\def\N{\mathbb{N}}
\def\Z{\mathbb{Z}}
\def\c{\mathbb{C}}
\def\Q{\mathbb{Q}}
\def\Du{\mathbb{D}}
\DeclareMathOperator{\Tr}{Tr}
\DeclareMathOperator{\Frac}{Frac}
\DeclareMathOperator{\Fil}{Fil}
\DeclareMathOperator{\fil}{Fil}
\DeclareMathOperator{\id}{id}
\DeclareMathOperator{\Car}{Car}
\DeclareMathOperator{\gr}{gr}
\DeclareMathOperator{\ann}{Ann}
\DeclareMathOperator{\cc}{CC}
\DeclareMathOperator{\supp}{Supp}
\DeclareMathOperator{\Exp}{Exp}
\DeclareMathOperator{\codim}{codim}
\newtheorem{theorem}{Théoreme}[section]
\newtheorem{lemma}[theorem]{Lemme}
\newtheorem{prop}[theorem]{Proposition}
\newtheorem{cor}[theorem]{Corollaire}
\newtheorem{definition}[theorem]{Définition}
\newtheorem{remark}[theorem]{Remarque}
\theoremstyle{definition}
\newtheorem{example}[theorem]{Exemple}
\def\V{\mathcal{V}}
\def\I{\mathcal{I}}
\def\J{\mathcal{J}}
\def\A{\widehat{\mathbb{A}}_K ^{1, \mathrm{an}} }
\def\D{\mathcal{D}}
\def\Dk{\widehat{\mathcal{D}}^{(0)}_{\X, k}}
\def\Dks{\mathcal{D}_{X, k}}
\def\Dx{\mathcal{D}_{X, k, x}}
\def\Dkq{\widehat{\mathcal{D}}^{(0)}_{\X, k, \Q}}
\def\X{\mathfrak{X}}
\def\m{\mathfrak{m}}
\def\Spf{\mathrm{Spf}\,}
\def\Spec{\mathrm{Spec}\,}
\def\Spm{\mathrm{Spm}\,}
\def\O{\mathcal{O}}
\def\iso{\overset{\sim}{\longrightarrow}}
\def\d{\mathrm{d}}
\def\sp{\mathrm{sp}}
\def\Nb{\overline{N}_k}
\def\Na{\overline{N}}
\def\E{\mathcal{E}}
\def\F{\mathcal{F}}
\def\Eo{\mathcal{E}^\circ}
\def\M{\mathcal{M}}
\def\Nn{\mathcal{N}}
\def\L{\mathcal{L}}
\def\Ext{\mathcal{E}xt}
\def\Di{\mathcal{D}_{\X, \infty}}
\def\H{\mathcal{H}(\Di)}
\title{$\Dkq$-modules holonomes sur une courbe formelle}
\author{Raoul Hallopeau}
\date{}
\begin{document}

\maketitle

\selectlanguage{english}
\begin{abstract}
Let $\mathfrak{X}$ be a formal smooth curve over a complete discrete valuation ring $\mathcal{V}$ of mixed characteristic $(0 , p)$.
Let $\widehat{\mathcal{D}}^{(0)}_{\mathfrak{X}, \mathbb{Q}}$ be the sheaf of crystalline differential operators of level 0 (i.e. generated by the derivations).
In this situation, Garnier proved that holonomic $\widehat{\mathcal{D}}^{(0)}_{\mathfrak{X}, \mathbb{Q}}$-modules as defined by Berthelot have finite length.
In this article, we address this question for the sheaves $\widehat{\mathcal{D}}^{(0)}_{\mathfrak{X}, k , \mathbb{Q}}$ of congruence 
level $k$ defined by Christine Huyghe, Tobias Schmidt and Matthias Strauch.
Using the same strategy as Garnier, we prove that holonomic $\widehat{\mathcal{D}}^{(0)}_{\mathfrak{X}, k , \mathbb{Q}}$-modules have finite length.
We finally give an application to coadmissible modules by proving that coadmissible modules with integrable connection over curves have finite length.
\end{abstract}

\selectlanguage{french}
\tableofcontents

\section{Introduction}

Un enjeu principal dans la théorie des $\D$-modules arithmétiques consiste à y généraliser la notion de $\D$-modules holonomes.
Une catégorie constituée de tels modules permettra par exemple d'étudier les cohomologies cristalline et rigide.
Des catégories de $\D$-modules arithmétiques holonomes avec Frobenius ont été construites par Berthelot et Caro de deux manières différentes. 
Berthelot a défini une variété caractéristique comme une sous-variété fermée du fibré cotangent de l'espace de base, ce qui lui a permis de définir la notion d'holonomie comme dans le cas complexe.
Caro a quant à lui construit une catégorie de $\D$-modules surholonomes munis  d'un Frobenius a priori stable par les six opérations et a montré que cette catégorie coïncide avec la catégorie de Berthelot  dans le cas des schémas quasi-projectifs.

Christine Huyghe, Tobias Schmidt et Matthias Strauch ont introduit dans \cite{huyghe} un faisceau $\Di = \varprojlim_k \Dkq$ d'opérateurs différentiels surconvergeants obtenu en rajoutant des niveaux de congruence $k \in \N$ aux faisceaux de Berthelot.
Les modules à considérer sur $\Di$ ne sont plus les $\Di$-modules cohérents mais les $\Di$-modules coadmissibles.
Nous ne disposons pas de bonne notion d'holonomie pour ces modules.
L'article \cite{ABW2} de Ardakov-Bode-Wadsley  introduit une catégorie de modules faiblement holonomes de $\D$-modules coadmissibles sur un espace analytique rigide en utilisant la caractérisation classique des modules holonomes.
Cependant, cette catégorie demeure trop grosse, les $\D$-modules faiblement holonomes ne sont par exemple pas tous de longueur finie.
Une autre approche possible consiste à définir une variété caractéristique pour les $\D$-modules coadmissibles.
Une première étape afin d'obtenir une catégorie de $\Di = \varprojlim_k \Dkq$-modules holonomes est de commencer par définir une bonne catégorie de $\Dk$-modules holonomes pour un niveau de congruence $k$ fixé.
C'est l'objectif de cet article dans le cas où $\X$ est une courbe formelle.
Nous introduirons dans un autre article une variété caractéristique pour les $\Di$-modules coadmissibles afin d'obtenir une notion d'holonomie dans ce contexte.

\vspace{0.4cm}
Expliquons maintenant le cadre et les résultats de cet article.
Soit $\V$ un anneau complet de valuation discrète de caractéristique mixte $(0,p)$ et $K = \Frac(\V)$ son corps des fractions.
Nous fixons un $\V$-schéma formel lisse $\X$ dont l'idéal de définition est engendré par une uniformisante $\varpi$ de $\V$.
Nous considérons le faisceau $\Dk$ introduit dans l'article \cite{huyghe} de Huyghe-Schmidt-Strauch.
Il s'agit d'un faisceau de sous-algèbres du faisceau $\widehat{\mathcal{D}}^{(0)}_\X$ des opérateurs différentiels cristallins obtenu en rajoutant un paramètre $k \in \N$ appelé niveau de congruence.
Soit $U$ un ouvert affine de $\X$ sur lequel on dispose d'un système de coordonnées étales $(x_1, \dots , x_d)$. Si $\partial_1 , \dots , \partial_d$ sont les dérivations associées, alors
\[ \Dk(U) = \left\{ \sum_{\alpha \in \N^d} a_\alpha \cdot \partial_1^{\alpha_1} \dots \partial_d^{\alpha_d},~~ a_\alpha \in \O_\X(U) ~~\mathrm{tels~ que} ~~a_\alpha \cdot \varpi^{-k | \alpha|} \underset{ | \alpha| \to \infty}{\longrightarrow} 0 \right\}. \]
Pour $k = 0$, nous retrouvons le faisceau de Berthelot $\widehat{\mathcal{D}}^{(0)}_\X$ de niveau $m = 0$. Notons $\Dkq = \Dk \otimes_\V K$.
Pour $k'\geq k$, nous disposons d'une inclusion $\widehat{\mathcal{D}}^{(0)}_{\X , k' , \Q}(U) \subset \Dkq(U)$.
Ces inclusions locales induisent un morphisme de transition $\widehat{\mathcal{D}}^{(0)}_{\X , k+1 , \Q} \to \Dkq$.
On note $\Di = \varprojlim_k \Dkq$ le faisceau limite projective des faisceaux $\Dkq$.

\vspace{0.4cm}

Rajouter un niveau de congruence $k$ au faisceau $\widehat{\mathcal{D}}^{(0)}_{\X , \Q}$ des opérateurs différentiels cristallins est très intéressant pour plusieurs raisons brièvement décrites ci-dessous.
Tout d'abord, les faisceaux $\Dkq$ pour les différents niveaux de congruences $k$ interviennent naturellement pour résoudre certaines questions données par exemple dans l'article \cite{huyghe2} de Christine Huyghe, Tobias Schmidt et Matthias Strauch.
Les faisceaux $\Dkq$ apparaissent dans l'étude de représentations localement analytiques de groupes de Lie $p$-adique. Ils s'avèrent aussi utiles pour regarder des isocristaux surconvergents dans le cas ramifié.

Par ailleurs, d'un point de vue conceptuel, les faisceaux $\Dkq$ sont pertinents.
En effet, nous pouvons associer aux éléments de $\widehat{\mathcal{D}}^{(0)}_{\X , \Q}$ des fonctions analytiques sur le fibré cotangent $T^*\X$ de $\X$ convergents sur une bande horizontale de l'espace analytique rigide associé, le fibré cotangent rigide $T^* \X_K$.
Les opérateurs de $\Dkq$ définissent des fonctions sur $T^*\X_K$ convergents sur un domaine grossissant avec $k$. Ces régions recouvrent $T^*\X_K$ lorsque $k$ tend vers l'infini.
Plus précisément, soit $(x_1 , \dots , x _n , \xi_d , \dots , \xi_d)$ un système de coordonnées locales sur $T^*U$ associée aux coordonnées étales de départ sur $U$.
Nous pouvons associer à tout opérateur $P = \sum_{\alpha \in \N^d} a_\alpha(x) \cdot \partial_1^{\alpha_1} \dots \partial_d^{\alpha_d}$ de l'algèbre $\widehat{\mathcal{D}}^{(0)}_{\X , \Q}(U)$ un élément $P(x , \xi) = \sum_{\alpha \in \N^d} a_\alpha(x) \cdot \xi_1^{\alpha_1} \dots \xi_d^{\alpha_d}$ du fibré cotangent rigide $T^*U_K$.
La fonction $P(x , \xi)$ converge sur la bande horizontale $\{ | \xi_1 | \leq 1 , \dots , |\xi_d| \leq 1 \}$ de $T^*U_K$.
Un opérateur différentiel $P$ de $\Dkq(U)$ définit une fonction analytique $P(x , \xi)$ convergente sur la bande horizontale $\{ | \xi_1 | \leq p^k , \dots , |\xi_d| \leq p^k \}$.
Ainsi, les opérateurs différentiels de l'algèbre $\Di(U)  = \varprojlim_k \Dkq(U) = \bigcap_k \Dkq(U)$ induisent des fonctions analytiques entières sur le fibré cotangent rigide $T^*\X_K$.

\vspace{0.4cm}

Lorsque $X$ est une variété complexe lisse, la variété caractéristique d'un $\D_X$-module cohérent $M$ non nul est une sous-variété involutive du fibré cotangent $T^*X$.
La preuve de ce résultat repose sur la caractéristique nulle de $\c$. En particulier, une composante irréductible de $\Car M$ a une dimension supérieure à celle de $X$.
Le module $M$ est appelé holonome si $\dim \Car M = \dim X$. La minimalité des dimensions des composantes irréductibles de la variété caractéristique $\Car M$ implique que $M$ est de longueur finie.

Soit maintenant $\E$ un $\Dk$-module à gauche cohérent. Sa variété caractéristique $\Car \E$ est définie en généralisant la construction de Berthelot pour un niveau de congruence $k$ comme suit.
Notons $\kappa$ le corps résiduel de $\V$ et $X = \X \times_\V \Spec \kappa$ la fibre spéciale de $X$.
La réduction $E = \E \otimes_\V \kappa$ modulo $\varpi$ de $\E$ est un $\Dks$-module cohérent, où $\Dks = \Dk \otimes_\V \kappa$ est un faisceau sur $X$.
Les opérateurs différentiels de $\Dks$ étant finis, on munit $\Dks$ de la filtration donnée par l'ordre des opérateurs différentiels.
Classiquement, la variété caractéristique de $E$ est construite comme une sous-variété fermée du fibré cotangent $T^* X $ de $X$.
La variété caractéristique de $\E$ est par définition celle de $E$.
Un $\Dk$-module cohérent dont la variété caractéristique est de dimension au plus la dimension de $\X$ est appelé module holonome.
Cependant, les méthodes utilisées pour une variété complexe ne s'appliquent plus puisque la caractéristique de $\kappa$ est positive (la fibre spéciale $X$ de $\X$ est un $\kappa$-schéma).
Le fait que ces modules soient de longueur finie n'est pas connu en général.

\vspace{0.4cm}

Laurent Garnier a démontré dans \cite{garnier} que les $\widehat{\mathcal{D}}^{(0)}_{\X , \Q}$-modules holonomes sont de longueur finie lorsque $\X$ est une courbe formelle.
Nous généralisons dans cet article ce résultat à un niveau de congruence $k \in \N$ quelconque toujours pour une courbe formelle $\X$.
Nous adaptons les constructions et les preuves de Laurent Garnier pour les $\Dkq$-modules cohérents dans les sections deux et trois.

La partie \ref{partie3} commence par quelques rappels et propriétés sur les faisceaux $\O_{\X , \Q}$ et $\Dkq$.
Nous introduisons dans la section \ref{partie4} les variétés caractéristiques des $\Dkq$-modules cohérents.
Nous expliquons dans la partie \ref{partiedx} qu'il est suffisant d'étudier les variétés caractéristiques des quotients $\Dx / I$ de $\Dx := \Dks \otimes_\kappa \O_{X,x}$.
Nous démontrons ensuite dans \ref{partie2.3} l'inégalité de Bernstein : les composantes irréductibles de la variété caractéristique d'un $\Dkq$-module cohérent non nul sont de dimension au moins un.
Un $\Dkq$-module cohérent $\E$ est dit holonome si $\dim \Car \E \leq \dim X = 1$.
Nous prouvons enfin dans la partie \ref{partie2.4} le résultat suivant.

\begin{prop}
Soit $\E$ un $\Dkq$-module cohérent. Les énoncés suivants sont équivalents :
\begin{enumerate}
\item
$\E$ est holonome ;
\item
$\E$ est localement de la forme $\widehat{\mathcal{D}}^{(0)}_{U, k, \Q} /\I$ pour un idéal cohérent $\I \neq 0$  ;
\item
$\E$ est de longueur finie ;
\item
$\E$ est de torsion ;
\item
$\Ext_{\Dkq}^d(\M , \Dkq) = 0$ pour tout entier $d \neq 1$ ;
\item
il existe un ouvert non vide $U$ de $\X$ tel que $\E_{|U}$ soit un $\O_{\X , \Q |U}$-module libre de rang fini. Autrement dit, $\E_{|U}$ est un module à connexion intégrable.
\end{enumerate}
\end{prop}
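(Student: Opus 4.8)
The plan is to prove the cycle of implications $(2)\Rightarrow(1)\Rightarrow(3)\Rightarrow(4)\Rightarrow(2)$ together with the two independent equivalences $(1)\Leftrightarrow(5)$ and $(1)\Leftrightarrow(6)$. Throughout one uses that $\dim X = 1$, so $T^*X$ is a surface; that $\Car\E$ is by construction a \emph{conical} closed subvariety of $T^*X$, being the support of the graded module attached to a good order-filtration, with $\gr\Dks$ a polynomial ring in one variable $\xi$ over $\O_X$ (recalled in \ref{partie4}); and that by the Bernstein inequality of \ref{partie2.3} every nonzero coherent $\Dkq$-module has all irreducible components of its characteristic variety of dimension exactly $1$. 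In particular (1) amounts to saying $\E = 0$ or $\dim\Car\E = 1$.

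For $(2)\Rightarrow(1)$ I would argue locally: over an affine $U$ where $\I$ has a nonzero section, choose a coherent integral model; the reduction of $\Dkq/\I$ modulo $\varpi$ is a quotient of $\Dks$, and after rescaling a nonzero $P\in\I$ by a suitable power of $\varpi$ one obtains an operator with nonzero reduction $\bar P\in\Dks$, hence with nonzero principal symbol $\sigma(\bar P)\in\gr\Dks$; then $\Car\E\subseteq V(\sigma(\bar P))$ is a hypersurface in the surface $T^*X$, so $\dim\Car\E\le 1$. For $(3)\Rightarrow(4)$: if $\E$ is coherent but not of torsion, then $\E/\E_{\tor}$ is a nonzero coherent torsion-free module whose stalk at the generic point $\eta$ of $X$ is a nonzero finitely generated torsion-free module — hence free of positive rank — over $(\Dkq)_\eta$, a noncommutative principal ideal domain; since that ring is not artinian this stalk has infinite length, so $\E$ does too by exactness of localization, and contrapositively finite length forces torsion. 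For $(4)\Rightarrow(2)$: if $\E$ is coherent and of torsion, its generic stalk is a finitely generated torsion module over the noncommutative PID $(\Dkq)_\eta$, hence cyclic, $\E_\eta\cong(\Dkq)_\eta/(\Dkq)_\eta P$ with $P\ne 0$; lifting a generator and spreading out (shrinking $\X$ and clearing denominators as in Garnier) produces, locally, a surjection $\Dkq\twoheadrightarrow\E$ with nonzero kernel $\I$.

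For $(1)\Leftrightarrow(5)$ I would invoke the Auslander-regularity of $\Dkq$, of global dimension $2$ (ring-theoretic preliminaries of \ref{partie3}): then $j(\E) := \min\{d : \Ext^d_{\Dkq}(\E,\Dkq)\ne 0\}$ satisfies $\dim\Car\E = 2 - j(\E)$ and $\Ext^d_{\Dkq}(\E,\Dkq) = 0$ for $d > 2$. If $\E$ is holonomic and nonzero, Bernstein's inequality applied to every nonzero subquotient forbids a subquotient with $\Car$ of dimension $0$, which kills $\Ext^2_{\Dkq}(\E,\Dkq)$, while $\dim\Car\E = 1$ gives $j(\E) = 1$, i.e. $\Ext^0_{\Dkq}(\E,\Dkq) = 0$; hence $\Ext^d_{\Dkq}(\E,\Dkq) = 0$ for all $d\ne 1$ (the case $\E = 0$ being trivial). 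Conversely that vanishing forces $\Ext^1_{\Dkq}(\E,\Dkq)\ne 0$ when $\E\ne 0$, so $j(\E) = 1$ and $\dim\Car\E = 1$. For $(1)\Leftrightarrow(6)$: since $\Car\E$ is conical in the surface $T^*X$, its only possible $1$-dimensional components are the zero section $T^*_XX$ and the fibres $T^*_xX$ over finitely many closed points $x\in S$; thus if $\E$ is holonomic then over $U := \X\setminus S$ one has $\Car(\E_{|U})\subseteq T^*_UU$, whence a good filtration stabilises and $\E_{|U}$ is $\O_{\X,\Q|U}$-coherent, and — shrinking $U$ further, using that a coherent $\O$-module with integrable connection on a smooth curve over a field of characteristic $0$ is locally free of finite rank — one obtains (6). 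Conversely, if $\E_{|U}$ is $\O$-coherent on a dense open $U$ then $\dim\Car(\E_{|U})\le 1$ while $\Car\E\setminus T^*U$ lies in the finitely many lines $T^*_xX$, $x\notin U$, so $\dim\Car\E\le 1$.

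The heart of the proposition, and the step I expect to be the main obstacle, is $(1)\Rightarrow(3)$: a holonomic $\Dkq$-module has finite length. Here I would follow Garnier's strategy closely. By the reduction of \ref{partiedx} it suffices to bound the length of a holonomic $\Dx/I$ for closed points $x\in X$. One attaches to a good filtration of a holonomic module a multiplicity — the total mass of its characteristic cycle, a finite $1$-cycle of $T^*X$ with positive integer coefficients — and argues by induction on this multiplicity: a simple subquotient has nonempty characteristic cycle by Bernstein, so it must strictly decrease the multiplicity, provided one controls how multiplicities add in short exact sequences over the explicit congruence-level ring $\Dx$, a Weyl-type algebra over the discrete valuation ring $\O_{X,x}$. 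The difficulty is exactly that, over $\kappa$ of characteristic $p$, one has lost the involutivity of the characteristic variety that makes this bookkeeping automatic over $\c$; following Garnier one must instead analyse by hand the ideals and filtrations of $\Dx$, and the genuinely new work is to push that local analysis through uniformly in the congruence level $k$.
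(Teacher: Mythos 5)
Your proposal is structured soundly and correctly identifies $(1)\Rightarrow(3)$ as the substantial step, but it diverges from the paper on the torsion/cyclicity triangle and is sketchy on $(1)\Rightarrow(6)$, and both divergences hide gaps.

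The paper does not argue at the generic stalk $(\Dkq)_\eta$ at all. It proves that $\Dkq(U)$ is \emph{simple} (Proposition \ref{prop1.10}) and of infinite length over itself, and then invokes Stafford's theorem (Theorem \ref{stafford}): any finite-length left module over a simple ring of infinite length is cyclic. Combined with $(1)\Rightarrow(3)$, this gives local cyclicity of holonomic modules directly, hence $(1)\Leftrightarrow(2)\Leftrightarrow(3)$; and $(3)\Leftrightarrow(4)$ is then elementary (an injective map $\Dkq(U)\hookrightarrow\E(U)$ sending $Q\mapsto Qe$ for a non-torsion $e$ shows $\E(U)$ has infinite length; conversely, for a torsion module one inducts on the number of generators, the base case $r=1$ being exactly $\E\simeq\Dkq/\ann(e_1)$). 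Your route via ``$(\Dkq)_\eta$ is a noncommutative PID, so a finitely generated torsion module is cyclic'' has two problems: the paper never establishes that the stalk at the generic point is a PID (the division theorems of \ref{partie1.3} are proven on shrinking affines, not at the generic stalk), and the assertion that finitely generated torsion modules over a noncommutative PID are cyclic already requires simplicity and infinite length of the ring — i.e.\ it is Stafford's theorem in disguise, not a shortcut around it. The ``spread out a generic generator'' step over a sheaf of noncommutative rings also needs justification that the paper avoids entirely by working over $\Dkq(U)$.

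For $(1)\Rightarrow(6)$, showing that a good filtration of the \emph{reduction mod $\varpi$} stabilises on $U$ shows $\E\otimes\kappa$ is $\O_X$-coherent, but this does not by itself lift to $\O_{\X,\Q}$-coherence or freeness of $\E$ over $K$, and ``coherent + integrable connection $\Rightarrow$ locally free on a smooth curve over a characteristic-$0$ field'' is not the framework the paper works in. The paper's Lemma \ref{modconnexion} instead produces, from $N_k(\I)=0$ at every point, a division basis of $\I$ reduced to a single operator $P$ with invertible leading coefficient, and then applies Hensel's lemma (Proposition \ref{lemmehensel}) and Corollary \ref{corhensel} to obtain $\E\simeq\Dkq/Q$ with $Q$ unitary and $\E$ free of rank $\Nb(Q)$ over $\O_{\X,\Q}$. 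That is the mechanism that actually crosses from the special fibre back to the generic fibre, and your sketch omits it. Finally, your sketch of $(1)\Rightarrow(3)$ is faithful to the paper's strategy (multiplicities are finite, additive in short exact sequences of holonomic modules by Proposition \ref{prop3.17} and the Bernstein inequality, so descending chains stabilise), so that part is fine.
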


Considérons maintenant le faisceau $\Di = \varprojlim_k \Dkq$.
Dans la section \ref{section4}, nous appliquons les résultats précédents aux $\Di$-modules coadmissibles, c'est à dire aux $\Di$-modules isomorphes à une limite projective de $\Dkq$-modules cohérents $\M_k$ ayant de bonnes propriétés de transitions entre les différents niveaux de congruences.
En particulier, nous construisons une catégorie abélienne formée de $\Di$-modules coadmissibles de longueur finie.
Elle est constituée des modules coadmissibles $\M \simeq \varprojlim_k \M_k$ vérifiant les deux points suivants.
\begin{enumerate}
\item
Il existe un rang $k_0$ tel que pour tout $k \geq k_0$, $\M_k$ est un $\Dkq$-module holonome.
\item
La limite supérieure pour $k \geq k_0$ des multiplicités des modules $\M_k$ est finie.
\end{enumerate}
Nous montrons que cette catégorie n'est pas triviale. En effet, elle contient les $\Di$-modules coadmissibles de la forme $\Di /P$ dès que $P$ un opérateur différentiel fini de $\Di$.
Nous montrons enfin que les modules coadmissibles à connexion intégrable appartiennent à cette catégorie.
Les modules à connexion intégrable sont donc de longueur finie.

\subsection*{Notations}

\begin{enumerate}
\item[$\bullet$]
$\V$ est un anneau complet de valuation discrète de caractéristique mixte $(0,p)$, d'idéal maximal $\m$ et de corps résiduel $\kappa$ supposé parfait. On note $| \cdot |$ la valeur absolue normalisée de $\V$, $\varpi$ une uniformisante et $K =\Frac(\V)$ son corps des fractions.
\item[$\bullet$]
$X$ est une courbe sur $\kappa$ lisse connexe quasi-compacte et $x \in X$ est un point fermé donné.
\item[$\bullet$]
$\X$ est un $\V$-schéma formel lisse localement de type fini relevant $X$ d'idéal de définition engendré par l'uniformisante $\varpi$.
\item[$\bullet$]
$\X_K$ est l'espace analytique rigide associé à $\X$.
\item[$\bullet$]
$t$ est un relèvement local sur $\O_\X$ d'une uniformisante en $x$ ($\O_{X , x}$ est un anneau de valuation discrète puisque $X$ est une courbe). Alors $\d t$ est une base de $\Omega^1 _{\X,x}$. On note $\partial$ la dérivation associée.

\item[$\bullet$]
$U$ est un ouvert affine de $\X$ contenant $x$ sur lequel on dispose d'une coordonnée locale.
\item[$\bullet$]
Soit $f\in \Gamma(U , \O_{\X, \Q}) \backslash \{0\}$ et $r$ tel que $f_1 : = \varpi^r f \in \Gamma(U , \O_\X) \backslash \Gamma(U , \m \cdot \O_\X)$. On note $U_{\{ f \}}\subset U$ l'ouvert sur lequel $f_1$ est inversible. On remarquera que $ U_{\{ f \} } \cup \{ x \} = U \backslash \{ V(\bar{f}_1) - \{x\} \}$ (où $\bar{f}_1$ est la réduction de $f_1$ modulo $\m$) est un ouvert puisque $\bar{f}_1$ n'a qu'un nombre fini de zéros.
\item[$\bullet$]
Sauf mention contraire, les idéaux et les modules considérés seront tous à gauche.
\end{enumerate}

\section{Propriétés du faisceau $\Dkq$}\label{partie3}

On adapte dans cette section la seconde partie de l'article \cite{garnier} de Laurent Garnier à un indice de congruence $k \in \N$.
On munit l'algèbre $\Dkq(U)$ d'une norme complète multiplicative puis on montre la simplicité de cette dernière dans la partie \ref{partie1.2}.
On énonce ensuite quelques théorèmes de division sur $\Dkq$ dans la section \ref{partie1.3}
On termine enfin par quelques rappels et quelques propriétés sur les bases de division d'un idéal cohérent de $\Dkq$ dans la dernière sous-partie \ref{partie1.4}.

\subsection{Rappels sur la norme spectrale de $\O_{\X , \Q}$}

On redonne ici la définition d'une algèbre affinoïde et de sa norme spectrale.
Puis on rappelle quelques résultats utiles de la première partie de l'article \cite{garnier} de Garnier. On pourra s'y référer pour les preuves des lemmes énoncés.

\vspace{0.4cm}

On note $T_n(\V) = \V \langle T_1 , \dots , T_n \rangle$ l'algèbre de Tate sur $\V$ à $n$-variables.
Si $T^\alpha = T_1^{\alpha_1}  \dots T_n^{\alpha_n}$ et $|\alpha| = \alpha_1 + \dots + \alpha_n$, alors
\[ T_n(\V) = \left \{ f(T) = \sum_{\alpha\in \N^n} c_\alpha \cdot T^\alpha,~~ |c_\alpha| \underset{|\alpha| \to \infty}{\to} 0 \right\}. \]
On munit $T_n(\V)$ de la norme de Gauss définie par $|f| = \max\{|c_\alpha|\}$. C'est une valuation et $T_n(\V)$ est le complété de $\V[ T_1 , \dots , T_n]$ pour cette valuation. En particulier, $T_n(\V)$ est une $\V$-algèbre de Banach.
Elle est de plus noetherienne et tout idéal $I$ est complet. Le quotient $T_n(\V)/ I$ de $T_n(\V)$ est donc une $\V$-algèbre de Banach pour la topologie induite par le passage au quotient.
L'algèbre de Tate $T_n(\V)$ est l'ensemble des séries entières en $T$ à coefficients dans $\V$ qui convergent sur la boule unité fermée de $K^n$.
On peut aussi munir $T_n(\V)$ de la norme supérieure. Elle coïncide avec la norme de Gauss. Cela provient du principe du maximum vérifié par $T_n(\V)$ : il existe $y \in \V^n$ tel que $|f| = |f(y)|$.

\vspace{0.4cm}

Une $\V$-algèbre affinoïde $A$ est par définition une $\V$-algèbre de Banach isomorphe (en tant qu'algèbre topologique) à un quotient $T_n(\V)/I$ de $T_n(\V)$ par un idéal $I$.
Toutes les normes sur $A$ induites par une présentation de $A$ comme quotient d'une algèbre de Tate sont équivalentes.

Si $z$ est un idéal maximal de $A_K := A \otimes_\V K$, alors $A_K /z$ est une extension finie de $K$. La valeur absolue de $K$ s'étend uniquement en une valeur absolue sur $A_K /z$ notée encore $| \cdot |$.
On définit la norme spectrale d'un élément $f \in A_K$ de la manière suivante.
On note $f(z)$ l'image de $f$ dans $A_K/z$ et $|f(z)|$ sa valeur absolue. Alors
\[ \| f \|_\sp := \max_{z \in \Spm A_K} |f(z)|.\]
En général, $\| \cdot \|_\sp$ est seulement une semi-norme inférieure à toute norme de Gauss induite.
Cependant, lorsque l'algèbre $A_K$ est intègre, c'est une valeur absolue ultramétrique équivalente aux normes de Gauss. C'est le cas par exemple pour $A = T_n(\V)$.

\vspace{0.4cm}

Tout ouvert affine $U$ de $\X$ est le spectre formel d'une $\V$-algèbre affinoïde $A$: $U = \Spf A$.
De plus, $U_K = \Spm A_K$, où $A_K = A \otimes_\V K$ une $K$-algèbre affinoïde (ie un quotient de $T_n(K)$).
Puisque la courbe $\X$ est connexe et lisse, l'algèbre $A_K$ est intègre.
La norme spectrale $\| \cdot \|_\sp$ est donc une valuation complète sur l'algèbre affinoïde définissant $U_K$.

\vspace{0,4cm}

On suppose pour la fin de cette partie que $x$ est un point $\kappa$-rationnel de $X$.
Pour tout $0 \leq \lambda < 1$, on note $V_\lambda := \{ y \in U_K : |t(y)| \geq \lambda \}$.
C'est un ouvert de $\X_K$ contenu dans $U_K$. Puisque l'ouvert $U$ est affine, l'ouvert $V_\lambda$ est affinoide et ne dépend pas du choix de $t$ pour tout $\lambda$ vérifiant $| \lambda |  > |\omega| = \frac{1}{p}$.
Puisque la courbe $\X$ est lisse en $x$, on dispose d'un isomorphisme permettant d'identifier le tube $]x[$ à un disque ouvert :
\[ ]x[ \iso D(0, 1^-) := \{ y \in \A : 0 \leq | t(y) | < 1 \}. \]

Soit $f \in \Gamma( V_{\lambda_0} , \O_{\X_K})$ une section de $\O_{\X_K}$. Alors $f_{| ]x[\cap V_{\lambda_0}}$ s'écrit uniquement comme une série $\sum_{i\in \N} \alpha_i \cdot t^i$, où les $\alpha_i$ sont des éléments de $K$.
Cette fonction converge sur la couronne $C([\lambda_0 , 1[) := \{ y \in \A : \lambda_0 \leq | t(y) | < 1 \}$. Pour tout $\lambda_0 \leq \lambda < 1$, on note
\[ N(f_{| ]x[\cap V_\lambda} , \lambda) := \max\left\{i \in \N : |\alpha_i| \cdot \lambda^i = \sup_{j\in \N}{ |\alpha_j| \cdot \lambda^j} \right\} \in \N \cup \{+ \infty\}. \]
On pose
\[ N(f) := \lim_{\lambda \to 1^-} N(f_{| ]x[\cap V_\lambda} , \lambda) \in \N \cup \{+\infty \} .\]

\begin{lemma}\label{lemme1.1}
Pour toute section $f \in \Gamma( V_{\lambda_0} , \O_{\X_K})$ non nulle, $N(f)$ est un entier positif ne dépendant pas du choix de $t$.
De plus, si $f_{| ]x[\cap V_{\lambda_0}} = \sum_{i\in \N} \alpha_i \cdot t^i$, alors $N(f)$ est le plus petit indice tel que $ \|f\|_\sp = |\alpha_{N(f)}| = \max_{j\geq 0} |\alpha_j|$.
En particulier, $\|f\|_\sp$ est dans $|K|$.
\end{lemma}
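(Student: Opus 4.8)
The statement claims that for a nonzero section $f \in \Gamma(V_{\lambda_0}, \O_{\X_K})$, the quantity $N(f)$ is a nonnegative integer (not $+\infty$), is independent of the local coordinate $t$, equals the smallest index at which the Newton polygon of the Laurent expansion $\sum_i \alpha_i t^i$ on the annulus achieves its maximal value, and that this maximal value is $\|f\|_{\sp}$. Here I would rely only on standard facts about Tate algebras and the spectral norm recalled in the excerpt.

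First I would analyze the local behaviour near $x$. Writing $f_{|]x[\cap V_\lambda} = \sum_{i \in \N} \alpha_i t^i$ with $\alpha_i \in K$, I observe that for fixed $\lambda < 1$ this is a convergent series on the annulus $C([\lambda_0, 1[)$, so $|\alpha_i| \lambda^i \to 0$ as $i \to \infty$; hence $\sup_j |\alpha_j|\lambda^j$ is attained, the set in the definition of $N(f_{|]x[\cap V_\lambda},\lambda)$ is finite and nonempty, and $N(f_{|]x[\cap V_\lambda},\lambda) \in \N$. The map $\lambda \mapsto N(f_{|]x[\cap V_\lambda},\lambda)$ is easily seen to be nondecreasing in $\lambda$ (if $|\alpha_i|\lambda^i = |\alpha_j|\lambda^j$ with $i > j$ then $|\alpha_i|\mu^i > |\alpha_j|\mu^j$ for $\mu > \lambda$ — the larger exponent dominates as $\mu$ grows), so the limit $N(f) = \lim_{\lambda \to 1^-} N(f_{|]x[\cap V_\lambda},\lambda)$ exists in $\N \cup \{+\infty\}$. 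The crux is therefore to show this limit is finite.

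To get finiteness, I would use that $f$ is not merely a function on the tube but extends to the affinoid $V_{\lambda_0}$; equivalently, it is (after multiplying by a power of $\varpi$) an element of the integral affinoid algebra $A' = \Gamma(V_{\lambda_0}, \O_\X)$, which is a $\V$-affinoid algebra, and $A'_K$ is integral since $\X$ is smooth and connected. Because $A'_K$ is an integral affinoid algebra, the spectral seminorm $\|\cdot\|_{\sp}$ is an actual valuation, equivalent to the Gauss norms coming from a presentation, and in particular $\|f\|_{\sp} < \infty$. On the other hand, restriction to the annulus $C([\lambda, 1[) \subset V_\lambda \subset V_{\lambda_0}$ is a morphism of affinoids whose spectral norm is bounded by $\|f\|_{\sp}$, and on the annulus the spectral norm of $\sum_i \alpha_i t^i$ is exactly $\sup_i |\alpha_i|\lambda^i$ (maximum principle for the Laurent/Tate algebra of the annulus). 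So $\sup_i |\alpha_i|\lambda^i \le \|f\|_{\sp}$ for all $\lambda < 1$, hence, letting $\lambda \to 1^-$, $\sup_i |\alpha_i| \le \|f\|_{\sp} < \infty$, and in particular $|\alpha_i| \to 0$ is \emph{not} what we want — rather we need that the sup is attained. Here I would argue that for $\lambda$ close enough to $1$, the index realizing $\sup_i |\alpha_i|\lambda^i$ stabilizes: since $\sup_i |\alpha_i|$ is finite and attained at some smallest index $n_0$ (the values $|\alpha_i|$ lie in the discrete value group $|K^\times| \cup \{0\}$ and are bounded, with $|\alpha_i| \to 0$), once $\lambda$ is so close to $1$ that $|\alpha_i|\lambda^i < |\alpha_{n_0}|\lambda^{n_0}$ for the finitely many $i < n_0$ and for all $i$ large, only finitely many competitors remain and the maximizing index is eventually $n_0$. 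Thus $N(f) = n_0 \in \N$ and $\|f\|_{\sp} = |\alpha_{n_0}| = \max_j |\alpha_j|$, proving the last two assertions; this also shows $\|f\|_{\sp} \in |K|$.

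Finally, independence of $t$: any two local uniformizers $t, t'$ at $x$ differ by a unit in $\O_{\X,x}$, inducing an automorphism of the open disc $]x[$ of the form $t' = u(t)\,t$ with $u(0)$ a unit; such a substitution is an isometry for each $\lambda$-Gauss norm on the annulus near the boundary (the leading term is unchanged up to a unit), so it preserves both $\sup_i |\alpha_i|\lambda^i$ and the maximizing index for $\lambda$ near $1$. Hence $N(f)$ and $\|f\|_{\sp}$ are intrinsic. The main obstacle is the finiteness argument — i.e.\ transferring the boundedness of $f$ on the whole affinoid $V_{\lambda_0}$ into boundedness of the Laurent coefficients and stabilization of the maximizing index; everything else is bookkeeping with Gauss norms and the maximum principle, which I would cite from the first part of \cite{garnier}.
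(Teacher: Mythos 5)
The paper does not supply its own proof of this lemma --- it is quoted from Garnier and the reader is referred there --- so there is no in-paper argument to compare yours against, and I can only assess your reconstruction on its merits. The main thread is sound: the monotonicity of $\lambda\mapsto N(f_{|]x[\cap V_\lambda},\lambda)$ is correct; restriction of $f$ to a sub-annulus inside $V_{\lambda_0}$ gives $\sup_i|\alpha_i|\le\|f\|_\sp<\infty$; the discreteness of $|K^\times|$ (since $\V$ is a DVR) then guarantees that the bounded set $\{|\alpha_i|\}$ has a maximum, attained at a smallest index $n_0$; and your stabilization argument ($|\alpha_i|\lambda^i<|\alpha_{n_0}|\lambda^{n_0}$ automatically for $i>n_0$ because $\lambda<1$, and for the finitely many $i<n_0$ once $\lambda$ is near enough to $1$ because $|\alpha_i|<|\alpha_{n_0}|$) correctly yields $N(f)=n_0\in\N$. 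Your parenthetical remark that $|\alpha_i|\to 0$ is not a consequence of the facts you cite (a bounded sequence taking values in a discrete set need not tend to zero), but fortunately your stabilization step never uses it, so this is only cosmetic. The independence of $t$ is merely sketched; it is true, and the sketch is on the right track, but to conclude that the extremal \emph{index} (and not just the extremal value) is preserved under $t=w(t')\,t'$ with $w$ a unit, one should observe that the new coefficient satisfies $\beta_{n_0}=\alpha_{n_0}w_0^{\,n_0}+(\text{sum of multiples of }\alpha_j,\ j<n_0)$ with $|w_0|=1$ and all the extra terms of norm $<|\alpha_{n_0}|$, so $|\beta_{n_0}|=|\alpha_{n_0}|$ while $|\beta_i|<|\alpha_{n_0}|$ for $i<n_0$; this deserves to be spelled out rather than invoked.

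There is, however, one genuine gap in your argument: the claimed equality $\|f\|_\sp=|\alpha_{n_0}|=\max_j|\alpha_j|$. You establish only the inequality $\max_j|\alpha_j|\le\|f\|_\sp$ (by restriction to the annulus and the maximum principle there), and then assert the equality without proving the reverse. The reverse direction is not automatic: $\|f\|_\sp$ is by definition a supremum over all rigid points of $V_{\lambda_0}$, and this affinoid contains not only the annulus $]x[\cap V_{\lambda_0}$ but also the locus $\{y:|t(y)|=1\}$, which specializes to $X\smallsetminus\{x\}$ and on which $|f(y)|$ is a priori uncontrolled by the coefficients $\alpha_i$ of the power-series expansion near $x$. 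To conclude one needs a maximum-modulus or Shilov-boundary argument specific to this geometry, identifying the spectral norm on $V_{\lambda_0}$ with the Gauss norm near the boundary of the tube $]x[$; that is precisely the step one would have to extract from Garnier's original proof, and your write-up simply asserts it.
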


\begin{remark}~
\begin{enumerate}
\item
Si $N(f)=0$, alors $f$ n'a pas de zéro sur $]x[$ et $x\in U_{\{f\}}$.
\item
On a $N(0, \lambda ) = N(0) = +\infty$.
\end{enumerate}
\end{remark}

On rappelle que $\O_{X,x}$ est un anneau de valuation discrète, de corps résiduel $\kappa$ lorsque $x$ est un point $\kappa$-rationnel.
Par définition, $t$ en est une uniformisante. On considère la valuation $v$ de $\O_{X,x}$ donnée par $v(t) = 1$.

\begin{lemma}\label{lemmevaluation}
Soit $f \in \Gamma(V_\lambda , \O_{\X_K})$ une section telle que $\| f \|_\sp = 1$. Alors $N(f)$ est la valuation de $(f \mod \varpi)$ dans $\O_{X,x}$.
\end{lemma}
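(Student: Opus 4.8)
Le plan est de relier la quantité analytique $N(f)$, définie à l'aide du développement de $f$ sur le tube $]x[$, à la valuation $v$ de l'anneau de valuation discrète $\O_{X,x}$, en passant par la réduction modulo $\varpi$. On peut supposer d'emblée, quitte à restreindre $\lambda$, que $f \in \Gamma(V_\lambda , \O_{\X_K})$ provient en fait d'une section $g \in \Gamma(U , \O_{\X})$ de l'$\O$-faisceau entier : en effet, l'hypothèse $\|f\|_\sp = 1$ force $f$ à être à coefficients dans $\V$ sur la couronne, et le principe du maximum sur l'affinoïde $U_K = \Spm A_K$ (l'algèbre $A_K$ étant intègre puisque $\X$ est lisse connexe) entraîne que la norme spectrale coïncide avec la norme sup, donc que $f$ est entière de norme $1$, c'est-à-dire $f \in A$ avec $f \notin \m \cdot A$.

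\textbf{Étape principale : lire $N(f)$ sur la réduction.} D'après le lemme \ref{lemme1.1}, $N(f)$ est le plus petit indice $i$ tel que $|\alpha_i| = \max_j |\alpha_j| = \|f\|_\sp = 1$, où $f_{|]x[\cap V_{\lambda_0}} = \sum_i \alpha_i \cdot t^i$. Autrement dit, $\alpha_i \in \V$ pour tout $i$, et $N(f)$ est le plus petit indice pour lequel $\alpha_i$ est une unité de $\V$ (les $\alpha_i$ d'indice strictement inférieur étant dans $\m$). Il s'agit maintenant de voir que ce développement en série en $t$ sur le tube, réduit modulo $\varpi$, redonne le développement $t$-adique de $(f \bmod \varpi)$ dans le complété $\widehat{\O}_{X,x} = \kappa[[t]]$. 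Pour cela j'identifierais le morphisme de spécialisation $\sp : ]x[ \to \{x\}$ : la réduction modulo $\varpi$ d'une série $\sum_i \alpha_i t^i \in \V[[t]]$ convergente sur la couronne et à coefficients bornés par $1$ est la série $\sum_i \bar{\alpha}_i t^i \in \kappa[[t]]$, et cette construction est compatible avec l'isomorphisme $]x[ \,\iso\, D(0,1^-)$ fourni par la lissité en $x$ et avec l'identification $\widehat{\O}_{X,x} \simeq \kappa[[t]]$ (les deux provenant du même relèvement $t$ de l'uniformisante). La valuation de $(f \bmod \varpi)$ dans $\O_{X,x}$, qui est aussi sa valuation $t$-adique dans le complété, est alors le plus petit indice $i$ tel que $\bar{\alpha}_i \neq 0$ dans $\kappa$, c'est-à-dire le plus petit indice tel que $\alpha_i$ soit une unité de $\V$ : c'est exactement $N(f)$.

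\textbf{Point délicat.} L'obstacle principal me semble être la justification soigneuse de la compatibilité entre les deux descriptions de l'anneau local : d'un côté le développement analytique sur le tube $]x[$ vu comme disque ouvert rigide, de l'autre le développement formel $t$-adique dans $\widehat{\O}_{X,x}$, et le fait que la spécialisation $\sp^{-1}(x) = ]x[$ envoie bien le coefficient $\alpha_i$ sur $\bar\alpha_i$ sans décalage ni torsion. Une fois admis que $t$ joue le même rôle des deux côtés — ce qui est cohérent avec le choix fait dans les notations, où $t$ est un relèvement sur $\O_\X$ d'une uniformisante en $x$ et $\d t$ une base de $\Omega^1_{\X,x}$ — le reste est formel : il suffit de remarquer que passer de $\Gamma(V_\lambda, \O_{\X_K})$ à la fibre en $x$ revient, sur les séries, à réduire les coefficients, et que prendre la valuation commute au plus petit indice non nul. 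Je vérifierais enfin que l'énoncé ne dépend pas du choix de $t$, ce qui découle déjà de l'invariance de $N(f)$ établie au lemme \ref{lemme1.1} et de l'intrinsèque de $v$ sur $\O_{X,x}$.
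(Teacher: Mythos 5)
Your argument is essentially the paper's: expand $f$ on $]x[ \cap V_{\lambda_0}$ as $\sum_i \alpha_i \cdot t^i$, observe via Lemma \ref{lemme1.1} that $\|f\|_\sp = 1$ forces $\alpha_i \in \V$ with $N(f)$ the smallest index at which $\alpha_i$ is a unit, reduce modulo $\varpi$, and read off the $t$-adic valuation as the smallest index with $\bar{\alpha}_i \neq 0$. The one place where the paper is cleaner is precisely your \emph{point délicat}: rather than passing to the completion $\widehat{\O}_{X,x} \simeq \kappa[[t]]$ and arguing a compatibility between the analytic tube expansion and the formal one, the paper simply notes that since the $\alpha_i$ tend to $0$ $\varpi$-adically, almost all $\bar{\alpha}_i$ vanish, so $(f \bmod \varpi)$ is already a polynomial in $\kappa[t] \subset \O_{X,x}$. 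This finiteness is worth making explicit, as it is what makes the phrase ``valuation in $\O_{X,x}$'' (rather than in its completion) literally well-posed. Separately, your opening reduction claiming that $f$ comes, after shrinking $\lambda$, from an integral section $g \in \Gamma(U, \O_\X)$ via the maximum principle on $U_K$ is not justified as stated and is not needed: $f$ is only given on the affinoid $V_\lambda$, not on all of $U_K$, and the argument uses only the coefficientwise estimate on the tube, which Lemma \ref{lemme1.1} already supplies.
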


On écrit $f_{| ]x[\cap V_{\lambda_0}} = \sum_{i\in \N} \alpha_i \cdot t^i$, toujours sous l'hypothèse que $\| f \|_\sp = 1$.
Autrement dit, les coefficients $\alpha_i$ sont dans $\V$.
On a $(f_{| ]x[\cap V_{\lambda_0}} \mod \omega) = \sum_{i \in \N} \bar{\alpha}_i \cdot t^i$ avec $\bar{\alpha}_i$ la réduction modulo $\varpi$ de $\alpha_i$.
Comme les coefficients $\alpha_i$ convergent pour la topologie $\varpi$-adique, les $\bar{\alpha}_i$ sont presque tous nuls et la somme définissant $(f_{| ]x[\cap V_{\lambda_0}} \mod \omega)$ est finie.
Alors $N(f)$ est le plus petit entier $n$ tel que $\bar{\alpha}_n \neq 0$.

\vspace{0.4cm}

Lorsque $U$ est un ouvert affine de $\X$, on note la norme spectrale de l'algèbre affinoïde $\O_{\X , \Q}(U) := \O_\X(U)\otimes_\V K$ simplement par $| \cdot|$.
On rappelle qu'elle est équivalente à toute norme de Gauss induite sur $\O_{\X , \Q}(U)$ et qu'il s'agit d'une valuation.

\subsection{Propriétés du faisceau $\Dkq$}\label{partie1.2}

\subsubsection*{Le faisceau $\Dkq$}

On commence par rappeler brièvement la définition du faisceau $\Dkq$ des opérateurs différentiels sur lequel on travaille.
Le lecteur peur regarder la seconde partie de l'article \cite{huyghe} de Christine Huyghe, Tobias Schmidt et Matthias Strauch pour plus de détails.
On désigne toujours par $U$ un ouvert affine contenant $x$ sur lequel on dispose d'une coordonnée étale.
On note $\partial$ la dérivation associée.

\vspace{0.4cm}

Le faisceau $\D^{(0)}_{\X , k}$ est défini comme un sous-faisceau dépendant d'un paramètre $k \in \N$ appelé niveau de congruence du faisceau usuel $\mathcal{D}^{(0)}_\X$ des opérateurs différentiels.
On retrouve $\mathcal{D}^{(0)}_\X$ lorsque $k=0$.
Localement, $\D^{(0)}_{\X , k}(U)$ est la $\V$-algèbre engendrée par $\O_\X(U)$ et par la dérivation $\varpi^k\partial$.
Plus précisément,
\[ \D^{(0)}_{\X , k}(U) = \left\{ \sum_{n \in \N}  a_n \cdot (\varpi^{k}\partial)^n , ~~ a_n \in \O_\X(U), ~~ a_n = 0 ~\mathrm{pour}~ n >>0 \right\}. \]
On peut aussi voir $\D^{(0)}_{U , k}$ comme le $\O_U$-module libre de base les puissances de $\varpi^{k} \partial$ :
\[ \D^{(0)}_{U , k} = \bigoplus_{n\in \N} \O_U \cdot (\varpi^k \partial)^n .\]

On note $\Dks$ la réduction modulo $\varpi$ du faisceau $ \D^{(0)}_{\X , k}$.
C'est le faisceau de $\kappa$-algèbres sur la fibre spéciale $X = \X \times_\V \Spec \kappa$ de $\X$ engendré localement sur $U$ par ${\O_X}_{|U}$ et par la dérivation $\partial_k$ image de $\varpi^k \partial$ après réduction modulo $\varpi$.
On rappelle que $\X$ et $X$ ont même espace topologique. On identifie donc $U$ à un ouvert affine de $X$.

\vspace{0.4cm}

Soit $\Dk := \varprojlim_i \left(\D^{(0)}_{\X , k}/ \varpi^{i+1} \D^{(0)}_{\X , k}\right)$ le complété $\varpi$-adique du faisceau $ \D^{(0)}_{\X , k}$ et $\Dkq := \Dk \otimes_\V K$. On dispose de la description locale suivante:
\[ \Dkq(U) = \left\{ \sum_{n=0}^\infty  a_n \cdot (\varpi^k\partial)^n , ~~ a_n \in \O_{\X , \Q} (U) , ~~| a_n |\underset{n \to \infty}{\longrightarrow} 0 \right\} .\]

Il est démontré dans \cite{huyghe} que ces algèbres sont toutes noetheriennes et que les faisceaux associés sont cohérents.
Pour $k'>k$, il est clair que $\widehat{\mathcal{D}}^{(0)}_{\X, k'}(U) \subset \Dk(U)$.
En particulier, $\Dkq(U)$ est une sous-algèbre de l'algèbre des opérateurs différentiels $\widehat{\mathcal{D}}^{(0)}_\X(U) = \widehat{\mathcal{D}}^{(0)}_{\X, 0}(U)$.
Cependant, lorsque $k \geq 1$, on observe que l'algèbre $\Dk(U)$ n'est pas isomorphe à l'algèbre $\widehat{\mathcal{D}}^{(0)}_\X(U)$.
On peut en effet le montrer en remarquant que le commutateur $[\varpi^k \partial , a ] = \varpi^k \cdot \partial(a)$ dans $\Dk(U)$ diffère du commutateur $[\partial , a ] = \partial(a)$ dans $\widehat{\mathcal{D}}^{(0)}_\X(U)$.

\subsubsection*{Structure d'algèbre de Banach sur $\Dkq(U)$}

On munit maintenant la $K$-algèbre $\Dkq(U)$ d'une norme multiplicative complète $\| \cdot \|_k$. Dans un premier temps, on suppose encore que $x$ est un point $\kappa$-rationnel de $X$.

\begin{definition}
Soit $H = \sum_{n \in \N} a_n \cdot (\varpi^k \partial)^n$ un élément de $\Dkq(U)$. On pose
\begin{enumerate}
\item
$\|H\|_k := \max_{n \geq 0} \{ |a_n| \}$ ;
\item
$\Nb(H) := \max\{ n \in \N : |a_n| = \|H\|_k \}$ ;
\item
$N_k(H) := N(a_{\Nb(H)})$.
\end{enumerate}
\end{definition}

On rappelle que si $\sum_{i \in \N} \alpha_i \cdot t^i$ est l'écriture comme série de $a_{\Nb(H)}$ sur $]x[ \cap U_K$, alors $\|H\|_k = |\alpha_{N_k(H)} |$.
On peut remarquer que
\[ \Dk(U) =  \left\{ H \in \Dkq(U)  : \| H \|_k \leq 1 \right \}. \]

Soit $H = \sum_{n \geq 0} a_n \cdot (\varpi^k \partial)^n$ un opérateur différentiel non nul de $\Dkq(U)$.
On fixe un scalaire $\alpha \in K^\times$ tel que $| \alpha | = \left(\max_{n \geq 0} |a_n| \right)^{-1}$.
Il s'agit bien d'un élément de $ |K|^\times$ d'après le lemme \ref{lemme1.1}. Alors $\alpha H$  est de norme 1 et l'opérateur $\alpha H$ appartient à $\Dk(U)$.
L'entier $\Nb(H)$ est le plus grand indice $n$ tel que $| \alpha  \cdot a_n | = \|\alpha H\|_k= 1$.
Ainsi, le nombre $\Nb(H)$ est l'ordre de l'opérateur $(\alpha H \mod \varpi)$ dans la $\kappa$-algèbre $\Dks(U)$. Cet entier ne dépend pas du choix de $\alpha$.
De plus, $N_k(H) = N_k(\alpha H)$ est la valuation de $\alpha \cdot a_{\Nb(H)}$ modulo $\varpi$ dans $\O_{X,x}$ d'après le lemme \ref{lemmevaluation}.
Ce nombre ne dépend pas non plus de $\alpha$.

\vspace{0.4cm}

Les entiers $\Nb(H)$ et $N_k(H)$ coïncident donc respectivement avec l'ordre et la valuation de $(\alpha H \mod \varpi)$ dans $\Dks(U)$ pour tout scalaire $\alpha \in K$ vérifiant $\|\alpha H \|_k = 1$.
Par ailleurs, ces définitions sont indépendantes du choix de la coordonnée locale sur $U$.

\begin{lemma}
La norme $\| \cdot \|_k$ et les fonctions $\Nb$ et $N_k$ ne dépendent pas du choix de la coordonnée locale.
\end{lemma}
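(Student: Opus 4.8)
The plan is to reduce everything to a single change-of-coordinate computation. Let $s$ be a second \'etale coordinate on $U$ and $\partial'$ its associated derivation. Since $\d t$ is a basis of the free $\O_\X(U)$-module $\Omega^1_{\X/\V}(U)$ one has $\d s = \partial(s)\cdot\d t$, and as $\d s$ is also a basis, $\partial(s)$ is a unit of $\O_\X(U)$; the chain rule $\d f = \partial(f)\d t = \partial'(f)\partial(s)\d t$ then gives $\partial' = u\,\partial$ with $u := \partial(s)^{-1}\in\O_\X(U)^\times$, so in particular $|u| = |u^{-1}| = 1$. I will show that $\|\cdot\|_k$ depends only on the algebra $\Dk(U)$ and that $\Nb$ and $N_k$ depend only on the reduction $\Dks(U)$ with its order filtration, and that neither object changes when $\partial$ is replaced by $\partial'$.

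For the norm, first I would note that $\D^{(0)}_{\X,k}(U)$, the $\V$-subalgebra of $\mathcal{D}^{(0)}_\X(U)$ generated by $\O_\X(U)$ and $\varpi^k\partial$, is unchanged if one uses $\varpi^k\partial' = u(\varpi^k\partial)$ instead, because both $u$ and $u^{-1}$ lie in $\O_\X(U)$. Hence its $\varpi$-adic completion $\Dk(U) = \{H\in\Dkq(U):\|H\|_k\le 1\}$ is intrinsic, and since $\|H\|_k\in|K|$ by Lemme \ref{lemme1.1}, one recovers $\|H\|_k = \min\{|\lambda|^{-1}:\lambda\in K^\times,\ \lambda H\in\Dk(U)\}$ with no reference to the coordinate.

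For $\Nb$ and $N_k$ I would invoke the discussion preceding the statement: choosing $\alpha\in K^\times$ with $\|\alpha H\|_k = 1$, the integer $\Nb(H)$ is the order of $(\alpha H\bmod\varpi)$ in $\Dks(U)$ and $N_k(H)$ is the valuation in $\O_{X,x}$ of its leading coefficient. So it suffices to check that the order filtration of $\Dks(U)$ and the valuation of leading coefficients are coordinate-independent. The reduction of $\varpi^k\partial'$ is $\partial'_k = \bar u\,\partial_k$ with $\bar u\in\O_X(U)^\times$, and from $[\varpi^k\partial,a] = \varpi^k\partial(a)\in\O_\X(U)$ one gets $[\partial_k,\O_X(U)]\subseteq\O_X(U)$; an induction on $n$ then shows $(\partial'_k)^n\in\bigoplus_{m\le n}\O_X(U)\partial_k^m$ with the coefficient of $\partial_k^n$ equal to $\bar u^n$, whence by symmetry $\bigoplus_{m\le n}\O_X(U)(\partial'_k)^m = \bigoplus_{m\le n}\O_X(U)\partial_k^m$ for all $n$, so the order filtration is intrinsic. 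Finally, rewriting an order-$n$ operator in the powers of $\partial'_k$ multiplies its leading coefficient by the unit $\bar u^{-n}$, which leaves the valuation unchanged since $x\in U$ makes $\bar u$ a unit in $\O_{X,x}$. The only delicate point is the last induction with the commutator terms; it is harmless because those terms strictly lower the order (and, for $k\ge 1$, vanish outright modulo $\varpi$), so I expect no real obstacle.
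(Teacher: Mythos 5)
Your proof is correct, and it takes a genuinely different route from the paper's. The paper argues directly in $\Dkq(U)$: it writes $P = \sum a_n(\varpi^k\partial')^n = \sum a_n(\alpha\varpi^k\partial)^n$ with $\partial' = \alpha\partial$, expands $(\alpha\partial)^n = \alpha^n\partial^n + \sum_{m<n} b_m\partial^m$ with $|b_m|\le 1$, and then estimates the error terms of norm $\le |\varpi|^k\|P\|_k$ to conclude that the maximum of the coefficient norms is unchanged, treating $k=0$ by a supplementary remark; the invariance of $\Nb$ and $N_k$ is left largely implicit in that computation. You instead split the statement into a ``soft'' part and a ``hard'' part. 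For the norm, you observe that the $\V$-subring $\Dk(U)$ is visibly unchanged when $\varpi^k\partial$ is replaced by $u\,\varpi^k\partial$ with $u\in\O_\X(U)^\times$, and that since this subring is the unit ball $\{\|\cdot\|_k\le 1\}$ and the norm is $|K|$-valued, the norm is recovered by rescaling with no estimates whatsoever. For $\Nb$ and $N_k$ you then reduce to the coordinate-invariance of the order filtration on $\Dks(U)$ and of the valuation of the leading coefficient, and you carry out the analogous commutator computation mod $\varpi$, where it is much lighter (and trivial for $k\ge 1$, since $\Dks$ is then commutative). Both arguments rest on the same $\partial'=u\partial$ decomposition and an induction through lower-order commutator terms, but yours pushes the computation to the graded level, which makes the $k=0$ versus $k\ge1$ dichotomy transparent and has the merit of explicitly covering all three invariances rather than only $\|\cdot\|_k$.
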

\begin{proof}
On considère une autre coordonnée locale sur l'ouvert $U$ ; on note $\partial'$ la dérivation associée.
Puisque $\partial'$ est un générateur du faisceau tangent $\O_U \cdot \partial$, il existe un élément $\alpha$ de $\O_\X(U)^\times$ tel que $\partial' = \alpha \cdot \partial$.
Comme  $|\alpha| \leq 1$ et $\alpha$ est inversible, on a $|\alpha| = 1$.
Soit $P = \sum_{n \in \N} a_n \cdot (\varpi^k \partial')^n $ un opérateur différentiel de $\Dkq(U)$.
Sa norme $\|P\|_k$ pour la dérivation $\partial'$ est le maximum des normes spectrales des coefficients $a_n$.

Par ailleurs, $P = \sum_{n \in \N} a_n \cdot (\alpha\varpi^k \partial)^n$. On a $(\alpha\partial)^2 = \alpha^2 \partial^2 + \alpha \partial(\alpha) \partial$.
Comme $|\partial^n(\alpha)| \leq |\alpha| = 1$, le coefficient de $\partial$ a une norme spectrale inférieure ou égale à un.
Une récurrence sur $n \geq 1$ montre que
\[ (\alpha\partial)^n = \alpha^n \partial^n + \sum_{m=0}^{n-1} b_m \partial^m \]
avec $|b_m| \leq 1$ pour tout entier $m \in \{ 0 , \dots , n-1 \}$.
Il vient
\begin{align*}
P & = \sum_{n \in \N} a_n \left[ \alpha^n (\varpi^k\partial)^n + \varpi^{kn} \sum_{m=0}^{n-1} b_m \partial^m\right] \\
& = \sum_{n \in \N} a_n \alpha^n (\varpi^k\partial)^n + \underbrace{\sum_{n\in \N^*} a_n \varpi^k \sum_{m=0}^{n-1} \varpi^{k(n-m-1)} b_m (\varpi^k\partial)^m}_{\sum_{n\geq 0} \beta_n (\varpi^k \partial)^n}
\end{align*}
avec $|\beta_n| \leq |\varpi|^k \cdot \|P\|_k $ et $|a_n \alpha^n | = |a_n|$. Lorsque $k>0$, $|\beta_n| < \|P\|_k$ ; il est clair que la norme de $P$ pour la dérivation $\partial$ est aussi donnée par le maximum $\max_{n\in\N} |a_n|$.
Pour $k=0$, le résultat reste vrai. En effet, dans la seconde somme, le coefficient de $(\varpi^k \partial)^n$ est une combinaison des $a_k$ pour $k>n$ et des $b_m$.
\end{proof}

On rappelle que la norme $\|P\|_k$ d'un opérateur différentiel $P$ de $\Dkq(U)$ et que les entiers $\Nb(P)$ et $N_k(P)$ dépendent du point $x$ donné.
Ce sont des notions locales en $x$.
On rappelle aussi, lorsque $\|P\|_k  = 1$, que $N_k(P)$ est la valuation du coefficient dominant de $\bar{P} = (P \mod \varpi)$ dans $\O_{X,x}$.

\begin{prop}~
\begin{enumerate}
\item
Les algèbres $\Dk(U)$ et $\Dkq(U)$ sont complètes pour la norme $\| \cdot \|_k$.
\item
La topologie induite par quotient sur tout $\Dkq$-module cohérent est complète.
\item
Pour tout opérateurs $H$ et $Q$ de $\Dkq(U)$, on a
\[ \| HQ \|_k = \| H\|_k \cdot \|Q\|_k,  ~~ \Nb(HQ) = \Nb(H) + \Nb(Q),  ~~ N_k(HQ) = N_k(H) + N_k(Q). \]
\end{enumerate}
\end{prop}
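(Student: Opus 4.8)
The plan is to treat the three assertions separately, the multiplicativity statement (3) being where the real work lies.

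For assertion (1), I would recognise $\Dkq(U)$ as a space of null sequences. The assignment $H=\sum_n a_n(\varpi^k\partial)^n\mapsto(a_n)_{n\in\N}$ is an isometric $K$-linear bijection between $\Dkq(U)$ and the space of sequences of the affinoid algebra $\O_{\X,\Q}(U)$ tending to $0$, equipped with the supremum norm. Since $\O_{\X,\Q}(U)$ is complete for its spectral norm $|\cdot|$ (recalled above), this sequence space is a Banach space, and hence so is $\Dkq(U)$. For $\Dk(U)$ I would use the identity $\Dk(U)=\{H:\|H\|_k\le 1\}$ noted before the statement: it is the closed unit ball of $\Dkq(U)$, hence a closed, thus complete, subspace. (Alternatively, $\|\cdot\|_k$ induces the $\varpi$-adic topology on $\Dk(U)$, for which $\Dk(U)$ is complete by its very construction as a $\varpi$-adic completion.)

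For assertion (3) the plan is to pass to the residue algebra $\Dks(U)$. Since $\|\cdot\|_k$ takes its values in $|K|$ (Lemma \ref{lemme1.1}), I may rescale $H$ and $Q$ by scalars of $K^\times$ and assume $\|H\|_k=\|Q\|_k=1$, i.e.\ $H,Q\in\Dk(U)$ with nonzero reductions $\bar H,\bar Q\in\Dks(U)$. The crucial observation is that $\Dks(U)$ is an integral domain: for $k\ge 1$ the commutator $[\varpi^k\partial,a]=\varpi^k\partial(a)$ vanishes modulo $\varpi$, so $\Dks(U)$ is the polynomial ring $\O_X(U)[\partial_k]$, while for $k=0$ it is a skew polynomial ring over $\O_X(U)$ (an Ore extension for the reduced derivation); in both cases it is a domain since $\O_X(U)$ is one ($X$ being a smooth connected, hence integral, curve), the degree in $\partial_k$ is additive under multiplication, and the leading coefficient of a product is the product of the leading coefficients (for $k=0$ one only uses $\partial^m b\equiv b\partial^m$ modulo operators of order $<m$). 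Therefore $\overline{HQ}=\bar H\,\bar Q\ne 0$, and since $HQ\in\Dk(U)$ and $\|\cdot\|_k\in|K|$ this forces $\|HQ\|_k=1=\|H\|_k\|Q\|_k$. Recalling from the discussion preceding the statement that $\Nb(HQ)$ is the order of $\overline{HQ}$ in $\Dks(U)$ and that $N_k(HQ)$ is the valuation in the discrete valuation ring $\O_{X,x}$ of its leading coefficient, additivity of the degree yields $\Nb(HQ)=\Nb(H)+\Nb(Q)$, and multiplicativity of leading coefficients together with additivity of the valuation of $\O_{X,x}$ yields $N_k(HQ)=N_k(H)+N_k(Q)$. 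A closed point $x$ that is not $\kappa$-rational can be handled by replacing $\V$ by a suitable finite unramified extension, which alters none of these invariants.

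For assertion (2), the plan is to combine Noetherianity with the division theory of \S\ref{partie1.3}. By \cite{huyghe} the algebra $\Dkq(U)$ is Noetherian, so a coherent $\Dkq$-module is, locally on $U$, a quotient $\Dkq(U)^r/N$ with $N$ finitely generated; the division theorems of \S\ref{partie1.3} provide a division basis of $N$ whose associated normal forms exhibit $N$ as a closed subspace of $\Dkq(U)^r$, so that $\Dkq(U)^r/N$ is the quotient of a Banach space by a closed subspace, hence complete for the quotient topology. The main obstacle I anticipate is therefore assertion (3), specifically the two algebraic inputs — integrality of $\Dks(U)$ and multiplicativity of the leading coefficient of a product — although both become routine once the explicit shape of $\Dks(U)$ is identified, since the translation of $\Nb$ and $N_k$ into the order and the valuation of the leading coefficient of the reduction has already been established; for assertion (2) the genuine content lies entirely in the closedness of coherent submodules of $\Dkq(U)^r$, which is exactly the purpose of the division theory developed in the following subsection.
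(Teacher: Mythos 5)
Your proof is correct but follows a genuinely different route for assertion~(3). The paper expands the product $HQ$ into the sum $\sum_u \alpha_u (\varpi^k\partial)^u$, writes $\alpha_u$ explicitly as a sum over pairs $(\ell, j)$, and estimates each term to locate the dominant one at $u = \Nb(H) + \Nb(Q)$; it is a self-contained, bare-hands computation. You instead rescale to $\|H\|_k = \|Q\|_k = 1$ and pass to the residue ring $\Dks(U)$, observing that it is an integral domain (a commutative polynomial ring $\O_X(U)[\partial_k]$ for $k \geq 1$, an Ore extension by the derivation for $k = 0$) in which degree is additive and leading coefficients multiply, then translate back via the identification of $\Nb$ and $N_k$ with the order and the valuation of the leading coefficient of the reduction. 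This is a clean conceptual reformulation: multiplicativity of the residue seminorm is exactly the statement that the associated graded ring has no zero-divisors, and your version makes the role of the domain property of $\O_X(U)$ (hence of the smoothness and connectedness of $X$) explicit, where the paper's coefficient estimate uses it only implicitly through the spectral norm being multiplicative. For assertion~(1) your argument coincides with the paper's (with a small alternative added). For assertion~(2) the paper's justification is very terse (completeness is asserted and presentation-independence attributed to the multiplicativity of $\|\cdot\|_k$); your appeal to closedness of finitely generated submodules is the right underlying fact, though the specific invocation of \S\ref{partie1.3} is a little loose, since those division theorems are stated for ideals of $\Dkq$ rather than for arbitrary submodules of $\Dkq(U)^r$ — the more direct source for closedness is the standard theorem on finitely generated submodules over noetherian Banach algebras. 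One small caveat on the scalar extension in~(3): you should say not merely that the finite unramified extension \emph{alters} none of the invariants, but that $\Nb$ and $N_k$ were in fact \emph{defined} via such an extension when $x$ is not $\kappa$-rational (as the paper does after Corollary \ref{corinv}), so that the reduction to the rational case is by construction rather than by a preservation argument.
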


\begin{proof}
Le premier point découle du fait que l'algèbre $\Dkq(U)$ est complète pour la topologie $\varpi$-adique et que la topologie induite par la norme spectrale est équivalente à la topologique $\varpi$-adique sur l'algèbre affinoïde $\O_{\X,\Q}(U)$.
On munit tout $\Dkq$-module cohérent $\E$ de la norme induite par des présentations finies locales de $\E$.
Cette dernière est complète et ne dépend pas des présentations choisies puisque la norme $\| \cdot \|_k$ est multiplicative par 3.
Cela montre le second point.

Soit maintenant $H = \sum_{n \in \N} a_n \cdot (\varpi^k \partial)^n$ et $Q = \sum_{n \in \N} b_n \cdot (\varpi^k \partial)^n$ deux opérateurs différentiels de $\Dkq(U)$. On a
\begin{align*}
H Q & = \sum_{i\in \N} a_i \cdot (\varpi^k \partial)^i \left(\sum_{j \in \N} b_j \cdot (\varpi^k \partial)^j \right) \\
& = \sum_{i , j \geq 0} \left( \sum_{\ell = 0}^i {i \choose \ell} \cdot a_i \cdot \varpi^{k\ell} \cdot  \partial^\ell(b_j) \cdot \varpi^{k(i+j-\ell)} \cdot\partial^{i+j-\ell} \right) \\
& = \sum_{u \geq 0} \underbrace{\underset{0 \leq j \leq u}{\sum_{\ell \geq 0}} \left( {u+\ell-j \choose \ell} \cdot a_{u+\ell-j} \cdot \varpi^{k\ell} \cdot \partial^\ell(b_j) \right)}_{\alpha_u \in \O_{\X , \Q}(U)} (\varpi^k \partial)^u .
\end{align*}
On remarque déjà que
\[ \left| {u+\ell-j \choose \ell} \cdot a_{u+\ell-j} \cdot \varpi^{k\ell} \cdot \partial^\ell(b_j) \right| \leq |a_{u+\ell-j}| \cdot | \partial^\ell(b_j)| \leq |a_{u+\ell-j}| \cdot |b_j| \leq \| H \|_k \cdot \| Q \|_k .\]
Ainsi, $\|HQ\|_k \leq \| H \|_k \cdot \| Q \|_k$. Pour $u = \Nb(H) + \Nb(Q)$, $\ell = 0$ et $ j = \Nb(Q)$, le coefficient associé dans la somme définissant $\alpha_u$ est $a_{\Nb(H)} \cdot b_{\Nb(Q)}$.
Ce terme est donc de norme $\| H \|_k \cdot \| Q \|_k$. Si $j \geq \Nb(Q)$, alors $|b_j| < \| Q \|_k$.
Si $j < \Nb(Q)$ ou si $j \leq \Nb(Q)$ et $\ell \geq 1$, alors $u+\ell-j > \Nb(H)$. Donc $|a_{u+\ell-j}| < \| H \|_k$.
Dans tous ces cas, la norme du terme associé dans $\alpha_u$ est strictement inférieure à $\| H \|_k \cdot \| Q \|_k$.
Ceci prouve que $|\alpha_u| = \| H \|_k \cdot \| Q \|_k$. Autrement dit, $\|HQ\|_k = \| H \|_k \cdot \| Q \|_k$.

Si $u > \Nb(H) + \Nb(Q)$, on montre de manière analogue que $|\alpha_u| < \| H \|_k \cdot \| Q \|_k$.
Ainsi, $\Nb(HQ) = \Nb(H) + \Nb(Q)$. On peut supposer que $H$ et $Q$ sont de norme un.
Dans ce cas, $N_k(H) = v(a_{\Nb(H)} \mod \varpi)$ et $N_k(Q) = v(b_{\Nb(Q)} \mod \varpi)$, où $v$ est la valuation de $\O_{X,x}$. Puisque $\alpha_{\Nb(H) + \Nb(Q)} = a_{\Nb(H)} \times b_{\Nb(Q)} +$ (un terme de norme spectrale strictement inférieure), on a bien
\begin{align*}
N_k(HQ) & = v(a_{\Nb(H)} \cdot b_{\Nb(Q)} \mod \varpi) = v(a_{\Nb(H)} \mod \varpi) + v(b_{\Nb(Q)} \mod \varpi) \\
& = N_k(H) + N_k(Q) .
\end{align*}
\end{proof}

\subsubsection*{Applications}

On énonce dans cette partie quelques propriétés de l'algèbre de Banach $\Dkq(U)$.
Les preuves sont adaptées de celles de Laurent Garnier à un niveau de congruence $k$ quelconque.
La proposition suivante caractérise l'inversibilité des éléments de $\Dkq$ à l'aide des fonctions $\Nb$ et $N_k$. 

\begin{prop}\label{prop1.7}
On suppose que $x$ est un point $\kappa$-rationnel. Soit $H \in \Dkq(U)$. Il existe un ouvert $V$ de $U$ contenant $x$ sur lequel $H$ est inversible si et seulement si $\Nb(H) = N_k(H) = 0$. Si de plus $\|H\|_k = 1$, alors $H^{-1} \in \Dk(V)$.
\end{prop}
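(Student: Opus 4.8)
The plan is to prove both implications by relating invertibility of $H$ in $\Dkq(V)$ (for a suitable small affine $V \ni x$) to the behaviour of the leading symbol of $H$ modulo $\varpi$. One direction is easy: if $H$ is invertible on some open $V \ni x$, then since $\| \cdot \|_k$, $\Nb$ and $N_k$ are multiplicative (third part of the previous proposition), from $H H^{-1} = 1$ we get $\Nb(H) + \Nb(H^{-1}) = \Nb(1) = 0$ and $N_k(H) + N_k(H^{-1}) = N_k(1) = 0$, and since both quantities are nonnegative integers they must vanish. (One should note $\Nb$ and $N_k$ are computed at $x \in V$, which is legitimate since these are local invariants at $x$.)

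For the converse, suppose $\Nb(H) = N_k(H) = 0$. After rescaling by a scalar $\alpha \in K^\times$ with $|\alpha| = \|H\|_k^{-1}$ — which by Lemma \ref{lemme1.1} lies in $|K^\times|$ — we may assume $\|H\|_k = 1$, so $H \in \Dk(U)$. Then $\Nb(H) = 0$ says the reduction $\bar{H} = (H \bmod \varpi)$ has order $0$ in $\Dks(U)$, i.e. $\bar{H} \in \O_X(U)$; and $N_k(H) = 0$ says the valuation of $\bar{H}$ in $\O_{X,x}$ is $0$, i.e. $\bar{H}(x) \neq 0$. Hence $\bar{H}$ is a nonzero element of $\O_X(U)$ not vanishing at $x$, so it is invertible on the affine open $V := U_{\{H\}} \cup \{x\}$ — which, as recalled in the Notations, is genuinely open since $\bar{H}$ has only finitely many zeros (here $H$ plays the role of $f$; the leading coefficient $a_0$ of $H$, being a lift of $\bar{H}$, is the relevant section, inverted away from the finite zero set of $\bar{H}$ minus $x$). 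Write $H = a_0(1 - R)$ over $V$ where $a_0 \in \O_{\X,\Q}(V)^\times$ is the degree-$0$ coefficient, normalized so $\|H\|_k = 1$, and $R := a_0^{-1}(a_0 - H) \in \Dkq(V)$. By construction $a_0 - H$ has all its $\O_{\X,\Q}$-coefficients of spectral norm $\leq 1$ and all its terms of positive order in $\varpi^k\partial$ except possibly the degree-$0$ term, whose reduction mod $\varpi$ vanishes because $\bar{H} = \bar{a}_0$; so $\|a_0 - H\|_k \leq |\varpi|$ after one checks the degree-$0$ term reduces to zero and the higher terms carry a factor $\varpi^k$ (for $k \geq 1$; for $k = 0$ one argues as in Garnier that the order is genuinely positive once the leading term is removed). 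Hence $\|R\|_k < 1$, the Neumann series $\sum_{n \geq 0} R^n$ converges in the Banach algebra $\Dkq(V)$ (completeness, first part of the previous proposition), and $H^{-1} = \left(\sum_{n \geq 0} R^n\right) a_0^{-1} \in \Dkq(V)$. If moreover $\|H\|_k = 1$ to begin with then $a_0^{-1}$ can be taken in $\O_\X(V)$ up to a unit and $\|H^{-1}\|_k \leq 1$, so $H^{-1} \in \Dk(V)$.

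The main obstacle I expect is the estimate $\|R\|_k < 1$, i.e. controlling the lower-order part of $H$ after extracting the leading coefficient: one must be careful that shrinking from $U$ to $V$ does not spoil the norm estimates (the spectral norm on $\O_{\X,\Q}(V)$ versus $\O_{\X,\Q}(U)$), that inverting $a_0$ stays within the unit ball of $\O_{\X,\Q}(V)$ (which is why we need $\bar{a}_0(x) \neq 0$ and $V$ chosen so that $\bar a_0$ is a unit everywhere on $V$), and — for $k = 0$ — that $a_0 - H$ really has positive order in $\partial$ so that multiplication by $a_0^{-1}$ produces an operator of norm $< 1$; this last point is exactly where Garnier's argument for $\widehat{\mathcal{D}}^{(0)}_{\X,\Q}$ is invoked, adapted verbatim to congruence level $k$.
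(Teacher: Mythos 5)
Your proof takes essentially the same route as the paper's: the forward implication from multiplicativity of $\Nb$ and $N_k$, and the converse by inverting $H$ on $V = U_{\{a_0\}}\cup\{x\}$ after factoring out the constant coefficient $a_0$ and summing the resulting geometric series in $\Dkq(V)$. One remark: the estimate $\|a_0-H\|_k < 1$ follows directly from $\Nb(H)=0$ (which forces $|a_j|<|a_0|$ for every $j\geq 1$), not from the higher-order terms carrying a factor $\varpi^k$ --- that factor is absorbed into the basis elements $(\varpi^k\partial)^n$ and does not shrink the $\|\cdot\|_k$-norm --- so the bound holds uniformly in $k$ and your special handling of $k=0$ is unnecessary.
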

\begin{proof}
Si $H$ est inversible d'inverse $H^{-1}$ , alors $\Nb(H) + \Nb(H^{-1}) = \Nb(1) = 0$. Donc $\Nb(H) = 0$ puisque $\Nb(H)$ est un entier positif.
De même, $N_k (H) = 0$. Réciproquement, on suppose que $\Nb(H) = N_k(H) = 0$.
On écrit $H = \sum_{n \in \N} a_n \cdot (\varpi^k \partial)^n$. Ces deux conditions signifient que $|a_0| > |a_n|$ pour tout $n>0$ et que $a_0$ n'a pas de zéro sur $]x[$.
Autrement dit, $a_0$ est inversible sur l'ouvert $V = U_{\{a_0\}} \cup \{x\}$ de $U$. Sur cet ouvert, l'inverse de $H$ est donné par la série classique
\[ H^{-1} = \sum_{i \in \N} \left(-\sum_{j\in \N} \frac{a_j}{a_0} (\varpi^k\partial)^j \right)^i a_0^{-1} .\]
Cet opérateur converge puisque
\[ \left\| \sum_{j\in \N} \frac{a_j}{a_0} (\varpi\partial)^j \right\|_k = \max_{j \geq 1} \left\{ \left| \frac{a_j}{a_0} \right| \right\} <1 .\]
Ainsi, $H^{-1}$ définit bien un opérateur de $\Dkq(V)$. Si maintenant $H$ est de norme un, alors les coefficients $a_n$ et $a_0^{-1}$ sont des éléments de $\O_\X(V)$. Il en découle que $H^{-1} \in \Dk(V)$.
\end{proof}

On fixe une clôture algébrique $\overline{K}$ de $K$. A partir de maintenant, et pour le reste de l'article, $x$ n'est plus supposé $\kappa$-rationnel. C'est un point $\kappa'$-rationnel pour une certaine extension finie $\kappa'$ de $\kappa$.
Soit $K'$ une extension finie de $K$ dans $\overline{K}$ dont le corps résiduel est $\kappa'$. Quitte à étendre $K$ par $K'$, on peut définir les fonctions $\Nb$ et $N_k$ des opérateurs de $\Dkq(U)$ en $x$.

\vspace{0.4cm}

Puisque l'extension $K' /K$ est finie, l'algèbre $K\langle T_1 , \dots , T_n\rangle \otimes_K K'$ est complète.
La $K'$-algèbre de Tate $T_n(K')$ coïncide donc avec $T_n(K)\otimes_K K'$. On munit $K'$ de l'extension non normalisée de la valeur absolue de $K$, notée encore $| \cdot |$.
Le morphisme canonique $T_n(K) \to T_n(K')$ est une isométrie de $K$-algèbres pour les normes de Gauss, égales aux normes spectrales.
Plus généralement, si $A$ est une $K$-algèbre affinoïde, alors $A' = A \otimes_K K'$ est une $K'$-algèbre affinoïde.
Le morphisme canonique $A \to A'$ est une isométrie de $K$-algèbres affinoïdes. Lorsque $A$ est intègre, la norme spectrale est une norme sur $A$ et le morphisme précédent est une isométrie pour les normes spectrales. 

\vspace{0.4cm}

On munit $\Dkq(U) \otimes_K K'$ de la norme de $K'$-algèbre $\| P \otimes \lambda\|'_k = |\lambda| \cdot \| P\|_k$.
Comme le morphisme canonique $\O_{\X  , \Q}(U) \to \O_{\X  , \Q}(U) \otimes_K K'$ est une $K$-isométrie, le morphisme $ \Dkq(U) \to \Dkq(U) \otimes_K K'$ est une isométrie de $K$-algèbres. Soit $H \in \Dkq(U)$.
La fonction $\Nb(H)$ ne dépend donc pas de l'extension $K'$ de $K$ mais seulement de $H$ : cet entier est le même aussi bien dans $(\Dkq(U) , \| \cdot \|_k)$ que dans $(\Dkq(U) \otimes_K K' , \| \cdot \|_k')$.

\begin{cor}\label{corinv}
Un opérateur différentiel $H$ de $\Dkq(U)$ est inversible au voisinage de $x$ si et seulement si $\Nb(H) = N_k(H) = 0$.
\end{cor}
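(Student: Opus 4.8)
The plan is to deduce the corollary from Proposition~\ref{prop1.7}, bringing in the base change to the finite extension $K'/K$ with residue field $\kappa'$ only where the hypothesis that $x$ be rational was genuinely used. Concretely, I would let $\V'$ be the valuation ring of $K'$ and set $U' := U \times_{\Spf\V}\Spf\V'$, so that $x$ is a $\kappa'$-rational point of the special fibre of $\X' := \X\times_{\Spf\V}\Spf\V'$; and I would use the facts recalled just before the statement, namely that $\Dkq(U)\to\Dkq(U)\otimes_K K'$ is an isometry of $K$-algebras, that its target is the level-$k$ congruence sheaf over $U'$, and that $\Nb(H)$ and $N_k(H)$ are by definition those of the image $H'$ of $H$ in this target (independently of $K'$).

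For the "only if" direction I would not need the base change at all: if $H$ is invertible in $\Dkq(V)$ for some open $V\ni x$ in $U$, then by the multiplicativity of $\Nb$ and $N_k$ (the third part of the Banach-algebra proposition above) one has $\Nb(H)+\Nb(H^{-1}) = \Nb(1) = 0$ and $N_k(H)+N_k(H^{-1}) = 0$; since these integers are all $\geq 0$, it follows that $\Nb(H) = N_k(H) = 0$, exactly as in the proof of Proposition~\ref{prop1.7}.

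For the "if" direction I would replay the construction of the inverse from the proof of Proposition~\ref{prop1.7}, directly over $K$. Writing $H = \sum_n a_n(\varpi^k\partial)^n$ with $a_n\in\O_{\X,\Q}(U)$, the condition $\Nb(H) = 0$ gives $|a_0| > |a_n|$ for $n>0$, and $N_k(H) = N(a_0) = 0$ says — by the first remark after Lemma~\ref{lemme1.1}, read over $K'$ — that $a_0$ has no zero on $]x[$, i.e. $x\in U_{\{a_0\}}$. By the observation in the Notations, $V := U_{\{a_0\}}\cup\{x\}$ is then an open neighbourhood of $x$ in $U$ on which $a_0$ is invertible and $\max_{n\geq 1}|a_n/a_0| < 1$, so the Neumann series $H^{-1} = \sum_{i}\bigl(-\sum_{j}\tfrac{a_j}{a_0}(\varpi^k\partial)^j\bigr)^i a_0^{-1}$ converges in $\Dkq(V)$; note that once the conditions on $a_0$ are in hand this construction lives entirely over $K$, no extension being needed.

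The step I expect to be the only real subtlety is checking that the two geometric facts used in the "if" direction — that $N(a_0) = 0$ is equivalent to "$a_0$ has no zero on $]x[$", and that $U_{\{a_0\}}\cup\{x\}$ is open — which were set up for a $\kappa$-rational point, survive the base change $U\rightsquigarrow U'$. I would handle this by invoking that $\kappa$ is perfect, so that $\kappa'/\kappa$ is separable and both $U_{\{a_0\}}$ and the finiteness of the zero locus of $\bar a_0$ are unaffected by replacing $\kappa$ with $\kappa'$, and that the spectral norm — hence the invariant $N$ — is preserved by the isometry $\O_{\X,\Q}(U)\hookrightarrow\O_{\X,\Q}(U)\otimes_K K'$. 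A cleaner alternative, which I would mention, is to prove the whole corollary by faithfully flat descent along the finite flat morphism $U'\to U$: an element of $\Dkq(W)$ is invertible if and only if its image in $\Dkq(W)\otimes_K K'$ is, which reduces both implications verbatim to Proposition~\ref{prop1.7} over $K'$.
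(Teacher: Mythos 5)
Your proof is correct and takes essentially the same route as the paper: both apply Proposition~\ref{prop1.7} after extending scalars to $K'$, then observe that once $a_0$ is known to be invertible on a neighbourhood of $x$ the explicit Neumann series for $H^{-1}$ has coefficients in $\O_{\X,\Q}$, so lives in $\Dkq(V)$ over $K$. The only cosmetic difference is that the paper performs the descent directly at the level of $a_0$ (invertible in $\O_{\X,\Q}(V)\otimes_K K'$ and lying in $\O_{\X,\Q}(V)$ forces invertibility in $\O_{\X,\Q}(V)$), which is precisely the ``cleaner alternative'' you sketch at the end.
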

\begin{proof}
La proposition \ref{prop1.7} montre que $H$ est inversible au voisinage de $x$ après extension des scalaires de $K$ à $K'$.
Soit $V \subset U$ un ouvert contenant $x$ sur lequel $H$ est inversible. On écrit $H = \sum_{n=0}^\infty a_n \cdot (\varpi^k \partial)^n \in \Dkq(V)$.
Puisque $a_0$ est inversible dans $\O_{\X , \Q}(V) \otimes_K K'$ et $a_0 \in \O_{\X , \Q}(V)$, $a_0$ est inversible dans $\O_{\X ,  \Q}(V)$.
L'inverse $H^{-1} = \sum_{i \geq 0} \left(-\sum_{j\geq1} \frac{a_j}{a_0} (\varpi^k\partial)^j \right)^i a_0^{-1}$ de $H$ dans $\Dkq(V) \otimes_K K'$ appartient donc à $\Dkq(V)$.
\end{proof}

Ce critère d'inversibilité permet de démontrer que la $K$-algèbre $\Dkq(U)$ est simple.

\begin{prop}\label{prop1.10}
Pour tout ouvert affine $V$ de $\X$, $\Dkq(V)$ est une algèbre simple.
\end{prop}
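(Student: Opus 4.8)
The plan is to prove that $\Dkq(V)$ has no nontrivial two-sided ideal. Let $\J$ be a nonzero two-sided ideal of $\Dkq(V)$ and pick a nonzero element $H \in \J$. The strategy is to use commutators with functions to reduce the order of $H$, and then to exploit the inversibility criterion (Corollary \ref{corinv}) to conclude. The key observation is that if $H = \sum_{n} a_n \cdot (\varpi^k \partial)^n$ has $\Nb(H) = N \geq 1$, then for a well-chosen function $f \in \O_{\X,\Q}(V)$, the commutator $[H, f]$ lies in $\J$ and has strictly smaller $\Nb$-value, so that after finitely many such steps one lands on an element $H_0$ with $\Nb(H_0) = 0$.

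First I would make the commutator computation explicit. For $f \in \O_\X(V)$ and the generator $D := \varpi^k \partial$, one has $[D, f] = \varpi^k \partial(f)$, hence $[D^n, f] = \sum_{\ell=1}^{n} \binom{n}{\ell} \varpi^{k\ell}\partial^\ell(f) D^{n-\ell}$, which has order $n-1$ in the variable $D$ with leading coefficient $n \cdot \varpi^k \partial(f)$. Consequently, if $H$ has $\Nb(H) = N$, then $[H, f]$ is an operator whose coefficient of $D^{N-1}$ is (up to terms coming from lower $a_n$) equal to $N \cdot \varpi^k \partial(f) \cdot a_N$ plus contributions; choosing $f = t$ (a local coordinate) so that $\partial(t) = 1$, the dominant contribution to the coefficient of $D^{N-1}$ becomes $N \cdot \varpi^k \cdot a_N$. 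Since $\kappa$ has characteristic $p$ this is delicate if $p \mid N$, so I would instead argue as in Garnier: after rescaling $H$ to norm $1$, reduce modulo $\varpi$ to the $\kappa$-algebra $\Dks(V)$ and work there, where $[\bar H, \bar f]$ visibly lowers the order, but one must be careful because $\varpi^k$ kills things for $k \geq 1$ — so the right move is to work with the operator $t^j \partial^n$ structure directly, or to use $[\,\cdot\,, a_0]$-type brackets to clear out specific coefficients. Iterating, I extract from $\J$ a nonzero operator $H_0$ with $\Nb(H_0) = 0$; that is, $H_0 = a_0 + (\text{lower norm terms})$ with $|a_0| = \|H_0\|_k$ strictly dominating all other coefficients.

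Next I would handle the $N_k$-value. Having $\Nb(H_0) = 0$, the element $H_0$ is, after normalization, congruent modulo $\varpi$ to the function $\bar a_0 \in \O_{X,x}$, whose valuation is $N_k(H_0)$. If $N_k(H_0) = 0$, then by Corollary \ref{corinv} $H_0$ is invertible in a neighborhood $W$ of $x$, hence invertible in $\Dkq(W)$ — but that only shows $\J$ restricted to $W$ is everything, and I need it on all of $V$. To globalize: the set of points $x$ at which some chosen nonzero $H \in \J$ fails to be invertible is contained in a finite set (the zeros of $\bar a_0$ together with the finitely many points where $\Nb$ or $N_k$ jumps), since $V$ is a curve; running the commutator reduction uses only $\O_{\X,\Q}(V)$-coefficients so stays global, and one shows that after finitely many bracket operations one obtains an element of $\J$ that is invertible at \emph{every} point of $V$, hence invertible in $\Dkq(V)$, forcing $\J = \Dkq(V)$. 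If instead $N_k(H_0) \geq 1$, I take a further commutator with a function having a simple zero at $x$ — concretely $[t\partial, H_0]$ or multiplication-type manipulations — to decrease $N_k$ by one, again using multiplicativity of $N_k$ from the previous proposition to control what happens; after finitely many steps $N_k$ reaches $0$.

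The main obstacle I anticipate is the interplay between the two "degree" functions $\Nb$ and $N_k$ and the positive residue characteristic: lowering $\Nb$ via $[D^n, f]$ produces a factor $n$ which may vanish in $\kappa$, and the factor $\varpi^k$ (for $k \geq 1$) makes the reduction modulo $\varpi$ of a bracket trivial, so the naive "commute to kill the top order" argument must be replaced by a more careful bookkeeping — precisely the point where Garnier's level-$0$ argument needs genuine adaptation. The resolution is to track $\Nb$ and $N_k$ simultaneously using the multiplicativity $\Nb(HQ) = \Nb(H)+\Nb(Q)$, $N_k(HQ) = N_k(H)+N_k(Q)$ established above, choosing the bracketing function to decrease the lexicographic pair $(\Nb, N_k)$ at each step, and finally invoking Corollary \ref{corinv} once that pair is $(0,0)$ at all points of $V$. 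A secondary (but routine) obstacle is passing from "invertible near $x$" to "invertible on $V$", which is handled by the curve hypothesis: non-invertibility is a closed condition supported on a finite set, so a suitable $\O_{\X,\Q}(V)$-linear combination of translates clears all bad points at once.
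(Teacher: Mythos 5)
Your overall strategy — drive the pair $(\Nb,N_k)$ to $(0,0)$ by commutators and then invoke Corollary~\ref{corinv} — is exactly the paper's, and you correctly sense that reducing modulo $\varpi$ cannot work for $k\geq 1$, since $\Dks$ is then commutative and all brackets die. But you leave this unresolved (``work with the $t^j\partial^n$ structure directly'' is not a proof step), whereas the paper's resolution is concrete: stay in $\Dkq(V)$, where $K$ has characteristic $0$, and iterate $[\,\cdot\,,t]$ exactly $\Nb(H)$ times to get
\[ [H,t]^{\Nb(H)} = \bigl(\varpi^{k\Nb(H)}\,\Nb(H)!\bigr)\sum_{i\geq\Nb(H)}\binom{i}{\Nb(H)}\,a_i\,(\varpi^k\partial)^{i-\Nb(H)} .\]
The scalar prefactor is a \emph{nonzero} element of $K$ (it may be $\varpi$-divisible, but that is irrelevant to invertibility in $\Dkq$), and the ultrametric estimate $\bigl|\binom{i}{\Nb(H)}a_i\bigr|\leq|a_i|<|a_{\Nb(H)}|$ for $i>\Nb(H)$ — which only uses integrality of the binomials, not their nonvanishing mod $p$ — shows the resulting operator has $\Nb=0$. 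So the $p$-divisibility of $N$ you worried about is a non-issue once you stop trying to argue modulo $\varpi$.

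The second gap is more serious. To reduce $N_k$ once $\Nb=0$, you propose bracketing with a function having a simple zero, or with $t\partial$. Bracketing $H_0$ with a function lowers $\Nb$, not $N_k$ (and when $\Nb(H_0)=0$ it strictly lowers the norm, since $a_0$ commutes with functions, so you lose control entirely). Bracketing with the Euler operator $t\partial$ sends the constant coefficient $a_0 = t^m(\text{unit})$ to $t\partial_t(a_0)=m\,t^m(\text{unit})$: it \emph{preserves} the $t$-adic valuation. The paper instead brackets with $\varpi^k\partial\in\Dk(V)$: one computes $[H,\varpi^k\partial]=-\varpi^k\sum_i\partial(a_i)(\varpi^k\partial)^i$, which differentiates every coefficient, and iterating $N_k(H)$ times (again with a nonzero scalar prefactor $(-\varpi^k)^{N_k(H)}N_k(H)!\in K^\times$) lowers the valuation of $a_0$ to zero. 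Until you replace your proposed bracket by $[\,\cdot\,,\varpi^k\partial]$ and carry out the norm estimates in $\Dkq$ rather than in $\Dks$, the argument does not close.
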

\begin{proof}
Soit $I$ un idéal bilatère non nul de $\Dkq(V)$ et $x \in V$ un point fermé.
On va montrer qu'il existe un voisinage ouvert affine $W$ de $x$ dans $V$ tel que $I_{|W}$ contienne un élément inversible dans $\Dkq(W)$.
Les points fermés étant denses dans $V$, ceci implique que $I = \Dkq(V)$.
D'après le corollaire \ref{corinv}, il suffit de montrer que quitte à réduire $V$, $I$ contient un élément $P$ vérifiant $\Nb(P) = N_k(P) = 0$. On peut remplacer $K$ par une extension finie afin que $x$ soit rationnel et supposer que $V$ est affine.

On part d'un opérateur différentiel non nul $H = \sum_{i \in \N} a_i \cdot (\varpi^k \partial)^i$ de $I$.
Comme $I$ est un idéal bilatère, les crochets $[ H , t] = Ht - tH$ et $[H , t]^{n+1} := [[H , t]^n , t]$ pour $n \in \N$ restent des éléments de $I$.
On a $[H , t ] = \varpi^k \cdot  \sum_{i \in \N^*} i a_i \cdot (\varpi^k \partial)^{i-1}$ et 
\[ [H , t]^{\Nb(H)} = (\varpi^{k \Nb(H)} \cdot \Nb(H) !)  \sum_{i \geq \Nb(H)} {i \choose \Nb(H)} \cdot a_i \cdot (\varpi^k \partial)^{i-\Nb(H)} .\]
Pour tout $i > \Nb(H)$, on a
\[ \left| {i \choose \Nb(H)} a_i \right| \le | a_i | < | a_{\Nb(H)} | .\]
Autrement dit, $\Nb( [H , t]^{\Nb(H)} ) = 0$. Quitte à remplacer $H$ par $[H , t]^{\Nb(H)}$, on peut supposer que $\Nb(H) = 0$.
Par ailleurs, $\varpi^k \cdot \partial \cdot a_i \cdot (\varpi^k \partial)^i = \varpi^k \cdot \partial(a_i) \cdot (\varpi^k \partial)^i + a_i \cdot (\varpi^k \cdot  \partial)^{i+1}$.
Donc
\begin{align*}
[H , \varpi^k\partial ] & = H \varpi^k\partial - \varpi^k \partial H = \sum_{i \geq 0} \left(a_i \cdot (\varpi^k \partial)^{i+1} - \varpi^k \partial \cdot a_i \cdot (\varpi^k \partial)^i \right) \\
& = -\varpi^k \sum_{i \geq 0} \partial(a_i) \cdot (\varpi^k \partial)^i.
\end{align*}
Ainsi, $[H , \varpi^k\partial ]^{N_k(H)} =  (-\varpi^k)^{N_k(H)} \sum_{i \geq 0} \partial^{N_k(H)}(a_i) \cdot (\varpi^k \partial)^i$. Puisque $\Nb(H) = 0$, on a
\[ \forall i \geq 1, ~~  | \partial^{N_k(H)}(a_i) | \leq | N_k(H)!| \cdot | a_i |  < |N_k(H)!| \cdot \|H\|_k. \]
Sur $]x[ \cap U_K$ on peut écrire $a_0 = \sum_{i\geq 0} \alpha_i \cdot t^i$, $\alpha_i \in K$. On a
\[ \partial^{N_k(H)} (a_0) = N_k(H)! \sum_{i\geq N_k(H)} {i \choose N_k(H)} \cdot \alpha_i \cdot t^{i-N_k(H)} .\]
Comme $N_k(H) = N(a_0)$, on a
\[ \forall i > N_k(H), ~~ \left| {i \choose N_k(H)} \alpha_i \right| \leq | \alpha_i | < | \alpha_{N_k(H)} | = |\alpha_{N(a_0)} | = \| H \|_k .\]
Ainsi, $| \partial^{N_k(H)}(a_0) | = |N_k(H)!| \cdot |\alpha_0| = |N_k(H)!| \cdot \|H\|_k$ et $N_k(\partial^{N_k(H)}(a_0)) = 0$. Ceci montre que $[H , \varpi^k\partial]^{N_k(H)}$ est un élément de $I$ de fonctions $\Nb$ et $N_k$ nulles. Quitte à réduire l'ouvert $V$ contenant $x$, cet élément est inversible.
\end{proof}

\subsection{Théorèmes de division dans $\Dkq$}\label{partie1.3}

Les résultats de cette partie sont une adaptation des théorèmes de division énoncés par Laurent Garnier dans \cite{garnier} pour $\widehat{\mathcal{D}}^{(0)}_{\X, \Q}$ au cas des $\Dkq$-modules cohérents.
Les preuves se généralisent immédiatement pour un niveau de congruence $k$.

\begin{definition}
Soit $P$ un opérateur différentiel de $\Dkq(U)$.
\begin{enumerate}
\item
On appelle coefficient dominant de $P$ son coefficient d'indice $\Nb(P)$.
Si $\|P \|_k =1$, il s'agit du coefficient dominant de $\bar{P}$ après réduction modulo $\varpi$.
\item
On dit que $P$ est $\Nb$-dominant si $P$ est un opérateur fini d'ordre $\Nb(P)$.
Cette condition signifie que le coefficient de plus haut degré de $P$ est de norme maximale, ou de manière équivalente que $P$ et $\bar{P}$ ont le même ordre lorsque $\| P \|_k =1$.
\end{enumerate}
\end{definition}

\begin{prop}
Soit $P$ un opérateur différentiel non nul de $\Dkq(U)$. On note $b$ son coefficient dominant et $V$ l'ouvert $U_{\{b\}} \cup \{x\} $ de $U$.
Alors tout élément $H$ de $\Dkq(U)$ s'écrit uniquement sous la forme $H = Q P + R + S$ avec:
\begin{enumerate}
\item
$Q, R, S \in \Dkq(V)$ ;
\item
$R$ est d'ordre fini $< \Nb(P)$ ;
\item
$ S = \sum_{i \geq \Nb(P)} \mu_i \cdot (\varpi^k \partial) ^i $, $\mu_ i\in K[t]$ de degré $< N_k(P)$ ;
\item
$\| H \|_k = \max\{ \| Q \|_k \cdot \| P \|_k, \| R \|_k , \| S \|_k \}$.
\end{enumerate}
Si $H \in \Dk(U)$, alors $R$ et $S$ sont dans $\Dk(V)$. Si de plus $\| P \|_k = 1$, alors $Q \in \Dk(V)$.
\end{prop}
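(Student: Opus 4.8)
The plan is to mimic the classical division (Weierstrass-type) algorithm for differential operators, using the multiplicative norm $\|\cdot\|_k$ and the functions $\overline{N}_k$, $N_k$ to control the process. First I would reduce to the case $\|P\|_k = 1$ and $\|H\|_k \leq 1$: multiplying $P$ by a scalar of suitable absolute value in $|K|^\times$ (possible by Lemma \ref{lemme1.1}) changes neither $\overline{N}_k(P)$, $N_k(P)$, nor the shape of the asserted decomposition, and similarly one may rescale $H$. In this normalized situation the dominant coefficient $b$ of $P$ reduces mod $\varpi$ to a nonzero element of $\O_X(U)$ whose vanishing locus on $]x[$ has order exactly $N_k(P)$; on the open $V = U_{\{b\}} \cup \{x\}$, the element $b$ becomes, after the identification of $]x[$ with a disc, a unit times $t^{N_k(P)}$ plus a term of strictly smaller norm, so that on $V$ the operator $\bar P$ has the form $t^{N_k(P)} (\varpi^k\partial)^{\overline{N}_k(P)} \cdot (\text{unit}) + (\text{lower})$.

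The core of the argument is an iterative division over the residue ring $\Dks(V)$. Given $\bar H \in \Dks(V)$, I would write $\bar H = \bar Q_0 \bar P + \bar R_0 + \bar S_0$ with $\bar R_0$ of order $< \overline{N}_k(P)$ and $\bar S_0 = \sum_{i \geq \overline{N}_k(P)} \bar\mu_i \,(\varpi^k\partial)^i$ with $\deg_t \bar\mu_i < N_k(P)$: one first performs ordinary (non-commutative) Euclidean division in the free $\O_X(V)$-module $\Dks(V) = \bigoplus_n \O_X(V)\cdot(\varpi^k\partial)^n$ by the $\overline{N}_k$-dominant reduced operator $\bar P$ to kill all terms of order $\geq \overline{N}_k(P)$ whose leading coefficient is divisible by $t^{N_k(P)}$, and then the remaining high-order terms, having coefficients in $\O_X(V)$ of $t$-order $< N_k(P)$, get pushed into $\bar S_0$ (here one uses that $\O_{X,x}$ is a DVR with uniformizer $t$, so every section of $\O_X$ near $x$ is a polynomial of degree $< N_k(P)$ in $t$ plus a multiple of $t^{N_k(P)}$, up to shrinking $V$). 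Lifting to $\D^{(0)}_{\X,k}(V)$, this yields $H = Q_1 P + R_1 + S_1 + \varpi H_1$ with $H_1 \in \D^{(0)}_{\X,k}(V)$ and all norms $\leq 1$. Iterating on $H_1$ and summing the $\varpi$-adically convergent series $Q = \sum \varpi^j Q_{j+1}$, $R = \sum \varpi^j R_{j+1}$, $S = \sum \varpi^j S_{j+1}$ — convergence being guaranteed by completeness of $\Dk(V)$ and $\Dkq(V)$ for $\|\cdot\|_k$ (part (1) of the previous Proposition) — produces the decomposition with coefficients of norm $\leq 1$, whence $Q, R, S \in \Dk(V)$ when $\|P\|_k = 1$; inverting the normalization gives the general statement, with $Q \in \Dkq(V)$ in general and $Q \in \Dk(V)$ precisely when $\|P\|_k = 1$.

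It remains to prove the norm identity $\|H\|_k = \max\{\|Q\|_k\|P\|_k, \|R\|_k, \|S\|_k\}$ and uniqueness. For the identity, the inequality $\leq$ is immediate from the ultrametric inequality and multiplicativity of $\|\cdot\|_k$ (part (3) of the Proposition). For $\geq$ one argues by contradiction: if $\|H\|_k$ were strictly smaller than the right-hand side, rescale so that the right-hand side is $1$ and $\|H\|_k < 1$; then reducing mod $\varpi$ gives $0 = \bar Q\,\bar P + \bar R + \bar S$ in $\Dks(V)$ with not all of $\bar Q, \bar R, \bar S$ zero, and one derives a contradiction by comparing orders and $t$-adic valuations of the leading terms — the term $\bar Q \bar P$ has order $\geq \overline{N}_k(P)$ with leading coefficient of $t$-valuation $\geq N_k(P)$ (times the leading coefficient of $\bar Q$), whereas $\bar R$ has order $< \overline{N}_k(P)$ and the coefficients of $\bar S$ in degrees $\geq \overline{N}_k(P)$ have $t$-valuation $< N_k(P)$, so no cancellation is possible unless all three vanish. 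The same comparison, applied to the difference of two decompositions of the same $H$, gives uniqueness. The step I expect to be the main obstacle is the careful bookkeeping in the residue-level division: ensuring that the Euclidean division by the $\overline{N}_k$-dominant operator $\bar P$ and the subsequent reduction of coefficients modulo $t^{N_k(P)}$ can be carried out simultaneously on \emph{all} orders $\geq \overline{N}_k(P)$ over the (possibly shrunk) affine open $V$, while keeping every intermediate operator of norm $\leq 1$ so that the $\varpi$-adic iteration converges — in effect, making precise that the pair $(\overline{N}_k(P), N_k(P))$ behaves like a bidegree for a monomial order compatible with both the differential order and the $t$-adic filtration.
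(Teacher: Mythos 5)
Your proof is correct and follows the standard Weierstrass-division strategy (normalize $\|P\|_k = 1$, reduce mod $\varpi$, carry out the bidegree-ordered division in the residue ring $\Dks(V)$, lift, and iterate via $\varpi$-adic completeness of $\Dk(V)$), which is precisely the route the paper relies on through Garnier's argument for $k=0$. The only minor slips are that the iterate $H_1$ lives in $\Dk(V)$ rather than $\D^{(0)}_{\X,k}(V)$, and that no shrinking of $V$ is needed for the coefficient division $c = q\bar b + r$ with $r\in\kappa[t]$ of degree $<N_k(P)$: since $\bar b$ vanishes only at $x$ on $V = U_{\{b\}}\cup\{x\}$, one has $\O_X(V)/(\bar b)\simeq\O_{X,x}/(t^{N_k(P)}\O_{X,x})$, so the division holds over $V$ itself.
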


Si $N_k(P) = 0$, alors $S = 0$ puisque ses coefficients sont des polynômes de degrés strictement inférieurs à $N_k(P)$.
En se restreignant à l'ouvert $V = U_{\{ b \}}$, on peut factoriser $P$ par $b$ et supposer que $N_k(P) = 0$.
On en déduit le corollaire suivant.

\begin{cor}
Soit $P$ un opérateur différentiel non nul de $\Dkq(U)$ de coefficient dominant $b$. Si $V = U_{\{b\}}$, alors tout élément $H$ de $\Dkq(U)$ s'écrit uniquement sous la forme $H = Q P + R$ avec:
\begin{enumerate}
\item
$Q, R \in \Dkq(V)$ ;
\item
$R$ est d'ordre fini $< \Nb(P)$ ;
\item
$\| H \|_k = \max\{ \| Q \|_k \cdot \| P \|_k, \| R \|_k \}$.
\end{enumerate}
\end{cor}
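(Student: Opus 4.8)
La proposition pr\'ec\'edente contient l'essentiel de l'argument : le plan est d'en d\'eduire le corollaire en annulant le terme compl\'ementaire $S$, ce que permet la restriction \`a l'ouvert $V = U_{\{b\}}$. Sur $V$ le coefficient dominant $b$ de $P$ est inversible, d'o\`u une factorisation $P = b \cdot P^\flat$ avec $P^\flat := b^{-1} P \in \Dkq(V)$. La multiplicativit\'e de $\Nb$ d\'ej\`a \'etablie donne $\Nb(P^\flat) = \Nb(b^{-1}) + \Nb(P) = \Nb(P)$, donc le coefficient de $(\varpi^k\partial)^{\Nb(P)}$ dans $P^\flat$, qui vaut $b^{-1}b = 1$, en est le coefficient dominant ; ce coefficient \'etant constant n'a aucun z\'ero, d'o\`u $N_k(P^\flat) = N(1) = 0$. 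J'appliquerais alors la proposition pr\'ec\'edente \`a $P^\flat$ et \`a la restriction de $H$ \`a $V$ (relativement \`a un point ferm\'e de $V$, par exemple $x$ si $x \in V$) : le coefficient dominant de $P^\flat$ \'etant une unit\'e, l'ouvert intervenant dans cette proposition est $V$ tout entier, et dans l'\'ecriture $H = Q P^\flat + R + S$ obtenue, les coefficients de $S$ sont des polyn\^omes de degr\'e $< N_k(P^\flat) = 0$, donc nuls. Il reste \`a poser $Q' := Q b^{-1} \in \Dkq(V)$ et \`a utiliser l'associativit\'e pour r\'e\'ecrire $Q P^\flat = (Q b^{-1}) P = Q' P$, ce qui fournit l'existence, avec $R$ d'ordre fini $< \Nb(P)$.

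Pour l'unicit\'e, de deux d\'ecompositions $H = Q_1 P + R_1 = Q_2 P + R_2$ avec $R_1, R_2$ d'ordre fini $< \Nb(P)$, je tirerais $(Q_1 - Q_2) P = R_2 - R_1$ ; si $Q_1 \neq Q_2$, l'alg\`ebre $\Dkq(V)$ \'etant int\`egre (par multiplicativit\'e de $\| \cdot \|_k$), ce membre de gauche est non nul, et la multiplicativit\'e de $\Nb$ donne $\Nb\bigl((Q_1 - Q_2) P\bigr) = \Nb(Q_1 - Q_2) + \Nb(P) \geq \Nb(P)$, ce qui contredit que $R_2 - R_1$ est d'ordre $< \Nb(P)$ ; donc $Q_1 = Q_2$, puis $R_1 = R_2$. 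Le point 3 r\'esulterait de l'identit\'e de normes de la proposition appliqu\'ee \`a $P^\flat$, soit $\|H\|_k = \max\{\|Q\|_k \cdot \|P^\flat\|_k, \|R\|_k\}$, combin\'ee \`a $\|Q\|_k \cdot \|P^\flat\|_k = \|Q\|_k \cdot \|b^{-1}\|_k \cdot \|P\|_k = \|Q'\|_k \cdot \|P\|_k$, cette derni\`ere \'egalit\'e venant de la multiplicativit\'e de la norme.

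Je m'attends \`a ce que la seule v\'erification non purement formelle, d\'ej\`a pr\'esente dans la preuve de la proposition pr\'ec\'edente, soit le contr\^ole des quantit\'es $\Nb$, $N_k$ et $\| \cdot \|_k$ lors du passage de $U$ \`a l'ouvert $V = U_{\{b\}}$ : on l'utilise ici exactement comme dans cette preuve, le facteur $\|b^{-1}\|_k$ introduit par la normalisation $P = b P^\flat$ se compensant dans les \'egalit\'es finales. On remarquera enfin que $x$ appartient \`a $V$ si et seulement si $N_k(P) = 0$, mais que l'argument est insensible \`a ce fait puisque $S = 0$ dans tous les cas.
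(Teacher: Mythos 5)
Votre preuve est correcte et suit exactement l'approche du texte : en se restreignant à $V = U_{\{b\}}$, on factorise $P = b\,P^\flat$ pour se ramener au cas $N_k(P^\flat) = 0$, ce qui annule le terme $S$ de la proposition précédente. Le texte ne fait que l'indiquer en deux phrases avant d'énoncer le corollaire ; vous explicitez en plus l'unicité (par intégrité et multiplicativité de $\Nb$) et la formule de normes, mais ce sont les mêmes ingrédients.
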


Ces théorèmes de divisions permettent de démontrer deux versions du lemme de Hensel pour tout opérateur différentiel $P$ de $\Dkq(U)$.

\begin{prop}[Lemme de Hensel]
Soit $H$ un opérateur non nul de $\Dkq(U)$ de coefficient dominant $b$.
On note encore $V = U_{\{b\}} \cup \{x\}$. Alors $H$ se décompose uniquement sous la forme $H = QP + S$ avec
\begin{enumerate}
\item
$Q, P, S  \in \Dkq(V)$ ;
\item
$P$ est $\Nb$-dominant de coefficient dominant $b$ ;
\item
$S = \sum_{i \geq \Nb(H)} \mu_i \cdot (\varpi^k \partial)^i$ avec $\mu_i \in K[t]$ de degré $< N_k(P)$ ;
\item
$\| Q \|_k = 1$ et il existe un ouvert $W \subset U$ tel que $Q$ soit inversible dans $\Dkq(W)$ ;
\item
$\| S \|_k < \| H \|_k$.
\end{enumerate}
\end{prop}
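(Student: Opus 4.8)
The plan is to imitate the classical passage from a Weierstrass-type division theorem to a Weierstrass-type preparation theorem, carried out as a successive approximation inside the Banach algebra $\Dkq(V)$. Write $H=\sum_i a_i(\varpi^k\partial)^i$ and set $N=\Nb(H)$; after multiplying $H$ by a scalar (which scales $b$ accordingly and is thus harmless) we may assume $\|H\|_k=1$, so $|a_N|=|b|=1$ and $|a_i|<1$ for $i>N$. The initial approximation is the truncation $P_1:=\sum_{i=0}^{N}a_i(\varpi^k\partial)^i$: it is $\Nb$-dominant of order $N$ with dominant coefficient $b$, has $\|P_1\|_k=1$, and $c:=\|H-P_1\|_k<1$. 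The open set attached to $P_1$ is $U_{\{b\}}\cup\{x\}=V$, so all divisions below take place over $V$. I also fix the $K$-subspace $\mathcal{S}$ of $\Dkq(V)$ of the operators $\sum_{i\ge N}\mu_i(\varpi^k\partial)^i$ with $\mu_i\in K[t]$ of degree $<N(b)$; by Lemma~\ref{lemme1.1} the integer $N(b)=N_k(H)$ is finite, and $\mathcal{S}$ is closed in $\Dkq(V)$.

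Assume inductively that $P_n\in\Dkq(V)$ is $\Nb$-dominant of order $N$ with dominant coefficient $b$, $\|P_n\|_k=1$, and $\|H-P_n\|_k\le c$. The division theorem writes $H=Q_nP_n+R_n+S_n$ uniquely, with $R_n$ of order $<N$ and, since $N_k(P_n)=N(b)$, with $S_n\in\mathcal{S}$. Dividing $H-P_n$ by $P_n$ as $H-P_n=Q''P_n+R''+S''$ (all norms $\le\|H-P_n\|_k\le c$, as $\|P_n\|_k=1$) and adding $1\cdot P_n$ gives another decomposition $H=(1+Q'')P_n+R''+S''$ of $H$ by $P_n$; by uniqueness $Q_n=1+Q''$, so $\|Q_n-1\|_k,\|R_n\|_k,\|S_n\|_k\le c$ and $Q_n$ is a unit of $\Dkq(V)$ with $\|Q_n^{-1}-1\|_k=\|Q_n-1\|_k\le c$. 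Now set $P_{n+1}:=P_n+(Q_n^{-1}R_n)_{<N}$, where $(\,\cdot\,)_{<N}$ denotes truncation to orders $<N$; the correction has order $<N$, so $P_{n+1}$ is again $\Nb$-dominant of order $N$ with dominant coefficient $b$ and norm $1$, and $\|H-P_{n+1}\|_k\le c$. Since $R_n$ has order $<N$, the order-$\ge N$ part of $Q_n^{-1}R_n$ equals that of $(Q_n^{-1}-1)R_n$, so has norm $\le c\|R_n\|_k$; writing $H=Q_nP_{n+1}+(S_n+Q_n(Q_n^{-1}R_n)_{\ge N})$, dividing the second summand by $P_{n+1}$, and using that $S_n\in\mathcal{S}$ together with the linearity and uniqueness of the decomposition, one obtains $H=Q_{n+1}P_{n+1}+R_{n+1}+S_{n+1}$ with $\|R_{n+1}\|_k\le c\|R_n\|_k$ and with $\|P_{n+1}-P_n\|_k$, $\|Q_{n+1}-Q_n\|_k$, $\|S_{n+1}-S_n\|_k$ all $\le\|R_n\|_k$. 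Hence $\|R_n\|_k\le c^n\to 0$, and the sequences $P_n\to P$, $Q_n\to Q$, $S_n\to S$ converge ($\Dkq(V)$ is complete, $\mathcal{S}$ is closed, and the operators of order $\le N$ form a complete subspace). In the limit $H=QP+S$, with $P$ $\Nb$-dominant of order $\Nb(P)=N=\Nb(H)$ and dominant coefficient $b$, $S\in\mathcal{S}$ (that is, $S=\sum_{i\ge\Nb(H)}\mu_i(\varpi^k\partial)^i$ with $\deg\mu_i<N_k(P)$), $\|S\|_k\le c<\|H\|_k$, and $\|Q-1\|_k\le c<1$; thus $\|Q\|_k=1$, $\Nb(Q)=N_k(Q)=0$, and $Q$ is invertible on a neighbourhood $W$ of $x$ by Corollary~\ref{corinv} (in fact $Q$ is already a unit of $\Dkq(V)$).

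For uniqueness, let $H=QP+S=Q'P'+S'$ be two decompositions as in the statement, again normalized so that $\|Q-1\|_k,\|Q'-1\|_k<1$. As $P,P'$ are both $\Nb$-dominant of order $N$ with dominant coefficient $b$, the difference $E:=P'-P$ has order $<N$, and $N_k(P)=N_k(P')=N(b)$, so $\mathcal{S}$ is common to both. From $(Q-Q')P=Q'E+(S'-S)$, whose left-hand side lies in $\Dkq(V)P$, the order-$<N$ remainder of the right-hand side under division by $P$ must vanish; but $S'-S$ contributes nothing (it lies in $\mathcal{S}$) and $Q'E=E+(Q'-1)E$, where $E$ is its own order-$<N$ remainder, so $\|E\|_k\le\|(Q'-1)E\|_k\le\|Q'-1\|_k\|E\|_k$, forcing $E=0$. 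Hence $P=P'$, and $Q,S$ are then unique by the uniqueness in the division theorem. \textbf{The main obstacle} is the choice of correction term: $P$ must be updated by the order-$<N$ truncation of $Q_n^{-1}R_n$ — not by $R_n$, nor by $Q_n^{-1}R_n$ itself — so as to preserve at once the order $N$, the dominant coefficient $b$ and the $\Nb$-dominance, and one must then check that the discarded order-$\ge N$ tail is genuinely smaller, which holds precisely because it is produced by $(Q_n^{-1}-1)$ acting on the low-order operator $R_n$; obtaining a contraction constant $c<1$ uniform in $n$ is what makes the scheme converge.
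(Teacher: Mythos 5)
Your existence argument is a correct Newton-type iteration built on the division theorem, and this is the kind of proof the paper has in mind (it defers here to Garnier's proofs for level $k=0$, which likewise iterate the division theorem). The normalization $\|H\|_k=1$, the truncation $P_1$, the closedness of $\mathcal{S}$, and the key observation that the discarded tail of the correction equals $((Q_n^{-1}-1)R_n)_{\ge N}$, hence has norm $\le c\,\|R_n\|_k$, are all used correctly; the contraction $\|R_n\|_k\le c^n$ then carries the convergence, and the limits $P$, $Q$, $S$ have the stated properties, with $\|Q-1\|_k\le c<1$ giving invertibility of $Q$ in $\Dkq(V)$ directly.

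The uniqueness paragraph, however, has a genuine gap. You say the two decompositions are ``again normalized so that $\|Q-1\|_k,\|Q'-1\|_k<1$'', but this is not a normalization one may impose: rescaling $Q'$ would destroy the constraint that $P'$ has dominant coefficient $b$. What your estimate $\|E\|_k\le\|Q'-1\|_k\,\|E\|_k$ actually requires is a \emph{proof} that the conditions of the statement force $\|Q'-1\|_k<1$; and you likewise use without justification that $\Nb(P)=\Nb(P')=\Nb(H)$ so that $E=P'-P$ has order $<\Nb(H)$. Both facts hold, but need an argument: by Corollary \ref{corinv}, $Q'$ invertible near $x$ gives $\Nb(Q')=N_k(Q')=0$; since $\|S'\|_k<\|H\|_k$, the indices where $H$ has a coefficient of maximal norm coincide with those of $Q'P'$, so $\Nb(P')=\Nb(Q'P')=\Nb(H)$; and comparing the coefficients of index $\Nb(H)$ in $H=Q'P'+S'$ gives $b=q'_0\,b+(\text{terms of norm }<1)$, hence $|q'_0-1|<1$, while the remaining coefficients of $Q'$ already have norm $<1$ because $\Nb(Q')=0$. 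Without these deductions, the inequality $\|E\|_k\le\|Q'-1\|_k\,\|E\|_k$ is vacuous since $\|Q'-1\|_k\le 1$ always; once they are supplied, your argument closes correctly.
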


En ne cherchant plus à énoncer une division sur un ouvert contenant $x$, on obtient la version suivante du lemme d'Hensel.

\begin{prop}[Lemme de Hensel]\label{lemmehensel}
Soit $H \in \Dkq(U) \backslash\{0\}$ de coefficient dominant $b$.
Alors $H$ se décompose uniquement sous la forme $H = QP$ avec
\begin{enumerate}
\item
$Q, P   \in \Dkq(U_{\{b\}})$ ;
\item
$P$ est $\Nb$-dominant de coefficient dominant $b$ ;
\item
$\| Q \|_k = 1$ et $Q$ est inversible dans $\Dkq(U_{\{b\}})$.
\end{enumerate}
\end{prop}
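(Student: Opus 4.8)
The plan is to carry out, over the open $U_{\{b\}}$, the same Hensel lifting that underlies the preceding proposition; the simplification is that on $U_{\{b\}}$ the dominant coefficient $b$ becomes a unit, so the defect term $S$ disappears. First I would normalize. Since $\|H\|_k\in|K|$ (Lemme \ref{lemme1.1} applied to the dominant coefficient of $H$), multiplying $H$ by a scalar of $K^\times$ we may assume $\|H\|_k=1$; then $b\in\Gamma(U,\O_\X)$ has non-zero reduction, so with the conventions of the introduction $U_{\{b\}}$ is the open locus where $\bar b\neq 0$, on which $b$ is a unit with $|b|=|b^{-1}|=1$. Write $N:=\Nb(H)$ and $H=\sum_i a_i(\varpi^k\partial)^i$. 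The condition $\Nb(H)=N$ means $|a_i|<1$, i.e. $a_i\in\varpi\O_\X(U)$, for $i>N$, so modulo $\varpi$ the reduction $\bar H\in\Dks(U_{\{b\}})$ is the \emph{finite} operator $\sum_{i\le N}\bar a_i(\partial_k)^i$ of order $N$ whose leading coefficient $\bar b$ is a unit of $\O_X(U_{\{b\}})$. In particular one can perform right Euclidean division by $\bar H$, or by any integral lift of it, inside $\Dks(U_{\{b\}})$, with remainder of order $<N$ — and this is the only place where the hypothesis is used in an essential way.

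Next I would construct, by successive approximation in the $\varpi$-adically complete ring $\Dk(U_{\{b\}})$, a unit $V$ and a finite operator $P$ of order $N$ with $VH=P$. Start from $V_0=1$ and $P_0=\sum_{i\le N}a_i(\varpi^k\partial)^i$, so that $V_0H-P_0\in\varpi\Dk(U_{\{b\}})$. Inductively, given a unit $V_n$ with $\|V_n\|_k\le 1$ and a finite operator $P_n$ of order $N$ with $V_nH-P_n\in\varpi^{n+1}\Dk(U_{\{b\}})$, one has $\overline{V_nH}=\bar P_n$, a finite operator of order $N$ with leading coefficient $\equiv\bar b$; performing right Euclidean division of the degrees-$>N$ part of $\varpi^{-(n+1)}(V_nH-P_n)\bmod\varpi$ by $\bar P_n$ — possible because $\bar b$ is invertible — yields $\bar w_{n+1}\in\Dks(U_{\{b\}})$ annihilating that part. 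Lifting $\bar w_{n+1}$ integrally, setting $V_{n+1}=(1+\varpi^{n+1}w_{n+1})V_n$ and correcting $P_n$ by an element of $\varpi^{n+1}\Dk(U_{\{b\}})$ of order $\le N$ gives $V_{n+1}H-P_{n+1}\in\varpi^{n+2}\Dk(U_{\{b\}})$, still with $\|V_{n+1}\|_k\le 1$ and $P_{n+1}$ finite of order $N$ with degree-$N$ coefficient $\equiv b\bmod\varpi$. The sequences $(V_n)$ and $(P_n)$ are Cauchy; their limits $V$ (a norm-$1$ unit, since $V\equiv 1\bmod\varpi$) and $P$ (finite of order $N$, with leading coefficient $b_\star\equiv b\bmod\varpi$, hence a unit on $U_{\{b\}}$) satisfy $VH=P$.

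To finish the existence statement I would renormalize the leading coefficient: writing $P=b_\star M$ with $M:=b_\star^{-1}P$ monic of order $N$, one gets $H=V^{-1}P=\bigl(V^{-1}b_\star b^{-1}\bigr)\,(bM)$; since $b_\star b^{-1}\equiv 1\bmod\varpi$ is a norm-$1$ unit, $Q:=V^{-1}b_\star b^{-1}$ is a norm-$1$ unit of $\Dk(U_{\{b\}})$ and $P:=bM$ is $\Nb$-dominant with dominant coefficient $b$, so $H=QP$ as required. (Alternatively, one may start from the decomposition $H=Q_0P_0+S_0$ of the previous proposition over $U_{\{b\}}\cup\{x\}$, check by comparing the coefficients of $(\varpi^k\partial)^N$ that $Q_0$ restricts to a unit of $\Dkq(U_{\{b\}})$, and then absorb $S_0$ by the same device.)

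For uniqueness, suppose $H=Q_1P_1=Q_2P_2$ with the stated properties. As the $Q_i$ are units, $\Dkq(U_{\{b\}})P_1=\Dkq(U_{\{b\}})H=\Dkq(U_{\{b\}})P_2$, so $P_2=WP_1$ for some $W\in\Dkq(U_{\{b\}})$; fixing a point $x'\in U_{\{b\}}$, for which $\|\cdot\|_k$ and $\Nb$ are defined and multiplicative, one finds $\|W\|_k=1$ and $\Nb(W)=0$. Since $P_1$ is finite of order $N$ with leading coefficient $b$, the operator $(\varpi^k\partial)^jP_1$ has order exactly $N+j$ with leading coefficient $b$; comparing the coefficient of $(\varpi^k\partial)^{N+m}$ on the two sides of $WP_1=P_2$ (which has order $N$) gives, for every $m\ge 1$, the inequality $|w_m|\le\max_{j>m}|w_j|$, where $W=\sum_j w_j(\varpi^k\partial)^j$. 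As $|w_j|\to 0$ this forces $w_j=0$ for all $j\ge 1$, so $W=w_0$ is a function, and equating the degree-$N$ coefficients of $w_0P_1=P_2$ gives $w_0=1$ (because $b$ is a unit). Hence $P_1=P_2$, and then $Q_1=Q_2$ since $\Dkq(U_{\{b\}})$ is an integral domain and $P_1\neq 0$. The main obstacle is the bookkeeping in the approximation — keeping the corrections of order $\le N$, the norms $\|V_n\|_k\le 1$, and the gain of a factor $\varpi$ at each step — together with some care about the two spectral norms on $\O_{\X,\Q}(U)$ and $\O_{\X,\Q}(U_{\{b\}})$.
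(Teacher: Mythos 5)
Your proof is correct: the successive-approximation (Hensel/Weierstrass) lifting in the $\varpi$-adically complete ring $\Dk(U_{\{b\}})$, the norm bookkeeping showing $\|Q\|_k=1$ and that $P$ stays finite of order $\Nb(H)$ with leading coefficient $b$, and the coefficient-comparison argument for uniqueness are all sound. The paper does not spell out a proof of this proposition — it defers to Garnier's article and asserts the arguments carry over to congruence level $k$ — but the route you take, together with the alternative you sketch of specializing the preceding version of Hensel's lemma to $U_{\{b\}}$ where the defect $S$ vanishes, is exactly the intended one.
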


Les deux corollaires suivant se déduisent de la division selon un opérateur différentiel de $\Dkq$.

\begin{cor}\label{corhensel}
Soit $\E = \Dkq / P$ un $\Dkq$-module cohérent à gauche donné par un opérateur différentiel $P$ de $\Dkq(U)$.
Il existe un ouvert $V$ de $U$ (obtenu en retirant les zéros du coefficients dominant de $P$) sur lequel $\E_{|V} \simeq \Dkq / \tilde{P}$ avec $\tilde{P}$ un opérateur $\Nb$-dominant de même coefficient dominant que $P$.
De plus, $\E_{|V}$ est un $\O_{\X,Q}$-module libre de rang $\Nb(P)$.
\end{cor}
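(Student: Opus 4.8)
Le plan est d'appliquer le lemme de Hensel (proposition~\ref{lemmehensel}) \`a $P$, puis d'effectuer la division euclidienne par l'op\'erateur $\Nb$-dominant qu'il produit. On \'ecrit $P = \sum_{n \geq 0} a_n \cdot (\varpi^k \partial)^n$ dans $\Dkq(U)$ et l'on note $b = a_{\Nb(P)}$ son coefficient dominant~; il est non nul, car $|b| = \| P \|_k > 0$, et l'ouvert cherch\'e est $V := U_{\{b\}}$, ouvert affine de $\X$ sur lequel $b$ est inversible et qui porte encore la coordonn\'ee \'etale $t$. La proposition~\ref{lemmehensel} fournit une factorisation $P = Q\tilde{P}$ dans $\Dkq(V)$, o\`u $\tilde{P}$ est $\Nb$-dominant de coefficient dominant $b$, o\`u $\| Q \|_k = 1$, et o\`u $Q$ est inversible dans $\Dkq(V)$, donc dans $\Dkq(W)$ pour tout ouvert $W \subset V$. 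Comme $Q$ est inversible, $\Nb(Q) = 0$~; la multiplicativit\'e de $\Nb$ donne alors $\Nb(\tilde{P}) = \Nb(P)$, et $\tilde{P}$ \'etant $\Nb$-dominant, cet entier est aussi l'ordre de $\tilde{P}$.

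On en d\'eduit d'abord l'isomorphisme $\Dkq$-lin\'eaire. Pour tout ouvert affine $W \subset V$, l'\'egalit\'e $P = Q\tilde{P}$ dans $\Dkq(W)$, jointe \`a l'inversibilit\'e de $Q$, entra\^{\i}ne $\Dkq(W) \cdot P = \Dkq(W) \cdot Q\tilde{P} = \Dkq(W) \cdot \tilde{P}$, puisque $\Dkq(W) \cdot Q = \Dkq(W)$. Les faisceaux d'id\'eaux \`a gauche coh\'erents engendr\'es sur $V$ par $P$ et par $\tilde{P}$ co\"{\i}ncident donc, d'o\`u un isomorphisme de $\Dkq$-modules $\E_{|V} \simeq \Dkq/\tilde{P}$ au-dessus de $V$.

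Il reste \`a montrer que ce quotient est un $\O_{\X , \Q}$-module libre de rang $\Nb(P)$ sur $V$. Le coefficient dominant $b$ de $\tilde{P}$ \'etant inversible sur $V$, donc sur tout ouvert affine $W \subset V$, le corollaire de division de la section~\ref{partie1.3} permet de diviser tout $H \in \Dkq(W)$ par $\tilde{P}$ sans avoir \`a r\'etr\'ecir $W$ : on a de mani\`ere unique $H = Q' \tilde{P} + R$ avec $Q', R \in \Dkq(W)$ et $R$ d'ordre $< \Nb(\tilde{P}) = \Nb(P)$. L'application $[H] \mapsto R$ identifie alors $\Dkq(W)/\Dkq(W) \cdot \tilde{P}$ au $\O_{\X , \Q}(W)$-module libre $\bigoplus_{i=0}^{\Nb(P)-1} \O_{\X , \Q}(W) \cdot (\varpi^k \partial)^i$~; l'unicit\'e du reste rend cette identification $\O_{\X , \Q}$-lin\'eaire et compatible aux restrictions. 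Les images de $1, \varpi^k \partial, \dots, (\varpi^k \partial)^{\Nb(P)-1}$ forment donc une base du $\O_{\X , \Q |V}$-module $\E_{|V}$, qui est ainsi libre de rang $\Nb(P)$.

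L'argument est court une fois admises la proposition~\ref{lemmehensel} et les divisions de la section~\ref{partie1.3}~; les points demandant un peu de soin sont formels : v\'erifier que $V = U_{\{b\}}$ est un ouvert affine portant la coordonn\'ee \'etale, que l'inversibilit\'e de $Q$ dans $\Dkq(V)$ se transmet \`a $\Dkq(W)$ pour tout $W \subset V$ (afin d'obtenir l'\'egalit\'e des faisceaux d'id\'eaux), et que la prise de reste dans la division est compatible aux restrictions, de sorte que les \'enonc\'es locaux de libert\'e se recollent en une base globale sur $V$. Aucune difficult\'e r\'eelle n'est \`a pr\'evoir au-del\`a de ce travail de recollement.
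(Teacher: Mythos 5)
Votre preuve est correcte et suit essentiellement la m\^eme d\'emarche que celle du texte : appliquer le lemme de Hensel (proposition~\ref{lemmehensel}) pour \'ecrire $P = Q\tilde{P}$ sur $V = U_{\{b\}}$, en d\'eduire l'\'egalit\'e des id\'eaux \`a gauche engendr\'es (donc $\E_{|V} \simeq \Dkq/\tilde{P}$), puis invoquer la division euclidienne par $\tilde{P}$ pour identifier le quotient au $\O_{\X,\Q|V}$-module libre de base $1, \varpi^k\partial, \dots, (\varpi^k\partial)^{\Nb(P)-1}$. Vous explicitez simplement quelques points laiss\'es implicites dans le texte (la multiplicativit\'e de $\Nb$ donnant $\Nb(\tilde{P}) = \Nb(P)$, la transmission de l'inversibilit\'e de $Q$ aux ouverts $W \subset V$, la compatibilit\'e du reste aux restrictions), sans changer d'argument.
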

\begin{proof}
On applique le lemme d'Hensel à $P$ avec $V$ l'ouvert sur lequel le coefficient dominant de $P$ est inversible.
On peut écrire $P = Q \tilde{P}$ avec $\tilde{P}$ vérifiant les conditions de l'énoncé et $Q$ un opérateur inversible dans $\Dkq(V)$.
On en déduit que $\E_{|V} \simeq \Dkq / \tilde{P}$.
La seconde partie de l'énoncé découle du théorème de division dans $\Dkq(V)$ puisque le coefficient dominant de $\tilde{P}$ est inversible sur l'ouvert $V$ : tout élément $H$ de $\Dkq(V)$ s'écrit uniquement sous la forme $H = Q \tilde{P} +R$ avec $R$ un opérateur fini de $\Dkq(V)$ d'ordre strictement inférieur à $\Nb(P)$.
\end{proof}

\begin{cor}\label{cor1.2.12}
Soient $P, Q \in \Dkq(U)$ tels que $\ \Dkq / P \simeq \Dkq / Q$ en tant que $\Dkq$-modules à gauche. Alors $\Nb(P) = \Nb(Q)$.
\end{cor}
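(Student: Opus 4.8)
The plan is to invoke Corollary~\ref{corhensel} for $P$ and for $Q$ separately and then to compare the ranks of free $\O_{\X,\Q}$-modules on a common nonempty open subset of $U$.

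First I would set $\E := \Dkq/P$, which by hypothesis is isomorphic, as a coherent left $\Dkq$-module, to $\Dkq/Q$. Applying Corollary~\ref{corhensel} to $P$ yields an open subset $V_P \subset U$ — obtained by deleting from $U$ the finitely many zeros of the reduction of the dominant coefficient of $P$ — on which $\E_{|V_P}$ is a free $\O_{\X,\Q}$-module of rank $\Nb(P)$; note that $V_P \neq \emptyset$, since it contains a dense open of the nonempty set $U$. Symmetrically, transporting the isomorphism $\Dkq/P \simeq \Dkq/Q$ and applying Corollary~\ref{corhensel} to $Q$, I would obtain a nonempty open $V_Q \subset U$ on which $\E_{|V_Q}$ is a free $\O_{\X,\Q}$-module of rank $\Nb(Q)$.

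Next I would restrict to $W := V_P \cap V_Q$. Since $\X$ is a smooth connected curve, its underlying topological space — which is that of $X$ — is irreducible, so the two nonempty opens $V_P$ and $V_Q$ must meet and $W \neq \emptyset$. Choosing a nonempty affine open $W' \subset W$, the ring $\O_{\X,\Q}(W')$ is nonzero (indeed an integral domain, $\X_K$ being irreducible), and over it $\E(W')$ is simultaneously a free module of rank $\Nb(P)$ and of rank $\Nb(Q)$; since the rank of a free module over a nonzero commutative ring is well defined, this forces $\Nb(P) = \Nb(Q)$.

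The argument is short once Corollary~\ref{corhensel} is in hand; the only points deserving a word of justification are that $V_P$ and $V_Q$ are nonempty and that they intersect — both immediate consequences of the irreducibility of the curve — together with the well-definedness of the rank of a free $\O_{\X,\Q}(W')$-module, which plays here the role of the invariant that replaces any notion of length or characteristic variety at this stage.
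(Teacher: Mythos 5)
Your proof is correct and follows essentially the same path as the paper's: the paper also invokes the Hensel-type corollary (Corollaire~\ref{corhensel}) to replace $P$ and $Q$ by $\Nb$-dominant operators on a common open $V$, observes that the two quotient modules are free $\O_{V,\Q}$-modules of ranks $\Nb(P)$ and $\Nb(Q)$, and concludes by comparing ranks. The only difference is that you spell out why the common open is nonempty via irreducibility of the curve, a point the paper leaves implicit.
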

\begin{proof}
Soit $V$ un ouvert contenu dans $U$ sur lequel $\widehat{\mathcal{D}}^{(0)}_{V, k , \Q} / P \simeq \widehat{\mathcal{D}}^{(0)}_{V, k , \Q} / \tilde{P}$ et $\widehat{\mathcal{D}}^{(0)}_{V, k , \Q} / Q \simeq \widehat{\mathcal{D}}^{(0)}_{V, k , \Q} / \tilde{Q}$ avec $\tilde{P}$ et $\tilde{Q}$ deux opérateurs finis d'ordre respectif $\Nb(P)$ et $\Nb(Q)$.
Ces deux modules sont des $\O_{V , \Q}$-modules libres de rang $\Nb(P)$ et $\Nb(Q)$ respectivement.
Puisqu'ils sont isomorphes en tant que $\widehat{\mathcal{D}}^{(0)}_{V, k , \Q}$-modules, ils sont isomorphes en tant que $\O_{V , \Q}$-modules.
On en déduit que $\Nb(P) = \Nb(Q)$.
\end{proof}

La proposition suivante provient de l'existence d'une division \og euclidienne \fg \, sur $\Dkq$ et du lemme d'Hensel (proposition \ref{lemmehensel}).
La preuve est analogue à celle de la proposition 5.1.2 de l'article \cite{garnier} de Laurent Garnier en rajoutant un niveau de congruence $k$.

\begin{prop}\label{propdec}
Soit $\E$ un $\Dkq$-module cohérent et $U$ un ouvert affine de $\X$ contenant $x$.
Il existe alors un opérateur $P$ de $\Dkq(U)$, un ouvert affine $V$ contenu dans $U$ (obtenu en retirant les zéros du coefficients dominant de $P$) et un entier $n$ tels que
\[ \E_{|V} \simeq  (\widehat{\mathcal{D}}^{(0)}_{V, k, \Q} / P) \oplus (\widehat{\mathcal{D}}^{(0)}_{V, k, \Q})^n .\]
\end{prop}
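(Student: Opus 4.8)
The plan is to argue by induction on the minimal number of generators of $\E$ on small affine opens, reducing the general coherent module to the cyclic case $\widehat{\mathcal{D}}^{(0)}_{V,k,\Q}/P$ plus a free part. First I would fix the affine open $U$ containing $x$ with its étale coordinate and choose a finite presentation $(\widehat{\mathcal{D}}^{(0)}_{U,k,\Q})^m \xrightarrow{\varphi} (\widehat{\mathcal{D}}^{(0)}_{U,k,\Q})^n \to \E_{|U} \to 0$, possibly after shrinking $U$ around $x$ so that such a presentation exists (coherence). If $n=1$ then $\E_{|U} \simeq \widehat{\mathcal{D}}^{(0)}_{U,k,\Q}/\I$ for a left ideal $\I$; picking any nonzero $P \in \I$ with minimal $\Nb$ among elements of $\I$ (minimality makes the division remainder of order $<\Nb(P)$ land correctly), Corollary~\ref{corhensel} lets me shrink $U$ to the open $V$ where the leading coefficient $b$ of $P$ is invertible, replacing $P$ by an $\Nb$-dominant $\tilde P$ of the same order. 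On $V$ the Euclidean division (the Corollaire following the division Proposition) shows $\widehat{\mathcal{D}}^{(0)}_{V,k,\Q}/\widehat{\mathcal{D}}^{(0)}_{V,k,\Q}\tilde P$ is a free $\O_{\X,\Q}$-module of rank $\Nb(P)$, and I must check that after further shrinking the quotient $\widehat{\mathcal{D}}^{(0)}_{V,k,\Q}/\I$ equals $\widehat{\mathcal{D}}^{(0)}_{V,k,\Q}/\widehat{\mathcal{D}}^{(0)}_{V,k,\Q}\tilde P$: any other element of $\I$ divides with remainder of order $<\Nb(P)$, and such a remainder of minimal-positive-$\Nb$ type forces shrinking away its leading coefficient's zeros until it becomes $0$. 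This is where I expect to invoke that $\O_{X,x}$ is a discrete valuation ring so that only finitely many shrinking steps are needed near $x$, exactly as in Garnier's Proposition~5.1.2.

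For the inductive step with $n \geq 2$ generators, I would look at the first generator $e_1$ of $(\widehat{\mathcal{D}}^{(0)}_{U,k,\Q})^n$ and the submodule it generates in $\E$: its annihilator is a left ideal $\I_1$, and by the $n=1$ analysis there is an open $V_1 \ni x$ on which $\widehat{\mathcal{D}}^{(0)}_{V_1,k,\Q}\cdot e_1 \simeq \widehat{\mathcal{D}}^{(0)}_{V_1,k,\Q}/P$ with $P$ either zero (if $e_1$ generates a free rank-one submodule) or $\Nb$-dominant. The key point is to split this submodule off as a direct summand: because $\widehat{\mathcal{D}}^{(0)}_{V_1,k,\Q}/P$ is a free $\O_{\X,\Q}$-module of finite rank on $V_1$, I would produce a $\O_{\X,\Q}$-linear — and then, after shrinking, $\widehat{\mathcal{D}}^{(0)}$-linear — splitting of $\widehat{\mathcal{D}}^{(0)}_{V_1,k,\Q}\cdot e_1 \hookrightarrow \E_{|V_1}$. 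Dividing the remaining generators $e_2,\dots,e_n$ by $P$ (Euclidean division again) replaces them by images in the complement, and the quotient $\E_{|V_1}/(\widehat{\mathcal{D}}^{(0)}_{V_1,k,\Q}\cdot e_1)$ is coherent and generated by $n-1$ elements; by induction it becomes $(\widehat{\mathcal{D}}^{(0)}_{V,k,\Q}/P')\oplus(\widehat{\mathcal{D}}^{(0)}_{V,k,\Q})^{n'}$ on a smaller $V \ni x$.

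The actual bookkeeping then has to merge the two cyclic pieces $\widehat{\mathcal{D}}^{(0)}_{V,k,\Q}/P$ and $\widehat{\mathcal{D}}^{(0)}_{V,k,\Q}/P'$ into a single one: this is where I would use that over $V$ (a smooth affine curve minus finitely many closed points, so a Dedekind-type base) and via the $\O_{\X,\Q}$-freeness, two such cyclic $\widehat{\mathcal{D}}^{(0)}_{V,k,\Q}$-modules of finite $\O_{\X,\Q}$-rank combine — after one more shrinking — to $\widehat{\mathcal{D}}^{(0)}_{V,k,\Q}/P''$ with $\Nb(P'') = \Nb(P)+\Nb(P')$, analogously to the elementary-divisor / cyclic-vector argument for $\O_{\X,\Q}$-coherent $\widehat{\mathcal{D}}^{(0)}$-modules; alternatively one absorbs all free $\widehat{\mathcal{D}}^{(0)}_{V,k,\Q}$-summands last and only needs to merge with a trivial piece. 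The main obstacle is precisely this direct-sum decomposition and cyclic-vector step: ensuring the $\O_{\X,\Q}$-linear splittings can be upgraded to $\widehat{\mathcal{D}}^{(0)}_{V,k,\Q}$-linear ones after finitely many shrinkings, which relies crucially on $N_k$, $\Nb$ being well-behaved under multiplication (the last Proposition before §\ref{partie1.3}) and on $\O_{X,x}$ being a DVR so that the descending chain of opens obtained by deleting leading-coefficient zeros stabilizes near $x$. Everything else is a direct transcription of Garnier's §5.1 with $\partial$ replaced by $\varpi^k\partial$ throughout.
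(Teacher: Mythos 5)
The base case $n=1$ of your induction is essentially sound: choose $P \in \I_x$ with $\Nb(P)$ minimal, apply Hensel to make $P$ $\Nb$-dominant on a shrunk $V$, and then Euclidean division forces any $H \in \I$ to be a multiple of $P$ (a nonzero remainder $R$ would satisfy $\Nb(R) \le \mathrm{ord}(R) < \Nb(P)$, contradicting minimality). No extra shrinking beyond inverting the leading coefficient is needed for this step.

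Your inductive step, however, contains two genuine gaps that you flag as ``the main obstacle'' but never actually close, and they are precisely where the content of the proposition lives.
\begin{enumerate}
\item \emph{Splitting.} You assert that the cyclic submodule $\widehat{\mathcal{D}}^{(0)}_{V,k,\Q}\cdot e_1 \simeq \widehat{\mathcal{D}}^{(0)}_{V,k,\Q}/P$ splits off as a direct summand of $\E_{|V}$ by ``producing a $\O_{\X,\Q}$-linear and then, after shrinking, $\widehat{\mathcal{D}}^{(0)}$-linear splitting.'' This is not automatic: a $\D$-submodule of a $\D$-module has no reason to be a direct summand, and there is no general mechanism for promoting an $\O$-linear retraction to a $\D$-linear one. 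No amount of shrinking does this for free, and you give no argument.
\item \emph{Merging.} The final step, combining $\widehat{\mathcal{D}}^{(0)}_{V,k,\Q}/P$ and $\widehat{\mathcal{D}}^{(0)}_{V,k,\Q}/P'$ into a single $\widehat{\mathcal{D}}^{(0)}_{V,k,\Q}/P''$, is exactly a cyclic vector theorem. The elementary-divisor normal form over a non-commutative PID gives $\bigoplus_i D/d_iD \oplus D^n$, not a single cyclic piece; collapsing the torsion summands to one is a separate nontrivial result (in the paper it ultimately rests on Stafford's theorem, Th\'eor\`eme~\ref{stafford}, and the finite-length statement, both of which come later and cannot be used here without circularity). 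Gesturing at ``the elementary-divisor / cyclic-vector argument'' does not supply a proof, and in fact without the merge your induction only yields a direct sum of several cyclic modules plus a free part, which is weaker than the statement.
\end{enumerate}
The reference proof (Garnier~5.1.2) works directly on a finite presentation matrix of $\E$, using the division theorem and Hensel's lemma to perform row and column operations that, after finitely many shrinkings near $x$ (controlled by the strict decrease of $\Nb$ and $N_k$), bring the matrix into a normal form from which the decomposition $\widehat{\mathcal{D}}^{(0)}_{V,k,\Q}/P \oplus (\widehat{\mathcal{D}}^{(0)}_{V,k,\Q})^n$ can be read off; this systematic reduction is where the cyclicity of the torsion part is actually produced, rather than being asserted via a splitting-plus-merging scheme. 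You should either reproduce that matrix reduction or give a self-contained proof of the two steps you identify as obstacles; as written the proposal reduces the proposition to two unproved claims of comparable difficulty.
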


\subsection{Base de division d'un idéal cohérent de $\Dkq$}\label{partie1.4}

On termine cette section par définir une base de division d'un idéal cohérent non nul $\I$ de $\Dkq$.
Une telle base permettra de calculer la variété caractéristique du $\Dkq$-module cohérent $\Dkq / \I$. 

\vspace{0.4cm}

On commence par définir la notion de base de division en $x$ au niveau de la fibre spéciale $X$ de $\X$.
Soit $U$ un ouvert affine contenant $x$ admettant une coordonnée locale associée à $x$.
On note $\Dx := \Dks(U) \otimes_\kappa \O_{X,x}$. En tant que $\kappa$-algèbre, $\Dx$ est isomorphe à $\bigoplus_{n\in \N} \O_{X,x} \cdot \partial_k^n$ où $\partial_k$ est l'image de $\varpi^k \partial$ après réduction modulo $\omega$.
Il s'agit de l'algèbre des opérateurs différentiels en $\partial_k$ à coefficients dans $\O_{X,x}$. On note dans la suite l'algèbre $\Dks(U) \otimes_\kappa \O_{X,x}$ simplement par $\Dks \otimes_\kappa \O_{X,x}$ puisqu'elle ne dépend pas du choix de $U$.

On rappelle que $\O_{X,x}$ est un anneau de valuation discrète $v$ d'uniformisante $t$.
Les notions de base de division en $x$ vont coïncider entre un idéal cohérent de $\Dk$ et sa réduction modulo $\varpi$ dans $\Dx$.

\vspace{0.4cm}

Soit $P = \alpha_d \cdot \partial_k^d + \dots +\alpha_1 \cdot \partial_k+\alpha_0$ un opérateur non nul d'ordre $d=d(P)$ de $\Dx$. On appelle valuation de $P$ celle de son coefficient dominant $a_d$ : $v(P) := v(\alpha_d)$. \textit{L'exposant} $\Exp(P)$ de $P$ est le couple $(v(P) , d(P)) \in \N^2$. Si $Q$ est un autre opérateur de $\Dx$, on vérifie que $\Exp(PQ) = \Exp(P) + \Exp(Q)$.

\vspace{0.4cm}

Soit $I$ un idéal cohérent à gauche non nul de $\Dx$. On définit son \textit{exposant} par
\[ \Exp(I) := \{(v(P) , d(P)) : P \in I \backslash\{0\} \} \subset \N^2 .\]
On a
\[ \Exp(t^i \cdot P) =  ( i , 0)  + \Exp(P) ~~~ \mathrm{et} ~~~ \Exp(\partial_k^j \cdot P) = (0 , j) + \Exp(P) .\]
On en déduit que $\Exp(I) = \Exp(I) + \N^2$. Ainsi, l'exposant de $I$ est une partie de $\N^2$ délimitée inférieurement par un escalier fini.
On peut voir la figure ci-dessous pour un exemple.

Soit $P_1$ un élément de $I$ de degré minimal $d$ et de valuation minimale parmi les éléments de $I$ de degré $d$.
On construit récursivement un élément $P_i$ de $I$ d'ordre $d(P_{i+1}) = d(P_i) + 1$ et de valuation minimale parmi les éléments de même degré jusqu'à obtenir un élément $P_r$ de valuation minimale dans $I$.

On obtient ainsi une famille d'opérateurs $(P_1 , \dots, P_r)$ échelonnée pour l'ordre telle que $P_i$ soit de valuation minimale parmi les éléments de même ordre, telle que $d(I) = d(P_1)$ soit l'ordre minimal des éléments de $I$ et telle que $v(I) = v(P_r)$ soit la valuation minimale des éléments de $I$. Une telle famille est appelée \textit{base de division} de $I$.

\vspace{0.4cm}

Soit $I$ un idéal cohérent de $\Dks$ et $x \in X$. Alors $I_x$ est un idéal cohérent de $\Dx = \Dks \otimes_\kappa \O_{X,x}$.
On appelle \textit{base de division} de $I$ relativement au point $x$ une base de division $(P_1 , \dots , P_r)$ de l'idéal $I_x$.
Les opérateurs $P_1 , \dots , P_r$ sont des éléments de $I(U)$ pour un certain ouvert affine $U$ contenant $x$.

\vspace{0.4cm}

La figure ci-dessous illustre graphiquement le positionnement d'une base de division en $x$ vis-à-vis de l'exposant de $I$.

\begin{center}
\begin{tikzpicture}
\draw[thick][->] (-1,0) -- (12,0);
\draw (12.2,0) node[right] {valuation};
\draw [thick][->] (0,-1) -- (0,8);
\draw (0,8.2) node[above] {degré};
\draw (0,0) node[below right] {$0$};

\draw[gray][fill= gray!20]  (2,8) -- (2,6)  -- (3,6) -- (3,4) -- (6,4) -- (6 , 2) -- (9, 2) -- (9 , 1) -- (12 , 1) -- (12 , 8) -- (2,8) -- cycle ;

\draw[thick] (2,8) -- (2,6)  -- (3,6) -- (3,4) -- (6,4) -- (6 , 2) -- (9, 2) -- (9 , 1) -- (12 , 1);
\draw (9,1) node{$\bullet$} ;
\draw (9,1) node[below]{$P_1$} ;
\draw (6,2) node{$\bullet$} ;
\draw (6,2) node[below]{$P_2$} ;
\draw (6,3) node{$\bullet$} ;
\draw (6,3) node[left]{$P_3$} ;
\draw (3,4) node{$\bullet$} ;
\draw (3,4) node[below]{$P_4$} ;
\draw (3,5) node{$\bullet$} ;
\draw (3,5) node[left]{$P_5$} ;
\draw (2,6) node{$\bullet$} ;
\draw (2,6) node[below]{$P_r$} ;

\draw[dotted] (2,6) -- (2,0) ;
\draw (2,0) node{$\bullet$} ;
\draw (2,0) node[below]{$v(I) = v(P_r)$} ;

\draw[dotted] (9,1) -- (9,0) ;
\draw (9,0) node{$\bullet$} ;
\draw (9,0) node[below]{$v(P_1)$} ;

\draw[dotted] (9,1) -- (0,1) ;
\draw (0,1) node{$\bullet$} ;
\draw (0,1) node[left]{$d(I)$} ;

\draw[dotted] (6,2) -- (0,2) ;
\draw (0,2) node{$\bullet$} ;
\draw (0,2) node[left]{$d(I)+1$} ;

\draw[dotted] (6,3) -- (0,3) ;
\draw (0,3) node{$\bullet$} ;
\draw (0,3) node[left]{$d(I)+2$} ;

\draw[dotted] (2,6) -- (0,6) ;
\draw (0,6) node{$\bullet$} ;
\draw (0,6) node[left]{$d(I)+r-1$} ;

\draw (8, 6) node{$\Exp(I)$} ;

\draw (5.5 , -1) node{\large\textbf{Escalier et base de division de $I$} en $x$} ;

\end{tikzpicture}
\end{center}

\vspace{0,2cm}

Soit maintenant $\I$ un idéal à gauche cohérent non nul de $\Dkq$ et $Q \in \I_x$ ; $Q$ est un opérateur de $\Dkq(U)$ pour un certain ouvert affine $U$ contenant $x$.
On lui associe le couple $(N_k(Q) , \Nb(Q))$ ne dépendant que de $x$ appelé \textit{exposant} de $Q$ en $x$.
Si $Q$ est de norme un, on rappelle que $N_k(Q)$ et $\Nb(Q)$ sont respectivement la valuation et l'ordre de $(Q \mod \varpi)$ dans $\Dx$.
\textit{L'exposant} de $\I$ en $x$ est défini par
\[ \Exp(\I) := \{ (N_k(Q) , \Nb(Q)) : Q \in \I_x \backslash \{0\}\} \subset \N^2 .\]
On définit comme pour un idéal de $\Dx$ une base de division de $\I$ relativement au point $x$.
C'est une famille d'éléments $(P_1 , \dots , P_r)$ de $\I_x$ échelonnée pour la fonction $\Nb$ telle que $P_i$ soit de fonction $N_k$ minimale parmi les éléments de même fonction $\Nb$, telle que $N_k(\I) = N_k(P_r)$ soit minimale parmi les éléments de $I$ et telle que $\Nb(\I) = \Nb(P_1)$ soit minimale parmi les éléments de $I$.
On demande de plus que les $P_i$ soient normalisés : pour tout $i \in \{ 1 , \dots , r\}$, $\| P_i\|_k = 1$.

\vspace{0.4cm}

Cette dernière condition permet d'assurer la compatibilité des bases de division dans $\Dk$ et dans $\Dx$ après réduction modulo $\varpi$.
En effet, soit $\I$ un idéal cohérent non nul de $\Dk$ admettant une base de division en $x$. On note $I$ la réduction modulo $\varpi$ de $\I$ ; c'est un idéal cohérent de $\Dks$ et $I_x$ est un idéal de $\Dx$.
Alors $(P_1 , \dots , P_r)$ est une base de division de $\I$ relativement à $x$ si et seulement si $(P_1 \mod \varpi , \dots , P_r \mod \varpi)$ est une base de division de $I_x$.
En particulier, les escaliers et les exposants de $I$ et $\I$ coïncident en $x$.

\vspace{0.4cm}

Les deux lemmes suivants sont démontrés pour $k = 0$ par Laurent Garnier dans \cite{garnier}, partie 4, proposition 4.2.1 et corollaire 4.2.2 respectivement.
Ils résultent de l'existence d'une division de tout élément de $\Dkq(U)$ par une base de division de $\I$.
Leurs preuves s'adaptent sans difficulté pour un niveau de congruence $k$ quelconque.

\begin{lemma}\label{lemmebase}
Toute base de division de $\I$ en $x$ engendre l'idéal $\I_x$.
\end{lemma}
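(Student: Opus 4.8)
The plan is a division argument: let $\I_x^{0}\subseteq\I_x$ be the left ideal generated by $P_1,\dots,P_r$ on some open around $x$; I want the reverse inclusion, and I will obtain it by reducing an arbitrary germ of $\I_x$ modulo the family.

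The first step is to describe the exponent set. Multiplicativity of $N_k$ and $\Nb$ gives $(N_k(tQ),\Nb(tQ))=(1,0)+(N_k(Q),\Nb(Q))$ and $(N_k((\varpi^k\partial)Q),\Nb((\varpi^k\partial)Q))=(0,1)+(N_k(Q),\Nb(Q))$, so $\Exp(\I)=\Exp(\I)+\N^2$. Together with the way a division basis is built -- $\Nb(P_1)<\dots<\Nb(P_r)$ are the consecutive integers $\Nb(\I),\dots,\Nb(\I)+r-1$, each $P_i$ realises the minimal value of $N_k$ among the elements of $\I_x$ of order $\Nb(P_i)$, and $N_k(P_r)=N_k(\I)$ is the absolute minimal valuation -- this yields
\[ \Exp(\I)=\bigcup_{i=1}^{r}\bigl((N_k(P_i),\Nb(P_i))+\N^2\bigr): \]
the inclusion $\supseteq$ follows from $P_i\in\I_x$ and the previous identity, and for $\subseteq$ one takes a nonzero $Q\in\I_x$, uses $\Nb(Q)\ge\Nb(\I)$, and separates the cases $\Nb(Q)\le\Nb(P_r)$ (then $\Nb(Q)=\Nb(P_l)$ for a unique $l$ and $N_k(Q)\ge N_k(P_l)$) and $\Nb(Q)>\Nb(P_r)$ (then $N_k(Q)\ge N_k(\I)=N_k(P_r)$).

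The second step is the reduction itself, by well-founded induction on the couple $(\Nb(H),N_k(H))\in\N^2$ ordered lexicographically with $\Nb$ primary: I claim every $H\in\I_x$ lies in $\I_x^{0}$, the case $H=0$ being trivial. Suppose $H\ne0$. By definition $(N_k(H),\Nb(H))\in\Exp(\I)$, so by the displayed formula there is an index $j$ with $N_k(P_j)\le N_k(H)$ and $\Nb(P_j)\le\Nb(H)$. Let $b_j$ be the dominant coefficient of $P_j$; the division theorem of Section \ref{partie1.3}, applied on the open $U_{\{b_j\}}\cup\{x\}$ (which contains $x$), writes $H=Q_jP_j+R_j+S_j$ with $R_j$ of order $<\Nb(P_j)$ and $S_j=\sum_{i\ge\Nb(P_j)}\mu_i(\varpi^k\partial)^i$, $\mu_i\in K[t]$ of degree $<N_k(P_j)$. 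Set $R:=R_j+S_j=H-Q_jP_j\in\I_x$. The division theorem gives $\Nb(R)\le\Nb(H)$, and in the case of equality, since $\Nb(P_j)\le\Nb(H)$ the coefficient of $R$ at order $\Nb(H)$ is the coefficient $\mu_{\Nb(H)}$ of $S_j$, so $N_k(R)\le\deg\mu_{\Nb(H)}<N_k(P_j)\le N_k(H)$. In both cases $(\Nb(R),N_k(R))$ is strictly smaller than $(\Nb(H),N_k(H))$, so the induction hypothesis gives $R\in\I_x^{0}$, whence $H=Q_jP_j+R\in\I_x^{0}$.

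Finally, note that only the finitely many opens $U_{\{b_1\}}\cup\{x\},\dots,U_{\{b_r\}}\cup\{x\}$ intervene, so the whole argument can be carried out on their intersection $V\ni x$; the recursion stops after finitely many steps because the order is well-founded, and one concludes $\I_x=\I_x^{0}$. The two places asking for care are the derivation of the formula for $\Exp(\I)$ from the staircase combinatorics -- which rests on the $P_i$ genuinely attaining the minimal valuations of $\I_x$ in each degree -- and the input taken from the division theorem, namely that division by $P_j$ does not raise the order and that the tail $S_j$ has coefficients of $t$-degree $<N_k(P_j)$; these two facts are exactly what forces $R$ to be strictly smaller than $H$, and they also absorb the convergence issues coming from the a priori infinitely supported operators. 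I expect this guaranteed strict decrease at each step to be the only genuine obstacle, the rest being bookkeeping in $\N^2$.
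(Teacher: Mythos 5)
Your description of $\Exp(\I)$ as $\bigcup_{i=1}^{r}\bigl((N_k(P_i),\Nb(P_i))+\N^2\bigr)$ is correct, and the idea of reducing a germ of $\I_x$ against the basis is sound in spirit. The gap is in the induction step, at the assertion ``the division theorem gives $\Nb(R)\le\Nb(H)$.'' The division theorem of Section~\ref{partie1.3} gives no such bound: it only controls the shape of $R_j$ (finite order $<\Nb(P_j)$) and of $S_j$ (coefficients $\mu_i\in K[t]$ of degree $<N_k(P_j)$, supported in orders $\ge\Nb(P_j)$), together with $\|H\|_k=\max\{\|Q_jP_j\|_k,\|R_j\|_k,\|S_j\|_k\}$. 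That equality constrains norms, not $\Nb$, which picks out the \emph{largest} index where the maximal norm is attained; and as soon as $\|R_j+S_j\|_k<\|H\|_k$, the tail $S_j$ can realise its (small) maximal norm at an order far above $\Nb(H)$. Concretely, for $k\ge 1$ take $P=1+t(\varpi^k\partial)$, so $\Nb(P)=N_k(P)=1$ and $\|P\|_k=1$, and
\[
H=(t\,\varpi^k\partial)\,P+\varpi\,(\varpi^k\partial)^{10}=t(1+\varpi^k)(\varpi^k\partial)+t^2(\varpi^k\partial)^2+\varpi(\varpi^k\partial)^{10},
\]
so that $\Nb(H)=2$, $N_k(H)=2$ and the exponent of $H$ lies in $(1,1)+\N^2$. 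The displayed identity is precisely the (unique) division of $H$ by $P$: $Q=t\varpi^k\partial$, $R=0$, $S=\varpi(\varpi^k\partial)^{10}$, all constraints and the norm equality hold, and $\Nb(R+S)=10>2=\Nb(H)$. Your lexicographic measure on $(\Nb,N_k)$ jumps up, so the well-founded induction does not close. (To realise this inside an ideal, take $\I$ generated by $P$ and $(\varpi^k\partial)^{10}$; then $H\in\I_x$, and a division basis does contain an operator with exponent $(1,1)$.)

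What a single division by $P_j$ \emph{does} give is only that the exponent of the remainder lies outside the one quadrant $(N_k(P_j),\Nb(P_j))+\N^2$; by itself this is not a strictly decreasing well-founded quantity. The argument the paper appeals to (Garnier, proposition~4.2.1, adapted to congruence level $k$) divides \emph{simultaneously} by the whole basis $(P_1,\dots,P_r)$, producing a remainder whose exponent lies outside all of $\Exp(\I)$ at once; a remainder coming from an element of $\I_x$ is then forced to vanish in a single step, with no iteration. To save your approach you would need either to import that multi-divisor division statement, or to exhibit a different well-founded measure that genuinely decreases after a single-operator division for a suitably chosen $P_j$; lex order on $(\Nb,N_k)$ with $\Nb$ primary is not one, as the example shows.
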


Une base de division existe toujours pour les idéaux de $\Dkq$. Cependant ce n'est pas vrai dans $\Dk$. 
Si $\I$ est un idéal cohérent non nul de $\Dk$, il n'est pas toujours possible de normaliser les $P_i$ dans $\Dk(U)$.
En effet, de la $\varpi$-torsion peut poser problème. Le lemme suivant donne une condition nécessaire et suffisante pour que l'idéal $\I$ admette une base de division relativement au point $x$.

\begin{lemma}\label{lemme2.11}
Un idéal cohérent $\I$ non nul de $\Dk$ admet une base de division relativement au point $x$ si et seulement si $\Dk / \I$ est sans $\varpi$-torsion au voisinage de $x$.
\end{lemma}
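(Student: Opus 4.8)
The plan is to prove both implications by comparing $\I$ with its rationalisation $\I_\Q := \I\otimes_\V K$, which is a nonzero coherent ideal of $\Dkq$ and therefore admits a division basis relative to $x$ consisting of operators $P_i$ with $\|P_i\|_k=1$. The pivot of the argument is the elementary reformulation: $\Dk/\I$ is $\varpi$-torsion free in a neighbourhood of $x$ if and only if, for every $G\in\Dk(V)$ defined near $x$ and every $n\ge 0$ such that $\varpi^n G\in\I$ near $x$, one already has $G\in\I$ near $x$. Indeed this is exactly torsion-freeness of the stalk $(\Dk/\I)_x$ (by induction on $n$, reading the multiplication-by-$\varpi$ exact sequence), and passing from the stalk to a neighbourhood is legitimate because $\Dk$ is a coherent sheaf of Noetherian rings: the $\varpi$-torsion subsheaf of the coherent module $\Dk/\I$ is then coherent — on an affine open it equals $\ker(\varpi^N\colon\Dk/\I\to\Dk/\I)$ for $N\gg 0$ — so its support is closed and, if it avoids $x$, misses an open neighbourhood of $x$.

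For the implication \emph{torsion free $\Rightarrow$ existence of a division basis}, I would take a division basis $(P_1,\dots,P_r)$ of $\I_\Q$ relative to $x$, defined on a common affine open $U\ni x$ chosen small enough that $\Dk/\I$ is $\varpi$-torsion free on $U$, and normalised so that $\|P_i\|_k=1$, hence $P_i\in\Dk(U)$. Each $P_i$ lies in $(\I_\Q)_x=\I_x\otimes_\V K$, so $\varpi^{n_i}P_i\in\I$ near $x$ for suitable $n_i$, whence $P_i\in\I$ near $x$ by the reformulation above, i.e.\ $P_i\in\I_x$. Finally $(P_1,\dots,P_r)$ is a division basis of $\I$ at $x$: since $\varpi^m Q$ and $Q$ share the same functions $\Nb$ and $N_k$, one has $\Exp(\I)=\Exp(\I_\Q)$ at $x$, so the staircase, the minimality conditions and the values $\Nb(\I)$, $N_k(\I)$ are unchanged; and the $P_i$ are already normalised.

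For the converse, suppose $\I$ admits a normalised division basis $(P_1,\dots,P_r)$ at $x$. By the reformulation and induction on $n$ it suffices to prove that $H\in\Dk(V)$ with $\varpi H\in\I_x$ forces $H\in\I$ near $x$. By Lemma \ref{lemmebase} the family $(P_1,\dots,P_r)$ generates $\I_x$, so dividing $\varpi H$ by it with the division theorems of Section \ref{partie1.3} (applied iteratively over a common open $W\ni x$) produces $\varpi H=\sum_i Q_i P_i$ with zero remainder, since $\varpi H\in\I_x$, and with $Q_i\in\Dkq(W)$. Each elementary division step satisfies $\|\cdot\|_k=\max\{\|Q\|_k\|P\|_k,\|R\|_k,\|S\|_k\}$, so the iterated division yields $\|Q_i\|_k\le\|\varpi H\|_k=|\varpi|\cdot\|H\|_k\le|\varpi|$, using $\|P_i\|_k=1$ and $H\in\Dk(V)$. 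Hence $\varpi^{-1}Q_i\in\Dk(W)$ and $H=\sum_i(\varpi^{-1}Q_i)P_i\in\I(W)$, which concludes.

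The step I expect to require the most care is the norm estimate $\|Q_i\|_k\le\|\varpi H\|_k$ for a division performed against an entire division basis: one must check that iterating the single-operator division theorem does not inflate the norm of the quotients, and that all divisions and the resulting identity take place over one and the same open neighbourhood of $x$. The rest — the passage between ``$\varpi^nG\in\I$ on some open'' and ``$G\in\I$ on a smaller open'', and the identification $\Exp(\I)=\Exp(\I_\Q)$ at $x$ — is routine bookkeeping, best phrased at the level of the stalk $(\Dk/\I)_x$.
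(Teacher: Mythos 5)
Your proof is correct and follows essentially the same route as Garnier's (corollary 4.2.2 of \cite{garnier}), which the paper cites without reproducing: compare $\I$ with $\I_\Q = \I\otimes_\V K$, use that $\Nb$ and $N_k$ are invariant under multiplication by powers of $\varpi$ so that the staircases of $\I$ and $\I_\Q$ coincide at $x$, normalise a division basis of $\I_\Q$ so that $\|P_i\|_k=1$ and hence $P_i\in\Dk$, and then pass between $\I_x$ and $(\I_\Q)_x$ via the division theorem with its norm estimate. Two small remarks. First, the invocation of Lemme~\ref{lemmebase} in the converse step is slightly imprecise: that lemma concerns ideals of $\Dkq$, so it literally gives that $(P_1,\dots,P_r)$ generates $(\I_\Q)_x$, not $\I_x$; since $\varpi H\in\I_x\subset(\I_\Q)_x$, the conclusion you actually need (division with zero remainder) does follow, and the subsequent norm control recovers integrality, but the statement should be phrased for $(\I_\Q)_x$. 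Second, the norm estimate $\max_i\|Q_i\|_k\le\|H\|_k$ for the iterated division against the whole basis — which you rightly flag as the delicate point — is exactly what propagates through Garnier's division algorithm because each elementary step replaces the input by its remainder, whose norm is bounded by the input's; this is implicit in the paper's remark that both lemmes \og r\'esultent de l'existence d'une division de tout \'el\'ement de $\Dkq(U)$ par une base de division de $\I$ \fg. So there is no gap, only a point of presentation.
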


La propriété suivante, à condition d'avoir une base de division, fournit une présentation finie d'un idéal cohérent à gauche de $\Dk$ en tant que $\Dk$-module à gauche. Il s'agit de la proposition 4.3.1 de \cite{garnier}.

\begin{prop}\label{propresolution}
On suppose que $x$ est un point $\kappa$-rationnel. Soit $\I$ un idéal cohérent non nul de $\Dk$ admettant une base de division $(P_1 , \dots , P_r)$ relativement à $x$.
Il existe un ouvert affine $U$ de $\X$ contenant $x$ et une matrice de relation $R \in \mathrm{M}_{r-1 , r}(\Dk(U))$ obtenue à partir des $P_i$ pour lesquels le complexe suivant de $\Dkq$-modules est exact  :
\[ \xymatrix{ 0 \ar[r] & (\widehat{\mathcal{D}}^{(0)}_{U, k})^{r-1} \ar[r]^{\cdot R} & (\widehat{\mathcal{D}}^{(0)}_{U, k})^r \ar[r] & \I_{|U} \ar[r] & 0 } .\]
\end{prop}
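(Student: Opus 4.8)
The plan is to construct the relation matrix $R$ explicitly from the division base $(P_1, \dots, P_r)$ and then check exactness of the complex by hand at each spot. The starting point is Lemma \ref{lemmebase}: since $(P_1, \dots, P_r)$ generates $\I_x$, and the $P_i$ live in $\I(U)$ for some affine open $U \ni x$, we may shrink $U$ so that the map $(\widehat{\mathcal{D}}^{(0)}_{U,k})^r \to \I_{|U}$ sending the standard basis vectors $e_i \mapsto P_i$ is surjective. Exactness on the right is then immediate (possibly after a further shrinking of $U$ to make the cokernel vanish as a coherent sheaf). The real content is (a) identifying the right $r-1$ relations and (b) proving injectivity of $\cdot R$ and exactness in the middle.

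For step (a): the division base is echelonized for the order function $\Nb$, with $\Nb(P_{i+1}) = \Nb(P_i) + 1$, while $N_k(P_i)$ is minimal among elements of $\I_x$ of the same $\Nb$-value. The natural relations come from the two ways of "climbing the staircase": for each $i \in \{1, \dots, r-1\}$, one forms a combination of the form $\varpi^k \partial \cdot P_i$ (which raises $\Nb$ by one, landing at the same order as $P_{i+1}$) and reduces its leading behaviour against $t \cdot P_{i+1}$ and lower-order terms. More precisely, using the division theorem (Corollaire following the Proposition on division, or its $\Nb$-dominant refinement), one divides $\varpi^k \partial \cdot P_i$ by the base $(P_1, \dots, P_r)$ and records the quotients; this produces the $i$-th row of $R \in \mathrm{M}_{r-1,r}(\widehat{\mathcal{D}}^{(0)}_{\X,k}(U))$. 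One must check the division stays integral, i.e. the quotients lie in $\Dk(U)$ and not merely in $\Dkq(U)$ — this is where the normalization $\|P_i\|_k = 1$ and the compatibility with reduction modulo $\varpi$ (discussed just before Lemma \ref{lemmebase}) are used.

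For step (b), exactness in the middle: suppose $\sum_i Q_i P_i = 0$ with $Q_i \in \Dkq(U)$. I would argue by descending induction on $\max_i (\Nb(Q_i) + \Nb(P_i))$, the key point being that the staircase structure of $\Exp(\I)$ forces a cancellation of top terms that can only be realized through one of the constructed relations; subtracting a suitable $\Dkq(U)$-multiple of a row of $R$ strictly decreases this invariant. This is exactly the argument of Garnier (\cite{garnier}, Proposition 4.3.1), and it transfers verbatim once the division theorems of Section \ref{partie1.3} are in place, with $(N_k, \Nb)$ playing the role of the bidegree $(v, d)$ used for ideals of $\Dx$. Injectivity of $\cdot R$ is the special case where the middle-term element is already in the image — one shows a nontrivial combination $\sum_j H_j (\text{row}_j) = 0$ would contradict minimality of the staircase data. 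Reducing modulo $\varpi$ and invoking the corresponding statement for the graded/fibre-special situation gives an alternative route: exactness of the $\Dkq$-complex can be checked after $\otimes \kappa$ because all modules involved are $\varpi$-torsion free (Lemma \ref{lemme2.11} applies to $\I$, hence to the kernels appearing), and over $\Dx$ one has the classical staircase resolution.

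The main obstacle I anticipate is controlling integrality during the divisions that build $R$: the division theorems are stated over $\Dkq(V)$ on an open $V$ obtained by deleting zeros of leading coefficients, and one needs the output to live in $\Dk(U)$ over a \emph{single} open $U$ containing $x$. Handling this requires using the $\Nb$-dominant form of the elements (available after the Hensel-type factorization, Proposition \ref{lemmehensel}) together with the normalization $\|P_i\|_k = 1$, so that the leading coefficients become units near $x$ and no denominators are introduced; this is precisely why the hypothesis that $x$ is $\kappa$-rational and that a division base \emph{exists} (equivalently, no $\varpi$-torsion, Lemma \ref{lemme2.11}) is imposed in the statement.
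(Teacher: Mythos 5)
Your proposal is correct and follows the same route as the paper, which gives no proof of its own but simply cites Garnier's Proposition 4.3.1 and notes that the argument transfers to congruence level $k$; your sketch of the staircase syzygies built by dividing $\varpi^k\partial\cdot P_i$ against the base, the integrality control via $\|P_i\|_k=1$, and the reduction modulo $\varpi$ to the known resolution over $\Dx$ is precisely the content of that adaptation. Two trivia: the complex in the statement is really one of $\Dk$-modules (the objects are integral), and the power of $t$ used against $P_{i+1}$ is $t^{N_k(P_i)-N_k(P_{i+1})}$, not simply $t$, but neither affects the substance of your argument.
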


\section{$\Dkq$-modules holonomes}\label{partie4}

On commence cette section par définir la variété caractéristique d'un $\Dkq$-module cohérent.
Il s'agit d'une sous-variété fermée du fibré cotangent $T^*X$ de la fibre spéciale $X$ de $\X$.
On définit alors les $\Dkq$-modules holonomes comme étant les $\Dkq$-modules cohérents dont la variété caractéristique est de dimension au plus un.
On démontre dans la partie \ref{partie2.4} que les $\Dkq$-modules holonomes sont de longueur finie.
Cela découle de l'inégalité de Bernstein, établit dans la partie \ref{partie2.3} : un $\Dkq$-module cohérent est non nul si et seulement si les composantes irréductibles de sa variété caractéristique sont de dimension au moins un.
Cette inégalité généralise pour un niveau de congruence $k$ celle démontrée par Laurent Garnier dans l'article \cite{garnier}.
On en déduit que les multiplicités des variétés caractéristiques vont s'additionner dans la catégorie des $\Dkq$-modules holonomes et qu'un $\Dkq$-module cohérent est nul si et seulement ses multiplicités sont nulles.
On démontre enfin dans la partie \ref{partie3.5} que les $\Dkq$-modules holonomes vérifient la caractérisation cohomologique classique : $\Ext_{\Dkq}^d(\M , \Dkq) = 0$ pour tout entier $d \neq 1$.

\vspace{0.4cm}

On désigne toujours par $U$  un ouvert affine de $\X$ contenant le point fermé $x$ (non supposé $\kappa$-rationnel) sur lequel on dispose d'une coordonnée étale.

\subsection{Variété caractéristique des $\Dkq$-modules cohérents}\label{partie2.1}

On résume brièvement dans cette partie la construction de la variété caractéristique d'un $\Dkq$-module cohérent, adaptée de celle de Berthelot pour un indice de congruence $k$.
Cette variété est définie comme étant la variété caractéristique \og classique \fg \, de la réduction modulo $\varpi$ d'un $\Dk$-module cohérent, donc d'un $\Dks$-module cohérent.
Le lecteur peut consulter les notes de Berthelot, par exemple la partie 5.2 de \cite{berthelotintro}, pour plus de détails.

\vspace{0.4cm}

On rappelle que le faisceau $\Dks = \Dk \otimes_\V \kappa$ est la réduction modulo $\varpi$ de $\Dk$.
C'est un faisceau de $\kappa$-algèbres sur la fibre spéciale $X = \X \times_\V \Spec \kappa$ de $\X$.
Comme $\X$ et $X$ ont le même espace topologique, on peut identifier $U$ à un ouvert affine de $X$.
On note $\partial_k$ l'image de $\varpi^k \partial$ dans la $\kappa$-algèbre $\Dks(U)$. On a
\[ \D_{U , k} = \bigoplus_{n\in\N} \O_U\cdot \partial_k^n . \]
On munit le faisceau $\Dks$ de la filtration croissante donnée localement par l'ordre des opérateurs différentiels :
\[ \forall m \in \N, ~~ \Fil^m (\D_{U , k}) := \bigoplus_{n = 0}^m \O_U\cdot \partial_k^n . \]
On note $\gr \Dks = \bigoplus_{m\in \N} \gr_m \Dks$ le gradué associé et $\xi_k$ l'image de $\partial_k$ dans $\gr_1 (\Dks(U))$.
Localement, $\gr (\D_{U , k}) \simeq \O_U[\xi_k]$ est un anneau de polynômes en une variable sur $\O_U$.
En particulier, le fibré cotangent $T^* X$ de $X$ est isomorphe en tant que $\kappa$-schéma à $\Spec \gr (\Dks)$.
On identifie ces deux schémas dans la suite. On note $\pi : T^* X \to X$ la projection canonique.

\vspace{0.4cm}

Soit $P = \sum_{n=0}^d a_n \cdot \partial_k^n$ un opérateur différentiel de $\D_{X, k}(U)$ d'ordre $d$.
On lui associe un élément du gradué $\gr \Dks(U)$ appelé \textit{symbole principal} de $P$ en posant
\[ \sigma(P) := a_d \cdot \xi_k^d \in \gr_d\D_{X, k}(U) .\]

\begin{remark}
 On a $[ \varpi^k \partial , x ] = \varpi^k \cdot \id$ dans l'algèbre $\Dk(U)$.
 Pour tout entier $k \geq 1$, on a donc $[ \partial_k , x ] = 0$.
 Ainsi, $\Dks(U)$ est une $\kappa$-algèbre commutative, donc une algèbre de polynômes en une variable : $\Dks(U) = \O_X(U)[\partial_k]$.
\end{remark}

Une filtration $(\fil^\ell E)_{\ell\in\N}$ d'un $\Dks$-module quasi-cohérent à gauche $E$ est une suite croissante $(\fil^\ell E)_\ell$ de sous $\O_X$-modules quasi-cohérents de $E$ telle que
\begin{enumerate}
\item
$ E = \bigcup_{\ell\in \N} \fil^\ell E$ ;
\item
$\forall n , \ell \in \N$, $(\fil^n \Dks) \cdot (\fil^\ell E) \subset \fil^{\ell + n} E$.
\end{enumerate}

Le gradué $\gr E$ pour une telle filtration est un $\gr \Dks$-module. La filtration est appelée \textit{bonne filtration } si $\gr E$ est un $\gr\Dks$-module cohérent.
Puisque la courbe $X$ est quasi-compacte, tout $\Dks$-module cohérent admet une bonne filtration globale.

On considère maintenant un $\Dks$-module cohérent $E$ muni d'une bonne filtration globale.
On associe à $E$ le $\O_{T^* X}$-module cohérent
\[ \tilde{E} := \O_{T^*X} \otimes_{\pi^{-1} (\gr \Dks)} \pi^{-1} (\gr E) .\]

\begin{definition}
La variété caractéristique de $E$ est le support de $\tilde{E}$ : $\Car E := \supp \tilde{E}$.
\end{definition}

C'est une sous-variété fermée de $T^*X$ puisque le $\Dks$-module $\tilde{E}$ est cohérent.
La variété caractéristique est indépendante du choix de la bonne filtration choisie.
Ce résultat a par exemple été démontré dans le lemme D.3.1 de \cite{hotta} dans le cas d'un anneau commutatif noetherien, hypothèses que vérifie la $\kappa$-algèbre commutative $\gr \Dks(U)$.

\vspace{0.4cm}

On appelle \textit{multiplicités} de $E$ les multiplicités des composantes irréductibles de sa variété caractéristique $\Car E$.
Soit $C$ une composante irréductible de $\Car E$ et $\eta$ son point générique.
Par définition, la multiplicité $m_C = m_C(E)$ de $C$ est la longueur du $(\O_{T^*X})_\eta$-module artinien $\tilde{E}_\eta$.
C'est un entier positif non nul dès que le module $E$ est non nul. Lorsque $C$ est une partie fermée irréductible non vide du fibré cotangant $T^*X$ non contenue dans la variété caractéristique $\Car E$, on pose $m_C = 0$.

On note $I(\Car E)$ l'ensemble des composantes irréductibles de la variété caractéristique de $E$. On définit le \textit{cycle caractéristique} de $E$ par la somme formelle
\[ \cc(E) := \sum_{C \in I(\Car E)} m_C \cdot C .\]

\vspace{0.4cm}

On dispose du résultat suivant classique pour une variété complexe.
La preuve de ce dernier est faite par exemple dans le lemme D.3.3 de \cite{hotta}.
On l'utilisera plus tard pour démontrer que les $\Dkq$-modules holonomes sont de longueur finie.

\begin{prop}\label{propse}
Soit $\xymatrix{ 0 \ar[r] & M \ar[r] & N \ar[r] & L \ar[r] & 0 }$ une suite exacte de $\Dks$-modules cohérents. Alors $\Car N = \Car M \cup \Car L$.
De plus, si $C$ est une composante irréductible de $\Car N$, alors $m_C(N) = m_C(M) + m_C(L)$ (avec $m_C(M) = 0$ ou $m_C(L) = 0$ si $C$ n'est pas dans $\Car M$ ou $\Car L$).
\end{prop}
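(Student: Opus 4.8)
The plan is to reduce the statement to a short exact sequence of graded modules over $\gr\Dks$ and then to apply the exact functor $E \mapsto \tilde E$.

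First I would fix a global good filtration $(\fil^\ell N)_{\ell\in\N}$ of $N$ (it exists since $X$ is quasi-compact), and equip $M$ with the induced filtration $\fil^\ell M := M \cap \fil^\ell N$ and $L$ with the quotient filtration $\fil^\ell L := \Ima(\fil^\ell N \to L)$. These are again filtrations of $\Dks$-modules. The quotient filtration on $L$ is good, as $\gr L$ is a quotient of the coherent $\gr\Dks$-module $\gr N$. The induced filtration on $M$ is good as well, by the standard Artin--Rees argument: this applies because, locally on an affine open $U$ carrying an \'etale coordinate, $\gr\Dks(U) \simeq \O_X(U)[\xi_k]$ is Noetherian, being a polynomial ring in one variable over the Noetherian ring $\O_X(U)$. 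For these filtrations, the sequence of associated graded modules
\[ 0 \longrightarrow \gr M \longrightarrow \gr N \longrightarrow \gr L \longrightarrow 0 \]
is exact: in degree $\ell$ this is the cokernel sequence obtained (e.g.\ by the snake lemma) from the two short exact sequences $0 \to \fil^{\ell-1}M \to \fil^{\ell-1}N \to \fil^{\ell-1}L \to 0$ and $0 \to \fil^\ell M \to \fil^\ell N \to \fil^\ell L \to 0$ linked by the inclusions $\fil^{\ell-1}\hookrightarrow\fil^\ell$.

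Next I would apply the functor $E \mapsto \tilde E = \O_{T^*X}\otimes_{\pi^{-1}(\gr\Dks)}\pi^{-1}(\gr E)$. Locally over $T^*U = \Spec\gr\Dks(U)$ this is nothing but the passage from a $\gr\Dks(U)$-module to the associated quasi-coherent sheaf on the affine scheme $T^*U$, hence it is exact, and we obtain a short exact sequence
\[ 0 \longrightarrow \tilde M \longrightarrow \tilde N \longrightarrow \tilde L \longrightarrow 0 \]
of coherent $\O_{T^*X}$-modules. Since $\Car$ and the multiplicities are independent of the chosen good filtrations, it remains only to read off the two assertions from this sequence.

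Finally, taking supports in a short exact sequence of coherent sheaves gives $\supp\tilde N = \supp\tilde M \cup \supp\tilde L$, that is $\Car N = \Car M \cup \Car L$. For the multiplicities, let $C \in I(\Car N)$ with generic point $\eta$; localizing at $\eta$ yields a short exact sequence $0 \to \tilde M_\eta \to \tilde N_\eta \to \tilde L_\eta \to 0$ of Artinian $(\O_{T^*X})_\eta$-modules, and additivity of length then gives $m_C(N) = m_C(M) + m_C(L)$, with $m_C(M) = 0$ (resp.\ $m_C(L) = 0$) exactly when $\tilde M_\eta = 0$ (resp.\ $\tilde L_\eta = 0$), i.e.\ when $C$ is not a component of $\Car M$ (resp.\ of $\Car L$), in accordance with the stated convention. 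The only step requiring genuine care is the goodness of the induced filtration on the submodule $M$; everything else is formal.
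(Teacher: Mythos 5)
Your proof is correct and follows exactly the standard argument that the paper itself delegates to Lemma D.3.3 of \cite{hotta}: pick a global good filtration on $N$, induce filtrations on $M$ (good by Artin--Rees over the Noetherian ring $\gr\Dks$) and on $L$, pass to the short exact sequence of graded modules, apply the exact functor $E \mapsto \tilde E$, and read off equality of supports and additivity of length at generic points. Nothing to add; the one step you correctly single out as non-formal (goodness of the induced filtration on the submodule) is precisely where the Artin--Rees lemma is needed, and you justify it.
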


Soit maintenant $\E$ un $\Dkq$-module cohérent à gauche.
Un \textit{modèle entier} de $\E$ est un $\Dk$-module cohérent $\Eo$ sans $\varpi$-torsion tel que $\E \simeq \Eo \otimes_\V K$.
Puisque $\E$ est cohérent, il existe un modèle entier $\Eo$ d'après \cite{berthelot1}.
La réduction $\Eo  \otimes_\V \kappa$ modulo $\varpi$ de $\Eo$ est un $\Dks$-module cohérent.

\begin{definition}
La variété caractéristique de $\E$ est la variété caractéristique du $\Dks$-module cohérent $\Eo  \otimes_\V \kappa$ : $\Car \E := \Car (\Eo  \otimes_\V \kappa)$.
\end{definition}

C'est un sous-schéma fermé du fibré cotangent $T^*X$ de $X$ indépendant du choix du modèle entier.
On appelle \textit{multiplicités} de $\E$ les multiplicités de sa variété caractéristique.

\vspace{0.4cm}

On termine cette partie par des exemples explicites de variétés caractéristiques. Ils permettent en pratique de calculer toutes les variétés caractéristiques.

\begin{example}~
On suppose que la courbe formelle $\X$ est affine munie d'une coordonnée locale.
On note toujours $\xi_k = \sigma(\partial_k)$ l'image de la dérivation $\partial_k$ dans $\gr_1 \Dks(X)$.
\begin{enumerate}
\item
Puisque le support de $\Dks$ est $X$ tout entier, on a $\Car \Dkq = T^* X$.
\item
Si $\E = 0$, alors sa variété caractéristique est vide.
\item
Soit $\E = \Dkq / P$ avec $P \in \Dkq(\X)$ un opérateur différentiel non nul. Quitte à multiplier $P$ par une bonne puissance de $\varpi$, on peut supposer que $\| P \|_k= 1$. Alors $\Eo = \Dk / P$ est un modèle entier de $\E$.
On note $d = \Nb(P)$ et $b$ le coefficient d'indice $d$ de $P$. La réduction $\bar{P}$ de $P$ modulo $\varpi$ est un opérateur de $\Dks(X)$ d'ordre $d$. Son coefficient dominant est $\bar{b} = (b \mod \varpi) \in \O_{X}(X)$.

On munit $\Eo \otimes_\V \kappa \simeq \Dks / \bar{P}$ de la filtration quotient. On a $\gr (\Eo \otimes_\V \kappa )= \gr \Dks / (\sigma(\bar{P}))$, où $\sigma(\bar{P}) = \bar{b} \cdot \xi_k^d $ est le symbole principal de $\bar{P}$. 
L'annulateur de ce module est l'idéal engendré par $\sigma(\bar{P})$. La variété caractéristique de $\E$ est donnée par l'équation
\[ \Car(\E) = \{ (x, \xi) \in T^*X : \sigma(\bar{P})(x,\xi) = \bar{b}(x) \cdot \xi^d = 0 \} . \]
\item
Plus généralement, soit $\E = \Dkq / \I$ pour un idéal cohérent non nul $\I$ de $\Dkq$. On se donne un modèle entier $\Eo = \Dkq / \overset{\circ}{\I}$ de $\E$. On note $I$ la réduction modulo $\varpi$ de $\overset{\circ}{\I}$. C'est un idéal de $\Dks$. Alors
\[ \Car(\E) = \{ (x, \xi) \in T^*X : \sigma(P)(x , \xi) = 0 ~~ \forall P \in I \} . \]
\end{enumerate}
\end{example}

\subsection{Réduction au cas des $\Dx$-modules cohérents}\label{partiedx}

Pour $x \in X$, on note $\Dx := (\Dks)_x = \bigoplus_{n \in \N} \O_{X, x} \cdot \partial_k$.
On associe à tout $\Dx$-module cohérent $E$ une variété caractéristique $\Car E$ définie comme une sous-variété fermée du $\kappa$-schéma $\Spec \left(\gr (\Dx)\right)$.
On munit comme dans la partie précédente l'algèbre $\Dx$ de la filtration donnée par l'ordre des opérateurs différentiels.
On note $\gr \Dx$ le gradué associé. Le module $E$ admet une bonne filtration ; on note $\gr E$ le gradué correspondant.
Soit $\phi  :  \Spec \left(\gr (\Dx)\right) \to \Spec( \O_{X,x})$ la projection donnée par l'inclusion de $\O_{X,x}$ dans $\gr (\Dx)$.
Alors
\[ \Car E := \supp \left( \O_{\Spec \left(\gr (\Dx)\right)} \otimes_{\phi^{-1} (\gr \Dx)} \phi^{-1} (\gr E) \right) . \]
Cette variété caractéristique ne dépend pas de la bonne filtration choisie sur $E$.
On note $s : X \to T^* X$ la section nulle du fibré cotangent.
Les notions de variétés caractéristiques et de multiplicités coïncident entre les germes de $\Dkq$ en $x$ et $\Dx$.

\begin{lemma}\label{lemmex}
Soit $\E$ un $\Dkq$-module cohérent et $\Eo$ un modèle entier. On dispose d'un isomorphisme de $\kappa$-schémas
\[ \Car(\Eo \otimes \kappa) \times_X \Spec(\O_{X,x}) \simeq \Car(\Eo_x \otimes \kappa) . \]
De plus, les multiplicités de $\Car(\Eo_x  \otimes \kappa)$ sont les multiplicités des composantes irréductibles de $\Car(\Eo \otimes \kappa)$ contenant $s(x)$.
\end{lemma}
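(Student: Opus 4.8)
The plan is to translate both sides into statements about graded modules over the relevant polynomial rings and then invoke the flat base change $\O_{X,x} = (\O_X)_x$ over the affine curve $X$. First I would fix a good global filtration $(\fil^\ell E)_\ell$ on $E := \Eo \otimes \kappa$ — this exists since $X$ is quasi-compact — and form $\gr E$, a coherent $\gr \Dks \simeq \O_X[\xi_k]$-module. Localising at $x$ is exact, so $(\gr E)_x = \gr(E_x)$ as a $\gr(\Dx) \simeq \O_{X,x}[\xi_k]$-module, and the localised filtration $(\fil^\ell E)_x$ is a good filtration on $E_x = \Eo_x \otimes \kappa$. Since $\Spec(\gr\Dx) = \Spec(\O_{X,x}[\xi_k]) = T^*X \times_X \Spec(\O_{X,x})$, the module computing $\Car(E_x)$ is exactly the pullback along $T^*X \times_X \Spec(\O_{X,x}) \to T^*X$ of the module $\tilde E$ computing $\Car(E)$. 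Taking supports and using that support commutes with flat base change (here along the flat morphism $\Spec(\O_{X,x}) \to X$), I get the scheme isomorphism $\Car(E) \times_X \Spec(\O_{X,x}) \simeq \Car(E_x)$, which is the first assertion.

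For the multiplicity statement, I would argue as follows. An irreducible component $C$ of $\Car(E)$ contains the point $s(x)$ — equivalently meets the fibre $\pi^{-1}(x)$ — precisely when its image in $X$ contains $x$, i.e. when $C$ survives after base change to $\Spec(\O_{X,x})$; the components not containing $s(x)$ are disjoint from $\pi^{-1}(x)$ and disappear. So the components of $\Car(E_x)$ are exactly the $C \times_X \Spec(\O_{X,x})$ for $C \ni s(x)$, and it remains to check that base change preserves the length of the stalk at the generic point. If $\eta$ is the generic point of such a $C$, then $\eta$ also lies in $T^*X \times_X \Spec(\O_{X,x})$ — because $C$ meets the fibre over the generic point of $X$, which is unaffected by localising $X$ at the closed point $x$ — and the local ring and the stalk of the pullback of $\tilde E$ at $\eta$ are literally unchanged by the base change (the morphism is an open-ish localisation near $\eta$, being an isomorphism on a neighbourhood of the generic fibre). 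Hence $m_C(E_x) = \mathrm{length}_{(\O_{T^*X})_\eta}(\tilde E_\eta) = m_C(E)$.

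The one point requiring genuine care — the main obstacle — is justifying that the generic point $\eta$ of a component $C$ containing $s(x)$ is not altered by the base change $\Spec(\O_{X,x}) \to X$, and more precisely that the stalk $\tilde E_\eta$ and the local ring $(\O_{T^*X})_\eta$ both pull back isomorphically. The delicate case is $d = 0$ (when $k \geq 1$, where $\Dks(U) = \O_X(U)[\partial_k]$ is commutative), since then a component of the characteristic variety can be of the form $\{\xi_k = 0\}$, i.e. (a component of) the zero section, whose generic point maps to the generic point of $X$ and lies over it — so localising $X$ at $x$ is harmless there — but one can also have vertical components $\pi^{-1}(x') = \{x = x'\}$ for closed points $x'$, and those with $x' \neq x$ are exactly the ones killed by the base change. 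I would phrase this cleanly by noting that $\Spec(\O_{X,x}) \to X$ is flat and its image consists of the generic point and $x$, hence for any irreducible closed $Z \subset T^*X$ whose generic point maps into $\{\eta_X, x\}$ (equivalently $Z \cap \pi^{-1}(x) \neq \emptyset$, using that $Z$ is closed and $X$ is a curve so $Z$ dominates $X$ unless $Z \subset \pi^{-1}(x)$), the localisation $\O_{T^*X, \eta_Z}$ is a localisation of $\O_{T^*X \times_X \Spec\O_{X,x}, \eta_Z}$ and conversely, forcing the multiplicities to agree.
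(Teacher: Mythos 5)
Your proposal is correct and follows essentially the same route as the paper: fix a good global filtration on $E = \Eo \otimes \kappa$, show that localisation at $x$ commutes with taking associated graded (so $(\gr E)_x \simeq \gr(E_x)$ as $\gr(\Dx)$-modules), read off the scheme isomorphism from flat base change of supports, and get the multiplicity statement from the fact that the local ring and stalk of $\tilde{E}$ at the generic point of a component containing $s(x)$ are unchanged by localisation. The paper phrases the support comparison by localising at points of $\Spec(\gr\Dx)$ via $\varphi : \O_X(X) \to \O_{X,x}$ and using integrality of $\O_X(X)$, and leaves the multiplicity part as an immediate consequence of the graded isomorphism; your extra discussion of which components survive (those meeting $\pi^{-1}(x)$, equivalently containing $s(x)$ since the variety is conical) and why their generic points are untouched is a more explicit rendering of the same content, not a different argument.
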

\begin{proof}
Le morphisme $\Dks(X) \otimes_{\O_X(X)} \O_{X,x} \to  \Dx = \bigoplus_{n \in \N} \O_{X,x} \cdot \partial_k^n$ est un isomorphisme de $\O_{X,x}$-modules car $\Dks = \bigoplus_{n \in \N} \O_X \cdot \partial_k^n$.
Il s'agit en fait d'un isomorphisme de $\kappa$-algèbres pour le produit sur $\Dks(X) \otimes_{\O_X(X)} \O_{X,x} $ induit par le produit tensoriel.
On dispose donc d'un isomorphisme de $\kappa$-algèbres :
\begin{equation}\label{isox}
\Dks(X) \otimes_{\O_X(X)} \O_{X,x} \simeq \Dx 
\end{equation}

On note $E = \Eo \otimes_\V \kappa$. C'est un $\Dks$-module cohérent.
La question étant locale en $x$, on peut supposer $X$ affine.
Comme $E$ est $\Dks$-cohérent, $E$ est un $\O_X$-module quasi-cohérent. Il est donc suffisant d'étudier le module des sections globales $E(X)$.

On munit le $\Dks(X)$-module $E(X)$ d'une bonne filtration $(\fil^n(E(X)))_{n\in \N}$, le module $E(X) \otimes_{\O_X(X)} \O_{X,x}$ de la filtration $\fil^n(E(X)) \otimes_{\O_X(X)} \O_{X,x}$ et $E_x$ de la filtration image. 
On note $\gr(E(X))$ et $\gr(E_x)$ les gradués associés.
Le problème se réduit donc à démontrer que $\gr(E(X)) \otimes \O_{X,x} \simeq \gr(E_x)$ en tant que $\gr( \Dx)$-modules et que les supports coincident.

Puisque $\O_{X,x}$ est le localisé de $\O_X(X)$ en $x$, $E(X) \otimes_{\O_X(X)} \O_{X,x}$ est isomorphe à $E_x$ en tant que $\O_{X,x}$-module.
Le morphisme $E(X) \otimes_{\O_X(X)} \O_{X,x} \simeq E_x$ est en fait un $\Dx$-isomorphisme.
En effet, l'isomorphisme \ref{isox} montre que l'on peut munir $E(X) \otimes_{\O_X(X)} \O_{X,x}$ d'une structure naturelle de $\Dx$-module.
On vérifie ensuite que cette structure coïncide avec celle de $E_x$ et que le morphisme $E(X) \otimes_{\O_X(X)} \O_{X,x} \simeq E_x$ est un $\Dx$-isomorphisme.

On en déduit que $\gr(E(X)) \otimes_{\O_X(X)} \O_{X,x} \simeq \gr(E_x)$ en tant que $\gr(\Dx(X))$-modules. Il reste à vérifier que leurs supports coïncident. Soit $y \in \Spec(\O_{X,x})$. On a
\[ \left( \gr(E(X)) \otimes_{\O_X} \O_{X,x} \right)_y = (\gr E(X))_{\varphi^{-1}(y)} \otimes_{\O_{X , \varphi^{-1}(y)}} (\O_{X,x})_y \]
où $\varphi$ est le morphisme canonique  $\O_X(X) \to \O_{X,x}$. Comme la $\kappa$-algèbre $\O_X(X)$ est intègre, ce module est non nul si et seulement si $(\gr E(X))_{\varphi^{-1}(y)}$ est non nul.
\end{proof}

\begin{remark}
On a demontré que $\Dks \otimes_{\O_X} \O_{X,x} \simeq  \Dx$ en tant que $\kappa$-algèbres. On identifie par la suite ces deux algèbres.
\end{remark}

On désigne maintenant le $\Dks$-module $\Eo \otimes_\V \kappa$ par $\E \otimes \kappa$ et le $\Dx$-module $\Eo_x  \otimes_\V \kappa$ par $\E_x \otimes \kappa$.
Ces notations sous-entendent le choix d'un modèle entier. Puisque la variété caractéristique ne dépend pas du modèle entier, les variétés $\Car(\E \otimes \kappa)$ et $\Car(\E_x \otimes \kappa)$ sont définies sans ambiguïté.

Lorsque $x$ est un point $\kappa'$-rationnel pour une extension finie $\kappa'$ de $\kappa$, il sera parfois nécessaire d'étendre les scalaires à $\kappa'$.
Cependant, si $E$ est un $\Dx$-module cohérent, les variétés caractéristiques de $E$ et $E \otimes_\kappa \kappa'$ auront la même dimension puisque l'extension $\kappa' / \kappa$ est finie. Il est donc suffisant de tout démontrer au niveau de $\kappa$.

\begin{definition}
On appelle multiplicités de $\E$ en $x$ les multiplicités de la variété caractéristique $\Car(\E_x  \otimes \kappa)$.
\end{definition}

D'après le lemme \ref{lemmex}, il s'agit des multiplicités des composantes irréductibles de la variété caractéristique de $\E$ contenant $s(x)$.

\vspace{0.4cm}

L'étude de la variété caractéristique d'un $\Dkq$-module cohérent se ramène donc à étudier les variétés caractéristiques des $\Dx$-modules cohérents.
On explicite dans ce paragraphe la variété caractéristique d'un $\Dx$-module cohérent non nul $E$. 
On peut tout d'abord se ramener au cas où $X$ est affine et $E = \Dx / I$ pour un idéal à gauche $I$ de $\Dx$.
En effet, puisque $E$ est cohérent, $E$ est engendré par des sections globales $e_1 , \dots , e_r$. Si $I_i = \ann_{\Dx}(e_i)$, alors $\Dx \cdot e_i \simeq \Dx /I_i$.
Comme la variété caractéristique est un support et puisque le support d'une somme est l'union des supports des termes de la somme, on a
\[ \Car(E) = \bigcup_{i=1}^r \Car(\Dx / I_i) .\]

Ainsi, on peut supposer que $E = \Dx /I$. Si $I = 0$, alors $\Car E = \Spec \left(\gr (\D_{\O_{X,x},k})\right)$ car le support de $\Dx$ est l'espace tout entier.
On considère maintenant le cas où $I$ est un idéal non nul. Soit $P_1  \dots , P_r$ une base de division de $I$ comme définie dans la partie \ref{partie1.3}.
Les symboles principaux $\sigma(P_1) , \dots , \sigma(P_r)$ engendrent le gradué $\gr(I)$ comme $\gr(\Dx)$-module.
On note $d = d(I)$ et $\alpha = v(I)$. Par définition, le couple $(\alpha , d)$ est l'exposant de $I$. On écrit $\Exp(P_1) = ( d , \alpha_1)$, $\Exp(P_2) = (d+1 , \alpha_2)$, \dots \, , $\Exp(P_r) = (d+r-1 ,\alpha)$ où $\alpha_1 \geq \alpha_2  \geq \dots \geq \alpha$.
Quitte à normaliser les opérateurs $P_i$, on a $\sigma(P_i) = t^{\alpha_i} \cdot \xi_k^{d+i-1}$.
La variété caractéristique de $E$ est alors

\[ \Car (E) = \left\{ (t , \xi_k) \in \Spec \gr(\Dx) : ~~ t^{\alpha_1} \cdot \xi_k^{d} = t^{\alpha_2} \cdot \xi_k^{d+1} = \dots = t^{\alpha} \cdot \xi_k^{d+r-1} = 0 \right\} .\]

Dans $\Dx$, la condition $I \neq 0$ n'est pas équivalente à la condition $\alpha \neq 0$ ou $d \neq 0$. On peut avoir $ \alpha =d =  0$ : c'est le cas par exemple pour $I = (t^n , \partial_k^\ell)$, $n , \ell \in \N$.
Les équations de la variété caractéristique de $E = \Dx / I$ se réduisent aux équations suivantes :
\begin{equation}\label{equacar}
 \Car (\Dx / I) = \begin{cases}
t \cdot \xi_k = 0~~ \mathrm{si} ~~ d(I) \neq 0 ~~ \mathrm{et} ~~ v(I) \neq 0  \\
\xi_k  = 0 ~~~  \mathrm{si} ~~ v(I)  = 0  \\
t = 0 ~~~~  \mathrm{si} ~~ d(I) = 0 \\
t = 0 ~~\mathrm{et}~~ \xi_k =0 ~~ \mathrm{si} ~~ d(I) = 0 ~~\mathrm{et} ~~ v(I) = 0
\end{cases} 
\end{equation}

Lorsque $\dim(\Car E) = 1$, la variété caractéristique de $E$ admet une ou deux composantes irréductibles données par les équations $t = 0$ et $\xi_k = 0$. Lorsque $\dim ( \Car E ) =0$, $\Car E = (0,0)$.
En particulier, l'inégalité de Bernstein est fausse pour les $\Dks$-modules cohérents.
Cependant, si $E$ provient d'un $\Dkq$-module cohérent, on montrera que le dernier cas de (\ref{equacar}) n'est pas possible.
 La variété caractéristique de $E$ sera donc donnée par l'une des trois premières équations.

\begin{example}\label{exempledirac}~
\begin{enumerate}
\item
Si $E = \Dx / (t^\alpha \cdot \partial_k^d)$ avec $\alpha , d \geq 1$, alors $\Car E$ a deux composantes irréductibles d'équations respectives $t = 0$ et $\xi_k = 0$.
\item
Si $E = \Dx / (t^n ,\partial_k^\ell)$, alors $\Car E = (0,0)$.
\item
Soit $E = \Dks / x$ un $\Dks$-module supporté en $x$. La variété caractéristique de $E$ en $x$ est la droite d'équation $t = 0$.
Soit $U$ un ouvert affine de $X$ contenant $x$ sur lequel on dispose d'une coordonnée locale $y$.
Le module $E$ étant nul en dehors de $U$, on peut supposer que $X = U$.
Alors $T^*X$ est affine et l'on note $(y , \xi)$ le système de cordonnées locales de $T^*X$ associé à la coordonnée initiale $y$.
On a $\Car(E) = \{ (y, \xi) \in T^*X : y = x \}$. La variété caractéristique de $E$ est la droite verticale de $T^*X$ passant par $x$ :

\begin{center}
\begin{tikzpicture}

\draw[thick][->] (-1,0) -- (5,0);
\draw (5.2,0) node[right] {$y$};
\draw [thick][->] (0,-1) -- (0,3);
\draw (0,3.2) node[above] {$\xi$};
\draw (0,0) node[below right] {$0$};

\draw[red][thick] (3, -1) -- (3,3) ;
\draw (3,2) node[right]{\textcolor{red}{$\Car E$}} ;
\draw (3,0) node[below right]{$x$} ;
\draw (4,3) node{$T^*X$} ;

\end{tikzpicture}
\end{center}
Un tel module est appelé un \textit{Dirac}.
\end{enumerate}
\end{example}

Un $\Dx$-module de la forme $\Dx / I$ distinct de $\Dx$ a deux multiplicités correspondant aux composantes $t = 0$ et $\xi_k = 0$, avec multiplicité nulle si la composante est un point ou si la composante est vide.
Lorsque $x$ est un point $\kappa$-rationnel, ces multiplicités correspondent aux nombres $d(I)$ et $v(I)$. Cela a été prouvé par P.Maisonobe dans \cite{maisonobe}, partie III, paragraphe 2.1.

\vspace{0.4cm}

Soit maintenant $\E = \Dk / \I$ pour un idéal cohérent non nul $\I$. D'après ce que l'on vient de dire, $\E$ a deux multiplicités en $x$ (potentiellement nulles) correspondant aux composantes $t = 0$ et $\xi_k = 0$ de la variété caractéristique $\Car(\E_x \otimes \kappa)$.
Ces multiplicités en un point rationnel sont respectivement les fonctions $\Nb(\I)$ et $N_k(\I)$.

\begin{prop}\label{prop3.10}
Soit $x$ un point $\kappa$-rationnel et $\I$ un idéal cohérent non nul de $\Dk$ tel que $\E = \Dk / \I$ soit sans $\varpi$-torsion.
Alors $\Nb(\I)$ et $N_k(\I)$ sont les multiplicités de $\E$  en $x$ des composantes $(\xi_k = 0)$ et $(t = 0)$ de la variété caractéristique $\Car(\E_x \otimes \kappa)$.
\end{prop}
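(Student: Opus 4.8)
Le plan consiste à ramener l'énoncé à un calcul de longueurs dans l'anneau local régulier de dimension deux $\O_{X,x}[\xi_k]$, au moyen d'une base de division de $\I$ en $x$.

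On commencera par invoquer le lemme \ref{lemme2.11} : puisque $\E = \Dk / \I$ est sans $\varpi$-torsion, $\I$ admet une base de division normalisée $(P_1, \dots, P_r)$ relativement à $x$, avec $\|P_i\|_k = 1$ pour tout $i$. D'après la compatibilité des bases de division dans $\Dk$ et dans $\Dx$ sous réduction modulo $\varpi$ rappelée juste après le lemme \ref{lemmebase}, la famille $(\bar P_1, \dots, \bar P_r)$ des réductions est une base de division de $I_x$, où $I = \I \bmod \varpi$ désigne l'idéal cohérent correspondant de $\Dks$ ; en particulier les escaliers et les exposants de $\I$ et de $I_x$ coïncident en $x$. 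Comme chaque $P_i$ est de norme un, les entiers $\Nb(P_i)$ et $N_k(P_i)$ sont respectivement l'ordre et la valuation du coefficient dominant de $\bar P_i$ dans $\O_{X,x}$, d'où $\Nb(\I) = \Nb(P_1) = d(I_x) =: d$ et $N_k(\I) = N_k(P_r) = v(I_x) =: \alpha$. Enfin, la nullité de la $\varpi$-torsion garantit que $I$ est un idéal de $\Dks$ et que $\E_x \otimes \kappa = \Dx / I_x$.

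On explicitera ensuite le gradué associé. En posant $\Exp(\bar P_i) = (\alpha_i, d + i - 1)$ avec $\alpha_1 \geq \alpha_2 \geq \cdots \geq \alpha_r = \alpha$, le coefficient dominant de $\bar P_i$ est un élément inversible de $\O_{X,x}$ multiplié par $t^{\alpha_i}$, de sorte que $\sigma(\bar P_i)$ est, à un inversible près, égal à $t^{\alpha_i} \xi_k^{d+i-1}$. En utilisant le fait, rappelé dans la partie \ref{partiedx}, que les symboles principaux $\sigma(\bar P_1), \dots, \sigma(\bar P_r)$ engendrent $\gr(I_x)$ comme $\gr(\Dx)$-module, et en identifiant $\gr(\Dx) \simeq \O_{X,x}[\xi_k]$, on obtient
\[ \gr(\E_x \otimes \kappa) \simeq \O_{X,x}[\xi_k] \,/\, (t^{\alpha_1} \xi_k^{d},\, t^{\alpha_2} \xi_k^{d+1},\, \dots,\, t^{\alpha_r} \xi_k^{d+r-1}), \]
ce qui redonne (\ref{equacar}). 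Les composantes $(\xi_k = 0)$ et $(t = 0)$ de $\Car(\E_x \otimes \kappa)$ ont pour points génériques respectifs les idéaux premiers de hauteur un $\eta_1 = (\xi_k)$ et $\eta_2 = (t)$ du domaine régulier $\O_{X,x}[\xi_k]$.

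Il restera alors à calculer les deux multiplicités par localisation. L'anneau local $(\gr\Dx)_{\eta_1} = \O_{X,x}[\xi_k]_{(\xi_k)}$ est un anneau de valuation discrète d'uniformisante $\xi_k$ ; comme $t$ y est inversible, l'idéal ci-dessus s'y localise en $(\xi_k^{d}, \xi_k^{d+1}, \dots, \xi_k^{d+r-1}) = (\xi_k^{d})$, donc $\gr(\E_x \otimes \kappa)_{\eta_1} \simeq (\gr\Dx)_{\eta_1} / (\xi_k^{d})$ est de longueur $d = \Nb(\I)$. De manière symétrique, $(\gr\Dx)_{\eta_2} = \O_{X,x}[\xi_k]_{(t)}$ est un anneau de valuation discrète d'uniformisante $t$ ; comme $\xi_k$ y est inversible et $\alpha = \min_i \alpha_i$, l'idéal s'y localise en $(t^{\alpha_1}, \dots, t^{\alpha_r}) = (t^{\alpha})$, d'où une longueur $\alpha = N_k(\I)$. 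Le calcul fournit de plus une multiplicité nulle lorsque $d = 0$ (resp. $\alpha = 0$), en accord avec les cas dégénérés de (\ref{equacar}) où la composante se réduit à un point ou est vide. Le seul ingrédient vraiment délicat est la gestion du passage de la base de division de $\I$ dans $\Dk$ à sa réduction — en particulier le fait que les symboles principaux d'une base de division engendrent le gradué et que les escaliers coïncident — mais cela est déjà établi dans les parties \ref{partie1.4} et \ref{partiedx}, de sorte que le reste de l'argument est routinier.
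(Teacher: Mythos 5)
Your proof is correct, and it follows a genuinely different route for the key step. Both you and the paper begin the same way: using the $\varpi$-torsion-free hypothesis to obtain the identification $\E_x \otimes \kappa \simeq \Dx / I_x$ (you assert this directly; the paper spells out the $\mathrm{Tor}$ argument on the short exact sequence $0 \to \I \to \Dk \to \E \to 0$), and then reducing to the fact that $\Nb(\I)$, $N_k(\I)$ coincide with $d(I_x)$, $v(I_x)$. The divergence is in the multiplicity computation: the paper simply cites Maisonobe (\cite{maisonobe}, partie III, \S 2.1) for the fact that the multiplicities of $\Dx/I_x$ along $(\xi_k=0)$ and $(t=0)$ are $d(I_x)$ and $v(I_x)$, whereas you make this explicit by writing $\gr(I_x) = (t^{\alpha_1}\xi_k^{d}, \dots, t^{\alpha_r}\xi_k^{d+r-1})$ via the reduced division basis and computing the lengths by localizing $\O_{X,x}[\xi_k]$ at the two height-one primes $(\xi_k)$ and $(t)$, where $t$ (resp.\ $\xi_k$) becomes invertible and the ideal collapses to $(\xi_k^d)$ (resp.\ $(t^\alpha)$). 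Your version is therefore more self-contained and makes transparent why the degenerate cases $d=0$ or $\alpha=0$ yield a zero multiplicity, at the modest cost of a slightly longer argument; both are valid and rely on the same preliminaries from parties \ref{partie1.4} and \ref{partiedx}. One minor remark: you correctly invoke lemme \ref{lemme2.11} for the existence of a division basis, whereas the paper's proof text cites \ref{lemmebase}, which appears to be a reference slip.
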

\begin{proof}
Puisque le module $\E = \Dk / \I$ est sans $\varpi$-torsion, $\I$ admet une base de division dans $\Dk$ d'après le lemme \ref{lemmebase}.
L'énoncé étant local en $x$, on peut supposer $\X$ affine. La suite exacte courte $\xymatrix{ 0 \ar[r] & \I \ar[r] & \Dk \ar[r] & \E \ar[r] & 0}$ permet d'obtenir la suite exacte suivante :
\[ \xymatrix{ 0 \ar[r] & \mathrm{Tor}_\V ^1(\E\otimes_\V\kappa) \ar[r] & \I \otimes_\V \kappa \ar[r] & \Dk \otimes_\V \kappa \ar[r] & \E\otimes_\V \kappa \ar[r] & 0 } .\]
Par hypothèse, $\E = \Dk / \I$ est sans $\varpi$-torsion. On en déduit que $\mathrm{Tor}_\V ^1(\E\otimes_\V\kappa) = 0$.
On obtient un $\Dx$-isomorphisme
\[ \E_x \otimes_\V \kappa \simeq \Dx /(\I \otimes_\V \kappa)_x .\]
Ainsi, le module $\E_x \otimes_\V \kappa$ est donné par l'idéal $I = \I \otimes_\V \kappa$. On rappelle que $\I$ et $I$ ont le même escalier en $x$ et que les fonctions $\Nb(\I)$ et $N_k(\I)$ coïncident avec $v(I)$ et $d(I)$.
Comme les multiplicités de $\Dx / I$ sont respectivement l'ordre et la valuation de l'idéal $I$ en $x$, on obtient le résultat.
\end{proof}

Soit enfin $\E = \Dkq / \I$ pour un idéal cohérent $\I$ non nul de $\Dkq$.
Si $\E \simeq \Dkq / \I'$ pour un autre idéal $\I'$, alors la proposition 4.2.1 de l'article \cite{garnier} de Laurent Garnier (division selon une base de division) implique que $\I$ et $\I'$ ont les mêmes fonctions $\Nb$ et $N_k$.
Les entiers $\Nb(\I)$ et $N_k(\I)$ ne dépendent donc pas du choix de l'idéal $\I$ définissant $\E$ comme un quotient de $\Dkq$.
Lorsque $\I = \Dkq \cdot P$ avec $P \neq 0$, ces nombres sont simplement $\Nb(P)$ et $N_k(P)$.

On peut trouver un modèle entier de $\E$ de la forme $ \Dk /\J$.
C'est un $\Dk$-module cohérent sans $\varpi$-torsion tel que $\E \simeq (\Dk /\J) \otimes_\V K$.
D'après le lemme \ref{lemme2.11}, l'idéal $\J$ admet une base de division en chaque point $x \in \X$.
Puisque $(\Dk /\J) \otimes_\V K \simeq \Dkq / (\J \otimes_\V K)$, on a $ \Dkq / \I \simeq\Dkq / (\J \otimes_\V K)$. Ainsi, les idéaux $\I$ et $\J \otimes_\V K$ ont les mêmes fonctions $\Nb$ et $N_k$.
Enfin, puisque $\J$ et $\J \otimes_\V K$ ont les mêmes escaliers, on obtient $\Nb(\I) = \Nb(\J)$ et $N_k(\I) = N_k(\J)$. On en déduit le résultat suivant.

\begin{cor}
Soit $x$ un point $\kappa$-rationnel et $\E \simeq \Dkq / \I$ avec $\I$ un idéal cohérent non nul de $\Dkq$.
Alors $\Nb(\I)$ et $N_k(\I)$ sont respectivement les multiplicités de $\E$  en $x$ des composantes irréductibles $(\xi_k = 0)$ et $(t = 0)$ de $\Car(\E_x \otimes \kappa)$.
\end{cor}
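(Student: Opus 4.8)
The plan is to reduce to Proposition \ref{prop3.10} by passing to a suitable integral model of $\E$. First I would choose an integral model $\Eo$ of $\E$; one exists by \cite{berthelot1}, and it is $\varpi$-torsion free by definition. Since $\E = \Eo\otimes_\V K \simeq \Dkq/\I$ is cyclic, after shrinking $\X$ we may assume $\Eo$ is generated by a single section, so that $\Eo \simeq \Dk/\J$ for a coherent ideal $\J$ of $\Dk$. As $\Dk/\J$ is $\varpi$-torsion free, Lemma \ref{lemme2.11} guarantees that $\J$ admits a division basis relative to $x$, which is exactly the hypothesis under which Proposition \ref{prop3.10} is available.

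Next, by definition $\Car(\E_x\otimes\kappa)$ is the characteristic variety of the $\Dx$-module $\Eo_x\otimes_\V\kappa$, and it does not depend on the chosen integral model by the discussion of Section \ref{partie2.1}. Since $\Dk/\J$ is $\varpi$-torsion free, arguing as in the proof of Proposition \ref{prop3.10} (the relevant $\mathrm{Tor}$ vanishes) one gets a $\Dx$-isomorphism $\Eo_x\otimes_\V\kappa \simeq \Dx/(\J\otimes_\V\kappa)_x$. Applying Proposition \ref{prop3.10} to the ideal $\J$ of $\Dk$ then shows that $\Nb(\J)$ and $N_k(\J)$ are the multiplicities at $x$ of the components $(\xi_k = 0)$ and $(t = 0)$ of $\Car(\E_x\otimes\kappa)$.

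It remains to identify these integers with $\Nb(\I)$ and $N_k(\I)$, which is essentially the paragraph preceding the statement. We have $\Dkq/\I \simeq \E \simeq (\Dk/\J)\otimes_\V K \simeq \Dkq/(\J\otimes_\V K)$. Proposition 4.2.1 of \cite{garnier} (division along a division basis, valid at any congruence level $k$) shows that two coherent ideals defining the same cyclic quotient of $\Dkq$ have the same functions $\Nb$ and $N_k$ at $x$, hence $\Nb(\I) = \Nb(\J\otimes_\V K)$ and $N_k(\I) = N_k(\J\otimes_\V K)$. Finally, $\J$ and $\J\otimes_\V K$ have the same staircase and the same exposant at $x$ (normalising the elements of a division basis alters neither $\Nb$ nor $N_k$), so $\Nb(\J\otimes_\V K) = \Nb(\J)$ and $N_k(\J\otimes_\V K) = N_k(\J)$, which finishes the proof. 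Rather than a genuine obstacle, the only point requiring care here is keeping track of the simultaneous independence of both sides from the choices made: that of $\Car(\E_x\otimes\kappa)$ and its multiplicities from the integral model, and that of the functions $\Nb(\I)$, $N_k(\I)$ from the presentation of $\E$ as a quotient of $\Dkq$.
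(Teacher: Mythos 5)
Your argument follows the same route as the paper: pick a cyclic integral model $\Dk/\J$, invoke Lemma \ref{lemme2.11} to get a division basis for $\J$, apply Proposition \ref{prop3.10} to compute the multiplicities as $\Nb(\J)$ and $N_k(\J)$, and finally identify these with $\Nb(\I)$ and $N_k(\I)$ via Garnier's division proposition and the invariance of the escalier under $-\otimes_\V K$. One small caution: your claim that ``after shrinking $\X$ we may assume $\Eo$ is generated by a single section'' is not justified for an arbitrary torsion-free integral model -- the cleaner route (which matches what the paper implicitly does) is to take the specific integral model $\Eo := \Dk\cdot\bar{1}\subset\E$, the $\Dk$-submodule generated by the image of $1$, which is cyclic by construction and torsion-free because it sits inside $\E$.
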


%here

\subsection{Inégalité de Bernstein}\label{partie2.3}

Cette partie est consacrée à la démonstration de l'inégalité de Bernstein : un $\Dkq$-module cohérent est non nul si et seulement si sa variété caractéristique est de dimension au moins un, ou de manière équivalente si ses multiplicités ne sont pas toutes nulles.

\vspace{0.4cm}

Comme on a pu le voir dans la partie précédente, l'inégalité de Bernstein est fausse pour $\Dx$-modules cohérents.
Par exemple, la variété caractéristique du $\Dx$-module $E = \Dx / (t^p , \partial_k)$ est réduite au point $(0 , 0)$.
L'inégalité de Bernstein étant vraie pour un $\Dkq$-module cohérent, cela signifie que $E$ ne provient pas d'un $\Dkq$-module cohérent.
On peut cependant remarquer que $E$ est un $\kappa$-espace vectoriel de dimension finie (égale à $p$).
Plus généralement, ce résultat est vrai pour tout $\Dx$-module cohérent dont la variété caractéristique est réduite à un point.

\begin{lemma}\label{lemmeev}
Soit $x \in X$ et $E$ un $\Dx$-module de type fini dont la variété caractéristique $\Car E$ est un point. Alors $E$ est un $\kappa$-espace vectoriel de dimension finie.
\end{lemma}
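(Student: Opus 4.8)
The plan is to pass to the associated graded module $\gr E$ attached to a good filtration and to show that it is already a finite-dimensional $\kappa$-vector space. Indeed $\gr E = \bigoplus_{m}\fil^{m}E/\fil^{m-1}E$, so if $\dim_{\kappa}\gr E$ is finite then only finitely many graded pieces are nonzero, the good filtration stabilises, $\fil^{m_{0}}E = E$ for some $m_{0}$, and telescoping gives $\dim_{\kappa}E = \dim_{\kappa}\gr E < \infty$. Everything therefore comes down to bounding $\dim_{\kappa}\gr E$.

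First I would reduce to the cyclic case. Write $E = \sum_{i=1}^{r}\Dx e_{i}$ with $I_{i} = \ann_{\Dx}(e_{i})$, so that $\Dx e_{i}\simeq\Dx/I_{i}$, there is a surjection $\bigoplus_{i}\Dx/I_{i}\to E$, and, by the decomposition $\Car E = \bigcup_{i}\Car(\Dx/I_{i})$ recalled in \ref{partiedx}, each $\Car(\Dx/I_{i})$ is contained in the single point $\Car E$. This excludes $I_{i} = 0$, for which $\Car(\Dx/I_{i}) = \Spec(\gr\Dx)$ is two-dimensional; and the explicit description (\ref{equacar}) of the characteristic variety of a quotient leaves, apart from the trivial case $I_{i} = \Dx$ (where $\Dx/I_{i} = 0$), only the possibility $\Car(\Dx/I_{i}) = \{(0,0)\}$ with $d(I_{i}) = v(I_{i}) = 0$. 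So it is enough to treat a single $E = \Dx/I$ with $I \ne \Dx$ and $d(I) = v(I) = 0$.

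For such an $E$ I would use the quotient filtration, which gives $\gr E = \gr\Dx/\gr(I)$ inside $\gr\Dx \simeq \O_{X,x}[\xi_{k}]$. Since $d(I) = 0$, the ideal $I$ contains a nonzero operator of order $0$, hence (after dividing by a unit of $\O_{X,x}$) a power $t^{a}\in I$; since $v(I) = 0$, it contains an operator $P$ of some order $e$ whose leading coefficient is a unit, so that $\sigma(P) = u\,\xi_{k}^{e}$ with $u$ invertible and therefore $\xi_{k}^{e}\in\gr(I)$. Both $a$ and $e$ are at least $1$ because $I \ne \Dx$. Consequently $\gr E$ is a quotient of
\[
\gr\Dx/(t^{a},\xi_{k}^{e}) \;\simeq\; \bigl(\O_{X,x}/t^{a}\O_{X,x}\bigr)[\xi_{k}]/(\xi_{k}^{e}),
\]
a $\kappa$-vector space of dimension $a\,e\,[\kappa':\kappa]$, where $\kappa'$ is the residue field of the discrete valuation ring $\O_{X,x}$, a finite extension of $\kappa$ since $x$ is a closed point of the curve $X$. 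Hence $\dim_{\kappa}\gr E < \infty$, and the first paragraph concludes the argument.

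The only step that genuinely uses the structure theory, rather than generalities on filtered and graded modules, is the reduction to the cyclic case: it is (\ref{equacar}) that rules out every zero-dimensional characteristic variety of a $\Dx$-quotient except the vertex $(0,0)$, whose residue field is the \emph{finite} extension $\kappa'$ of $\kappa$. This finiteness is precisely what makes $\gr\Dx/(t^{a},\xi_{k}^{e})$, and hence $E$, finite-dimensional over $\kappa$ and not merely of finite length over $\gr\Dx$. I do not anticipate any other difficulty.
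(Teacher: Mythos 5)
Your proof is correct, and it takes a genuinely different route from the paper's. You reduce to the cyclic case $E = \Dx/I$ via the identity $\Car E = \bigcup_i \Car(\Dx/I_i)$ recalled in section~\ref{partiedx}, then read off from the table~(\ref{equacar}) that a zero-dimensional characteristic variety forces $I\neq 0$, $I\neq\Dx$ and $d(I)=v(I)=0$; this produces $t^a$ and $\xi_k^e$ in $\gr(I)$ for the quotient filtration, so $\gr E$ is a quotient of the $\kappa$-finite algebra $(\O_{X,x}/t^a)[\xi_k]/(\xi_k^e)$, and you then descend finiteness from $\gr E$ to $E$ by the usual telescoping argument. The paper, by contrast, does not pass to a cyclic quotient: it keeps an arbitrary good filtration on $E$, normalises it so that $\Fil^i(\Dx)\cdot\Fil^j(E)=\Fil^{i+j}(E)$, observes directly that the closed point of $\Car E$ must be $(t,\xi_k)$, hence that some $t^v$ and $\xi_k^d$ kill $\gr E$, uses this to stabilise the filtration and show $E$ is finite over $\O_{X,x}$, and finally transfers the $t$-nilpotence from $\gr E$ back to $E$; that last transfer is what forces the paper's case split between $k\geq 1$ (where $\Dx$ is commutative and $t^v E=0$ at once) and $k=0$ (where only $t^{vd}E=0$). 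By arguing entirely inside the commutative ring $\gr\Dx$ you make that case split disappear, and you also absorb the residue-field extension $[\kappa':\kappa]$ into a single dimension count instead of a separate base-change paragraph at the end. The structural input is essentially the same in both treatments (your invocation of~(\ref{equacar}) and the paper's identification of the point with the ideal $(t,\xi_k)$ amount to the same fact), so there is no circularity; your version is somewhat leaner.
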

\begin{proof}
On traite tout d'abord le cas où $x$ est un point $\kappa$-rationnel. On se donne une bonne filtration $(\Fil^i E)_{i \in \N}$ de $E$. On note $\gr E$ le gradué associé.
Par définition d'une bonne filtration, il existe des éléments $e_1 , \dots , e_n$ de $E$ tels que leurs symboles principaux $\sigma(e_1), \dots , \sigma(e_n)$ engendrent le gradué $\gr E$.
On peut démontrer que $e_1 , \dots ,e_n$ engendrent $E$ en tant que $\Dx$-module.
On note $d_\ell$ l'ordre de $e_\ell$.
On peut vérifier que pour tout entier $m \geq \max_{1 \leq \ell \leq n} \{ d_\ell \}$, $\fil^m (E) = \sum_{\ell=1}^n \fil^{m-d_\ell} (\Dks) \cdot e_i$
On en déduit que $ \Fil^i (\Dx) \cdot \Fil^j (E) = \Fil^{i+j} (E)$ à partir d'un rang $j$ et que les $\Fil^i(E)$ sont des $\O_{X,x}$-modules de type fini.
Quitte à décaler la filtration, on peut supposer que
\[ \forall i , j \in \N, ~~ \Fil^i (\Dx) \cdot \Fil^j (E) = \Fil^{i+j} (E) .\]
En particulier, $\Fil^i (E) = \Fil^i (\Dx) \cdot \Fil^0 (E)$. Ainsi, tout système de générateurs $(e_1 , \dots, e_r)$ de $\Fil^0(E)$ en tant que $\O_{X,x}$-module engendre $E$ en tant que $\Dx$-module.

On suppose que $\Car E$ est un point. L'idéal définissant $\Car E$ est un idéal maximal de $\O_{X,x}[\xi_k]$ homogène en $\xi_k$ : le point $\Car(E)$ correspond nécessairement à l'idéal $(t , \xi_k)$.
Il existe donc deux entiers $d$ et $v$ tels que $t^v$ et $\xi_k^d$ annulent $\gr E$.
En particulier, $\xi_k^d \cdot \gr^i(E)$ est nul dans $\gr^{d+i}(E)$. Sur la filtration cela se traduit par
\[ \partial_k ^d \cdot \Fil^i E \subset \Fil^{i+d-1} (E) = \Fil^{d-1} (\Dx) \cdot \Fil^i (E) .\]
Pour $i=0$, on obtient
\[ \partial_k^d \cdot \Fil^0 (E) \subset \Fil^{d-1} (\Dx) \cdot \Fil^0(E) .\]
Il en résulte que pour tout entier naturel $i$,
\[ \Fil^i (E) = \Fil^i(\Dx) \cdot \Fil^0(E) \subset \Fil^{d-1} (\Dx) \cdot \Fil^0(E) .\]
La filtration de $E$ est donc stationnaire et $\Fil^n(E) = E$ pour tout entier $n \geq d-1$.
Ainsi, $E$ est engendré sur $\O_{X,x}$ par les $\partial_k^j \cdot e_i$ pour $j\in \{ 0 , \dots , d-1 \}$ et $i \in \{1, \dots , r\}$ : $E$ est un $\O_{X,x}$-module de type fini.

On rappelle que $\Fil^0(E)$ est annulé par $t^v$ et que pour tout entier $k\geq 1$, l'algèbre $\Dx$ est commutative.
Puisque $\Fil^0(E)$ engendre $E$ en tant que $\Dx$-module, $E$ est annulé par $t^v$ dès que $k \geq 1$.
Sinon, lorsque $k = 0$, le fait que $t^v$ annule $\gr E$ implique que $t^{v(\ell+1)}$ annule $\fil^\ell (E)$.
En particulier, $t^{v d}$ annule $E = \Fil^{d-1}(E)$. Dans tous les cas, $E$ est annulé par une puissance de $t$ que l'on note encore $t^v$.

Ainsi, $E = E / t^v E$ est un $\O_{X,x}/t^v \O_{X,x}$-module de type fini.
Pour conclure, il suffit de prouver que $\O_{X,x}/t^v \O_{X,x}$ est un $\kappa$-espace vectoriel de dimension finie. On le montre par récurrence sur $v$. On dispose de la suite exacte d'anneaux
\[ \xymatrix{ 0 \ar[r] & t^{v-1} \O_{X,x} / t^v \O_{X,x} \ar[r] & \O_{X,x}/t^v \O_{X,x} \ar[r] & \O_{X,x}/t^{v-1} \O_{X,x} \ar[r] & 0 } \]
avec $t^{v-1} \O_{X,x} / t^v \O_{X,x} \simeq \kappa = \O_{X,x}/t \O_{X,x}$ (via la multiplication par $ t^{v-1}$).
La première flèche $\kappa \to \O_{X,x} / t^v \O_{X,x}$ munit $\O_{X,x} / t^v \O_{X,x}$ d'une structure de $\kappa$-espace vectoriel.
La suite reste exacte en considérant les quotients comme des $\kappa$-espaces vectoriels.
Par hypothèse de récurrence, $\O_{X,x}/t^{v-1} \O_{X,x}$ est un $\kappa$-espace vectoriel de dimension finie.
Ainsi, $\O_{X,x}/t^v \O_{X,x}$ est aussi de dimension finie sur $\kappa$.

Si maintenant $x$ est un point quelconque, alors $x$ est $\kappa'$-rationnel pour une extension finie $\kappa'$ de $\kappa$. Le même raisonnement montre que $E$ est un $\kappa'$-espace vectoriel de dimension finie. Puisque $\kappa'$ est de dimension finie sur $\kappa$, $E$ sera un $\kappa$-espace vectoriel de dimension finie.
\end{proof}

\begin{prop}[inégalité de Bernstein]\label{bernstein}
Soit $\E$ un $\Dkq$-module cohérent non nul. Alors toute composante irréductible de $\Car \E$ est de dimension au moins un.
En particulier, $\dim(\Car \E) \geq 1$. De plus les multiplicités de $\E$ sont non nulles. 
\end{prop}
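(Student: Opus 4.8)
\emph{Plan de d\'emonstration.} L'id\'ee est de se ramener \`a un \'enonc\'e de finitude dont on tire une contradiction par un argument de trace valable seulement en caract\'eristique nulle. On observe d'abord que $\Car \E$ est une partie conique de $T^*X$, \'etant le support d'un $\O_{T^*X}$-module provenant d'un module gradu\'e sur $\gr \Dks$ ; sa dimension est donc au plus $\dim T^*X = 2$, et il suffit de montrer qu'aucune de ses composantes irr\'eductibles n'est de dimension $0$. Or une telle composante, irr\'eductible de dimension nulle et stable sous les homoth\'eties $\xi_k \mapsto \lambda\xi_k$ des fibres cotangentes, est fixe sous cette action, donc contenue dans la section nulle : elle s'\'ecrit $\{s(x)\}$ pour un point ferm\'e $x$ de $X$.

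Supposons par l'absurde que $\{s(x)\}$ soit une composante irr\'eductible de $\Car \E$. Puisque la section nulle et $T^*X$ contiennent $s(x)$, la maximalit\'e de $\{s(x)\}$ les exclut de $\Car \E$ ; toute composante de $\Car \E$ se projette donc sur un point ferm\'e de $X$, et l'on v\'erifie que $x$ est isol\'e dans $\supp(\E \otimes \kappa) = \pi(\Car \E)$. On peut alors choisir un ouvert affine $U$ de $\X$ contenant $x$, muni d'une coordonn\'ee \'etale, tel que $\supp(\E \otimes \kappa) \cap U = \{x\}$ ; le $\Dks$-module $\Eo(U)/\varpi\Eo(U) = (\E \otimes \kappa)(U)$ s'identifie alors \`a sa fibre $\E_x \otimes \kappa$. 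D'apr\`es le lemme \ref{lemmex}, la vari\'et\'e $\Car(\E_x \otimes \kappa)$ est r\'eduite \`a un point, puisque $\{s(x)\}$ est l'unique composante de $\Car \E$ contenant $s(x)$ ; le lemme \ref{lemmeev} entra\^ine alors que $\Eo(U)/\varpi\Eo(U)$ est un $\kappa$-espace vectoriel de dimension finie, non nul car $s(x) \in \Car \E$.

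Comme $\Eo(U)$ est de type fini sur l'alg\`ebre noeth\'erienne $\varpi$-adiquement compl\`ete $\Dk(U)$, il est $\varpi$-adiquement s\'epar\'e et complet, et le lemme de Nakayama $\varpi$-adique montre, \`a partir de la finitude de $\Eo(U)/\varpi\Eo(U)$ sur $\kappa$, que $\Eo(U)$ est de type fini sur $\V$ ; comme $\Eo(U)$ est en outre sans $\varpi$-torsion et non nul, $\E(U) = \Eo(U) \otimes_\V K$ est un $K$-espace vectoriel non nul de dimension finie. Les op\'erateurs $t$ et $\varpi^k\partial$ de $\Dkq(U)$ agissent $K$-lin\'eairement sur $\E(U)$ et v\'erifient $[\varpi^k\partial , t] = \varpi^k\cdot\id$ dans $\Dkq(U)$ ; en prenant la trace de ce commutateur d'endomorphismes de $\E(U)$, il vient $0 = \Tr_K([\varpi^k\partial , t]) = \varpi^k\cdot\dim_K \E(U)$, d'o\`u $\E(U) = 0$ puisque $\varpi \neq 0$ dans le corps $K$, de caract\'eristique nulle. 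C'est la contradiction cherch\'ee.

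On conclut que $\Car \E$ ne poss\`ede aucune composante de dimension $0$, donc que toutes ses composantes irr\'eductibles sont de dimension au moins $1$. Enfin, $\E$ \'etant non nul, le $\Dks$-module $\E \otimes \kappa$ est non nul (lemme de Nakayama) et $\Car \E$ est non vide, d'o\`u $\dim \Car \E \geq 1$ ; les multiplicit\'es de $\E$ sont strictement positives par d\'efinition, \'etant des longueurs de modules artiniens non nuls. L'obstacle principal est d'obtenir un mod\`ele entier de dimension finie sur $\kappa$ : il faut utiliser que $\{s(x)\}$ est une composante pour se ramener \`a un ouvert affine o\`u $\E \otimes \kappa$ est support\'e en $\{x\}$, puis invoquer les lemmes \ref{lemmex} et \ref{lemmeev} et relever en caract\'eristique nulle ; c'est \`a ce stade, et \`a lui seul, que la structure de sch\'ema formel de $\X$ est essentielle, l'argument de trace \'echouant au niveau de la fibre sp\'eciale (o\`u il ne donnerait qu'une congruence modulo $p$).
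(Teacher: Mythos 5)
Your proof is correct and uses essentially the same strategy as the paper: after reducing, via the lemmes \ref{lemmex} et \ref{lemmeev}, to the case where a model is a finite-dimensional $\kappa$-vector space, you lift through $\varpi$-adic completeness to get a nonzero finite-dimensional $K$-vector space, and conclude by taking the trace of $[\varpi^k\partial, t] = \varpi^k\cdot\id$, which forces this dimension to vanish in characteristic zero. The variations you introduce — invoking conicity to place the putative point component on the zero section, working with global sections over a well-chosen affine $U$ instead of with stalks, quoting complete Nakayama rather than re-deriving the successive approximation, and reaching the contradiction directly from $\E(U)\neq 0$ rather than via the paper's auxiliary detour through Diracs — are presentational and do not change the substance of the argument.
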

\begin{proof}
On note $E = \E \otimes \kappa$ la réduction modulo $\varpi$ d'un modèle entier $\Eo$ de $\E$.
On rappelle que par définition, $\Car \E = \Car E$. Si $\E$ est non nul, alors $E$ est aussi non nul. Dans ce cas, $\Car \E \neq \emptyset$.

On suppose qu'une composante irréductible de $\Car \E$ est un point $z = (x , \xi)$. Alors $\Car \E_x = \Car E_x$ est contenue dans un point.
Si cette variété caractéristique est vide, alors $\E_x = 0$ et $E_x = 0$.
Sinon le lemme \ref{lemmeev} montre que $E_x$ est un $\kappa$-espace vectoriel de dimension finie.

On en déduit que $\E$ est un $K$-espace vectoriel de dimension finie au voisinage de $x$.
En effet, soit $\bar{e}_1 , \dots, \bar{e}_r$ une base de $E_x$ comme $\kappa$-espace vectoriel.
On note $e_1 , \dots , e_r$ des relèvements de ces éléments dans $\Eo_x$ et $\F = \V \cdot e_1 + \dots + \V \cdot e_r$.
C'est un sous-$\V$-module complet de $\Eo_x$ pour la topologie $\varpi$-adique. Soit $y \in \Eo_x$. On montre que $y \in \F$.
Puisque $\bar{y} \in E = \kappa \cdot \bar{e}_1 + \dots + \kappa\cdot \bar{e}_r$, il existe $y_1 \in \F$ et $z_1 \in \varpi \cdot \Eo_x$ tels que $y = y_1 + z_1$.
De même, $\varpi^{-1} z_1$ s'écrit sous la forme $y_2 + \tilde{z}_2$ avec $y_2 \in \F$ et $\tilde{z}_2 \in \varpi\cdot \Eo_x$.
On obtient $y =  (y_1 + \varpi  y_2) + z_2$ avec $y_1 , y_2 \in \F$ et $z_2 = \varpi  \tilde{z}_2 \in \varpi^2 \cdot \Eo_x$. Une récurrence montre que pour tout entier $n \geq 1$, il existe $y_1, \dots , y_n \in \F$ et $z_n \in \varpi^n \cdot \Eo_x$ tels que
\[ y = y_1 + \varpi y_2 + \dots + \varpi^{n-1} y_n + z_n .\]
Puisque $\F$ est complet pour la topologie $\omega$-adique, le terme $y_1 + \varpi y_2 + \dots + \varpi^{n-1} y_n$ converge vers un élément $y_\infty \in  \F$.
Par ailleurs, comme $\Eo$ est sans $\varpi$-torson, $\Eo$ est séparé pour la topologie $\varpi$-adique. Ainsi, la suite $(z_n)_n$ converge vers zéro.
Le passage à la limite $n \to \infty$ donne $y = y_\infty \in  \F$.
Autrement dit, $\Eo_x = \F = \V \cdot e_1 + \dots + \V \cdot e_r$.
On en déduit donc que $\E_x \simeq \Eo_x \otimes_\V K = K\cdot e_1 + \dots + K \cdot e_r$ est un $K$-espace vectoriel de dimension finie.

On rappelle que $[ \varpi^k \partial , t] = \varpi^k \cdot  \id$. Comme $\E_x$ est un $K$-espace vectoriel de dimension finie, on a
\[ \Tr ( [ \varpi^k \partial , t]  ) = 0 = \Tr(\varpi^k \cdot  \id) = \varpi^k \cdot  \Tr(\id) = (\varpi^k \dim_K \E_x) .\]
Puisque $K$ est de caractéristique nulle, $\dim_K \E_x = 0$. Donc $\E_x = 0$ et $\E$ est nul au voisinage de $x$.

Dans tous les cas, $E_x = 0$ et $E$ est nul au voisinage de $x$.
Ainsi, le support de $E$ est un sous-schéma fermé propre de $X$ : sa dimension est strictement inférieure  à $\dim X = 1$ puisque $X$ est irréductible.
Le support de $E$ consiste donc en un nombre fini de points.
Autrement dit, $E$ est une somme directe de Dirac (ie de $\Dks$-modules supportés en un point).
Mais la variété caractéristique d'un Dirac est de dimension un, voir l'exemple \ref{exempledirac}.
La variété caractéristique de $E$ en $x$ est alors une union finie de droites d'après la proposition \ref{propse}.
Cela contredit l'hypothèse qu'une composante irréductible est un point.
Ainsi, soit $\E$ est nul, soit les composantes irréductibles de $\Car \E$ sont de dimension au moins un.

On rappelle que $\Car E = \supp \tilde{E}$ où $\tilde{E} = \O_{T^*X} \otimes_{\pi^{-1} (\gr \Dks)} \pi^{-1} (\gr E)$ est un $\O_{T^*X}$-module cohérent.
Soit $\eta$ le point générique d'une composante irréductible $C$ de $\Car \E$.
La multiplicité $m_C$ de $C$ est la longueur du $(\O_{T^*X})_\eta$-module artinien $\tilde{E}_\eta$.
Si $\E$ est non nul, alors $\tilde{E}_\eta$ est aussi non nul. Sa longueur $m_C$ est donc supérieure ou égale à un.
Autrement dit, les multiplicités des composantes irréductibles de $\Car \E$ sont toutes non nulles.
\end{proof}

\begin{cor}\label{cor3.14}
Un $\Dkq$-module cohérent $\E$ est nul si et seulement $\dim ( \Car \E) = 0$, ou de manière équivalente si toutes ses multiplicités sont nulles.
\end{cor}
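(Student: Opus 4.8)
The plan is to read this corollary off Proposition~\ref{bernstein}: the Bernstein inequality carries all the content, and the corollary is nothing more than its contrapositive packaged together with two trivial implications, so I expect no genuine obstacle here.

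First I would dispose of the implication $\E = 0 \Rightarrow \dim(\Car\E) \le 0$ (and all multiplicities zero). If $\E = 0$ one may simply take $\Eo = 0$ as an integer model --- or observe that any integer model $\Eo$, being $\varpi$-torsion free with $\Eo\otimes_\V K \simeq \E = 0$, embeds into $\Eo\otimes_\V K$ and hence vanishes. Either way $\Eo\otimes_\V\kappa = 0$, so $\Car\E = \supp(\Eo\otimes_\V\kappa) = \emptyset$; in particular $\dim(\Car\E) \le 0$ and, there being no irreducible component at all, every multiplicity of $\E$ is zero vacuously.

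For the converse I would argue by contraposition. Assume $\E \neq 0$. Then Proposition~\ref{bernstein} tells us that $\Car\E$ is nonempty, that each of its irreducible components has dimension at least one --- so $\dim(\Car\E) \ge 1$ --- and that each multiplicity $m_C(\E)$, for $C$ an irreducible component of $\Car\E$, satisfies $m_C(\E) \ge 1$. Hence neither ``$\dim(\Car\E) = 0$'' nor ``all multiplicities of $\E$ vanish'' can hold, which yields the two remaining implications and closes the circle of equivalences. The only thing to keep an eye on --- not really a difficulty --- is the bookkeeping around the empty characteristic variety: one agrees that $\Car\E = \emptyset$ counts as ``$\dim(\Car\E) = 0$'' (or rather $\le 0$), and one recalls that multiplicities are attached to the irreducible components of $\Car\E$, so that a genuinely zero-dimensional component would carry a nonzero multiplicity --- precisely the pathology that Proposition~\ref{bernstein} rules out for modules coming from $\Dkq$. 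Since everything substantive already sits in that proposition, the proof itself is a couple of lines.
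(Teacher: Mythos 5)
Your proof is correct and follows essentially the same route as the paper's: both directions are read off Proposition~\ref{bernstein} (Bernstein's inequality), with the trivial direction handled by taking the zero integer model so that $\Car\E=\emptyset$ and all multiplicities vanish by definition. The only difference is that you are slightly more careful than the paper in spelling out the contrapositive and in flagging the dimension-of-the-empty-set convention, which is a harmless precision, not a new argument.
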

\begin{proof}
Le premier point découle de la proposition \ref{bernstein}. On a vu que $\E \neq 0$ implique $\Car \E \neq  \emptyset$.
En particulier, si $\Car \E =  \emptyset$, alors $\E = 0$.
Dans ce cas, les multiplicités de $\E$ en les fermés irréductibles non vides de $T^*X$ sont nulles par définition.
Ainsi, $\E$ est nul si et seulement si ses multiplicités sont toutes nulles.
\end{proof}

\subsection{Modules holonomes}\label{partie2.4}

On démontre dans cette partie les propriétés vérifiées par les $\Dkq$-modules holonomes énoncées dans l'introduction.
En particulier, les $\Dkq$-modules holonomes coincident avec les $\Dkq$-modules de longueur finie.

\begin{definition}
Un $\Dkq$-module cohérent $\E$ est appelé module holonome si $\E = 0$ ou si $\dim \Car(\E) = \dim X = 1$.
\end{definition}

Par l'inégalité de Bernstein, un module $\E$ est holonome si et seulement si $\dim \Car \E \leq 1$.
La catégorie des $\Dkq$-modules holonomes est une sous-catégorie abélienne des $\Dkq$-modules cohérents d'après la proposition \ref{propse}. On réécrit ci-dessous son énoncé pour les $\Dkq$-modules cohérents.

\begin{prop}\label{propholo}
Soit $\xymatrix{ 0 \ar[r] & \M \ar[r] & \Nn \ar[r] & \L \ar[r] & 0 } $
une suite exacte de $\Dkq$-modules cohérents.
Alors $\Car \Nn = \Car \M \cup \Car \L$. En particulier, $\Nn$ est holonome si et seulement si $\L$ et $\M$ le sont.
\end{prop}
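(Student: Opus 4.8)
The plan is to reduce this statement to its special-fibre analogue, Proposition \ref{propse}, by choosing compatible integral models of the three $\Dkq$-modules in the sequence. The only genuine work lies in this choice of models; everything afterwards is formal.

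First I would pick an integral model $\Nn^\circ$ of $\Nn$, i.e. a coherent $\varpi$-torsion-free $\Dk$-module with $\Nn^\circ \otimes_\V K \simeq \Nn$ (such a model exists by \cite{berthelot1}). Viewing $\M$ as a $\Dkq$-submodule of $\Nn$, I would set $\M^\circ := \M \cap \Nn^\circ$ (intersection inside $\Nn$, where $\Nn^\circ$ is seen inside $\Nn = \Nn^\circ \otimes_\V K$): equivalently $\M^\circ = \ker(\Nn^\circ \to \Nn \to \L)$, so it is a coherent $\Dk$-submodule of $\Nn^\circ$, automatically $\varpi$-torsion-free, and it is a lattice in $\M$, i.e. $\M^\circ \otimes_\V K = \M$, since any section of $\M$ becomes a section of $\Nn^\circ$ after multiplication by a suitable power of $\varpi$. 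The naive quotient $\Nn^\circ / \M^\circ$ need not be $\varpi$-torsion-free, so I would let $\mathcal{T}$ be its $\varpi$-torsion subsheaf --- coherent because $\Dk$ is noetherian, so locally $\mathcal{T} = \ker(\varpi^N)$ for $N$ large --- and enlarge $\M^\circ$ to the preimage of $\mathcal{T}$ in $\Nn^\circ$, which I would again denote $\M^\circ$. This enlarged $\M^\circ$ is still coherent and $\varpi$-torsion-free, still satisfies $\M^\circ \otimes_\V K = \M$ (as $\mathcal{T} \otimes_\V K = 0$), and now $\L^\circ := \Nn^\circ / \M^\circ$ is a coherent $\varpi$-torsion-free $\Dk$-module with $\L^\circ \otimes_\V K = \Nn / \M = \L$.

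Next I would reduce modulo $\varpi$. Since $\L^\circ$ is $\varpi$-torsion-free, $\mathrm{Tor}_1^\V(\L^\circ, \kappa) = 0$, so
\[ \xymatrix{ 0 \ar[r] & \M^\circ \otimes_\V \kappa \ar[r] & \Nn^\circ \otimes_\V \kappa \ar[r] & \L^\circ \otimes_\V \kappa \ar[r] & 0 } \]
is a short exact sequence of coherent $\Dks$-modules. By the very definition of the characteristic variety of a coherent $\Dkq$-module, and its independence of the chosen integral model, we have $\Car \Nn = \Car(\Nn^\circ \otimes_\V \kappa)$, $\Car \M = \Car(\M^\circ \otimes_\V \kappa)$ and $\Car \L = \Car(\L^\circ \otimes_\V \kappa)$. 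Applying Proposition \ref{propse} to the displayed sequence then yields $\Car \Nn = \Car \M \cup \Car \L$ (together with additivity of multiplicities along common components, which is not needed here). Finally $\dim \Car \Nn = \max(\dim \Car \M, \dim \Car \L)$, so $\dim \Car \Nn \leq 1$ if and only if both $\dim \Car \M \leq 1$ and $\dim \Car \L \leq 1$; by Bernstein's inequality (Proposition \ref{bernstein}), recalled just after the definition of holonomy, a coherent $\Dkq$-module is holonomic precisely when its characteristic variety has dimension at most one, whence $\Nn$ is holonomic if and only if both $\M$ and $\L$ are.

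The main obstacle is the bookkeeping in the second paragraph: one must ensure the integral models of $\M$, $\Nn$ and $\L$ fit into a single short exact sequence and are all $\varpi$-torsion-free, which is exactly what forces the enlargement of $\M^\circ$ by the torsion of $\Nn^\circ/\M^\circ$ (so that the Tor term vanishes and exactness survives reduction mod $\varpi$). Once this is arranged, the result is simply inherited from the special-fibre statement \ref{propse}.
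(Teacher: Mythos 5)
Your proof is correct and carries out exactly the reduction to Proposition~\ref{propse} that the paper invokes (the paper merely asserts that Proposition~\ref{propholo} follows from~\ref{propse}, without spelling out the construction of compatible integral models, so you are supplying the omitted details). One small remark: the enlargement of $\M^\circ$ by the $\varpi$-torsion of $\Nn^\circ/\M^\circ$ is superfluous here, because with $\M^\circ = \ker(\Nn^\circ \to \L)$ the quotient $\Nn^\circ/\M^\circ$ already injects into $\L$, which is a $K$-vector space and therefore $\varpi$-torsion-free; so $\mathcal{T}=0$ and you could pass directly to the reduction modulo $\varpi$.
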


Voici un exemple de modules holonomes  : tout $\Dkq$-module cohérent de la forme $\Dkq / \I$ est holonome dès que $\I$ est un idéal cohérent non nul de $\Dkq$.

On regarde tout d'abord le cas très explicite où $\X = U$ est affine et $\E = \Dkq /P$ pour un opérateur différentiel $P$ non nul de $\Dkq(\X)$. On normalise $P$ afin d'avoir $\| P \|_k =1$.
Soit $\bar{P}$ l'image de $P$ dans $\Dks(X)$ et $x \in X$. On écrit $\bar{P} = \sum_{n=0}^d a_n \cdot \partial_k^n$  avec $d = \Nb(P)$.
On note $\alpha = N_k(P)$ la valuation de $a_d$ dans l'anneau de valuation discrète $\O_{X,x}$.
Quitte à multiplier $\bar{P}$ par un élément inversible de $\O_{X,x}$, on peut supposer que le coefficient dominant de $\bar{P}$ est $t^\alpha$.
Par définition, $(d , \alpha)$ est l'exposant de $P$ et de $\bar{P}$.
On note $E = \Dx / \bar{P}$.  Lorsque $x$ est $\kappa$-rationnel, les multiplicités de $E$ sont $d$ et $\alpha$.
L'idéal annulateur de $E$ est le radical de l'idéal engendré par le symbole principal $\sigma(\bar{P}) = t^\alpha \cdot \xi_k^d$ de $\bar{P}$.
On suppose $P$ non inversible au voisinage de $x$, ce qui est équivalent à avoir $\alpha \neq 0$ ou $d \neq 0$ d'après le corollaire \ref{corinv}.
Dans ce cas, $E$ est $\Dx$-module non nul. La variété caractéristique de $\E$ en $x$ est alors donnée par les équations
\[\Car E = \begin{cases}
t \cdot \xi_k = 0~~ \mathrm{si} ~~ \alpha \neq 0 ~~et ~~d \neq 0  \\
\xi_k  = 0 ~~~  \mathrm{si} ~~ \alpha = 0  \\
t = 0 ~~~~  \mathrm{si} ~~ d = 0
\end{cases} \]
Ces composantes irréductibles sont toutes de dimension 1 et $\dim \Car( \E_x \otimes \kappa) = 1$.
La variété caractéristique de $\E$ est donc de dimension $1$ et $\E$ est holonome.
Si $P$ est inversible au voisinage de $x$, alors $E = 0$ et la variété caractéristique de $\E$ en $x$ est vide. Cette condition est équivalente à $\alpha = d = 0$. On retrouve ainsi l'inégalité de Bernstein.

On passe au cas où $\E = \Dkq / \I$ pour un idéal cohérent $\I$ non nul. Soit $\Eo = \Dk / \J$ un modèle entier de $\E$.
La réduction modulo $\varpi$ de $\J$ est un idéal de $\Dks$ que l'on note $I$. L'exposant de $I_x$ est le couple $(N_k(\I) , \Nb(\I))$.
Le $\Dx$-module $E = \Dx / I_x$ est isomorphe à $\E_x \otimes \kappa$. Si $\E \neq 0$, alors $\E_x \otimes \kappa \neq 0$ pour au moins un point $x$ de $X$.
D'après les formules (\ref{equacar}) données page \pageref{equacar} et l'inégalité de Bernstein, on a
\[ \Car (\E_x \otimes \kappa) = \begin{cases}
t \cdot \xi_k = 0~~ \mathrm{si} ~~ N_k(\I) \neq 0 ~~et ~~ \Nb(\I) \neq 0  \\
\xi_k  = 0 ~~~  \mathrm{si} ~~ N_k(\I)  = 0  \\
t = 0 ~~~~  \mathrm{si} ~~ \Nb(\I) = 0 \\
\end{cases} \]
La variété caractéristique de $\E_x \otimes \kappa$ est donc de dimension 1. Si $\E  \neq 0$, on en déduit que $\dim \Car(\E) = 1$ et $\E$ est holonome.
Réciproquement, on verra plus tard que tout module holonome est de cette forme.

\begin{prop}\label{prop3.17}
Soit $\xymatrix{ 0 \ar[r] & \M \ar[r] & \Nn \ar[r] & \L \ar[r] & 0 }$ une suite exacte de $\Dkq$-modules holonomes. Alors $\cc(\Nn) = \cc (\M) + \cc(\L)$.
Autrement dit, les multiplicités s'additionnent pour les $\Dkq$-modules holonomes.
\end{prop}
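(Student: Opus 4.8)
The plan is to deduce this from Proposition \ref{propse} by choosing integral models of $\M$, $\Nn$ and $\L$ that form a short exact sequence of $\Dk$-modules which remains exact after reduction modulo $\varpi$, exactly as in the classical case (cf. \cite{hotta}) and in \cite{garnier} for $k=0$. The delicate point is that reduction modulo $\varpi$ is only right exact, so one cannot start from arbitrary integral models: the models of $\M$ and $\L$ must be chosen compatibly with a fixed model of $\Nn$.

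First I would fix any integral model $\Nn^\circ$ of $\Nn$, which exists by \cite{berthelot1} since $\Nn$ is coherent. As $\Nn^\circ$ is $\varpi$-torsion free it embeds into $\Nn^\circ \otimes_\V K \simeq \Nn$, so I may view $\Nn^\circ$ as a $\Dk$-submodule of $\Nn$. I then set $\M^\circ := \M \cap \Nn^\circ$ (the intersection being taken inside $\Nn$) and $\L^\circ := \Nn^\circ / \M^\circ$. Both are coherent, being respectively a submodule and a quotient of the coherent module $\Nn^\circ$ over the noetherian sheaf $\Dk$. The module $\M^\circ$ is $\varpi$-torsion free, being a $\Dk$-submodule of $\Nn^\circ$. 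The module $\L^\circ$ is $\varpi$-torsion free as well: if $n \in \Nn^\circ$ satisfies $\varpi n \in \M^\circ$, then $\varpi n \in \M$, and since $\M$ is a $\Dkq$-module (hence a $K$-vector space) this forces $n \in \M$, so $n \in \M \cap \Nn^\circ = \M^\circ$ and the class of $n$ in $\L^\circ$ vanishes. Applying the exact localization functor $-\otimes_\V K$, using that it commutes with the intersection $\M \cap \Nn^\circ$ inside $\Nn$ and that $\M \otimes_\V K = \M$, one gets $\M^\circ \otimes_\V K = \M \cap \Nn = \M$ and $\L^\circ \otimes_\V K = \Nn/\M = \L$; thus $\M^\circ$, $\Nn^\circ$, $\L^\circ$ are integral models of $\M$, $\Nn$, $\L$.

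By construction there is a short exact sequence of coherent $\Dk$-modules $0 \to \M^\circ \to \Nn^\circ \to \L^\circ \to 0$. Tensoring with $\kappa$ over $\V$ yields the exact sequence $\mathrm{Tor}_1^\V(\L^\circ , \kappa) \to \M^\circ \otimes \kappa \to \Nn^\circ \otimes \kappa \to \L^\circ \otimes \kappa \to 0$, and the leftmost term vanishes because $\L^\circ$ has no $\varpi$-torsion. Hence $0 \to \M^\circ \otimes \kappa \to \Nn^\circ \otimes \kappa \to \L^\circ \otimes \kappa \to 0$ is a short exact sequence of coherent $\Dks$-modules. Proposition \ref{propse} then gives $\Car(\Nn^\circ \otimes \kappa) = \Car(\M^\circ \otimes \kappa) \cup \Car(\L^\circ \otimes \kappa)$ together with $m_C(\Nn^\circ \otimes \kappa) = m_C(\M^\circ \otimes \kappa) + m_C(\L^\circ \otimes \kappa)$ for every irreducible component $C$, i.e. $\cc(\Nn^\circ \otimes \kappa) = \cc(\M^\circ \otimes \kappa) + \cc(\L^\circ \otimes \kappa)$. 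Since the characteristic variety and the multiplicities of a coherent $\Dkq$-module are independent of the chosen integral model, this reads $\cc(\Nn) = \cc(\M) + \cc(\L)$. I expect the only real obstacle to be the construction above: one must take $\M^\circ = \M \cap \Nn^\circ$ rather than an independently chosen model of $\M$, since it is precisely this choice that forces $\L^\circ$ to be $\varpi$-torsion free and makes the reduction modulo $\varpi$ stay short exact.
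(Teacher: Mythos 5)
Your construction of compatible integral models $\M^\circ := \M \cap \Nn^\circ$, $\Nn^\circ$, $\L^\circ := \Nn^\circ/\M^\circ$ is sound — and it actually supplies a detail the paper's own proof glosses over, namely why one may apply the $\Dks$-statement of Proposition \ref{propse} to a short exact sequence of $\Dkq$-modules, since reduction modulo $\varpi$ is only right exact in general. Your check that $\L^\circ$ is $\varpi$-torsion free is the right observation there.

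However, the last step of your argument contains a genuine gap, and it is precisely the step where the holonomy hypothesis must enter — which you never invoke. Proposition \ref{propse} gives $\Car \Nn = \Car \M \cup \Car \L$ and $m_C(\Nn) = m_C(\M) + m_C(\L)$ for every irreducible component $C$ of $\Car \Nn$. But the cycle $\cc(\M)$ is a sum over the irreducible components of $\Car\M$, and an irreducible component of $\Car\M$ need not be an irreducible component of $\Car\Nn$: it could be properly contained in a larger one (coming from $\Car\L$). In that case $\cc(\M)$ has a nonzero coefficient on a closed set that does not appear in $\cc(\Nn)$ at all, so $\cc(\Nn) \neq \cc(\M) + \cc(\L)$. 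The remark following the proposition in the paper makes exactly this point: when $\dim\Car\Nn = 2$, multiplicities do not add. The missing argument is the one the paper gives: by holonomy $\dim\Car\Nn \le 1$, and by the Bernstein inequality (Proposition \ref{bernstein}) every irreducible component of $\Car\M$, $\Car\L$, $\Car\Nn$ has dimension exactly $1$; since a $1$-dimensional irreducible closed subset of the $1$-dimensional variety $\Car\Nn$ is automatically maximal, one gets $I(\Car\Nn) = I(\Car\M) \cup I(\Car\L)$, and only then does the pointwise multiplicity formula of \ref{propse} upgrade to the equality of cycles. Without this you have proved the multiplicity additivity only over components of $\Car\Nn$, not the cycle identity.
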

\begin{proof}
La proposition \ref{propse} nous assure que $\Car \Nn = \Car \M \cup \Car \L$.
Elle nous dit aussi que si $C \in I( \Car \Nn)$ (ensemble des composantes irréductibles de $\Car \Nn$), alors $C \in I( \Car \M)$ ou $C \in I( \Car \L)$ et que $m_C(\Nn) = m_C(\M) + m_C(\L)$.
On suppose $\M$, $\L$ et $\Nn$ non nuls.
Alors $\dim \Car \Nn = \dim \Car \M = \dim \Car \L =1$ et toutes les composantes irréductibles sont de dimension un d'après l'inégalité de Bernstein.

Soit $I$ une composante irréductible de $\Car \M$ ou de $\Car \L$. Alors $I$ est un fermé irréductible de $\Car(\Nn)$ de dimension maximale $1 = \dim \Car \Nn$ : $C$ est donc une composante irréductible de $\Car \Nn$.
Ainsi, $I(\Car \Nn) = I(\Car \M) \cup I(\Car \L)$. L'égalité des cycles en découle puisqu'alors les multiplicités s'additionnent d'après \ref{propse}.
\end{proof}

\begin{remark}
Lorsque $\dim(\Car\Nn)) =2$, une composante irréductible de $\Car \L$ ou de $\Car \M$ n'est pas toujours une composante irréductible de $\Car \Nn$.
En effet, la dimension de la variété caractéristique $\Car \Nn$ peut être strictement supérieure à celle de $\Car \L$ ou de $\Car \M$.
Les multiplicités ne s'additionnent donc pas dans la catégorie des $\Dkq$-modules cohérents.
\end{remark}

On rappelle que $X$ est une courbe lisse connexe quasi-compacte. Le fibré cotangent $T^*X$ reste quasi-compact et noethérien.
La variété caractéristique de tout $\Dkq$-module cohérent a donc un nombre fini de composantes irréductibles et un nombre fini de multiplicités.
Puisque les multiplicités sont additives et puisqu'un module dont les multiplicités sont nulles est nul, tout $\Dkq$-module holonome va être de longueur finie.

\begin{prop}\label{prop2.18}
Tout $\Dkq$-module holonome est de longueur finie, inférieure à la somme de toutes ses multiplicités.
\end{prop}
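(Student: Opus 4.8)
The plan is to bound the length of a holonomic $\Dkq$-module $\E$ by the integer $\mu(\E) := \sum_{C \in I(\Car \E)} m_C(\E)$, the sum of all its multiplicities. First I would check that $\mu(\E)$ is a well-defined nonnegative integer: since $X$ is a quasi-compact curve, the cotangent bundle $T^*X$ is quasi-compact and noetherian, so $\Car \E$ has finitely many irreducible components and the sum is finite; moreover, by Corollaire \ref{cor3.14}, $\mu(\E) = 0$ if and only if $\E = 0$, so $\mu$ detects nonvanishing.

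The core of the argument is the additivity of $\mu$ along strictly increasing chains of submodules. Given a holonomic $\E$ and a chain $0 = \E_0 \subsetneq \E_1 \subsetneq \cdots \subsetneq \E_n = \E$, I would first observe that each $\E_i$ is coherent --- this uses that $\Dkq$ is a coherent, noetherian sheaf of rings (\cite{huyghe}), so submodules of coherent modules are coherent --- hence each subquotient $\E_i/\E_{i-1}$ is coherent. Applying Proposition \ref{propholo} to $0 \to \E_{i-1} \to \E_i \to \E_i/\E_{i-1} \to 0$, and using that $\E_i \subset \E$ is holonomic, every $\E_i$ and every subquotient $\E_i/\E_{i-1}$ is holonomic. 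Since all three terms of this short exact sequence are then holonomic, Proposition \ref{prop3.17} gives $\cc(\E_i) = \cc(\E_{i-1}) + \cc(\E_i/\E_{i-1})$, hence, taking the sum of all multiplicities, $\mu(\E_i) = \mu(\E_{i-1}) + \mu(\E_i/\E_{i-1})$. Telescoping over $i = 1, \dots, n$ gives $\mu(\E) = \sum_{i=1}^n \mu(\E_i/\E_{i-1})$.

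To conclude, each $\E_i/\E_{i-1}$ is a nonzero holonomic $\Dkq$-module, hence $\mu(\E_i/\E_{i-1}) \geq 1$ by the Bernstein inequality (Proposition \ref{bernstein}), so $n \leq \mu(\E)$. Since this holds for every strictly increasing chain of submodules of $\E$, the module $\E$ has finite length, at most $\mu(\E)$, that is, at most the sum of all its multiplicities.

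I do not anticipate a serious difficulty: the two ingredients that carry the proof --- additivity of characteristic cycles in the holonomic category, and the inequality $\mu(\E) \geq 1$ for $\E \neq 0$ --- are both already in hand, and it is exactly here that the failure of Bernstein at the level of $\Dx$-modules is circumvented. The only place demanding a little attention is the coherence of submodules of a coherent $\Dkq$-module, which follows from the noetherianity of the algebras $\Dkq(U)$; once that is in place, Proposition \ref{propholo} propagates holonomy to all subquotients and the counting argument is mechanical.
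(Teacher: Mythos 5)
Your proof is correct and rests on exactly the same two pillars as the paper's: additivity of characteristic cycles for holonomic modules (Proposition \ref{prop3.17}) and the Bernstein inequality (Proposition \ref{bernstein}) to guarantee that each nonzero subquotient contributes at least $1$ to the total multiplicity. The only organizational difference is that you directly bound the length of an arbitrary strictly increasing finite chain $0 = \E_0 \subsetneq \cdots \subsetneq \E_n = \E$ by telescoping $\mu$, whereas the paper instead establishes the descending chain condition (the total multiplicity drops by at least $1$ at each strict step, so the chain stabilizes) and combines this with the noetherianity of $\Dkq$; your phrasing has the minor advantage of making the explicit bound $\ell(\E) \le \sum_C m_C(\E)$ visibly fall out of the argument, which the paper states but does not spell out. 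In substance, the two proofs are the same.
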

\begin{proof}
Soit $\E$ un $\Dkq$-module holonome. Sa variété caractéristique a un nombre fini de composantes irréductibles et $\E$ n'a qu'un nombre fini de multiplicités.
Puisque $\Dkq$ est noetherien, il suffit de montrer que toute suite décroissante $(\E_n)_{n \in \N}$ de sous-$\Dkq$-modules de $\E$ est stationnaire.
On suppose que $\E_0 = \E$. Comme $\E_n$ est inclus dans $\E$, $\E_n$ est holonome. On considère la suite exacte courte de modules holonomes
\[ \xymatrix{ 0 \ar[r] & \E_{n+1} \ar[r] & \E_n \ar[r] & \E_n / \E_{n+1} \ar[r] & 0 }. \]
Les multiplicités de $\E_n$ sont la somme de celles de $\E_{n+1}$ et de $\E_n / \E_{n+1}$.
En particulier, les suites des multiplicités sont décroissantes. Elles sont donc stationnaires à partir d'un certain rang commun $n_0$ puisqu'il n'y a qu'un nombre fini fixé de multiplicités (donné par le nombre de multiplicités de $\E = \E_0$).
Pour tout entier $n \geq n_0$, les multiplicités de $\E_n / \E_{n+1}$ sont donc nulles par additivité.
Autrement dit, $\E_n / \E_{n+1} = 0$ par l'inégalité de Bernstein. Donc $\E_n = \E_{n+1}$ pour tout $n \geq n_0$.
Ainsi, $\E$ est de longueur finie inférieure ou égale à la somme de ses multiplicités.
\end{proof}

Le théorème suivant de Stafford, énoncé initialement pour les algèbres de Weyl, implique que tout $\Dkq$-module holonome est monogène. La preuve étant élémentaire, on en redonne une démontrée dans \cite{stafford}, partie 4.

\begin{theorem}[Stafford]\label{stafford}
Soit $R$ un anneau simple de longueur infinie en tant que $R$-module à gauche. Alors tout $R$-module de longueur finie est monogène.
\end{theorem}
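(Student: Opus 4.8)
The statement is purely ring-theoretic, and the plan is to reduce it to a one-step reduction-of-generators lemma and then prove that lemma by induction on length. Concretely, I would first prove the following: \emph{if $M$ is a left $R$-module of finite length, $a \in M$, and $N \subseteq M$ is a submodule with $M = Ra + N$, then there is $n \in N$ with $M = R(a+n)$}. Granting this, the theorem is immediate: a finite length module is finitely generated, say $M = Ra_1 + \dots + Ra_r$; applying the lemma with $N = Ra_2 + \dots + Ra_r$, which is a submodule of $M$ hence of finite length, produces the single generator $a_1 + n$.

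I would prove the lemma by induction on the length $\ell(N)$ of $N$. If $\ell(N) = 0$ then $N = 0$ and $n = 0$ works. If $\ell(N) \geq 1$, choose a submodule $Y \subsetneq N$ with $N/Y$ simple, so that $\ell(Y) < \ell(N)$. If $N \subseteq Ra + Y$, then $M = Ra + Y$ and the induction hypothesis applied to $Y$ already gives a generator of the required form. Otherwise, pass to $\overline M = M/Y$: the image $\overline N$ of $N$ is simple, and since $N \not\subseteq Ra + Y$ we have $\overline N \not\subseteq R\overline a$, so $\overline N \cap R\overline a$ is a proper submodule of the simple module $\overline N$, hence zero; therefore $\overline M = R\overline a \oplus \overline N$.

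The crux, and the only point where the hypotheses on $R$ are used, is to find $n_0 \in N$ whose class $s \in \overline N$ satisfies $R(\overline a + s) = \overline M$. Put $L = \ann_R(\overline a)$, so that $R\overline a \cong R/L$; as $R\overline a$ is a subquotient of $M$ it has finite length, while $R$ has infinite length, so $L \neq 0$. On the other hand $\ann_R(\overline N)$ is a two-sided ideal, proper because $\overline N \neq 0$, hence zero since $R$ is simple; writing $\ann_R(\overline N) = \bigcap_{s \in \overline N} \ann_R(s)$ and using $L \neq 0$, there must be some $s \in \overline N$ with $L \not\subseteq \ann_R(s)$, i.e. $Ls \neq 0$, hence $Ls = \overline N$ by simplicity. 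Lifting such an $s$ to $n_0 \in N$, one checks directly that $R(\overline a + s)$ surjects onto $\overline M/\overline N \cong R\overline a$ and that $R(\overline a + s) \cap \overline N = Ls = \overline N$, whence $R(\overline a + s) = \overline M$, i.e. $M = R(a + n_0) + Y$. Applying the induction hypothesis to $a' = a + n_0$ and $Y$ yields $y \in Y$ with $M = R(a' + y) = R\bigl(a + (n_0 + y)\bigr)$, and $n_0 + y \in N$; this closes the induction.

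The main obstacle is the inductive step just described: one has to manufacture the correcting term $n_0$ inside the simple quotient $\overline N$, and this rests on exactly the two hypotheses in play — no nonzero cyclic subquotient of a finite length module is isomorphic to $R$ (because $R$ has infinite length, so $\ann_R(\overline a) \neq 0$), and every nonzero module over a simple ring is faithful (so $\ann_R(\overline N) = 0$). The remainder — descending the generation and direct-sum statements to $M/Y$ and lifting back — is routine bookkeeping.
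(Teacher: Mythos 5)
Your proof is correct, and it follows a genuinely different route from the paper's. The paper's argument (taken from Stafford) is organized in two stages: an outer induction on the length of $M$ reduces to the case $M = R\alpha + R\beta$, and then a \emph{double} (lexicographic) induction on the pair $\bigl(\ell(R\beta),\,\ell((R\alpha+R\beta)/R\alpha)\bigr)$ collapses two generators to one; simplicity of $R$ enters by writing $1 = \sum_i s_i f r_i$ for a nonzero $f \in \ann_R(\alpha)$ and pushing this relation against $\beta$. You instead isolate a clean one-step reduction lemma --- given $M = Ra + N$ of finite length, some $n \in N$ satisfies $M = R(a+n)$ --- and run a \emph{single} induction on $\ell(N)$, passing to a simple quotient $\overline N = N/Y$ to arrive at the direct-sum situation $\overline M = R\overline a \oplus \overline N$. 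At that point you use simplicity in the abstract form ``$\ann_R(\overline N) = 0$ for any nonzero module over a simple ring'' to locate $s \in \overline N$ with $Ls = \overline N$ for $L = \ann_R(\overline a)$, and the computation $L(\overline a + s) = Ls = \overline N$ then forces $R(\overline a + s) = \overline M$. Both proofs use precisely the same pair of hypotheses (nonvanishing of annihilators of finite-length cyclics because $\ell(R) = \infty$, and faithfulness of nonzero modules over simple rings), but your single-induction reduction lemma avoids the nested lexicographic bookkeeping and yields a cleaner inductive structure; the paper's version, by contrast, makes more transparent the explicit modification $\alpha + r\beta$ of the generating pair, which is the form usually quoted from Stafford.
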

\begin{proof}
Soit $M$ un $R$-module de longueur finie. On commence par démontrer que $M$ est engendré par deux éléments $\alpha$ et $\beta$ par récurrence sur la longueur $\ell $ de $M$.
Si $\ell = 1$, alors $M$ est simple et donc engendré par un élément. Soit $\alpha \in M \backslash \{0\}$. Si $ M \neq R \cdot \alpha$, alors $M / R \alpha \neq 0$.
Puisque $\ell(M / R \alpha ) < \ell$, l'hypothèse de récurrence implique que $M / R \alpha $ est engendré par un élément $\bar{\beta}$ pour un certain $\beta \in M$.
Alors $M$ est engendré par $\alpha$ et $\beta$ en tant que $R$-module : $M = R \alpha + R \beta$.
On suppose dans la suite que $R \alpha \nsubseteq R \beta$ et que $R \beta \nsubseteq R \alpha$.
Pour toute paire d'éléments $(x , y)$ de $M$, on note
\[ \ell(x , y) = ( \ell(R y) , \ell((R x + R y) / R x)) \in \N^2 . \]
On dit que $(x' , y') < (x,y)$ si $\ell(x',y') < \ell(x,y)$ pour l'ordre lexicographique.
On suppose par récurrence sur $\ell(\alpha,\beta) \in \N^2$ que pour tout couple $(\alpha' , \beta') < (\alpha , \beta)$, il existe $\gamma' \in M$ tel que $R \alpha' + R \beta' = R \gamma'$.
L'initialisation est donnée par $\ell(0 , 0) = (0 , 0)$ pour $M = 0$.

Puisque $\ell(R) = + \infty$, $L(\alpha) := \ann_R(\alpha) \neq 0$.
En effet, l'application $R \to R \alpha, ~ a \mapsto a \alpha$ n'est pas injective car $\ell(R\alpha) < \infty$.
On fixe un élément $f \in L(\alpha) \backslash \{0\}$. Comme $R$ est simple, on peut trouver des éléments $s_1 , \dots , s_m, r_1 , \dots , r_m \in R$ tels que $1 = \sum_{i=1}^m s_i \cdot f \cdot r_i$.

S'il existe $x \in L(\alpha)$ et $y \in L(\beta)$ tels que $1 = x r + y$ pour un certain $r = r_i$, alors $M$ est engendré par un élément.
En effet, $\beta = (xr+y) \beta = xr\beta =x (\alpha+r\beta)$ car $y\beta = x\alpha = 0$ et $\alpha = (\alpha+r\beta) - r\beta$.
Ainsi, $\alpha, \beta \in R \cdot (\alpha+r\beta)$ et $M = R \alpha + R \beta = R\cdot (\alpha+r\beta)$.
On considère maintenant le cas où $R \neq L(\beta) + L(\alpha)\cdot r_i$ pour tout $i \in \{ 1 , \dots , m\}$.

Puisque $\sum_{i=1}^m s_i \cdot f \cdot r_i = 1$, on a $\sum_{i=1}^m R \cdot f \cdot r_i = R$ et $\sum_{i=1}^m R \cdot f  r_i \beta= R \beta$.
Comme $R \beta \nsubseteq R \alpha$, il existe un élément $r = r_i$ tel que $R \cdot fr\beta \not\subset R \alpha$.

L'inclusion stricte $L(\beta) + R \cdot fr \subset L(\beta) + L(\alpha) \cdot r \varsubsetneq R$ implique
\[ R \cdot fr\beta \simeq (L(\beta) + R\cdot fr) / L(\beta) \varsubsetneq R / L(\beta) \simeq R \beta . \]
Autrement dit, $(\alpha , fr\beta) < (\alpha , \beta)$. Par hypothèse, il existe $\gamma' \in M$ tel que $R \gamma' = R\cdot fr\beta + R \alpha$.
Puisque $R \cdot fr\beta \not\subset R \alpha$, $R \alpha \varsubsetneq R \gamma'$. On en déduit que
\[ \ell( (R\gamma' + R \beta) / R \gamma') = \ell( (R\alpha + R \beta) / R\gamma') < \ell( (R\alpha + R \beta) / R\alpha) . \]
Ainsi, $(\gamma' , \beta) < (\alpha , \beta)$. A nouveau par hypothèse de récurrence, il existe $\gamma \in M$ tel que
\[ R \gamma = R \gamma' + R \beta = R\alpha + R \beta = M . \]
Cet élément $\gamma$ engendre donc $M$ en tant que $R$-module.
\end{proof}

\begin{cor}\label{coromono}
Tout $\Dkq$-module holonome est localement monogène.
\end{cor}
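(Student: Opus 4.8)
The plan is to deduce this from Stafford's theorem (Theorem~\ref{stafford}), applied to the algebras $\Dkq(U)$. Being locally monogenic is a local question, so it suffices to fix a point $x\in\X$, choose an affine open $U$ of $\X$ containing $x$ and carrying a local coordinate (as in the Notations), and show that the $\Dkq(U)$-module $M:=\E(U)$ of global sections of $\E|_U$ is monogenic over $R:=\Dkq(U)$. The two hypotheses of Theorem~\ref{stafford} for $R$ will be checked first. Simplicity of $R=\Dkq(U)$ is exactly Proposition~\ref{prop1.10}. As for $M$, the coherent $\Dkq|_U$-module $\E|_U$ is again holonomic --- its characteristic variety is defined locally and restriction to an open subset cannot increase its dimension --- so by Proposition~\ref{prop2.18}, together with the equivalence between coherent $\Dkq|_U$-modules and finitely generated $\Dkq(U)$-modules on the affine $U$, the module $M$ has finite length over $R$.

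It remains to see that $R=\Dkq(U)$ has infinite length as a left module over itself. For this I would exhibit the strictly decreasing infinite chain of left ideals
\[ R \;\supsetneq\; R\cdot(\varpi^k\partial) \;\supsetneq\; R\cdot(\varpi^k\partial)^2 \;\supsetneq\; \cdots .\]
Strictness follows at once from the multiplicativity of $\Nb$ (point~3 of the proposition describing the Banach algebra structure of $\Dkq(U)$): one has $\Nb\big((\varpi^k\partial)^n\big)=n$, so an identity $(\varpi^k\partial)^n = Q\cdot(\varpi^k\partial)^{n+1}$ with $Q\in R$ would force $\Nb(Q)=-1$, which is absurd. Hence $R$ has no finite composition series, i.e.\ it is of infinite length as a left $R$-module.

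With these two facts established, Theorem~\ref{stafford} applies directly and gives $M=R\cdot e$ for some $e\in M=\E(U)$; translating back through the sheaf/module dictionary on the affine open $U$, the single section $e$ generates $\E|_U$, which is the assertion. I expect the only delicate point to be the bookkeeping around the affine chart: ensuring $U$ can be taken affine (so that Proposition~\ref{prop1.10} and the equivalence of categories are available), that the restriction $\E|_U$ is still holonomic hence of finite length over $\Dkq(U)$, and that $M$ being monogenic over $\Dkq(U)$ does translate into $\E|_U$ being generated by one section. The genuinely new input --- simplicity of $\Dkq(U)$ and the infinite-length computation above --- is then immediate from results already proved.
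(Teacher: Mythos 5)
Your proposal is correct and follows essentially the same route as the paper: simplicity of $\Dkq(U)$ from Proposition~\ref{prop1.10}, finite length of $\E(U)$ from Proposition~\ref{prop2.18}, infinite length of $\Dkq(U)$ via the strictly decreasing chain $\bigl(\Dkq(U)\cdot(\varpi^k\partial)^n\bigr)_n$, and then Stafford's theorem. The only cosmetic difference is that you invoke multiplicativity of $\Nb$ to see the chain is strict, whereas the paper invokes multiplicativity of $\|\cdot\|_k$ (hence integrality); these are the same underlying fact.
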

\begin{proof}
Soit $U$ un ouvert affine de $\X$ muni d'une coordonnée étale et $\E$ un $\Dkq$-module holonome.
L'algèbre $\Dkq(U)$ est simple par la proposition \ref{prop1.10}. Elle est aussi de longueur infinie à gauche et à droite.
En effet, la suite $\left(\Dkq(U) \cdot (\varpi^k \partial)^n\right)_{n \in \N}$ est strictement décroissante puisque l'algèbre $\Dkq(U)$ est intègre (la norme $\| \cdot \|_k$ est multiplicative).
D'après la proposition \ref{prop2.18}, le module $\E(U)$ est de longueur finie puisque $\E$ est holonome.
Le théorème \ref{stafford} assure alors l'existence d'un élément $e \in \E(U)$ tel que $\E_{| U} \simeq \widehat{\mathcal{D}}^{(0)}_{U, k, \Q} \cdot e$.
\end{proof}

Soit $\E$ un $\Dkq$-module holonome. Il est localement monogène.
Soit $U$ un ouvert de $\X$ sur lequel $\E$ engendré par une section $e$.
Alors $\I = \ann_{\widehat{\mathcal{D}}^{(0)}_{U, k, \Q}} (u)$ est un idéal cohérent non nul de $\widehat{\mathcal{D}}^{(0)}_{U, k, \Q}$.
Autrement l'application $\widehat{\mathcal{D}}^{(0)}_{U, k, \Q}$-linéaire $\widehat{\mathcal{D}}^{(0)}_{U, k, \Q} \to \E_{|U}$, $P \mapsto P \cdot u$ serait injective et $\E_{|U}$ serait aussi de longueur infinie.
Ainsi, $\E_{|U} \simeq \widehat{\mathcal{D}}^{(0)}_{U, k, \Q} / \I$ pour un idéal cohérent $\I$ non nul de $\widehat{\mathcal{D}}^{(0)}_{U, k, \Q}$.
Réciproquement, on a vu que tout $\Dkq$-module cohérent de la forme $\E = \Dkq / \I$, où $\I$ est un idéal non nul, est holonome. On peut maintenant énoncer plusieurs caractérisations des $\Dkq$-modules holonomes.

\begin{prop}\label{prop3.22}
Soit $\E$ un $\Dkq$-module cohérent. Les points suivants sont équivalents :
\begin{enumerate}
\item
$\E$ est holonome  ;
\item
$\E$ est localement de la forme $\widehat{\mathcal{D}}^{(0)}_{U, k, \Q} /\I$ pour un idéal cohérent $\I \neq 0$  ;
\item
$\E$ est de longueur finie  ;
\item
$\E$ est de torsion : pour tout ouvert affine $U$ de $\X$ et pour toute section $m \in \E(U)$, il existe $P \in \Dkq(U)$ non nul tel que $P \cdot m = 0$.
\end{enumerate}
\end{prop}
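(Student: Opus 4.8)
The plan is to establish the cycle $(1)\Rightarrow(3)\Rightarrow(4)\Rightarrow(1)$, together with $(1)\Leftrightarrow(2)$, which closes the loop and yields all four equivalences. The equivalence $(1)\Leftrightarrow(2)$ is essentially the content of the discussion preceding the statement: if $\E$ is holonomic it has finite length (Proposition~\ref{prop2.18}), so by Stafford's theorem (Corollary~\ref{coromono}) it is locally monogenic, $\E_{|U}\simeq\widehat{\mathcal{D}}^{(0)}_{U, k, \Q}\cdot e$, and the annihilator of $e$ is a coherent ideal which must be nonzero, for otherwise $\widehat{\mathcal{D}}^{(0)}_{U, k, \Q}\hookrightarrow\E_{|U}$ would force infinite length; conversely, that $\widehat{\mathcal{D}}^{(0)}_{U, k, \Q}/\I$ with $\I\neq0$ coherent is holonomic was obtained from the explicit description of $\Car(\widehat{\mathcal{D}}^{(0)}_{U, k, \Q}/\I)$ via the exponent $(N_k(\I),\Nb(\I))$ and the equations~(\ref{equacar}). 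The implication $(1)\Rightarrow(3)$ is exactly Proposition~\ref{prop2.18}.

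For $(3)\Rightarrow(4)$, I fix an affine open $U$ of $\X$ and a section $m\in\E(U)$, and I claim some nonzero $P\in\Dkq(U)$ kills $m$. If not, $a\mapsto am$ is injective, so the coherent ideal sheaf $\I=\ker(\widehat{\mathcal{D}}^{(0)}_{U, k, \Q}\to\E_{|U})$ has $\I(U)=0$, hence $\I=0$, and $\widehat{\mathcal{D}}^{(0)}_{U, k, \Q}\cdot m\simeq\widehat{\mathcal{D}}^{(0)}_{U, k, \Q}$ sits inside $\E_{|U}$. Now $\widehat{\mathcal{D}}^{(0)}_{U, k, \Q}$ has infinite length: picking a nonzero section $\theta$ of the tangent sheaf on $U$ and putting $D=\varpi^k\theta\in\Dkq(U)$, the subsheaves $\widehat{\mathcal{D}}^{(0)}_{U, k, \Q}\cdot D^n$ are strictly decreasing, since over any affine $W\subset U$ with an \'etale coordinate $\Dkq(W)$ is a domain in which $D$ is not invertible (its $\Nb$ equals $1$), and $\Dkq(U)$ embeds into $\Dkq(W)$ because $\Dkq$ is $\O_{\X,\Q}$-torsion-free. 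But $\E_{|U}$ has finite length: the restriction to $U$ of a composition series of $\E$ is, after deleting repetitions, a composition series of $\E_{|U}$, because the restriction of a simple coherent $\Dkq$-module $\mathcal S$ is simple or zero — a nonzero coherent subsheaf $\mathcal N$ of $\mathcal S_{|U}$ is the restriction of the coherent $\Dkq$-submodule $\ker(\mathcal S\to j_*(\mathcal S_{|U}/\mathcal N))$ of $\mathcal S$, where $j:U\hookrightarrow\X$ is the inclusion. The contradiction gives $(4)$.

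The substantive step is $(4)\Rightarrow(1)$. By coherence, $\X$ is covered by affine opens $U$ with an \'etale coordinate over which $\E_{|U}$ is generated by finitely many sections $m_1,\dots,m_r\in\E(U)$; by $(4)$, $\ann_{\Dkq(U)}(m_i)\neq0$, so the coherent subsheaf $\widehat{\mathcal{D}}^{(0)}_{U, k, \Q}\cdot m_i$ is isomorphic to $\widehat{\mathcal{D}}^{(0)}_{U, k, \Q}/\I_i$ for a nonzero coherent ideal $\I_i$. The filtration of $\E_{|U}$ by $\widehat{\mathcal{D}}^{(0)}_{U, k, \Q}\cdot m_1+\cdots+\widehat{\mathcal{D}}^{(0)}_{U, k, \Q}\cdot m_i$ has subquotients which are quotients of $\widehat{\mathcal{D}}^{(0)}_{U, k, \Q}\cdot m_i$, hence of the form $\widehat{\mathcal{D}}^{(0)}_{U, k, \Q}/\J$ with $\J\supseteq\I_i$ nonzero, thus holonomic by the explicit computation recalled above. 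Since holonomy is preserved by extensions (Proposition~\ref{propholo}), $\E_{|U}$ is holonomic, i.e.\ $\dim\Car(\E_{|U})\leq1$. As $\X$ is quasi-compact, finitely many such $U$ cover it and $\Car\E=\bigcup\Car(\E_{|U})$, so $\dim\Car\E\leq1$ and $\E$ is holonomic.

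I expect $(4)\Rightarrow(1)$ to be the main obstacle: the key is to present a torsion coherent module as a finite iterated extension of cyclic modules $\widehat{\mathcal{D}}^{(0)}_{U, k, \Q}/\I$ with $\I\neq0$ — whose holonomy is read off from the explicit characteristic variety — and then to reassemble the local dimension estimates via quasi-compactness. The auxiliary facts invoked (restriction preserves simplicity of coherent $\Dkq$-modules, $\Dkq(U)$ is an integral domain of infinite length for arbitrary affine $U$, and the characteristic variety is compatible with the sheaf/module dictionary) are routine and rest on the coherence results of \cite{huyghe}.
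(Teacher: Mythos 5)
Your proposal is correct and follows essentially the same route as the paper: $(1)\Leftrightarrow(2)$ via Stafford's theorem and the explicit equations~(\ref{equacar}), $(1)\Rightarrow(3)$ by Proposition~\ref{prop2.18}, $(3)\Rightarrow(4)$ by comparing the finite length of $\E(U)$ against the infinite length of $\Dkq(U)$, and $(4)\Rightarrow(1)$ by filtering a finitely generated torsion module by cyclic submodules $\widehat{\mathcal{D}}^{(0)}_{U,k,\Q}/\I_i$ with $\I_i\neq0$ and using closure of holonomy under extensions. The one place you go beyond the paper is in $(3)\Rightarrow(4)$, where the paper implicitly treats ``$\E(U)$ has finite length'' as given; your justification via $j_*$ and the observation that restriction of a simple coherent $\Dkq$-module to an open is simple or zero is a sound way to fill that small gap, but it does not change the overall strategy.
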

\begin{proof}
Les deux premiers points sont équivalents. D'après le théorème de Stafford et le corollaire \ref{coromono}, le point 3 est équivalent aux premiers.

On suppose maintenant $\E$ de longueur finie. Soit $U$ un ouvert affine de $\X$ et $(P \mod \I(U))$ un opérateur non nul de $\Dkq(U) / \I(U)$.
Puisque $\E(U)$ est de longueur finie et $\Dkq(U)$ est de longueur infinie, l'application $\Dkq(U) \to \E(U)$, $Q \mapsto Q \cdot (P \mod \I(U))$ n'est pas injective.
Ainsi, l'opérateur $(P \mod \I(U))$ est annulé par un élément non nul de $\Dkq(U)$ et $\E$ est un module de torsion.

Réciproquement, on suppose le module $\E$ de torsion. On se ramène au cas où $\X $ est affine en considérant un recouvrement ouvert affine fini de $\X$.
Comme le module $\E$ est cohérent, $\E$ est engendré par des sections globales $e_1, \dots ,e_r$.
On démontre par récurrence sur $r$ que le module $\E$ est holonome. Si $r = 1$, alors $\E \simeq \Dkq /\I$ où $\I$ est l'idéal annulateur de $e_1$.
Cet idéal est non nul car $e_1$ est de torsion et donc $\E$ est holonome.
Sinon par hypothèse de récurrence, le module $\E' = \Dkq \cdot e_2 + \dots + \Dkq \cdot e_r$ est de longueur finie.
Puisque $\E / \E' = \Dkq \cdot \overline{e_1}$ est aussi de longueur finie, le module $\E$ est forcément de longueur finie.
\end{proof}

On relie maintenant les modules holonomes aux modules à connexion  intégrable.
On identifie $X$ avec la section nulle $s : X \to T^*X$ du fibré cotangent $T^*X$ de $X$.
Le lemme suivant caractérise les modules à connexion intégrable.

\begin{lemma}\label{modconnexion}
Soit $\E$ un $\Dkq$-module holonome. Les énoncés suivants sont équivalents.
\begin{enumerate}
\item
Le $\Dkq$-module $\E$ est localement un $\O_{\X , \Q}$-module libre de rang fini.
\item
La variété caractéristique $\Car(\E)$ de $\E$ est incluse dans $X$.
\item
Le module $\E$ est localement de la forme $\Dkq / P$ avec $P$ un opérateur différentiel fini unitaire d'ordre égale au rang de $\E$ sur $\O_{\X , \Q}$.
\end{enumerate}
\end{lemma}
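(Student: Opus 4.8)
On prouvera le cycle d'implications $(1)\Rightarrow(3)\Rightarrow(2)\Rightarrow(1)$. Les trois \'enonc\'es \'etant locaux, on fixera un ouvert affine $U$ muni d'une coordonn\'ee $t$ (de sorte que $\partial(t)=1$), et, quitte \`a remplacer $K$ par une extension finie, on supposera le point ferm\'e $x$ consid\'er\'e rationnel --- une telle extension ne change ni $\dim\Car$ ni le caract\`ere $\O_{\X,\Q}$-libre d'un module. Pour $(3)\Rightarrow(2)$ : si $\E\simeq\Dkq/P$ sur $U$ avec $P$ unitaire d'ordre $r=\rg_{\O_{\X,\Q}}\E$, la comparaison de $r=\deg P$ avec le rang $\Nb(P)$ du $\O_{\X,\Q}$-module $\Dkq/P$ donn\'e par le corollaire~\ref{corhensel} entra\^ine $\deg P=\Nb(P)$, c'est-\`a-dire que $P$ est $\Nb$-dominant ; $P$ \'etant unitaire, cela signifie $\|P\|_k=1$, $\Nb(P)=r$ et $N_k(P)=0$. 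La r\'eduction $\bar P\in\Dks(U)$ est alors d'ordre $r$ et de symbole principal $\sigma(\bar P)=\xi_k^r$, donc, d'apr\`es le calcul des vari\'et\'es caract\'eristiques de la section~\ref{partie2.1}, on obtient $\Car\E=\{\xi_k=0\}=s(X)\subseteq X$.

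Pour $(1)\Rightarrow(3)$, on \'ecrira $\E$ localement comme un $\O_{\X,\Q}$-module libre de rang $r$ muni de sa connexion int\'egrable $\nabla=\varpi^k\partial$. Comme $\nabla^j e=\varpi^{kj}\partial^j e$ et $\varpi^{kj}\in K^\times\subseteq\O_{\X,\Q}$, un vecteur cyclique pour la d\'erivation $\partial$ (qui v\'erifie $\partial(t)=1$) l'est aussi pour $\nabla$ ; la caract\'eristique de $K$ \'etant nulle, le th\'eor\`eme du vecteur cyclique fournira, quitte \`a r\'eduire le voisinage affino\"ide de $x$, une section $e$ telle que $e,\nabla e,\dots,\nabla^{r-1}e$ soit une $\O_{\X,\Q}$-base de $\E$. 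Si $\nabla^r e=\sum_{j<r}b_j\nabla^j e$ avec $b_j\in\O_{\X,\Q}$, l'op\'erateur $P:=(\varpi^k\partial)^r-\sum_{j<r}b_j(\varpi^k\partial)^j$ est unitaire d'ordre $r$ et $Q\mapsto Q\cdot e$ induit un isomorphisme $\Dkq/P\iso\E$ ; comme $\rg_{\O_{\X,\Q}}\E=r=\deg P$, c'est l'\'enonc\'e $(3)$.

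Pour $(2)\Rightarrow(1)$, on montrera d'abord que $\E$ est $\O_{\X,\Q}$-coh\'erent. D'apr\`es la proposition~\ref{prop3.22}, on a localement $\E\simeq\Dkq/\I$ pour un id\'eal coh\'erent $\I\neq0$ ; on fixe $x$ et l'on suppose $\E_x\neq0$ (sinon $\E$ est nul au voisinage de $x$). D'apr\`es~(\ref{equacar}) et l'in\'egalit\'e de Bernstein (proposition~\ref{bernstein}), $\Car(\E_x\otimes\kappa)$ est non vide, de dimension un, et ne peut \^etre contenue dans $X=\{\xi_k=0\}$ que si $N_k(\I)=0$ (les autres cas faisant appara\^itre la droite $\{t=0\}$, qui n'est pas contenue dans la section nulle). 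Ainsi $\I_x$ contient un op\'erateur $P$ v\'erifiant $\|P\|_k=1$ dont la r\'eduction a un coefficient dominant inversible dans $\O_{X,x}$ ; le lemme de Hensel (proposition~\ref{lemmehensel}), suivi de la division par ce coefficient dominant (inversible sur un ouvert $V\ni x$), fournit un op\'erateur unitaire $\hat P\in\I_x$ d'ordre fini $D$. Alors $\Dkq/\hat P$ est un $\O_{\X,\Q}$-module libre de rang $D$ (corollaire~\ref{corhensel}) et l'on a une surjection $\Dkq/\hat P\twoheadrightarrow\E$ sur $V$, donc $\E$ est $\O_{\X,\Q}$-coh\'erent au voisinage de $x$, donc partout. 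Ensuite, $\varpi^k\partial$ \'etant une d\'erivation de $\O_{\X,\Q}$, il stabilise le sous-faisceau de $\O_{\X,\Q}$-torsion $\E_{\mathrm{tor}}$ (de $am=0$ on tire $a^2\varpi^k\partial(m)=-\varpi^k\partial(a)\cdot am=0$), qui est donc un sous-$\Dkq$-module coh\'erent ; \'etant $\O_{\X,\Q}$-coh\'erent et de torsion sur le faisceau $\O_{\X,\Q}$ d'anneaux de Dedekind (les alg\`ebres affino\"ides de la courbe lisse connexe $\X$ \'etant r\'eguli\`eres de dimension un), il est localement de longueur finie, donc localement un $K$-espace vectoriel de dimension finie. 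L'identit\'e $\Tr([\varpi^k\partial,t])=\varpi^k\dim_K(\E_{\mathrm{tor}})_x$ utilis\'ee dans la preuve de la proposition~\ref{bernstein}, jointe \`a la nullit\'e de la caract\'eristique de $K$, force $\E_{\mathrm{tor}}=0$ : $\E$ est donc coh\'erent et sans torsion sur $\O_{\X,\Q}$, donc localement libre de rang fini, ce qui est $(1)$.

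Le point d\'elicat sera le th\'eor\`eme du vecteur cyclique sur une base affino\"ide, utilis\'e dans $(1)\Rightarrow(3)$ : il faut disposer, sur un voisinage d'un point prescrit d'une courbe affino\"ide lisse, d'un vecteur cyclique pour une connexion ; cela d\'ecoule de la construction classique puisque $\O_{\X,\Q}$ contient $\Q$ et une coordonn\'ee $t$ avec $\partial(t)=1$, mais demande de contr\^oler le lieu o\`u cette construction d\'eg\'en\`ere. Une difficult\'e technique secondaire, dans $(2)\Rightarrow(1)$, tient \`a ce que la $\O_{\X,\Q}$-coh\'erence doit \^etre \'etablie directement, via le diviseur unitaire $\hat P$ extrait de $\I$, et non via le compl\'et\'e $\varpi$-adique d'un $\O_\X$-r\'eseau, un mod\`ele $\Dk$-coh\'erent n'\'etant pas n\'ecessairement $\O_\X$-coh\'erent.
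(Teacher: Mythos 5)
Your proof cycles $(1)\Rightarrow(3)\Rightarrow(2)\Rightarrow(1)$, whereas the paper proceeds $(1)\Rightarrow(2)\Rightarrow(3)\Rightarrow(1)$, and the two routes differ genuinely in the key steps.

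The one genuine gap lies in your $(1)\Rightarrow(3)$: you invoke a cyclic vector theorem to produce, near a prescribed closed point $x$ of an affino\"ide, a section $e$ with $e,\nabla e,\dots,\nabla^{r-1}e$ an $\O_{\X,\Q}$-basis. The version you need -- a cyclic vector whose degeneracy locus avoids a given closed point on a smooth affino\"ide curve -- is a non-trivial strengthening of the classical statement over a field (it requires something like Katz's algorithm together with control of where it breaks down), and you flag this but do not justify it. The paper avoids the cyclic vector theorem altogether: to pass from $(1)$ it reduces modulo $\varpi$ to a free $\O_X$-module $E$ of rank $d$, and applies Cayley--Hamilton to the $\O_X$-linear action of $\xi_k$ on $E$ to produce a monic polynomial of $\gr\Dks$ annihilating $E$, giving $\Car\E\subset X$ directly; this is elementary and self-contained. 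Unless you substantiate the cyclic vector input, this implication is incomplete.

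Your $(2)\Rightarrow(1)$ is, by contrast, a correct and arguably cleaner variant of the paper's $(2)\Rightarrow(3)$. Where the paper asserts that the division basis of $\I$ at $x$ collapses to a single $\Nb$-dominant operator -- a statement resting on the combinatorics of Section~\ref{partie1.4} -- you only need to extract one monic $\hat P\in\I_x$ via Hensel and inversion of the dominant coefficient, obtain $\O_{\X,\Q}$-coherence of $\E$ as a quotient of the free module $\Dkq/\hat P$, then kill $\O_{\X,\Q}$-torsion by the trace identity $\Tr([\varpi^k\partial,t])=0$ already used for Bernstein. This bypasses any claim about the division basis being a singleton and is a nice simplification. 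Your $(3)\Rightarrow(2)$ (monicity plus the rank from Corollary~\ref{corhensel} forces $\|P\|_k=1$, hence $N_k(P)=0$ and $\sigma(\bar P)=\xi_k^r$) is also correct. So the structure is sound and two of the three implications are valid alternatives, but the appeal to the cyclic vector theorem is the missing piece, and the paper's mod-$\varpi$ Cayley--Hamilton argument is precisely how to do without it.
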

\begin{proof}
On peut supposer que $X$ est affine muni d'une coordonnée locale.
Dans ce cas, $\gr \Dks \simeq \O_X[\xi]$. On suppose que le module $\E$ est non nul.
Puisque $\E$ est holonome, $\E$ est de la forme $\Dkq / \I$ pour un idéal cohérent non nul $\I$ de $\Dkq$.
Alors $E = \E \otimes_\V \kappa$ est un $\gr \Dks$-module cohérent de la forme  $\O_X[\xi] / I$ pour un certain idéal $I$ non nul.

On suppose tout d'abord que $\E$ est un $\O_{\X , \Q}$-module libre de rang fini $d$.
Il en découle que $E$ est un $\O_X$-module libre de rang $d$. Il existe des sections $e_1 , \dots , e_d$ de $E(X)$ telles que $E = \O_X \cdot e_1 \oplus \dots \oplus \O_X \cdot e_d$.
La famille $\{ \xi^n \cdot e_i \}_{n \in \Z}$ est liée sur $\O_X$. On peut donc trouver un entier $m \geq 1$ et des fonctions $a_j \in \O_X(X)$ tels que
\[ (\xi^m + a_{m-1} \xi^{m-1} + \dots + a_0) \cdot e_i = 0 . \]
Il en découle que la section $e_i$ est annulée par un polynôme unitaire $P_i$ de $\O_X[ \xi]$. Le polynôme unitaire $P = P_1  \dots P_n$ annule tous les éléments $e_1 , \dots , e_n$.
Le polynôme $P$ annule donc le module $E= \O_X \cdot e_1 \oplus \dots \oplus \O_X \cdot e_d$.
On en déduit que $P \in I$ et que $E = \O_X[\xi] / I$ est un $\O_X[\xi]$-module quotient de $\O_X[\xi] / P$.
Il vient $\Car(\E) = \Car(E) \subset \Car(\O_X[\xi] / P)$. Puisque $P$ est unitaire, on a $\Car(\O_X[\xi] / P) = X$.
Ainsi, $\Car(\E) \subset X$.

On suppose maintenant que la variété caractéristique de $\E$ est contenue dans $X$.
Soit $x $ un point de $X$. Quitte à étendre les scalaires $\kappa$ par une extension finie, on peut supposer que $x$ est $\kappa$-rationnel.
L'hypothèse $\Car(E) \subset X$ et la proposition \ref{prop3.10} impliquent que $N_k(\I) = N_k(I) = v_{\O_{X,x}}(I) = 0$.
Toute base de division de $\I$ en $x$ est donc réduite à un unique opérateur différentiel $P$ vérifiant $N_k(P) = 0$.
La condition $N_k(P) = 0$ signifie que le coefficient d'ordre $\Nb(P)$ de $P$ est inversible dans $\O_{\X , \Q}$ au voisinage de $x$.
Un tel opérateur $P$ est défini sur un ouvert affine de $X$ contenant $x$.
Quitte à réduire $X$, on peut supposer que $P \in \Dkq(X)$ et que le coefficient d'ordre $\Nb(P)$ de $P$ est inversible dans $\O_{\X , \Q}(\X)$.
Puisque $P$ est une base de division de l'idéal $\I$, on sait que $P$ engendre l'idéal $\I$. Ainsi, $\E \simeq \Dkq / P$.
Le corollaire \ref{corhensel} dit qu'il est possible de trouver un opérateur différentiel $Q$ unitaire d'ordre $\Nb(P)$ tel que $\E \simeq \Dkq / Q$.
On obtient le troisième point de la proposition.

Enfin le corollaire \ref{corhensel} assure que $\E$ est, localement au voisinage de $x$, un $\O_{\X, \Q}$-module  libre de rang $\Nb(Q)$.
Le schéma formel $\X$ étant irréductible, le nombre $\Nb(Q)$ ne dépend ni de $Q$ ni de $x$ d'après le corollaire \ref{cor1.2.12}.
On note $d$ cet entier. Pour résumer, $\E$ est localement un $\O_{\X , \Q}$-module libre de rang $d$.
\end{proof}

On en déduit une caractérisation des $\Dkq$-modules holonomes via les modules à connexion intégrable.

\begin{cor}
Soit $\E$ un $\Dkq$-module cohérent. Alors $\E$ est holonome si et seulement si il existe un ouvert non vide $U$ de $\X$ tel que $\E_{|U}$ soit un module à connexion intégrable.
\end{cor}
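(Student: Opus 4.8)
Pour établir cette équivalence, le plan est de traiter séparément les deux sens, en utilisant que la variété caractéristique est compatible à la restriction aux ouverts — de sorte que la restriction d'un $\Dkq$-module holonome reste holonome — et en se ramenant, dans chaque sens, à un quotient de la forme $\widehat{\mathcal{D}}^{(0)}_{\cdot, k, \Q}/P$.

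\textbf{Sens direct.} Partant d'un $\Dkq$-module holonome $\E$, que l'on peut supposer non nul (sinon $U = \X$ convient), j'appliquerais la proposition \ref{propdec} : elle fournit un ouvert affine non vide $V$ de $\X$, un opérateur $P \in \widehat{\mathcal{D}}^{(0)}_{V, k, \Q}(V)$ et un entier $n$ tels que $\E_{|V} \simeq (\widehat{\mathcal{D}}^{(0)}_{V, k, \Q} / P) \oplus (\widehat{\mathcal{D}}^{(0)}_{V, k, \Q})^n$. Comme $\E_{|V}$ est holonome et comme $(\widehat{\mathcal{D}}^{(0)}_{V, k, \Q})^n$ en est un facteur direct, la proposition \ref{propholo} forcerait ce facteur à être holonome ; or sa variété caractéristique est $T^*X_{|V}$, de dimension $2$, ce qui n'est possible que si $n = 0$. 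De même $P \neq 0$, faute de quoi $\E_{|V} \simeq \widehat{\mathcal{D}}^{(0)}_{V, k, \Q}$, de dimension caractéristique $2$, ne serait pas holonome. Le corollaire \ref{corhensel} appliqué à $P$ donne alors un ouvert non vide $U \subseteq V$ — obtenu en ôtant les zéros, en nombre fini, du coefficient dominant de $P$ — sur lequel $\E_{|U}$ est un $\O_{\X , \Q}$-module libre de rang fini $\Nb(P)$, c'est-à-dire un module à connexion intégrable (la valeur $\Nb(P) = 0$ correspondant au module nul, que l'on range aussi dans cette classe).

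\textbf{Sens réciproque.} Je supposerais $\E_{|U}$ module à connexion intégrable sur un ouvert non vide $U$. La réduction $(\E \otimes \kappa)_{|U}$ est alors un $\O_X$-module localement libre de rang fini ; munie de la filtration qui la concentre en degré $0$, qui est bonne car le gradué associé est $\O_X$-cohérent et $\xi_k$ y agit par zéro, le calcul de la première partie de la preuve du lemme \ref{modconnexion} montre que $\Car(\E_{|U})$ est contenue dans la section nulle de $T^*X$ au-dessus de $U$, donc de dimension au plus un. Par compatibilité à la restriction, $\Car \E \cap (T^*X_{|U}) = \Car(\E_{|U})$. Comme $X$ est une courbe, $X \setminus U$ est un ensemble fini de points fermés, donc $T^*X \setminus (T^*X_{|U})$ est une réunion finie de fibres cotangentes $\pi^{-1}(x_i) \simeq \mathbb{A}^1_\kappa$, chacune de dimension un. Toute composante irréductible de $\Car \E$ est alors, soit l'adhérence d'une composante irréductible de $\Car(\E_{|U})$, soit incluse dans un $\pi^{-1}(x_i)$, et dans les deux cas de dimension au plus un ; d'où $\dim \Car \E \leq 1$, c'est-à-dire $\E$ holonome.

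Le principal obstacle, en réalité modeste une fois la machinerie des sections précédentes disponible, est ce dernier recollement : il faut vérifier qu'aucune composante de dimension $2$ de $\Car \E$ ne peut surgir au-dessus des points de $X \setminus U$, ce qui repose précisément sur le fait que les fibres du fibré cotangent $T^*X$ sont de dimension un, donc sur l'hypothèse $\dim X = 1$. Tout le reste n'est qu'une lecture directe des propositions \ref{propdec} et \ref{propholo}, du corollaire \ref{corhensel} et de l'implication \og libre de rang fini $\Rightarrow$ caractéristique dans la section nulle \fg{} contenue dans le lemme \ref{modconnexion}.
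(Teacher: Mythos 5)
Votre preuve est correcte, mais elle emprunte un chemin sensiblement différent de celui de l'article dans les deux directions.

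Pour le sens direct, l'article se passe entièrement de la proposition \ref{propdec} : il retire de $\X$ les abscisses des composantes verticales de $\Car(\E)$ (en nombre fini), observe que $\Car(\E_{|U}) \subset U$, et conclut par l'équivalence $2 \Leftrightarrow 1$ du lemme \ref{modconnexion}. Vous, au contraire, passez par la décomposition $\E_{|V} \simeq (\widehat{\mathcal{D}}^{(0)}_{V,k,\Q}/P) \oplus (\widehat{\mathcal{D}}^{(0)}_{V,k,\Q})^n$ de la proposition \ref{propdec}, tuez le facteur libre par holonomie grâce à \ref{propholo}, puis invoquez \ref{corhensel} pour obtenir la liberté sur un ouvert. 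C'est plus long mais tout aussi valable, et cela évite d'invoquer le lemme \ref{modconnexion} pour ce sens-là. Pour le sens réciproque, l'article exploite directement l'irréductibilité de $T^*X$ : si $\E$ n'était pas holonome, sa variété caractéristique, fermée de dimension maximale $2$, coïnciderait avec $T^*X$ tout entier, donnant $\Car(\E_{|U}) = T^*U \not\subset U$ — contradiction immédiate. Votre analyse est plus géométrique et procède composante par composante : toute composante irréductible de $\Car \E$ rencontre $T^*X_{|U}$ ou est piégée dans une fibre $\pi^{-1}(x_i)$, et dans chaque cas sa dimension est majorée par un. Cela requiert deux vérifications implicites mais exactes (la compatibilité de la variété caractéristique à la restriction ouverte, et le fait qu'une composante rencontrant $T^*X_{|U}$ est l'adhérence d'une composante de $\Car(\E_{|U})$), alors que l'argument de l'article n'a besoin que de la première. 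Votre remarque finale sur le rôle de $\dim X = 1$ est juste : c'est bien la raison pour laquelle aucune composante de dimension $2$ ne peut surgir au-dessus de $X \setminus U$.
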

\begin{proof}
On suppose dans un premier temps que le module $\E$ est holonome. La variété caractéristique de $\E$ a un nombre fini de composantes irréductibles verticales.
On note $U$ l'ouvert de $\X$ obtenu en ôtant à $\X$ les abscisses des composantes verticales de $\Car(\E)$.
Par définition de $U$, on a $\Car(\E_{|U}) \subset U$. On en déduit que $\E_{|U}$ est un module à connexion intégrable d'après le lemme \ref{modconnexion}.

Réciproquement, soit $U$ un ouvert non vide de $\X$ pour lequel $\E_{|U}$ est un module à connexion intégrable.
Dans ce cas, $\Car(\E_{|U}) \subset U$, toujours d'après le lemme \ref{modconnexion}.
Si $\E$ n'est pas holonome, alors $\Car(\E) = T^*X$. En effet, $T^*X$ est irréductible puisque $X$ l'est et $\Car(\E)$ est une sous-variété fermée de $T^*X$ de dimension maximale deux.
En particulier, on aurait $\Car(\E_{|U}) = T^*U$. Cela contredit l'hypothèse $\Car(\E_{|U}) \subset U$.
Ainsi $\E$ est holonome.
\end{proof}

\subsection{Caractérisation cohomologique des modules holonomes}\label{partie3.5}

On énonce tout d'abord plusieurs résultats démontrés par Anne Virrion dans l'article \cite{virrion} pour les $\widehat{\mathcal{D}}^{(0)}_{\X, \Q}$-modules cohérents.
Les preuves et les propositions se généralisent sans difficulté pour un niveau de congruence $k$ quelconque.
En effet, les arguments des preuves des énoncés de Virrion utilisés dans ici se démontrent au niveau du gradué $\gr \widehat{\mathcal{D}}^{(0)}_{\X}$ avant de remonter à $\widehat{\mathcal{D}}^{(0)}_{\X}$ de manière classique.
Il suffit donc de vérifier les mêmes propriétés pour le gradué $\gr \widehat{\mathcal{D}}^{(0)}_{\X , k}$, ce qui est clair puisque $\gr \widehat{\mathcal{D}}^{(0)}_{\X , k} \simeq \gr \widehat{\mathcal{D}}^{(0)}_{\X }$.
On démontre ensuite qu'un $\Dkq$-module cohérent $\M$ est holonome si et seulement si
\[ \forall d \neq 1, ~~ \Ext_{\Dkq}^d (\M , \Dkq) = 0 . \]
Enfin, on définit un foncteur dualité de la catégorie des $\Dkq$-modules holonomes dans elle même vérifiant un isomorphisme de bidualité.

\vspace{0.4cm}

La proposition suivante se démontre comme le théorème 4.3 du chapitre 0 de \cite{virrion} dans le cas où $\X$ est une courbe formelle.
La preuve repose sur le calcul de la dimension du gradué du faisceau $\widehat{\mathcal{D}}^{(0)}_{\X}$ qui est identique au gradué du faisceau $\widehat{\mathcal{D}}^{(0)}_{\X  , k}$.
Les conclusions restent donc valides pour un niveau de congruence $k$ quelconque.

\begin{prop}
La dimension cohomologique du faisceau $\Dk$ est égale à trois et la dimension cohomologique du faisceau $\Dkq$ est inférieure ou égale à trois.
\end{prop}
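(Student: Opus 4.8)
The statement is local and its content is homological, so the plan is to reduce it to the computation of the global dimensions of the rings $\Dk(U)$ and $\Dkq(U)$, for $U$ an affine open of $\X$ carrying an \'etale coordinate: the $\Ext$-sheaves $\Ext^d_{\Dk}(\M,\Dk)$ (resp. $\Ext^d_{\Dkq}(\M,\Dkq)$) of a coherent module vanish for $d$ larger than this global dimension, and conversely. This is exactly the scheme of \cite{virrion}, ch.~0, Th.~4.3, and the only thing to check is that each step depends on the associated graded alone; this graded is here unchanged, since $\gr\Dk\simeq\gr\widehat{\mathcal{D}}^{(0)}_{\X}$.

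For the upper bound on $\Dk$, I would put on $R:=\Dk(U)$ the $\varpi$-adic filtration. Since $R$ is $\varpi$-torsion free and $\varpi$-adically complete and $\varpi$ is central, the associated graded ring is $\gr^\varpi R\simeq\Dks(U)[T]$. Locally $\Dks(U)=\O_X(U)[\partial_k]$ (commutative for $k\geq 1$, a ring of differential operators in one variable for $k=0$); filtering it by order of operators one gets $\gr\Dks(U)\simeq\O_X(U)[\xi_k]$, and as $\O_X(U)$ is a smooth affine $\kappa$-algebra of dimension one, hence regular of global dimension one, the ring $\O_X(U)[\xi_k]$ is regular of global dimension two. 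The comparison theorem for exhaustive filtrations with noetherian graded then gives that $\Dks(U)$ has global dimension $\leq 2$, hence $\gr^\varpi R\simeq\Dks(U)[T]$ has global dimension $\leq 3$. Now the $\varpi$-adic filtration on $R$ is separated, exhaustive and Zariskian (its degree-one part is generated by the central element $\varpi$) and $\gr^\varpi R$ is noetherian, so the homological comparison theorem for complete Zariskian filtered rings yields that $R$ has global dimension $\leq 3$; sheafifying gives that the cohomological dimension of $\Dk$ is $\leq 3$. For the reverse inequality it suffices to produce one module of projective dimension exactly $3$: after extending $K$ so that some closed point $x$ becomes $\kappa$-rational with local uniformizer $t$, the triple $(\varpi,t,\varpi^k\partial)$ is a regular sequence in the appropriate one-sided sense — $\varpi$ is a non-zero-divisor, $t$ acts injectively on $\Dks(U)$ (clear on $\gr\Dks(U)=\O_X(U)[\xi_k]$), and $\varpi^k\partial$ acts injectively on $\Dks(U)/t\,\Dks(U)$ — so the associated length-$3$ Koszul complex resolves $M:=\Dk(U)/(\Dk(U)\varpi+\Dk(U)t+\Dk(U)\varpi^k\partial)$, and $\Ext^3_{\Dk(U)}(M,\Dk(U))\neq 0$. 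Hence the cohomological dimension of $\Dk$ equals $3$.

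Finally, $\Dkq=\Dk[\varpi^{-1}]$ is the Ore localization of $\Dk$ at the central multiplicative set $\{\varpi^n\}_{n\geq 0}$; this is a flat localization, under which the global dimension cannot increase, so the cohomological dimension of $\Dkq$ is $\leq 3$ (in fact one expects the value $2=2\dim\X$, but only the bound is needed in the sequel). The genuine obstacle throughout is the one already present in \cite{virrion}: because $\Dk$ is $\varpi$-adically complete the order filtration is not exhaustive, so one is forced to argue through the $\varpi$-adic filtration, whose graded is $\Dks(U)[T]$, and to invoke the filtered--graded homological comparison in the complete (Zariskian) setting; that argument transposes verbatim to congruence level $k$ precisely because $\gr\widehat{\mathcal{D}}^{(0)}_{\X,k}\simeq\gr\widehat{\mathcal{D}}^{(0)}_{\X}$.
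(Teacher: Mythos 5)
Your proof is correct and follows essentially the same route as the paper, which simply invokes \cite{virrion}, ch.~0, Th.~4.3 and remarks that the argument only uses the associated graded of $\widehat{\mathcal{D}}^{(0)}_{\X,k}$, identical to that of $\widehat{\mathcal{D}}^{(0)}_{\X}$. You unpack Virrion's proof (the $\varpi$-adic filtration with graded $\Dks(U)[T]$, the Zariskian filtered--graded comparison, a regular sequence for the lower bound, Ore localization for $\Dkq$) rather than citing it, but the key observation --- that everything reduces to the graded ring, which is unchanged at congruence level $k$ --- is exactly the paper's.
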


Soit $\M$ un $\Dkq$-module cohérent. On pose
\[ \dim \M = \dim (\Car(\M)) \in \{ 0 , 1 , 2 \}, ~~  \codim \M = 2 \dim X - \dim \M = 2 - \dim \M . \]

L'inégalité de Bernstein se traduit de la manière suivante sur la codimension : $\M \neq 0$ si et seulement si $\codim \M \leq 1$.
Par ailleurs, $\M$ est un module holonome si et seulement si $\codim \M = 1$.
On note
\[ \omega_{\X , \Q} := \left( \wedge_{i=0}^1 \Omega_\X^1 \right) \otimes_\V K .\]
C'est un $\O_{\X , \Q}$-module libre de rang un.
La proposition 2.1.1 du chapitre 1 de \cite{virrion} appliquée au faisceau $\Dkq$ implique le résultat suivant.

\begin{prop}
Le foncteur $\bullet  \otimes_{\O_{\X , \Q}} \omega_{\X , \Q}^{-1}$ induit une équivalence de catégorie entre la catégorie des $\Dkq$-modules à droites et la catégorie des $\Dkq$-modules à gauche.
\end{prop}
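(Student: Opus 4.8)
The plan is to follow Virrion's construction of the side-changing functor, simply adapting it to the congruence level $k$. Locally on an affine $U$ with \'etale coordinate $t$, the tangent sheaf $\O_{\X , \Q}\cdot\partial$ is free of rank one, and $\Dkq(U)$ is generated as a $K$-algebra by $\O_{\X , \Q}(U)$ together with the single derivation $\varpi^k\partial$, subject only to the commutation relation $[\varpi^k\partial , f] = \varpi^k\partial(f)$ for $f \in \O_{\X , \Q}(U)$. Since $\X$ has dimension one there is no integrability (curvature) condition to impose, so a left $\Dkq$-module structure on an $\O_{\X , \Q}$-module $M$ is exactly the datum of a $K$-linear action of $\varpi^k\partial$ satisfying the Leibniz rule $(\varpi^k\partial)(fm) = \varpi^k\partial(f)\, m + f\,(\varpi^k\partial)(m)$, and symmetrically for right modules.

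First I would equip $\omega_{\X , \Q} = \Omega^1_\X \otimes_\V K$ with its canonical right $\Dkq$-module structure: locally, for $\eta = g\,\d t$, set $\eta\cdot(\varpi^k\partial) := - L_{\varpi^k\partial}\eta = -\varpi^k\partial(g)\,\d t$. The Lie derivative being intrinsic, this does not depend on the chosen coordinate, and a direct check shows it respects the relation $[\varpi^k\partial , f] = \varpi^k\partial(f)$; dualizing gives the canonical left $\Dkq$-module structure on $\omega_{\X , \Q}^{-1}$, locally $(\varpi^k\partial)(g\nu) = \varpi^k\partial(g)\,\nu$ where $\nu$ is the generator dual to $\d t$. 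Then for a right $\Dkq$-module $\mathcal{M}$ I endow $F(\mathcal{M}) := \mathcal{M}\otimes_{\O_{\X , \Q}}\omega_{\X , \Q}^{-1}$ with the left action $(\varpi^k\partial)(m\otimes\nu) := -\,m\cdot(\varpi^k\partial)\otimes\nu + m\otimes(\varpi^k\partial)(\nu)$, and dually, for a left $\Dkq$-module $\mathcal{N}$, I endow $G(\mathcal{N}) := \mathcal{N}\otimes_{\O_{\X , \Q}}\omega_{\X , \Q}$ with the analogous right action. One verifies that each formula is well defined on the tensor product over $\O_{\X , \Q}$ (the two contributions of an $\O_{\X , \Q}$-scalar cancel, using the Leibniz rules above) and that the module axioms hold; this is the routine, slightly sign-sensitive, part, and it is formally identical to the level-zero case of Virrion since $\gr\Dkq \simeq \gr\widehat{\mathcal{D}}^{(0)}_\X$.

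Finally I would check that $F$ and $G$ are quasi-inverse. The canonical contraction isomorphism $\omega_{\X , \Q}^{-1}\otimes_{\O_{\X , \Q}}\omega_{\X , \Q}\iso\O_{\X , \Q}$ of $\O_{\X , \Q}$-modules is, after the bookkeeping of the two twists, compatible with the $\Dkq$-actions, yielding natural isomorphisms $G\circ F \simeq \id$ on right modules and $F\circ G\simeq\id$ on left modules; since $\omega_{\X , \Q}$ is an invertible $\O_{\X , \Q}$-module the functors are moreover exact, hence they are mutually inverse equivalences of abelian categories. The only genuine obstacle is pinning down the correct $\varpi^k$-rescaled canonical connection on $\omega_{\X , \Q}$ and keeping the signs consistent between the $\omega_{\X , \Q}$ and $\omega_{\X , \Q}^{-1}$ sides so that the composites are canonically the identity; everything else is the formal content of Proposition 2.1.1 of Chapter 1 of \cite{virrion} applied verbatim to $\Dkq$, whose associated graded coincides with that of $\widehat{\mathcal{D}}^{(0)}_\X$.
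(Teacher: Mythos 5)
Your proposal is correct and takes essentially the same route as the paper: the paper simply invokes Proposition 2.1.1 of Chapter 1 of \cite{virrion} applied to $\Dkq$, justified by the isomorphism of graded algebras $\gr \widehat{\mathcal{D}}^{(0)}_{\X,k} \simeq \gr \widehat{\mathcal{D}}^{(0)}_{\X}$. You merely unwind more of the local construction (the right structure on $\omega_{\X,\Q}$ via $-L_{\varpi^k\partial}$, the twisted actions, the contraction isomorphism) before deferring, as the paper does, to Virrion for the formal verification.
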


On note $D_c^b(\Dkq)$ la catégorie dérivée formée des complexes bornés de $\Dkq$-modules cohérents.
On identifie la catégorie des $\Dkq$-modules cohérents avec les complexes concentrés en degré zéro.
Pour tout complexe $\M$ de $D_c^b(\Dkq)$, on définit son dual $\Du (\M)$ par
\[ \Du(\M) := \mathcal{R H}om (\M , \Dkq[1]) \otimes_{\O_{\X , \Q}} \omega_{\X , \Q}^{-1} .\]
Virrion démontre en toute généralité dans le chapitre trois de \cite{virrion} que $\Du$ est un foncteur de la catégorie $D_c^b(\Dkq)$ dans elle même et que pour tout complexe $\M$ de $D_c^b(\Dkq)$, il existe un isomorphisme canonique $\M \simeq \Du \circ \Du (\M)$.

\vspace{0.4cm}

On rassemble dans la proposition suivante le corollaire 2.3 et la proposition 3.5 du chapitre 3 de \cite{virrion}.
Ces résultats se démontrent tout d'abord au niveau du gradé $\gr \widehat{\mathcal{D}}^{(0)}_{\X} = \gr \widehat{\mathcal{D}}^{(0)}_{\X , k}$.
Le passage du gradué au faisceau $\widehat{\mathcal{D}}^{(0)}_{\X}$ est systématique et valide plus généralement.
En particulier, cela fonctionne aussi pour le faisceau $\Dk$.

\begin{prop}\label{prop3.27}
Soit $\M$ un $\Dkq$-module cohérent non nul. Alors
\begin{enumerate}
\item
$\forall i \geq 0$, $\codim( \Ext^i_{\Dkq}(\M , \Dkq) ) \geq i$ ;
\item
$\codim \M = \inf \left\{ i \in \N : \Ext^i_{\Dkq} (\M , \Dkq) \neq 0 \right\}$.
\end{enumerate}
\end{prop}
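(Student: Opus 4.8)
Le plan est de ramener les deux assertions au niveau du gradu\'e $\gr\Dks$, puis de les remonter au faisceau $\Dkq$ : c'est parce que $\gr\Dks \simeq \O_X[\xi_k] \simeq \gr\widehat{\mathcal{D}}^{(0)}_{\X}$ ne d\'epend pas du niveau de congruence que les arguments de \cite{virrion} s'appliquent encore. Je fixerais un mod\`ele entier sans $\varpi$-torsion $\Eo$ de $\M$, je poserais $E = \Eo \otimes_\V \kappa$ (un $\Dks$-module coh\'erent), et je rappellerais que, par d\'efinition, $\Car\M = \supp(\gr E)$ dans le fibr\'e cotangent $T^*X = \Spec\gr\Dks$ pour toute bonne filtration de $E$, de sorte que $\codim\M = \codim\supp(\gr E)$.

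\textbf{\'Etape 1 (le gradu\'e).} Puisque $X$ est une courbe lisse, $T^*X = \Spec\gr\Dks$ est un $\kappa$-sch\'ema r\'egulier de dimension $2$ (localement $\gr\Dx \simeq \O_{X,x}[\xi_k]$ avec $\O_{X,x}$ un anneau de valuation discr\`ete). Pour un sch\'ema r\'egulier $A$, qui est en particulier de Cohen--Macaulay et v\'erifie la condition d'Auslander, et pour un module coh\'erent non nul $N$, on dispose des deux faits classiques : $\codim\supp\Ext^i_A(N,A) \geq i$ pour tout $i \geq 0$, et $\codim\supp N = \inf\{ i : \Ext^i_A(N,A) \neq 0 \}$ (voir par exemple l'appendice D de \cite{hotta}, ou les r\'ef\'erences de \cite{virrion}). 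Appliqu\'es \`a $N = \gr E$ sur $A = \gr\Dks$, ces \'enonc\'es fournissent la proposition au niveau du gradu\'e.

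\textbf{\'Etape 2 (remont\'ee \`a $\Dks$, $\Dk$, puis $\Dkq$).} J'invoquerais la suite spectrale usuelle des anneaux filtr\'es (cf. \cite{virrion}, chapitre 0, et \cite{hotta}) : pour $E$ muni d'une bonne filtration, $\gr\Ext^n_{\Dks}(E,\Dks)$ est un sous-quotient de $\Ext^n_{\gr\Dks}(\gr E,\gr\Dks)$, si bien que $\Car\Ext^n_{\Dks}(E,\Dks) \subseteq \supp\Ext^n_{\gr\Dks}(\gr E,\gr\Dks)$ ; la filtration par l'ordre \'etant exhaustive et s\'epar\'ee sur les modules coh\'erents, l'annulation de $\Ext^n_{\gr\Dks}(\gr E,\gr\Dks)$ entra\^{\i}ne celle de $\Ext^n_{\Dks}(E,\Dks)$, et l'indice minimal de non-annulation est conserv\'e. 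Joint \`a l'\'etape 1, on obtient $\codim\Ext^i_{\Dks}(E,\Dks) \geq i$ et $\codim\M = \inf\{ i : \Ext^i_{\Dks}(E,\Dks) \neq 0 \}$. Je passerais ensuite \`a $\Dk$ via le changement d'anneaux le long de l'\'el\'ement central non diviseur de z\'ero $\varpi$ (avec $\Dks = \Dk/\varpi$) : comme $\Eo$ est sans $\varpi$-torsion, on dispose de la suite exacte
\[ 0 \longrightarrow \Ext^i_{\Dk}(\Eo,\Dk) \otimes_\V \kappa \longrightarrow \Ext^i_{\Dks}(E,\Dks) \longrightarrow \Ext^{i+1}_{\Dk}(\Eo,\Dk)[\varpi] \longrightarrow 0, \]
puis \`a $\Dkq$ via $\Ext^i_{\Dkq}(\M,\Dkq) \simeq \Ext^i_{\Dk}(\Eo,\Dk) \otimes_\V K$ ($K$ plat sur $\V$, $\Dk$ noeth\'erien).

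\textbf{\'Etape 3 et principal obstacle.} Il resterait \`a v\'erifier que les deux invariants se transportent correctement le long de $\gr\Dks \rightsquigarrow \Dks \rightsquigarrow \Dk \rightsquigarrow \Dkq$. La majoration $\codim\Ext^i_{\Dkq}(\M,\Dkq) \geq i$ se fait sans difficult\'e : un mod\`ele entier sans $\varpi$-torsion de $\Ext^i_{\Dkq}(\M,\Dkq)$ se r\'ealise comme quotient de $\Ext^i_{\Dk}(\Eo,\Dk)$, dont la r\'eduction modulo $\varpi$ se plonge dans $\Ext^i_{\Dks}(E,\Dks)$ d'apr\`es la suite exacte ci-dessus, d'o\`u l'inclusion des vari\'et\'es caract\'eristiques et la majoration voulue. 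Le point d\'elicat sera la seconde assertion : il faut ma\^{\i}triser la $\varpi$-torsion des $\Ext^i_{\Dk}(\Eo,\Dk)$ afin que l'indice minimal de non-annulation ne soit pas d\'ecal\'e lorsqu'on inverse $\varpi$. Pour $i < j$, o\`u $j := \codim\M$, la compl\'etude et la s\'eparation $\varpi$-adiques de $\Dk$ (lemme de Nakayama complet) donnent $\Ext^i_{\Dk}(\Eo,\Dk) = 0$, donc $\Ext^i_{\Dkq}(\M,\Dkq) = 0$ ; la non-annulation en degr\'e $j$ rel\`eve de l'analyse fine de \cite{virrion} (finitude de la dimension $\varpi$-cohomologique et comportement de la suite spectrale associ\'ee \`a la bi-filtration par $\varpi$ et par l'ordre). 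C'est le seul endroit o\`u la preuve d\'epasse une transcription du cas classique des $\mathcal{D}_X$-modules ; comme tous ces arguments ne mettent en jeu que le gradu\'e $\gr\widehat{\mathcal{D}}^{(0)}_{\X,k} \simeq \gr\widehat{\mathcal{D}}^{(0)}_{\X}$, on conclut comme dans \cite{virrion}.
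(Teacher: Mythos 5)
Your proposal takes essentially the same route as the paper: both reduce the two assertions to the graded ring $\gr\Dks \simeq \gr\widehat{\mathcal{D}}^{(0)}_\X$ (on which the congruence level $k$ plays no role) and invoke Virrion's chapitre~3 (corollaire~2.3 et proposition~3.5) for the systematic passage from the graded ring up to $\Dk$ and then $\Dkq$. Your \'etapes~2--3 merely spell out what the paper calls this \og passage syst\'ematique \fg\ (the change-of-rings exact sequence along $\varpi$, the complete Nakayama argument, the flatness of $K$), and you defer the crux --- the non-vanishing of $\Ext^{\codim\M}_{\Dkq}$ after inverting $\varpi$ --- to Virrion exactly as the paper does, so the two proofs agree.
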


On peut maintenant démontrer la caractérisation cohomologique suivante des $\Dkq$-modules holonomes.

\begin{prop}\label{prop3.28}
Soit $\M$ un $\Dkq$-module cohérent. Alors $\M$ est holonome si et seulement si
\[ \forall i \neq 1, ~~ \Ext_{\Dkq}^i(\M , \Dkq) = 0 .\]
De plus, si le module $\M$ est holonome, alors $\M^* :=\Ext_{\Dkq}^1(\M , \Dkq) \otimes_{\O_{\X , \Q}} \omega_{\X , \Q}^{-1}$ est aussi un $\Dkq$-module holonome.
\end{prop}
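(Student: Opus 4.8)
The plan is to deduce both statements from the homological machinery of Virrion (Proposition \ref{prop3.27}) together with the Bernstein inequality (Proposition \ref{bernstein}), exactly as in the classical theory of holonomic $\mathcal{D}$-modules over a complex curve. First I would dispose of the trivial case $\M = 0$, for which all the $\Ext$ groups vanish and there is nothing to prove; so assume $\M \neq 0$.

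For the equivalence, recall that $\M$ is holonome if and only if $\codim \M = 1$ (this is the reformulation of the Bernstein inequality noted just before Proposition \ref{prop3.27}, using that $\dim X = 1$ so $\codim \M = 2 - \dim \M$ and $\dim \M \in \{0,1,2\}$, with $\dim \M = 0$ excluded for $\M \neq 0$). Now Proposition \ref{prop3.27}(2) gives $\codim \M = \inf\{ i : \Ext^i_{\Dkq}(\M,\Dkq) \neq 0\}$, so $\codim \M = 1$ forces $\Ext^0_{\Dkq}(\M,\Dkq) = 0$. It remains to show $\Ext^i_{\Dkq}(\M,\Dkq) = 0$ for all $i \geq 2$. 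Here the key input is that the cohomological dimension of $\Dkq$ is at most three, so a priori only $i = 2$ and $i = 3$ are in question. For these one invokes Proposition \ref{prop3.27}(1): $\codim \Ext^i_{\Dkq}(\M,\Dkq) \geq i$, hence if $\Ext^2$ or $\Ext^3$ were nonzero it would be a nonzero coherent module of codimension $\geq 2$, i.e. of dimension $\leq 0$; but the Bernstein inequality says every nonzero coherent $\Dkq$-module has characteristic variety of dimension $\geq 1$, a contradiction. Therefore $\Ext^i_{\Dkq}(\M,\Dkq) = 0$ for $i \neq 1$. Conversely, if $\Ext^i_{\Dkq}(\M,\Dkq) = 0$ for all $i \neq 1$, then since $\M \neq 0$ Proposition \ref{prop3.27}(2) forces the infimum defining $\codim \M$ to equal $1$, so $\codim \M = 1$ and $\M$ is holonome.

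For the last assertion, set $\M^* = \Ext^1_{\Dkq}(\M,\Dkq) \otimes_{\O_{\X,\Q}} \omega_{\X,\Q}^{-1}$. Since $\M$ is holonome, the computation above shows $\Du(\M) = \mathcal{R}\mathcal{H}om(\M, \Dkq[1]) \otimes_{\O_{\X,\Q}} \omega_{\X,\Q}^{-1}$ has cohomology concentrated in degree $0$, where it equals $\M^*$; in particular $\Du(\M) \simeq \M^*$ is a genuine coherent $\Dkq$-module (the twist by the invertible $\O_{\X,\Q}$-module $\omega_{\X,\Q}^{-1}$ sends right modules to left modules by the equivalence of categories quoted above, and preserves coherence). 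By Virrion's biduality isomorphism $\M \simeq \Du \circ \Du(\M)$, applying $\Du$ once more to $\M^*$ recovers $\M$ up to isomorphism; since $\M^*$ is a module (complex concentrated in degree zero), running the same $\Ext$-vanishing argument backwards — or simply observing that a coherent module whose double dual is again a module in degree zero must itself have $\Ext^i$ vanishing for $i \neq 1$ — shows $\M^*$ satisfies the cohomological criterion, hence is holonome by the equivalence just proved.

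The main obstacle is the middle step: pinning down that $\Ext^2$ and $\Ext^3$ vanish. This is where one genuinely uses the positive-characteristic-insensitive part of the theory, namely that the Bernstein inequality of Proposition \ref{bernstein} holds for $\Dkq$ (even though it fails for $\Dks$- or $\Dx$-modules), so that a nonzero coherent module cannot have zero-dimensional characteristic variety. Everything else is formal manipulation with Virrion's duality formalism, whose transfer to congruence level $k$ has already been justified in the paragraph preceding Proposition \ref{prop3.27} via the identification $\gr \widehat{\mathcal{D}}^{(0)}_{\X,k} \simeq \gr \widehat{\mathcal{D}}^{(0)}_{\X}$.
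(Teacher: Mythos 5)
Your proof of the main equivalence follows the paper's argument exactly: the $\Ext^i$ groups (twisted by $\omega_{\X,\Q}^{-1}$ to make them coherent left modules) are squeezed between the lower bound $\codim \geq i$ of Proposition \ref{prop3.27}(1) and the Bernstein bound $\codim \leq 1$, forcing vanishing for $i \geq 2$; the case $i=0$ and the converse come from Proposition \ref{prop3.27}(2). The aside about cohomological dimension $\leq 3$ limiting attention to $i=2,3$ is harmless but unnecessary, since the Bernstein-plus-codimension squeeze already covers all $i\geq 2$ uniformly. Where you genuinely diverge from the paper is in the final assertion that $\M^*$ is holonomic: you appeal to Virrion's biduality $\M \simeq \Du\circ\Du(\M)$ and observe that if $\Du(\M^*)\simeq\M$ is concentrated in degree $0$ then $\Ext^j(\M^*,\Dkq)=0$ for $j\neq 1$, whence holonomicity by the criterion just established. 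The paper proceeds more economically and does not need biduality at this stage: by Proposition \ref{prop3.27}(1), $\codim\M^*\geq 1$; if $\codim\M^*=2$ then $\M^*=0$ by Bernstein, contradicting $\Ext^1(\M,\Dkq)\neq 0$ (which follows from $\codim\M=1=\inf\{i:\Ext^i\neq 0\}$); hence $\codim\M^*=1$. Your biduality route is correct — and in fact it essentially anticipates the corollary that follows Proposition \ref{prop3.28} in the paper — but it requires the full strength of Virrion's duality formalism where a two-line codimension estimate suffices. Both proofs are valid; the paper's is more elementary and keeps biduality in reserve for the subsequent corollary identifying $\Du(\M)$ with $\M^*$.
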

\begin{proof}
Soit $\M$ un $\Dkq$-module cohérent que l'on peut supposer non nul. Par construction,
$\Ext^i_{\Dkq}(\M , \Dkq) \otimes_{\O_{\X , \Q}} \omega_{\X , \Q}^{-1}$ est un $\Dkq$-module à gauche cohérent. Il vérifie donc l'inégalité de Bernstein.
Autrement dit, $\codim( \Ext^i_{\Dkq}(\M , \Dkq) ) \leq 1$ ou $\Ext^i_{\Dkq}(\M , \Dkq) = 0$.
Par ailleurs, on sait que $\codim( \Ext^i_{\Dkq}(\M , \Dkq) ) \geq i$ d'après la proposition \ref{prop3.27}.
Ainsi $\Ext^i_{\Dkq}(\M , \Dkq)  \neq 0$ implique $i \leq 1$. On a donc toujours $\Ext^i_{\Dkq}(\M , \Dkq) = 0$ dès que $i \geq 2$.

On suppose maintenant que le module $\M$ est holonome. Alors $\dim \M = \codim \M = 1$.
Le second point de la proposition \ref{prop3.27} implique que $\Ext^i_{\Dkq}(\M , \Dkq) = 0$ pour tout entier $i \neq 1 = \codim \M $.
Réciproquement, on suppose que $\Ext^i_{\Dkq}(\M , \Dkq) = 0$ dès que $i \neq 1$.
Le second point de la proposition implique que $\codim(\M) = 1$. Autrement dit, $\M$ est un module holonome.

Il reste à montrer que le module $\M^* = \Ext_{\Dkq}^1(\M , \Dkq) \otimes_{\O_{\X , \Q}} \omega_{\X , \Q}^{-1}$ est holonome dès que $\M$ est holonome.
On sait d'après la proposition \ref{prop3.27} que $\codim \M^* \geq 1$.
Si $\codim \M^* = 2$, alors $\M^* = 0$ d'après l'inégalité de Bernstein.
Cela contredit l'hypothèse $\Ext_{\Dkq}^1(\M , \Dkq) \neq 0$. Donc $\codim \M^* = 1$ et $\M^*$ est holonome.
\end{proof}

\begin{remark}
On a démontré que $\Ext^i_{\Dkq}(\M , \Dkq) = 0$ dès que $i \geq 2$ et $\M$ est un $\Dkq$-module cohérent.
\end{remark}

On démontre enfin que le dual d'un $\Dkq$-module holonome demeure holonome.

\begin{cor}
Le foncteur dualité préserve la catégorie des modules holonomes.
De plus, si $\M$ est un $\Dkq$-module holonome, alors $\Du (\M) \simeq \M^* = \Ext_{\Dkq}^1(\M , \Dkq) \otimes_{\O_{\X , \Q}} \omega_{\X , \Q}^{-1}$.
\end{cor}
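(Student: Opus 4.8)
The plan is to deduce this corollary directly from the cohomological characterisation of holonomy (Proposition~\ref{prop3.28}), combined with the fact, due to Virrion and recalled above, that $\Du$ is a well-defined endofunctor of $D_c^b(\Dkq)$. One views a holonomic $\Dkq$-module $\M$ as a complex concentrated in degree zero, and proves two things: first, that $\Du(\M)$ is canonically isomorphic in $D_c^b(\Dkq)$ to the single module $\M^* = \Ext_{\Dkq}^1(\M , \Dkq) \otimes_{\O_{\X , \Q}} \omega_{\X , \Q}^{-1}$ placed in degree zero; second, that $\M^*$ is itself holonomic.

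For the first point I would start from Proposition~\ref{prop3.28}, which gives $\Ext^i_{\Dkq}(\M , \Dkq) = 0$ for every integer $i \neq 1$. Hence the complex $\mathcal{RH}om(\M , \Dkq)$ has cohomology concentrated in degree one, so it is isomorphic in $D_c^b(\Dkq)$ to the module $\Ext^1_{\Dkq}(\M , \Dkq)$ placed in cohomological degree one (this module is $\Dkq$-coherent, being an $\Ext$ between coherent modules over the coherent ring sheaf $\Dkq$). Shifting by one place, $\mathcal{RH}om(\M , \Dkq[1]) = \mathcal{RH}om(\M , \Dkq)[1]$ is then isomorphic to $\Ext^1_{\Dkq}(\M , \Dkq)$ placed in degree zero.

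Next I would tensor with $\omega_{\X , \Q}^{-1}$. Since $\omega_{\X , \Q}^{-1}$ is a locally free $\O_{\X , \Q}$-module of rank one, the functor $\bullet \otimes_{\O_{\X , \Q}} \omega_{\X , \Q}^{-1}$ is exact, commutes with taking cohomology, and preserves coherence; in particular it introduces neither higher $\mathrm{Tor}$ nor a cohomological shift. One thus obtains
\[ \Du(\M) = \mathcal{RH}om(\M , \Dkq[1]) \otimes_{\O_{\X , \Q}} \omega_{\X , \Q}^{-1} \simeq \Ext^1_{\Dkq}(\M , \Dkq) \otimes_{\O_{\X , \Q}} \omega_{\X , \Q}^{-1} = \M^* , \]
the right-hand side being viewed as a module concentrated in degree zero. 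Finally, the last assertion of Proposition~\ref{prop3.28} says precisely that $\M^*$ is a holonomic $\Dkq$-module whenever $\M$ is; together with the isomorphism above, this shows that $\Du$ sends holonomic modules to holonomic modules and yields the announced identification $\Du(\M) \simeq \M^*$.

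The bulk of the work having already been done in Proposition~\ref{prop3.28} and in the general construction of Virrion's duality, I do not expect a serious obstacle here; the only points needing a line of justification are that $\mathcal{RH}om(\M , \Dkq)$ really is concentrated in a single degree (an immediate consequence of the vanishing of the other $\Ext$ sheaves) and that tensoring by the invertible sheaf $\omega_{\X , \Q}^{-1}$ is harmless at the level of derived categories (an immediate consequence of local freeness of rank one). One must also keep careful track of the direction of the shift in the definition $\Du(\M) = \mathcal{RH}om(\M , \Dkq[1]) \otimes \omega_{\X , \Q}^{-1}$, so that the surviving cohomology lands in degree zero rather than degree two.
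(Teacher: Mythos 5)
Your proposal is correct and follows essentially the same route as the paper's proof: both deduce from Proposition~\ref{prop3.28} that the $\Ext$ sheaves vanish outside degree~$1$, conclude that the complex $\mathcal{RH}om(\M , \Dkq[1])$ has cohomology concentrated in degree zero, use harmlessness of tensoring by the invertible sheaf $\omega_{\X , \Q}^{-1}$, and invoke the last assertion of Proposition~\ref{prop3.28} to get holonomy of $\M^*$. Your version merely spells out the degree-shift bookkeeping a bit more explicitly than the paper does.
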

\begin{proof}
Soit $\M$ un $\Dkq$-module holonome. On note $\M^*$ le $\Dkq$-module holonome $\Ext_{\Dkq}^1(\M , \Dkq) \otimes_{\O_{\X , \Q}} \omega_{\X , \Q}^{-1}$.
On sait d'après la proposition \ref{prop3.28} que pour tout $i \neq 1$, $\Ext_{\Dkq}^i(\M , \Dkq) = 0$. On a donc $\mathcal{H}^i(\Du(\M)) = 0$ pour tout entier $i \neq 0$.
On en déduit que $\Du(\M) \simeq \mathcal{H}^0(\Du(\M)) \simeq \M^*$ est un $\Dkq$-module holonome.
L'isomorphisme de bidualité $\M \simeq (\M^*)^*$ provient du théorème 3.6 du chapitre 1 de \cite{virrion}.
\end{proof}

\section{$\Di$-modules coadmissibles}\label{section4}

On introduit dans cette dernière section une catégorie abélienne formée de $\Di$-modules coadmissibles de longueur finie.
Idéalement, on aimerait définir une catégorie de $\Di$-modules coadmissibles holonomes qui soit une sous-catégorie pleine de celle-ci.
On commence par rappeler les définitions du faisceau $\Di$ et des $\Di$-modules coadmissibles.

\subsection{Définition}\label{partiedefmodulecoad}

Pour plus de détails sur le faisceau $\Di$ et sur les propriétés des $\Di$-modules coadmissibles, le lecteur peut regarder l'article  \cite{huyghe} de Christine Huyghe, Tobias Schmidt et Matthias Strauch.

\vspace{0.4cm}

Soit $U$ un ouvert affine contenant le point $x$ sur lequel on dispose d'une coordonnée locale associée à $x$.
Pour tout entier $k$, l'algèbre $\widehat{\mathcal{D}}^{(0)}_{\X, k + 1 , \Q}(U)$ est une sous-algèbre de $\Dkq(U)$.
On considère les morphismes de transition $\widehat{\mathcal{D}}^{(0)}_{\X, k+1 , \Q} \to \Dkq$ induits par ces inclusions locales.
On définit le faisceau $\Di$ comme la limite projective des faisceaux $\Dkq$.
\begin{definition}
On note $\Di := \varprojlim_k \Dkq$.
\end{definition}

Le faisceau $\Di$ est un faisceau de $K$-algèbres sur le schéma formel $\X$. Il vérifie les trois points suivant :
\vspace{0.1cm}
\begin{enumerate}
\item
$\Di(U)$ est une $K$-algèbre de Fréchet-Stein et sa topologie est induite par les normes $\| \cdot \|_k$ des algèbres de Banach $\Dkq(U)$;
\item
$\Di(U) = \varprojlim_k \Dkq(U) = \bigcap_{k \geq 0} \Dkq (U)$ ;
\item
$ \Di(U) =\displaystyle \left\{ \sum_{n = 0}^\infty a_n \cdot \partial^n :  \,  a_n \in \O_{\X, \Q} (U)~~ \mathrm{tq} ~~ \forall \eta>0,~  \lim_{n \to \infty} a_n \cdot \eta^n =0 \right\}$.
\end{enumerate}

Le lemme suivant caractérise les opérateurs différentiels finis de $\Di(U)$ à l'aide des fonctions $\Nb$.
On en déduit les éléments inversibles de $\Di(U)$.

\begin{lemma}\label{lemmedeg}
Soit $P$ un opérateur différentiel de $\Di(U)$. La suite $(\Nb(P))_{k \geq 0}$ est croissante.
De plus, $P$ est un opérateur fini de degré $d$ si et seulement si la suite $(\Nb(P))_k$ est stationnaire de valeur limite $d$.
\end{lemma}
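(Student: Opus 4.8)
Le plan est de raisonner sur l'\'ecriture de $P$ en puissances de la d\'erivation $\partial$ plut\^ot que de $\varpi^k\partial$. \'Ecrivons $P = \sum_{n\geq 0} a_n\partial^n$ avec $a_n\in\O_{\X,\Q}(U)$ ; vu dans $\Dkq(U)$, on a $P = \sum_{n\geq 0}(\varpi^{-kn}a_n)(\varpi^k\partial)^n$, de sorte que le coefficient de $(\varpi^k\partial)^n$ est de norme spectrale $|a_n|\,p^{kn}$ (on rappelle que $|\varpi| = 1/p$). Il vient, pour tout $k$,
\[ \|P\|_k = \max_{n\geq 0}|a_n|\,p^{kn}, \qquad \Nb(P) = \max\{\,n\in\N : |a_n|\,p^{kn} = \|P\|_k\,\}. \]
On peut supposer $P\neq 0$, de sorte que $\|P\|_k > 0$ et que ce maximum est atteint en un indice de coefficient non nul ; on notera $d_k$ l'entier $\Nb(P)$ obtenu au niveau de congruence $k$, si bien que $a_{d_k}\neq 0$.

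Pour la croissance de la suite $(\Nb(P))_{k\geq 0}$, on raisonnera par l'absurde : supposons $d_{k+1} < d_k$ pour un certain $k$. Comme $d_{k+1}$ est le plus grand indice r\'ealisant $\|P\|_{k+1}$ et que $d_k > d_{k+1}$, l'indice $d_k$ ne r\'ealise pas ce maximum, d'o\`u $|a_{d_k}|\,p^{(k+1)d_k} < |a_{d_{k+1}}|\,p^{(k+1)d_{k+1}}$. En isolant un facteur $p^{d_k}$ \`a gauche et $p^{d_{k+1}}$ \`a droite, et comme $d_{k+1} < d_k$ donne $p^{d_{k+1}-d_k} < 1$, on obtient $|a_{d_k}|\,p^{kd_k} < |a_{d_{k+1}}|\,p^{kd_{k+1}} \leq \|P\|_k = |a_{d_k}|\,p^{kd_k}$, ce qui est absurde. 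La suite est donc croissante.

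Pour le second point, supposons d'abord $P$ fini de degr\'e $d$, c'est-\`a-dire $a_d\neq 0$ et $a_n = 0$ pour $n > d$. Il n'y a qu'un nombre fini d'indices $n < d$ avec $a_n\neq 0$, et pour chacun d'eux le rapport $(|a_d|/|a_n|)\,p^{k(d-n)}$ tend vers $+\infty$ ; en choisissant $k_0$ assez grand pour les dominer tous simultan\'ement, on a $|a_d|\,p^{kd} > |a_n|\,p^{kn}$ pour tout $n\neq d$ et tout $k\geq k_0$ (l'in\'egalit\'e se conservant lorsque $k$ augmente car $d\geq n$). Donc $\Nb(P) = d$ pour tout $k\geq k_0$ : la suite est stationnaire de valeur limite $d$. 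R\'eciproquement, supposons $\Nb(P) = d$ pour tout $k\geq k_0$. Alors $a_d\neq 0$ (le maximum $\|P\|_{k_0}$ est atteint en $d$ et est strictement positif), et s'il existait un indice $m > d$ avec $a_m\neq 0$, on aurait, pour $k$ assez grand, $|a_m|\,p^{km} > |a_d|\,p^{kd} = \|P\|_k$, contredisant que $\|P\|_k$ est le maximum ; donc $a_n = 0$ pour $n > d$ et $P$ est un op\'erateur fini de degr\'e $d$.

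L'argument est enti\`erement \'el\'ementaire ; le seul contenu r\'eel est l'\'etape de croissance, et le seul point demandant de la vigilance est le passage entre l'\'ecriture en $\partial$ (utilis\'ee ici) et l'\'ecriture en $\varpi^k\partial$ (qui figure dans la d\'efinition de $\Nb$ et de $\|\cdot\|_k$), ainsi que le suivi des coefficients susceptibles de s'annuler. On pourrait aussi tout reformuler \`a l'aide du polygone de Newton de la suite $(-\log_p|a_n|)_{n\geq 0}$ : l'entier $\Nb(P)$ se lit comme la plus grande abscisse d'un point o\`u la droite de pente $k$ soutient ce polygone, et la croissance en $k$ est alors l'\'enonc\'e classique de convexit\'e.
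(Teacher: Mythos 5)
Your proof is correct and takes essentially the same route as the paper: both write $P=\sum_n a_n\partial^n$, observe that $\|P\|_k=\max_n|a_n|\,|\varpi|^{-kn}$ with $\Nb(P)$ the largest maximizing index, and deduce monotonicity from the fact that $|\varpi|^n$ decreases in $n$ (the paper does it directly via $\Nb(P)\leq\Na_{k+1}(P)$, you by contradiction, but the underlying inequality is the same). You are slightly more thorough in that you also spell out the easy implication ``$P$ finite of degree $d$ $\Rightarrow$ the sequence stabilizes at $d$'', which the paper leaves implicit; the Newton-polygon remark is a pleasant bonus but not needed.
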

\begin{proof}
On écrit $P = \sum_{n=0}^\infty a_n \cdot \partial^n$. On commence par montrer que la suite $(\Nb(P))_k$ est croissante.
Les coefficients de $P$ dans la base $(\varpi^k \partial)^n$ de $\Dkq$ sont $\varpi^{-kn} a_n$. Donc
\[ \Nb(P) = \max\{ n \in \N : |\varpi|^{-kn} \cdot |a_n|= \|P \|_k \} .\]
Soit $n_0 = \Na_{k+1}(P)$. Puisque $\|P \|_{k+1} = | \o\varpi^{-(k+1)n_0} \cdot a_{n_0} | > | \varpi^{-(k+1)n} \cdot a_n | $ par définition de $n_0$, on a
\[\forall n > n_0, ~~  |\varpi^{-kn} a_n | = |\varpi|^n \cdot | \varpi^{-(k+1)n} a_n | < |\varpi|^{n_0}\cdot  | \varpi^{-(k+1)n_0} a_{n_0} | =  | \varpi^{-kn_0} a_{n_0} | .\]
On en déduit que $\Nb(P) \leq n_0 = \overline{N}_{k+1}(P)$.

On suppose maintenant que $\Nb(P) = m$ à partir d'un rang $k_0$. Cela signifie que pour tout entier $k \geq k_0$ et pour tout entier $n > m$, $|a_n| \cdot |\varpi|^{-kn} < |a_m| \cdot |\varpi|^{-km}$.
Autrement dit,$|a_n| < |a_m| \cdot |\varpi|^{k(n-m)}$. Mais $|\varpi|^{k(n-m)} \to 0$ lorsque $k\to \infty$.
Le passage à la limite $k \to \infty$ donne $|a_n| = 0$. Ainsi, $P$ est un opérateur fini d'ordre $m$.
\end{proof}

\begin{cor}
Les opérateurs différentiels inversibles de l'algèbre $\Di(U)$ sont les fonctions inversibles :
\[ \Di(U)^\times = \O_{\X,\Q}(U)^\times .\]
\end{cor}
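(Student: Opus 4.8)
The plan is to read the statement off from Lemma~\ref{lemmedeg}. One inclusion is immediate: a unit of $\O_{\X,\Q}(U)$ is a differential operator of order $0$ whose inverse is again a function, hence an element of $\O_{\X,\Q}(U)\subseteq\Di(U)$, so $\O_{\X,\Q}(U)^\times\subseteq\Di(U)^\times$.

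For the reverse inclusion I would take $P\in\Di(U)^\times$ and set $Q:=P^{-1}\in\Di(U)$. Fix the closed point $x$, enlarging $K$ to the finite extension over which the functions $\Nb$ are defined, exactly as in the paragraph preceding Corollary~\ref{corinv}. For every $k\geq 0$ both $P$ and $Q$ lie in $\Dkq(U)$ with $PQ=1$, and the multiplicativity of $\Nb$ (the proposition giving $\Nb(HR)=\Nb(H)+\Nb(R)$) yields $\Nb(P)+\Nb(Q)=\Nb(1)=0$; since $\Nb$ is $\N$-valued, this forces $\Nb(P)=0$ for all $k$. Thus the sequence $(\Nb(P))_{k\geq 0}$ is stationary with limit value $0$, and Lemma~\ref{lemmedeg} shows that $P$ is a finite differential operator of degree $0$, i.e. $P=a_0\in\O_{\X,\Q}(U)$. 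Applying the same argument to $Q$ gives $Q\in\O_{\X,\Q}(U)$; as $\O_{\X,\Q}(U)$ is the degree-zero subring of $\Di(U)$, the identity $PQ=1$ already holds there, whence $P\in\O_{\X,\Q}(U)^\times$.

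I do not expect a genuine obstacle. The only point to handle with a little care is that $\Nb$ depends on the chosen point and a priori on the finite extension of $K$ used to make $x$ rational; but this is precisely the setting fixed before Corollary~\ref{corinv}, and the conclusion ``$P$ is a finite operator of degree $0$'' is intrinsic, so it is unaffected by that extension and we really do get $P\in\O_{\X,\Q}(U)$ over $K$. Once Lemma~\ref{lemmedeg} and the multiplicativity of $\Nb$ are granted, the remainder is formal.
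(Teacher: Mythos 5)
Your argument is correct and follows essentially the same route as the paper: both proofs reduce to showing $\Nb(P)=0$ for every congruence level $k$ and then invoke Lemma~\ref{lemmedeg} to conclude that $P$ is a finite operator of order~$0$. The only cosmetic difference is that the paper reads off $\Nb(P)=0$ via Corollaire~\ref{corinv} (the invertibility criterion), whereas you reprove that step directly from the multiplicativity $\Nb(PQ)=\Nb(P)+\Nb(Q)$ and the fact that $\Nb$ is $\N$-valued; these are the same idea, since the corollary is itself proved by that multiplicativity. Your extra care in also forcing $Q=P^{-1}$ into $\O_{\X,\Q}(U)$ before concluding $P\in\O_{\X,\Q}(U)^\times$ is a harmless elaboration of what the paper leaves implicit.
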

\begin{proof}
Soit $P \in \Di(U)$ un opérateur inversible. Alors $P$ est inversible dans $\Dkq(U)$ pour tout niveau de congruence $k \in \N$.
De manière équivalente, $\Nb(P) =0$ pour tout entier $k$ et le coefficient constant de $P$ inversible d'après le corollaire \ref{corinv}.
Le lemme \ref{lemmedeg} implique que $P$ est un opérateur fini d'ordre 0.
Autrement dit, $P$ est un élément inversible de $\O_{\X,\Q}(U)$.
\end{proof}

On termine cette partie par la définition des $\Di$-modules coadmissibles suivie d'un exemple.

\begin{definition}
Un module coadmissible est un $\Di$-module $\M$  isomorphe à une limite projective $\varprojlim_k \M_k$ de $\Dkq$-modules cohérents $\M_k$ tels que les applications de transition $\M_{k+1} \to \M_k$ soient $\widehat{\mathcal{D}}^{(0)}_{\X, k+1 , \Q}$-linéaires et tels que pour chaque indice $k$, on dispose d'un $\Dkq$-isomorphisme $\Dkq \otimes_{\widehat{\mathcal{D}}^{(0)}_{\X, k+1 , \Q}} \M_{k+1} \simeq \M_k$ induit par l'application de transition.
\end{definition}

La catégorie des $\Di$-modules coadmissibles est abélienne et contient les $\Di$-modules cohérents.
En effet, une présentation finie locale d'un $\Di$-module cohérent $\M$ fournie des présentations finies locales des modules $\M_k := \Dk \otimes_{\Di} \M$.
On dispose de morphismes de transition naturels commutant entre ces présentations finies pour les différents niveaux de congruence $k$.
 On en déduit que $\M$ est bien la limite projective des $\Dkq$-modules cohérents $\M_k$.

Soit $\M = \varprojlim_k \M_k$ un $\Di$-module coadmissible.
Il est démontré dans \cite{huyghe}, théorème 3.1.17 et \cite{schneider}, corollaire 3.1, que $\M_k \simeq \Dkq \otimes_{\Di} \M$ et que $\M \simeq \varprojlim_k \left(\Dkq \otimes_{\Di} \M\right)$ en tant que $\Di$-module coadmissible.
On peut donc choisir $\M_k$ égale à $\Dkq \otimes_{\Di} \M$.

\vspace{0.4cm}

On explicite dans l'exemple ci-dessous un opérateur différentiel infini $P$ de $\Di(U)$ vérifiant $\Nb(P) = k$.
On montre que $\Di / P$ est un $\Di$-module coadmissible isomorphe à une limite projective de la forme $\varprojlim_k \Dkq / P_k$ avec $P_k$ un opérateur différentiel fini d'ordre $k$ de l'algèbre $\Dkq(U)$.

\begin{example}\label{ex4.2}
Soit $P = \prod_{n \geq 1} (1 - \varpi^n \partial) \in \Di(U)$. Alors $\Nb(P) = k$. En effet, le coefficient de $\partial^n$ est à un signe près
\[ \varpi^{1+2+ \dots + n} \cdot (1 + \varpi + \varpi^2 + \dots) + \varpi^{2+ 3 + \dots + (n+1)} \cdot (1 + \varpi + \varpi^2 + \dots) + \dots = \varpi^\frac{n(n+1)}{2} \cdot a_n\]
avec $a_n$ un élément de $\V$ de valeur absolue 1. Dans $\Dkq(U)$, le coefficient d'ordre $n$ de $P$ est $\pm \varpi^{n \left(\frac{n+1}{2}-k \right)} \cdot a_n$.
Par définition, $\Nb(P)$ est le plus grand entier $n$ maximisant la valeur absolue $|\varpi|^{n \left(\frac{n+1}{2}-k \right)}$.
On cherche donc le plus grand entier $n$ minimisant la puissance $n \left(\frac{n+1}{2}-k \right)$.

La fonction $x \mapsto x\left(\frac{x+1}{2} - k\right)$ est minimale en $x = k - \frac{1}{2}$ de valeur $-k^2 -k + \frac{3}{4}$.
La puissance $n \left(\frac{n+1}{2}-k \right)$ est donc minimale pour $n = k-1$ et $n = k$.
Ceci prouve que $\Nb(P) = k$. On a de plus
\[ \| P_k \|_k = |\varpi|^{k \left(\frac{k+1}{2}-k \right)} = | \varpi|^{ -k^2 / 2 + k/2} .\]

Dans $\Dkq(U)$, $P$ s'écrit $P = P_k \cdot Q_k$ avec $P_k = \prod_{ 1 \leq n \leq k} (1 - \varpi^n \partial)$ un opérateur fini d'ordre $\Nb(P_k) = k$ et $Q_k = \prod_{n > k} (1 - \varpi^n \partial)$ inversible dans $\Dkq(U)$ puisque $\Nb(Q_k) = 0$ et puisque son coefficient constant est inversible.
On en déduit que $\Dkq / P \simeq \Dkq / P_k$.
Par ailleurs, $P_{k+1} = (1 - \varpi^{k+1} \partial) \cdot P_k$ avec $1 - \varpi^{k+1} \partial$ inversible dans $\Dkq(U)$. On a donc $\Dkq / P_{k+1} \simeq \Dkq / P_k$.
Autrement dit, $\Di / P \simeq \varprojlim_k \Dkq / P_k$ en tant que $\Di$-module coadmissible.

On peut retrouver $P$ à partir des opérateurs $P_k$ : la suite $(P_k)_k$ converge vers $P$ dans l'algèbre de Fréchet-Stein $\Di(U)$.
En effet, il suffit de le vérifier pour toutes les normes $\| \cdot \|_m$. On a $P - P_k = ( Q_k - 1) \cdot P_k$.
Pour $k \geq m$, on observe que $\overline{N}_m(P_k) = m$ et que $\| P_k \|_m = | \varpi |^{-\frac{1}{2}(m^2-m)}$.
Le coefficient constant de $Q_k - 1$ est nul et le coefficient de $\partial^n$ est de la forme $\varpi^{k + (k+1) + \dots + (k+n-1)} \cdot a_n$, où $a_n \in \V$ est de valeur absolue 1.
Dans $\widehat{\mathcal{D}}^{(0)}_{\X, m, \Q}(U)$, le coefficient de $(\varpi^m \partial)^n$ est $\omega^{n(k+\frac{n-1}{2}-m)} \cdot a_n$.
Les coefficients de $Q_k -1$ sont presque ceux de $P_k$ : il suffit de remplacer $k$ par $m+1-k$.
La fonction $x(k+\frac{x-1}{2}-m)$ est minimale pour $x = m+1-k - 1/2$. 
Pour $k$ assez grand, par exemple $k \geq m+1$, ce terme est négatif. La norme de $1-Q_k$ est donc donnée par le coefficient d'indice un : $\Nb(Q_k -1) = 1$ et $ \| Q_k -1 \|_m = |\varpi|^{k-m}$. Il vient
\[ \| P - P_k \|_m = \|1 - Q_k \|_m \cdot \| P_k\|_m = | \varpi|^{k-\frac{m^2}{2} - \frac{m}{2}} \underset{k \to \infty}{\to} 0.\]
\end{example}

\subsection{Une catégorie de $\Di$-modules coadmissibles de longueur finie}

Soit $\M_k$ un $\Dkq$-module holonome.
On note $m(\M_k)$ la longueur du cycle caractéristique de $\M_k$. Si $I(\Car(\M_k))$ est l'ensemble des composantes irréductibles de la variété caractéristique de $\M_k$, alors
\[ m(\M_k) := \sum_{C \in I(\Car(\M_k))} m_C(\M_k) \in \N .\]
Cette longueur est un entier naturel puisque les multiplicités $m_C(\M_k)$ le sont. Le corollaire \ref{cor3.14} implique que $\M_k = 0$ si et seulement si $m(\M_k) = 0$.

\vspace{0.4cm}

Soit maintenant $\M = \varprojlim_k \M_k$ un $\Di$-module coadmissible.
On suppose qu'il existe un niveau de congruence $k_0 \in \N$ tel que $\M_k$ soit un $\Dkq$-module holonome pour tout entier $k \geq k_0$.
On note $k_\M$ le plus petit entier naturel pour lequel $\M_k$ est holonome dès que $k \geq k_\M$.
On associe à un tel module coadmissible $\M$ une multiplicité $m(\M)$ définie par
\[ m(\M) = \limsup_{k \geq k_\M} \{ m(\M_k) \} =  \inf_{k \geq k_\M}  \left\{ \sup_{k' \geq k} m(\M_{k'}) \right\} \in \N \cup \{ \infty \} . \]

\begin{definition}\label{def4.4}
On note $\H$ la catégorie constituée des $\Di$-modules coadmissibles $\M= \varprojlim_k \M_k$ vérifiant les deux points suivant.
\begin{enumerate}
\item
Il existe un niveau de congruence à partir duquel les $\Dkq$-modules cohérents $\M_k$ sont tous holonomes.
\item
La multiplicité $m(\M)$ de $\M$ est finie, autrement dit $m(\M) \in \N$.
\end{enumerate}

\end{definition}

C'est une sous-catégorie pleine de la catégorie abélienne des $\Di$-modules coadmissibles.
On démontre dans ce qui suit que la catégorie $\H$ est abélienne et que tout object $\M$ de $\H$ est de longueur finie inférieure ou égale à $m(\M)$.

\vspace{0.4cm}

Soit $\M = \varprojlim_k \M_k$ un object de $\H$. Par définition, $m(\M) < \infty$.
Autrement dit, il existe un entier $k_0 \geq k_\M$ pour lequel $ \sup_{k \geq k_0} \{ m(\M_{k}) \} < \infty$.
La suite $( \sup_{k' \geq k} \{ m(\M_{k'}) \} )_{k \geq k_0}$ est décroissante formée d'entiers naturels.
Elle est donc stationnaire. Sa valeur limite est exactement $m(\M)$.
On en déduit qu'il existe un niveau de congruence $k_1 \geq k_\M$ pour lequel
\[ \forall k \geq k_1 , ~~ m(\M) = \sup_{k' \geq k} \{ m(\M_{k'}) \} . \]

Soit $\xymatrix{ 0 \ar[r] & \M \ar[r] & \Nn \ar[r] & \L \ar[r] & 0 }$ une suite exacte de $\Di$-modules coadmissibles.
On écrit $\M = \varprojlim_k \M_k$, $\Nn = \varprojlim_k \Nn_k$ et $\L = \varprojlim_k \L_k$.
Pour tout entier naturel $k$, cette suite induit une suite exacte de $\Dkq$-modules cohérents :
\[ \xymatrix{ 0 \ar[r] & \M_k \ar[r] & \Nn_k \ar[r] & \L_k \ar[r] & 0 } .\]
On note $k_0 = \max\{ k_\M, k_\Nn, k_\L \} \in \N$. Pour tout entier $k \geq k_0$, les modules $\Nn_k$, $\M_k$ et $\L_k$ sont holonomes par définition de $\H$.
Pour $k \geq k_0$, on sait d'après la proposition \ref{prop3.17} que $\cc(\Nn_k) = \cc(\M_k) + \cc(\L_k)$. Il en découle que pour tout $k \geq k_0$, 
\[ m(\Nn_k) = m(\M_k) + m(\L_k) .\]
On en déduit immédiatement la proposition suivante.

\begin{prop}\label{propsuiteexacte}
Soit $\xymatrix{ 0 \ar[r] & \M \ar[r] & \Nn \ar[r] & \L \ar[r] & 0 }$ une suite exacte de $\Di$-modules coadmissibles vérifiant le premier point de la définition \ref{def4.4}. Alors
\begin{enumerate}
\item
$m(\M) \leq m(\Nn)$ et $m(\L) \leq m(\Nn)$ ;
\item
$m(\Nn) \leq m(\M) + m(\L)$.
\end{enumerate}
En particulier, $\Nn \in \H$ si et seulement si $\M \in \H$ et $\L \in \H$.
\end{prop}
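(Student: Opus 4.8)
The plan is to work level by level in the congruence parameter $k$ and then pass to the limsup. First I would fix the short exact sequence $0 \to \M \to \Nn \to \L \to 0$ of $\Di$-modules coadmissibles and apply the exact functor $\Dkq \otimes_{\Di} \bullet$ for each $k$; since $\M$, $\Nn$, $\L$ are coadmissible with $\M_k \simeq \Dkq \otimes_{\Di}\M$ (and similarly for the others), this yields for every $k$ a short exact sequence $0 \to \M_k \to \Nn_k \to \L_k \to 0$ of coherent $\Dkq$-modules. I would then set $k_0 = \max\{k_\M, k_\Nn, k_\L\}$, which is finite because all three modules satisfy the first condition of Definition \ref{def4.4}; for $k \geq k_0$ all three $\Dkq$-modules in the sequence are holonomic, so Proposition \ref{prop3.17} gives $\cc(\Nn_k) = \cc(\M_k) + \cc(\L_k)$, hence the numerical identity $m(\Nn_k) = m(\M_k) + m(\L_k)$ for all $k \geq k_0$.

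From this pointwise additivity the two inequalities follow by an elementary manipulation of limsup. Since $m(\M_k), m(\L_k) \geq 0$, we have $m(\M_k) \leq m(\Nn_k)$ and $m(\L_k) \leq m(\Nn_k)$ for $k \geq k_0$; taking $\limsup_{k \to \infty}$ preserves these inequalities, so $m(\M) \leq m(\Nn)$ and $m(\L) \leq m(\Nn)$. Likewise $m(\Nn_k) = m(\M_k) + m(\L_k)$ gives, upon taking limsup and using the subadditivity $\limsup(a_k + b_k) \leq \limsup a_k + \limsup b_k$, the bound $m(\Nn) \leq m(\M) + m(\L)$. (Here one should be slightly careful with the indexing: the multiplicity $m(\M)$ is defined as $\limsup_{k \geq k_\M} m(\M_k)$, but since the limsup only depends on the tail, replacing $k_\M$ by the larger $k_0$ changes nothing, and similarly for $\Nn$ and $\L$.)

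For the final ``in particular'' clause: if $\Nn \in \H$, then $m(\Nn) \in \N$, and the inequalities $m(\M) \leq m(\Nn)$, $m(\L) \leq m(\Nn)$ force $m(\M)$ and $m(\L)$ to be finite; combined with the hypothesis that all three satisfy the first condition of Definition \ref{def4.4}, this means $\M, \L \in \H$. Conversely, if $\M, \L \in \H$, then $m(\M), m(\L) \in \N$ and $m(\Nn) \leq m(\M) + m(\L) < \infty$, so $\Nn \in \H$ as well. I do not anticipate a serious obstacle here: the only point demanding care is the bookkeeping of the various threshold indices $k_\M, k_\Nn, k_\L$ and checking that $\Dkq \otimes_{\Di}\bullet$ is genuinely exact on coadmissible modules and produces the $\M_k$ appearing in the given presentations --- but both facts are recalled in Section \ref{partiedefmodulecoad} (coadmissibility gives $\M_k \simeq \Dkq \otimes_{\Di}\M$, and exactness of the base change is part of the Fréchet--Stein formalism), so the argument is essentially a formal consequence of Proposition \ref{prop3.17}.
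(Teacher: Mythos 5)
Your proposal is correct and follows essentially the same route as the paper: descend the short exact sequence to each level $k$ via $\Dkq \otimes_{\Di} \bullet$, invoke Proposition~\ref{prop3.17} to get $m(\Nn_k) = m(\M_k) + m(\L_k)$ for $k$ past a common threshold, and pass to the limit. The only (cosmetic) difference is that you pass to the limit using the monotonicity and subadditivity of $\limsup$ directly, whereas the paper first observes that the decreasing sequence $k \mapsto \sup_{k'\geq k} m(\M_{k'})$ stabilizes (so that $m(\M)$ equals a genuine $\sup$ over a tail) and then takes suprema; both are valid and give the same inequalities, including in the case where some $m(\cdot)$ is infinite.
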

\begin{proof}
On note $\M = \varprojlim_k \M_k$, $\Nn = \varprojlim_k \Nn_k$ et $\L = \varprojlim_k \L_k$.
Il existe un niveau de congruence $k_1 \geq \max\{ k_\M, k_\Nn, k_\L \}$ pour lequel $m(\M) = \sup_{k '\geq k} \{ m(\M_{k'}) \}$, $m(\Nn) = \sup_{k' \geq k} \{ m(\Nn_{k'}) \}$ et $m(\L) = \sup_{k' \geq k} \{ m(\L_{k'}) \}$ dès que $k \geq k_1$.

Pour tout entier naturel $k \geq k_1$, on a $m(\Nn_k) = m(\M_k) + m(\L_k)$ d'après la proposition \ref{prop3.17}.
Les inégalités $m(\Nn_k) \geq m(\M_k)$ pour $k \geq k_1$ donnent $m(\Nn) \geq m(\M)$ en passant à la borne supérieure sur $k \geq k_1$.
De même, $m(\Nn_k) \geq m(\L_k) $. Enfin, les inégalités
\[ m(\Nn_k) = m(\M_k) + m(\L_k) \leq \sup_{k \geq k_1} \{ m(\M_{k}) \} + \sup_{k \geq k_1} \{ \mu(\M_{k}) \} = m(\M) + m(\L) \]
pour tout $k \geq k_1$ impliquent que $m(\Nn) = \sup_{k \geq k_1} \{ m(\Nn_{k}) \} \leq m(\M) + m(\L)$.
\end{proof}

\begin{remark}
Bien que $m(\Nn_k) = m(\M_k) + m(\L_k)$ pour un niveau de congruence $k$ fixé, la multiplicité $m$ n'est a priori pas additive pour les modules coadmissibles.
En effet, ces égalités deviennent seulement des inégalités en passant à la borne supérieure.
\end{remark}

Cette proposition montre que la catégorie $\H$ est abélienne.
L'exemple suivant assure qu'elle n'est pas triviale : elle contient les $\Di$-modules coadmissibles de la forme $\Di /P$ pour $P$ un opérateur différentiel fini de $\Di$.

\begin{example}\label{examplelf}
On suppose que $\X = U$ est affine. Soit $P \in \Di(\X)$.
On considère le $\Di$-module coadmissible $\M = \Di / P = \varprojlim_k \M_k$ avec $\M_k = \Dkq / P$.
Les $\Dkq$-modules cohérents $\M_k$ sont tous holonomes d'après la proposition \ref{prop3.22}.
\begin{enumerate}
\item
On regarde tout d'abord ce qu'il se passe lorsque $P$ est un opérateur infini. La suite $(\Nb(P))$ est croissante et diverge vers $+\infty$ d'après le lemme \ref{lemmedeg}.
La proposition \ref{prop3.10} implique que $m(\M_k) \geq \Nb(P)$.
On en déduit que $m(\M) = +\infty$. Donc $\M$ n'est pas un object de la catégorie $\H$.
\item
On suppose maintenant que $P = \sum_{n = 0}^d a_n \cdot \partial^n$ est un opérateur fini d'ordre $d$.
Alors $\Nb(P) = d$ pour $k$ assez grand d'après le lemme \ref{lemmedeg}.
On ne considère maintenant que ces indices $k$.
On sait d'après la proposition \ref{prop3.10} que les multiplicités de $\Car(\M_k)$ en $x$ sont les nombres $\Nb(P) = d$ et $N_k(P , x) = N(a_d, x) =$ valuation de $(a_d \mod \varpi)$ dans $\O_{ X , x}$.
En particulier, $x$ est l'abscisse d'une composante irréductible verticale de $\Car \M_k$ si et seulement si $N_k(P , x) > 0$.
La multiplicité de cette composante est alors $N_k(P , x)$.
Si $x_1 , \dots , x_s$ sont les zéros de $a_d$, alors pour $k$ suffisamment grand,
\[ m(\M_k) = d + N(a_d , x_1) + \dots + N(a_d , x_s) .\]
Ces multiplicités ne dépendent plus de $k$. On en déduit que
\[ m(\M) = \limsup_{k \geq 0} \{ m(\M_k) \} = d + N(a_d , x_1) + \dots + N(a_d , x_s) < \infty .\]
Autrement dit, $\M = \Di / P$ appartient à la catégorie $\H$.
\end{enumerate}
\end{example}

Le lemme suivant montre que la multiplicité $m$ caractérise les $\Di$-modules holonomes nuls.
Cela provient du fait qu'un $\Dkq$-module holonome $\M_k$ est nul si et seulement si $m(\M_k) = 0$.

\begin{lemma}\label{lemmezero}
Un élément $\M$ de $\H$ est nul si et seulement si $m(\M) = 0$.
\end{lemma}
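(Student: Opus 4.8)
Le plan est de traiter s\'epar\'ement les deux implications, chacune se ramenant \`a l'\'enonc\'e analogue au niveau de congruence fini, \`a savoir le corollaire \ref{cor3.14} : un $\Dkq$-module holonome est nul si et seulement si toutes ses multiplicit\'es sont nulles. On \'ecrira $\M \simeq \varprojlim_k \M_k$ avec $\M_k \simeq \Dkq \otimes_{\Di} \M$, comme rappel\'e apr\`es la d\'efinition \ref{def4.4}.

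D'abord, si $\M = 0$, alors $\M_k = \Dkq \otimes_{\Di} \M = 0$ pour tout $k$, donc $m(\M_k) = 0$ pour tout $k$ et par cons\'equent $m(\M) = \limsup_{k \geq k_{\M}} m(\M_k) = 0$. R\'eciproquement, supposons $m(\M) = 0$. Comme $m(\M) = \inf_{k \geq k_{\M}} \sup_{k' \geq k} m(\M_{k'})$ est la borne inf\'erieure d'une suite d\'ecroissante d'entiers naturels, elle est atteinte : il existe $k_1 \geq k_{\M}$ tel que $m(\M_k) = 0$ pour tout $k \geq k_1$. Pour de tels $k$, le module $\M_k$ est holonome, donc le corollaire \ref{cor3.14} donne $\M_k = 0$. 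On propage alors cette nullit\'e \`a tous les niveaux : par d\'efinition d'un module coadmissible, l'application de transition fournit un isomorphisme $\Dkq \otimes_{\widehat{\mathcal{D}}^{(0)}_{\X, k+1, \Q}} \M_{k+1} \simeq \M_k$, de sorte que $\M_{k+1} = 0$ entra\^{\i}ne $\M_k = 0$ ; une r\'ecurrence descendante issue de $k_1$ donne $\M_k = 0$ pour tout $k \in \N$, d'o\`u $\M \simeq \varprojlim_k \M_k = 0$.

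Il n'y a gu\`ere d'obstacle ici ; le seul point m\'eritant de l'attention est que l'hypoth\`ese $m(\M) = 0$ ne contr\^ole les $\M_k$ que pour $k$ assez grand, et que c'est l'isomorphisme de transition structurel des modules coadmissibles --- et non la seule additivit\'e des multiplicit\'es de la proposition \ref{prop3.17} --- qui permet de redescendre et de conclure que $\M$ lui-m\^eme, et non seulement ses troncatures aux grands niveaux, est nul.
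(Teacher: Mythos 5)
Your proof is correct and follows essentially the same route as the paper: both directions reduce to the corollary \ref{cor3.14} at each finite congruence level, and the reverse direction observes that $m(\M)=0$ forces $m(\M_k)=0$, hence $\M_k=0$, for all $k$ large enough. The only difference is that you spell out the last step (propagating $\M_k=0$ from large $k$ to all $k$ via the coadmissibility isomorphism, or equivalently by cofinality of the projective limit), whereas the paper leaves that implication implicit; this is a welcome clarification rather than a divergence in method.
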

\begin{proof}
On écrit $\M = \varprojlim_k \M_k$. Si $\M = 0$, alors $\M_k = 0$.
Les multiplicités de $\M_k$ sont toutes nulles par définition et $m(\M_k) = 0$. Alors $m(\M) = 0$.

On suppose maintenant que $m(\M) = 0$. Par définition de $m(\M)$, il existe un niveau de congruence $k$ à partir duquel $m(\M_k) = 0$.
Autrement dit, les multiplicités du module $\M_k$ sont toutes nulles et $\M_k = 0$ d'après le corollaire \ref{cor3.14}.
Ainsi, $\M_k = 0$ pour $k$ suffisamment grand et donc $\M = 0$.
\end{proof}

Bien que la multiplicité $m$ ne soit pas additive sur les suites exactes, on peut démontrer que les éléments de $\H$ sont de longueur finie en utilisant la proposition \ref{propsuiteexacte} et le lemme \ref{lemmezero}.

\begin{prop}\label{propfinitelenght}
Soit $\M$ un élément de $\H$. Alors $\M$ est de longueur finie inférieure ou égale à $\mu(\M)$.
\end{prop}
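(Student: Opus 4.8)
The strategy is to mimic the proof of finite length for holonomic $\Dkq$-modules (Proposition \ref{prop2.18}): I want to show that any descending chain of coadmissible submodules of $\M$ stabilises. First I would take a coadmissible submodule $\Nn \subset \M$ and show $\Nn$ again belongs to $\H$: this is exactly the content of Proposition \ref{propsuiteexacte}, applied to the short exact sequence $0 \to \Nn \to \M \to \M/\Nn \to 0$ of coadmissible modules (the category of coadmissible modules being abelian, the quotient $\M/\Nn$ is again coadmissible). The same proposition gives $m(\Nn) \le m(\M)$ and $m(\M/\Nn) \le m(\M)$. So every submodule appearing in a chain lies in $\H$ and has bounded multiplicity.

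Next I would take a descending chain $\M = \M^{(0)} \supset \M^{(1)} \supset \M^{(2)} \supset \cdots$ of coadmissible submodules. For each $n$ consider the short exact sequence
\[ \xymatrix{ 0 \ar[r] & \M^{(n+1)} \ar[r] & \M^{(n)} \ar[r] & \M^{(n)}/\M^{(n+1)} \ar[r] & 0 } . \]
By Proposition \ref{propsuiteexacte}, all three terms are in $\H$, and moreover, writing each module as a projective limit of its level-$k$ pieces, there is a common congruence level $k_1$ (coming from the finitely many relevant $k_\bullet$ and the stabilisation of the sup defining $m$) beyond which, for every $k \ge k_1$ and every $n$, one has the additivity $m(\M^{(n)}_k) = m(\M^{(n+1)}_k) + m((\M^{(n)}/\M^{(n+1)})_k)$ at the fixed level $k$ — this is Proposition \ref{prop3.17} applied to the level-$k$ exact sequences, which are exact sequences of holonomic $\Dkq$-modules. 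Hence for each fixed $k \ge k_1$ the sequence $(m(\M^{(n)}_k))_{n}$ is a non-increasing sequence of non-negative integers, so it is eventually constant; and $m(\M^{(n)}_k)$ is bounded by $m(\M_k) \le$ (the sup defining $m(\M)$), which is finite.

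The subtle point — and the main obstacle — is that one cannot simply say "$m$ is additive on coadmissible exact sequences" to conclude the chain stabilises, because passing to the $\limsup$ over $k$ only yields the inequalities of Proposition \ref{propsuiteexacte}, not an equality (cf.\ the Remarque after that proposition). So instead I would argue level by level: fix $k \ge k_1$; the chain $(\M^{(n)}_k)_n$ of holonomic $\Dkq$-submodules of $\M_k$ has non-increasing integer multiplicities bounded by $m(\M_k)$, hence there is $n_k$ such that $m(\M^{(n)}_k) = m(\M^{(n+1)}_k)$ for all $n \ge n_k$; by the fixed-level additivity this forces $m((\M^{(n)}/\M^{(n+1)})_k) = 0$, hence by Corollaire \ref{cor3.14} $\M^{(n)}_k = \M^{(n+1)}_k$ for $n \ge n_k$. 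The genuine difficulty is uniformity in $k$: a priori $n_k$ could grow without bound. To control it, note that for fixed $k$ the total drop $\sum_{n\ge 0} \big(m(\M^{(n)}_k) - m(\M^{(n+1)}_k)\big) \le m(\M_k) \le m(\M)$ is bounded by $m(\M)$ \emph{independently of $k$} (using $k \ge k_1$ so that $m(\M_k) \le m(\M)$). Therefore for any fixed $N$, at most $m(\M)$ of the quotients $\M^{(n)}/\M^{(n+1)}$ with $n < N$ can have $m((\M^{(n)}/\M^{(n+1)})_k) \ne 0$ for a given $k \ge k_1$; but if $\M^{(n)}/\M^{(n+1)} \ne 0$ in $\H$ then by Lemme \ref{lemmezero} $m(\M^{(n)}/\M^{(n+1)}) \ne 0$, so $m((\M^{(n)}/\M^{(n+1)})_k) \ne 0$ for $k$ large. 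Hence for each $n$ with nonzero quotient there is $k(n) \ge k_1$ with $m((\M^{(n)}/\M^{(n+1)})_k) \ge 1$ for all $k \ge k(n)$; if infinitely many quotients were nonzero, taking $k$ larger than finitely many $k(n)$ would make $\sum m((\M^{(n)}/\M^{(n+1)})_k)$ exceed $m(\M)$, a contradiction. Thus only finitely many quotients $\M^{(n)}/\M^{(n+1)}$ are nonzero, the chain stabilises, and the number of strict inclusions is at most $m(\M)$ — which gives $\ell(\M) \le m(\M)$ and completes the proof.
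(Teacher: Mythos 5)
Your proof is correct, and it reaches the same conclusion by a genuinely different route from the paper's. The paper first shows that $(m(\M^n))_n$, being a decreasing sequence of natural numbers, stabilises at some $n_0$, and then proves the key lemma: if $\M^{n+1}\subset\M^n$ with $m(\M^{n+1})=m(\M^n)$, then $\M^{n+1}=\M^n$. That lemma is established by exhibiting infinitely many levels $k$ at which $\M^n_k=\M^{n+1}_k$ (using the definition of $m$ as a $\limsup$ together with level-$k$ additivity and Corollaire~\ref{cor3.14}), and then passing to the projective limit along those $k$. In other words, the paper turns $m$ into a strictly decreasing integer-valued invariant of proper inclusions, and deduces finite length from that. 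You instead bound directly the number of nonzero successive quotients $\M^{(n)}/\M^{(n+1)}$: at any fixed sufficiently large level $k$ the telescoping sum $\sum_n m\bigl((\M^{(n)}/\M^{(n+1)})_k\bigr)\le m(\M_k)\le m(\M)$, and for each nonzero coadmissible quotient the level-$k$ multiplicity is eventually $\ge 1$, so a uniformity-in-$k$ argument caps the number of nonzero quotients by $m(\M)$. This cleverly sidesteps the projective-limit step of the paper by producing a contradiction at a single large level $k$, rather than reconstructing the coadmissible isomorphism $\M^n\simeq\M^{n+1}$ from levelwise ones. Two small points to tighten: the ``common congruence level $k_1$'' exists not because there are finitely many $k_\bullet$ (the chain is infinite) but because once $\M_k$ is holonomic every coadmissible sub and subquotient of $\M$ has holonomic level-$k$ piece; and the claim ``$m\bigl((\M^{(n)}/\M^{(n+1)})_k\bigr)\ne 0$ for $k$ large'' follows not from the $\limsup$ being positive alone (which only gives infinitely many such $k$), but from the fact that for a coadmissible module the set of levels where the level piece vanishes is downward closed, combined with Corollaire~\ref{cor3.14}.
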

\begin{proof}
Soit $\M = \varprojlim_k \M_k$ un élément de $\H$. On démontre que toute suite décroissante $(\M^n)_{n \in \N}$ de sous-$\Di$-modules coadmissibles de $\M$ est stationnaire.
On peut supposer que $\M^0 = \M$. Comme $\M^n$ est un sous-module de $\M$, on sait que $m(\M^n) \leq m(\M)$ d'après la proposition \ref{propsuiteexacte}.
La suite $(m(\M^n))_n$ est une suite décroissante d'entiers naturels. Elle est donc stationnaire. Il existe un entier naturel $n_0$ tel que pour tout $n \geq n_0$, $m(\M^{n+1}) = m(\M^n)$.
On suppose dans la suite que $n \geq n_0$. On écrit $\M^n = \varprojlim_k \M^n_k$.
Il existe un rang $k_n \geq \max\{k_{\M^n} , k_{\M^{n+1}} \}$ pour lequel
\begin{equation}\label{eqegalite}
\forall k \geq k_n, ~~ m(\M^n)= \sup_{k' \geq k} \{ m(\M^n_{k'}) \}= m(\M^{n+1})= \sup_{k' \geq k} \{ m(\M^{n+1}_{k'}) \}
\end{equation}
Pour tout $k \geq k_n$, on considère la suite exacte de $\Dkq$-modules cohérents
\[ \xymatrix{ 0 \ar[r] & \M_k^{n+1} \ar[r] & \M^n_k \ar[r] & \M^n_k / \M_k^{n+1} \ar[r] & 0 }. \]
On sait que $m(\M^n_k) = m(\M_k^{n+1}) + m(\M^n_k / \M_k^{n+1})$ d'après la proposition \ref{prop3.17}.
En particulier, si $m(\M^{n+1})= m(\M^{n+1}_{k})$ pour un certain niveau de congruence $k \geq k_n$, alors $m(\M^n)= m(\M^n_{k})$ d'après l'égalité (\ref{eqegalite}) puisque $m(\M^n_k) \geq m(\M_k^{n+1})$.
On en déduit l'égalité des multiplicités $m(\M^{n+1}_{k}) = m(\M^n_k)$ et que $m(\M^n_k / \M_k^{n+1}) = 0$.
Le corollaire \ref{cor3.14} implique alors que $\M^n_k / \M_k^{n+1} = 0$. Autrement dit, $\M_k^n \simeq \M_k^{n+1}$.
Puisque l'égalité \ref{eqegalite} est vérifiée pour tout entier $k \geq k_n$, il existe une infinité d'entiers $k \geq k_n$ pour lesquels $\M_k^n \simeq \M_k^{n+1}$.

Alors $\M^n \simeq \M^{n+1}$ en tant que $\Di$-modules coadmissibles.
En effet, soit $(k_\ell)_{\ell \geq 0}$ une suite strictement croissante d'entiers naturels telle que pour tout $\ell \in \N$, $\M_{k_\ell}^n = \M_{k_\ell}^{n+1}$.
La propriété universelle de la limite projective permet d'obtenir des isomorphismes de $\Di$-modules $\M^n \simeq \varprojlim_\ell \M_{k_\ell}^n$ et $\M^{n+1} \simeq \varprojlim_\ell \M_{k_\ell}^{n+1}$.
Puisque $\M^{n+1}$ est un sous-module de $\M^n$, les morphismes de transition des modules $\M^{n+1}_k$ sont induits par ceux des $\M^n_k$.
On en déduit que les morphismes de transition $\M_{k_{\ell+1}}^n = \M_{k_{\ell+1}}^{n+1} \to \M_{k_\ell}^n = \M_{k_\ell}^{n+1}$ des modules $\M_{k_\ell}^n$ sont aussi les morphismes de transition des modules $\M_{k_\ell}^{n+1}$.
Il en découle, en passant à la limite projective sur $\ell$, que $\M^n \simeq \M^{n+1}$.

On a démontré que pour tout entier $n \geq n_0$, $\M^n \simeq \M^{n+1}$. La suite $(\M^n)_n$ est donc stationnaire.
On a aussi démontré que $m(\M^{n+1}) = m(\M^n)$ implique $\M^{n+1} \simeq \M^n$ lorsque $\M^{n+1}$ est un sous-module de $\M^n$.
Comme la suite $(m(\M^n))_n$ est décroissante de terme initial $\mu(\M) < \infty$, la longueur d'une suite strictement décroissante de sous-modules de $\M$ est de longueur au plus $m(\M)$.
De même, toute suite strictement croissante de sous-modules de $\M$ est de longueur au plus $m(\M)$.
Ainsi, $\M$ est un $\Di$-module de longueur finie inférieure ou égale à $m(\M)$.
\end{proof}

\begin{example}
On continue l'exemple \ref{examplelf}.
On suppose toujours que $\X = U$ est affine muni d'une coordonnée locale.
Soit $P = \sum_{n = 0}^d a_n \cdot \partial^n$ un opérateur fini d'ordre $d$ de $\Di(\X)$.
On note $x_1 , \dots , x_s$ les zéros de $a_d$. On rappelle que $N(a_d, x)$ est la valuation de $(a_d \mod \varpi)$ dans l'anneau de valuation discrète $\O_{ X , x}$.
Le $\Di$-module coadmissible $\Di / P$ est de longueur finie inférieure ou égale à $m(\M) = d + N(a_d , x_1) + \dots + N(a_d , x_s)$ d'après la proposition.
\end{example}

On termine cette partie en démontrant que tout $\Di$-module à connexion intégrable est de longueur finie.

\begin{lemma}
Soit $\M = \varprojlim_k \M_k$ un $\Di$-module coadmissible tel que les $\M_k$ soient des $\O_{\X , \Q}$-modules libres de rang $n$ pour $k$ suffisamment grand.
Alors $\M$ est un $\O_{\X , \Q}$-module libre de rang $n$.
\end{lemma}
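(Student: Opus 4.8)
L'id\'ee est de montrer que, pour $k$ assez grand, les morphismes de transition $\M_{k+1} \to \M_k$ sont des isomorphismes de $\O_{\X,\Q}$-modules ; la conclusion en d\'ecoule aussit\^ot, puisque $\M \simeq \varprojlim_k \M_k$ et qu'une limite projective d'un syst\`eme form\'e d'isomorphismes \`a partir d'un certain rang est isomorphe \`a l'un quelconque de ses termes, ici \`a un $\M_k$ qui est libre de rang $n$ par hypoth\`ese. On fixe donc $k_0$ tel que $\M_k$ soit un $\O_{\X,\Q}$-module libre de rang $n$ pour tout $k \geq k_0$, et on se donne $k \geq k_0$.

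La question \'etant locale, je travaillerais sur un ouvert affine $U$ de $\X$ muni d'une coordonn\'ee \'etale ; on pose $A := \O_{\X,\Q}(U)$, une alg\`ebre affino\"{\i}de (donc noeth\'erienne). Comme $A \subset \widehat{\mathcal{D}}^{(0)}_{\X,k+1,\Q}(U)$, le morphisme de transition $\phi : \M_{k+1}(U) \to \M_k(U)$ est $A$-lin\'eaire entre deux $A$-modules libres de rang $n$. Je commencerais par observer que $\phi$ est d'image dense : d'apr\`es la th\'eorie des modules coadmissibles sur l'alg\`ebre de Fr\'echet-Stein $\Di(U)$ (voir \cite{schneider} et \cite{huyghe}), le morphisme canonique $\M(U) \to \M_k(U)$ est d'image dense pour la topologie canonique de $\M_k(U)$, et il se factorise par $\phi$ par d\'efinition du syst\`eme projectif $\M = \varprojlim_k \M_k$. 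Ensuite, l'image de $\phi$ est un sous-$A$-module de type fini du $A$-module libre de type fini $\M_k(U)$, donc ferm\'ee pour la topologie canonique (les sous-modules de type fini d'un module de type fini sur une alg\`ebre affino\"{\i}de sont ferm\'es). \'Etant \`a la fois dense et ferm\'ee, cette image vaut $\M_k(U)$ tout entier : $\phi$ est surjectif.

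Il reste \`a voir que $\phi$ est injectif : le choix de bases de $\M_{k+1}(U)$ et $\M_k(U)$ sur $A$ identifie $\phi$ \`a un endomorphisme surjectif de $A^n$, et un endomorphisme surjectif d'un module de type fini sur un anneau commutatif est automatiquement injectif (r\'esultat classique) ; donc $\phi$ est un isomorphisme. Ceci valant sur un recouvrement affine de $\X$ de mani\`ere compatible, le morphisme $\M_{k+1} \to \M_k$ est un isomorphisme de $\O_{\X,\Q}$-modules pour tout $k \geq k_0$, et le passage \`a la limite projective donne $\M \simeq \M_{k_0}$ en tant que $\O_{\X,\Q}$-modules ; or $\M_{k_0}$ est libre de rang $n$. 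Le point d\'elicat est la surjectivit\'e des morphismes de transition : elle combine la densit\'e issue du formalisme coadmissible / Fr\'echet-Stein et la fermeture des sous-modules de type fini dans les modules de type fini sur une alg\`ebre affino\"{\i}de. Tout le reste (la factorisation de $\M(U) \to \M_k(U)$ par $\phi$, l'injectivit\'e des endomorphismes surjectifs, le comportement des limites projectives de syst\`emes presque constants, le recollement) est de routine.
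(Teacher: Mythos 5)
Votre preuve est correcte et suit essentiellement la m\^eme strat\'egie que celle de l'article : montrer que pour $k$ assez grand les morphismes de transition $\M_{k+1}\to\M_k$ sont des isomorphismes de $\O_{\X,\Q}$-modules (densit\'e de l'image combin\'ee \`a sa fermeture, puis le fait qu'une surjection $\O_{\X,\Q}$-lin\'eaire entre deux libres de m\^eme rang fini est un isomorphisme), et en d\'eduire $\M\simeq\M_{k_0}$. La seule diff\'erence, mineure, est la source de la densit\'e : vous invoquez le fait g\'en\'eral (formalisme de Fr\'echet--Stein) que $\M(U)\to\M_k(U)$ a une image dense et que cette application se factorise par $\phi$, alors que l'article l'obtient directement de l'isomorphisme $\Dkq\otimes_{\widehat{\mathcal{D}}^{(0)}_{\X,k+1,\Q}}\M_{k+1}\simeq\M_k$ ; les deux arguments sont \'equivalents et reposent sur la m\^eme propri\'et\'e de densit\'e de $\widehat{\mathcal{D}}^{(0)}_{\X,k+1,\Q}(U)$ dans $\Dkq(U)$.
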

\begin{proof}
Par hypothèse, il existe un niveau de congruence $k_0 \in \N$ tels que $\M_k$ soit un $\O_{\X , \Q}$-module libre de rang $n$ pour tout $k \geq k_0$.
On ne considère maintenant que les indices $k$ supérieurs ou égaux à $k_0$.
On note $\lambda_k : \M_{k+1} \to \M_k$ le morphisme de transition au rang $k$.
Ce dernier est $\widehat{\mathcal{D}}^{(0)}_{\X, k+1 , \Q}$-linéaire donc $\O_{\X , \Q}$-linéaire.
Par hypothèse, l'application $\Dkq \otimes_{\widehat{\mathcal{D}}^{(0)}_{\X, k+1 , \Q}} \M_{k+1} \to \M_k$, $P \otimes e \mapsto P \cdot \lambda_k(e)$ est un isomorphisme $\Dkq$-linéaire.
Cela implique que l'image $\lambda_k(\M_{k+1})$ de $\lambda_k$ est dense dans $\M_k$ pour la topologie $\varpi$-adique.
Comme $\M_k$ est un $\O_{\X , \Q}$-module libre de rang fini, $\lambda_k(\M_{k+1})$ est un sous-$\O_{\X , \Q}$-module fermé de $\M_k$.
Puisqu'il est dense, $\lambda(\M_k) \simeq \M_k$ en tant que $\O_{\X , \Q}$-modules.
Autrement dit, l'application $\lambda_k : \M_{k+1} \to \M_k$ est surjective.
Comme $\M_k$ et $\M_{k+1}$ sont des $\O_{\X , \Q}$-modules libre de même rang fini $n$, $\lambda_k$ est un isomorphisme de $\O_{\X , \Q}$-modules.
On en déduit que $\M \simeq \varprojlim_{k \geq k_0} \M_k \simeq \M_{k_0}$ en tant que $\O_{\X , \Q}$-module.
Ainsi, $\M$ est un $\O_{\X , \Q}$-module libre de rang fini $n$.
\end{proof}

La réciproque de ce lemme est vraie : si $\M = \varprojlim_k \M_k$ est un $\Di$-module coadmissible et un $\O_{\X , \Q}$-module libre de rang $n$, alors il existe un niveau de congruence $k$ à partir duquel chaque $\M_k$ est un $\O_{\X , \Q}$-module libre de rang $n$.
Ce résultat est démontré dans \cite{hallopeau}.

\begin{definition}
Un module coadmissible $\M = \varprojlim_k \M_k$ est appelé module à connexion intégrable s'il existe un rang $k$ à partir duquel chaque $\M_k$ est un $\O_{\X , \Q}$-module libre de rang fini donné $n$.
\end{definition}

Soit $\M = \varprojlim_k \M_k$ un module à connexion de rang $n$.
D'après le lemme \ref{modconnexion} et la proposition \ref{prop3.10}, les modules $\M_k$ ont une unique multiplicité égale à $n$.
On déduit alors de la proposition \ref{propfinitelenght} que tout $\Di$-module intégrable à connexion est de longueur finie.

\begin{prop}
Soit $\M = \varprojlim_k \M_k$ un $\Di$-module à connexion intégrable. Alors $\M$ est un $\Di$-module de longueur finie inférieure ou égale au rang $\mathrm{rg}_{\O_{\X , \Q}}(\M)$.
\end{prop}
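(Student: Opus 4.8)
The plan is to recognize that a $\Di$-module with integrable connection is, essentially by construction, an object of the category $\H$ whose multiplicity equals its $\O_{\X , \Q}$-rank, after which the conclusion is a direct application of Proposition \ref{propfinitelenght}.

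First I would write $\M \simeq \varprojlim_k \M_k$ and set $n = \mathrm{rg}_{\O_{\X , \Q}}(\M)$. By the definition of a module with integrable connection there is a congruence level $k_0 \in \N$ such that $\M_k$ is a free $\O_{\X , \Q}$-module of rank $n$ for every $k \geq k_0$. By Lemma \ref{modconnexion}, each such $\M_k$ then has characteristic variety contained in the zero section $X$; in particular $\dim \Car(\M_k) = 1$, so $\M_k$ is holonomic, and the first condition of Definition \ref{def4.4} holds with $k_\M \leq k_0$.

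Next I would compute the multiplicities. Combining Lemma \ref{modconnexion} with Proposition \ref{prop3.10}, for $k \geq k_0$ the characteristic variety $\Car(\M_k)$ has a single irreducible component, namely the component $\xi_k = 0$ which is $X$ itself, carrying multiplicity $\Nb = n$, while the vertical multiplicity $N_k$ is zero (this identification of the multiplicity of a connection module with its generic rank is exactly the point already settled in the paragraph preceding the statement, so no new work is needed here). Hence $m(\M_k) = n$ for all $k \geq k_0$, and therefore $m(\M) = \limsup_{k \geq k_\M} m(\M_k) = n < \infty$, which is the second condition of Definition \ref{def4.4}. Thus $\M \in \H$.

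Finally, applying Proposition \ref{propfinitelenght} to $\M \in \H$ gives that $\M$ is a $\Di$-module of finite length at most $m(\M) = n = \mathrm{rg}_{\O_{\X , \Q}}(\M)$, which is the assertion. There is no real obstacle in this argument: the only substantive input — the equality of the multiplicity of a connection module and its rank — has been established earlier, and the rest is just checking that $\M$ satisfies the two defining conditions of $\H$ and quoting the finite-length criterion for that category.
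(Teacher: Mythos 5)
Your proof is correct and follows essentially the same route as the paper. The paper's argument (given in the paragraph immediately preceding the statement) is precisely to invoke Lemma \ref{modconnexion} and Proposition \ref{prop3.10} to deduce that each $\M_k$ has a unique multiplicity equal to $n$, so that $\M$ lies in $\H$ with $m(\M)=n$, and then to apply Proposition \ref{propfinitelenght}; your write-up simply spells out the verification of the two conditions of Definition \ref{def4.4} more explicitly.
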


\bibliographystyle{plain}
\bibliography{biblio.bib}

\end{document}